\newcommand{\mone}[1][1]{{\,\text{-}#1}}
\newtheorem{theoremm}{Theorem}[section]
\declaretheorem[style=plain,name=Theorem,numberlike=theoremm]{theorem}
\declaretheorem[style=plain,name=Lemma,numberlike=theoremm]{lemma}
\declaretheorem[style=plain,name=Proposition,numberlike=theoremm]{proposition}
\declaretheorem[style=plain,name=Corollary,numberlike=theoremm]{corollary}
\declaretheorem[style=definition,name=Definition,numberlike=theorem]{definition}
\declaretheorem[style=remark,name=Example,numberlike=theorem]{example}
\declaretheorem[style=remark,name=Remark,numberlike=theorem]{remark}
\numberwithin{equation}{section}
\newcolumntype{C}{>{$}c<{$}}
\newcommand{\C}{\mathrm{C}}
\newcommand{\s}{\square_{\C}}
\font\sc=rsfs10
\newcommand{\cC}{\sc\mbox{C}\hspace{1.0pt}}
\newcommand{\cCH}{\sc\mbox{C}_{\,\mathcal{H}}\hspace{1.0pt}}
\newcommand{\cS}{\sc\mbox{S}\hspace{1.0pt}}
\newcommand{\cE}{\sc\mbox{E}\hspace{1.0pt}}
\newcommand{\cD}{\sc\mbox{D}\hspace{1.0pt}}
\newcommand{\cA}{\sc\mbox{A}\hspace{1.0pt}}
\newcommand{\cV}{\sc\mbox{V}\hspace{1.0pt}}
\newcommand{\cBM}{\sc\mbox{B}_{\mathbf{M}}\hspace{1.0pt}}
\newcommand{\cBMop}{\sc\mbox{B}^{\,\mathrm{op}}_{\mathbf{M}}\hspace{1.0pt}}
\font\scc=rsfs7
\newcommand{\ccB}{\scc\mbox{B}\hspace{1.0pt}}
\newcommand{\ccC}{\scc\mbox{C}\hspace{1.0pt}}
\newcommand{\ccCH}{\scc\mbox{C}_{\mathcal{H}}\hspace{1.0pt}}
\newcommand{\ccD}{\scc\mbox{D}\hspace{1.0pt}}
\newcommand{\ccE}{\scc\mbox{E}\hspace{1.0pt}}
\font\sccc=rsfs5
\newcommand{\cccC}{\sccc\mbox{C}\hspace{1.0pt}}
\newcommand{\cccCH}{\sccc\mbox{C}_{\mathcal{H}}\hspace{1.0pt}}
\newcommand{\cccD}{\sccc\mbox{D}\hspace{1.0pt}}
\newcommand{\cEnd}{\cE\mathrm{nd}}
\newcommand{\ccEnd}{\ccE\mathrm{nd}}
\newcommand{\B}{\sc\mbox{B}\mathrm{icom}}
\newcommand{\BB}{\sc\mbox{B}\B}
\newcommand{\Bi}{\sc\mbox{B}\mathrm{iinj}}
\newcommand{\lunit}{\upsilon^{l}}
\newcommand{\runit}{\upsilon^{r}}
\tikzset{anchorbase/.style={baseline={([yshift=-0.5ex]current bounding box.center)}},
smallnodes/.style={font=\scriptsize,text height=0.75ex,text depth=0.15ex},
cstrand/.style={line width=1.5,color=black},
dstrand/.style={line width=1,color=blue,densely dotted},
}
\tikzstyle directed=[postaction={decorate,decoration={markings,
mark=at position #1 with {\arrow[draw=black, line width=0.4mm]{>}}}}]
\tikzstyle opdirected=[postaction={decorate,decoration={markings,
mark=at position #1 with {\arrow[draw=black, line width=0.4mm]{<}}}}]
\tikzstyle marked=[postaction={decorate,decoration={markings,
mark=at position #1 with {\draw[ultra thin,black,fill=black] (0,0) circle (.08cm);}}}]
\tikzstyle box=[minimum height=0.4cm,draw,rounded corners, draw,rectangle,solid]
\newcommand*\circled[1]{\tikz[baseline=(char.base)]{
\node[shape=circle,draw,inner sep=2pt] (char) {#1};}}
\begin{document}
\title[Finitary birepresentations of finitary bicategories]
{Finitary birepresentations of finitary bicategories}

\author[M. Mackaay, V. Mazorchuk, V. Miemietz, D. Tubbenhauer and X. Zhang]
{Marco Mackaay, Volodymyr Mazorchuk, Vanessa Miemietz,\\ Daniel Tubbenhauer and Xiaoting Zhang}

\begin{abstract}
In this paper, we discuss the generalization of finitary $2$-representation theory of finitary $2$-categories to finitary birepresentation theory of finitary bicategories. In previous papers on the subject, the classification of simple transitive $2$-representations of a given $2$-category was reduced to that for certain subquotients. These reduction results were all formulated as bijections between equivalence classes of $2$-representations. In this paper, we generalize them to biequivalences between certain $2$-categories of birepresentations. Furthermore, we prove an analog of the double centralizer theorem in finitary birepresentation theory.
\end{abstract}

\maketitle
\tableofcontents


\section{Introduction}\label{section:intro}


Finitary $2$-representation theory of finitary $2$-categories, which is the categorical analog
of finite dimensional representation theory of finite dimensional algebras, has evolved
considerably in the last decade, see e.g. \cite{MM1,MM2,MM3,MM5,MM6,CM,ChMi,KMMZ,MMMT,MMMZ} and references therein.

The main reason for restricting the framework to $2$-representations of $2$-categories,
was to avoid technical difficulties which naturally arise when one considers
birepresentations of bicategories in general: by weakening the axioms, the proofs of most results require more and bigger diagrams, whose commutativity is not always easy to show. However, in our
work on the $2$-representation theory of Soergel bimodules of finite Coxeter type, it became clear that the $2$-categorical setup is really too restrictive.
Dealing with concrete examples of a certain weak categorical structure as if they were strict and justifying the oversimplification by referring to well-known general and abstract strictification theorems, becomes untenable at some point. For example, the (conjectural) classification of the so-called simple transitive $2$-representations of Soergel bimodules involves certain subquotients of Soergel bimodules which are
naturally bicategories but not $2$-categories.
Moreover, the well-known
classification of the simple transitive birepresentations of these bicategories depends on the associator in an essential way (cf. \cite[Example 7.4.10 and Corollary 7.12.20]{EGNO}).

The main purpose of this paper is therefore to discuss the generalization of some important
foundational results on finitary $2$-representation theory to finitary birepresentation theory. By discussing, we mean formulating those results carefully in the greatest possible generality (or at least as generally as we currently can) and proving them in detail whenever the proof is not straightforward and cannot be found in the literature.
A lot of the results in this paper will not surprise the experts, but we think that it is important to have the statements and their proofs, which sometimes involve quite complicated diagrams (e.g. the proof of Theorem \ref{prop0.4}), in written form somewhere in the literature. However, the paper also contains new results, as we will discuss in the next paragraphs, which are also intended as a brief and incomplete overview of birepresentation theory.

In the series of papers mentioned above, the first key tool for studying the structure of finitary $2$-representations, is the weak Jordan--H{\"o}lder theorem \cite[Subsection 3.5]{MM5}. Just like the usual Jordan--H{\"o}lder theorem in the representation theory of finite dimensional algebras, the weak Jordan--H{\"o}lder theorem shows that any finitary $2$-representation admits a filtration by $2$-subrepresentations and an associated sequence of so-called \emph{simple transitive} subquotients,
which play the role of the simples in $2$-representation theory. This sequence is an essential invariant of the $2$-representation and for that reason the main focus in $2$-representation theory has been on the problem of classifying simple transitive $2$-representations so far. For the rest of this introduction, we will refer to this problem as the \emph{Classification Problem}. Fortunately, the generalization of the weak Jordan--H{\"o}lder theorem to finitary birepresentations is straightforward.

The Classification Problem for a given finitary $2$-category $\cC$ can be subdivided into several smaller classification problems by taking advantage of the so-called \emph{cell structure} of $\cC$, which was introduced in \cite[Section 4]{MM1}. The set of isomorphism classes of indecomposable $1$-morphisms of $\cC$ is naturally endowed with three preorders, called the left, the right and the two-sided preorders, generalizing Green's relations for semigroups \cite{Gre} the well-known Kazhdan--Lusztig preorders on
the Hecke algebras of Coxeter groups \cite{KL}. Just as in Kazhdan--Lusztig theory, the associated
equivalence classes are called left, right and two-sided cells and are partially ordered.
By \cite[Subsection 3.2]{CM}, for each simple transitive $2$-representation of $\cC$,
there is a unique maximal two-sided cell of $\cC$ that is not annihilated by the
$2$-representation, called its \emph{apex}. This shows that one can address
the Classification Problem for $\cC$ ``one apex at a time''. The generalization of these results to finitary birepresentations of finitary bicategories is also straightforward.

The next trick is to reduce the Classification Problem for $\cC$
even further. For that, one has to assume that the $\cC$ is \emph{fiat}, meaning that it is endowed with a weak categorical involution satisfying certain additional conditions (for some results it is not strictly necessary for the auto-equivalence to be involutory, but that is a technicality we do not want to discuss here). In the context of tensor categories, these notions relate to rigidity/pivotal structures. The involution maps each left cell to a right cell, called its \emph{dual}, and vice-versa, and the intersection of a left cell and its dual is called a \emph{diagonal $\mathcal{H}$-cell}. For any diagonal $\mathcal{H}$-cell $\mathcal{H}$ in any two-sided cell $\mathcal{J}$, one can naturally define a subquotient $\cC_{\mathcal{H}}$ of $\cC$ which is also fiat and contains
at most two cells: the trivial one (containing the identity $1$-morphism) and $\mathcal{H}$ (i.e. $\cC_{\mathcal{H}}$ has two
cells if $\mathcal{H}$ does not contain the identity and one cell otherwise), both of which are left, right and two-sided. In
\cite[Subsection 4.2]{MMMZ}, it was shown that there is a bijection between the set of equivalence classes of simple transitive $2$-representations of $\cC$ with apex $\mathcal{J}$ and the set of equivalence classes
of simple transitive $2$-representations of $\cC_{\mathcal{H}}$ with apex $\mathcal{H}$. The generalization in this paper, which we call \emph{strong $\mathcal{H}$-cell reduction},
is two-fold: not only do we prove it in the context of finitary birepresentations of \emph{fiab} bicategories (the bicategorical analog of fiat $2$-categories), but we also formulate it as a biequivalence between two $2$-categories of simple transitive birepresentations, rather than a mere bijection between sets of equivalence classes of simple transitive birepresentations. Both the formulation of this generalization and its proof are much more involved than
the original counterparts in \cite{MMMZ}. They are the content of Theorems \ref{theorem:H-reduction1} and \ref{theorem:H-reduction2}, but require a lot of technical preparation, which is also new and starts in Subsection \ref{quotientbicat}.

A key ingredient in strong $\mathcal{H}$-reduction is the relation between the birepresentations of a given
finitary bicategory $\cC$ and the coalgebras in $\cC$, which are $1$-morphisms together with
comultiplication and counit $2$-morphisms satisfying weak versions of coassociativity and counitality. This relation, which generalizes Ostrik's results in the context of tensor
categories \cite[Theorem 3.1]{Os} (see also \cite{EGNO} and references therein), was first studied in \cite{MMMT} and \cite{MMMZ} in the context of finitary $2$-categories (a major difference with Ostrik's results being that tensor categories are abelian, whereas finitary $2$-categories are only additive), but is vastly generalized here, in Sections \ref{section:coalgebras} and \ref{section:coalgebras-and-birepresentations}.
In this case, the generalization is three-fold: as before, everything is now done in the context of birepresentations
and bicategories and the key results are now formulated in terms of biequivalences between certain bicategories rather than mere bijections between sets of equivalence classes, but
in Subsection \ref{subsection:no-abelian} we additionally show how to avoid the (injective) abelianization of $\cC$ (under an additional assumption of \emph{$\mathcal{J}$-simplicity}), which was used in \cite{MMMT} and \cite{MMMZ} to generalize Ostrik's results. This is done by using an operation on coalgebras which we call \emph{framing} and
can be seen as a categorical generalization of conjugation. The concept of framing as such is not new, see e.g. the proof of \cite[Theorem 7.12.11]{EGNO}, but our application of it to avoid abelianization is new to the best of our knowledge.

Finally, in Section \ref{section:doublecentralizer} we state and prove the
\emph{double centralizer theorem} for simple transitive birepresentations of fiab
bicategories (Theorem \ref{thm:double-centralizer}). This is the analog of
\cite[Theorem 7.12.11]{EGNO} in our context. If $\cC$ is semisimple, the two theorems and their proofs coincide, but in general there is a subtle but important difference, which we
will explain in Section \ref{section:doublecentralizer}.

\noindent \textbf{Acknowledgments.} M.~M., Vo.~Ma, D.~T. and X.~Z. thank Va.~Mi. for organizing and hosting the very interesting and very productive workshop ``Representations of monoidal categories and $2$-categories'' in 2019,
during which many of the results in this paper and the next one (still to be finished) were first discussed. The hospitality of the University of East Anglia and financial support from EPSRC are gratefully acknowledged. D.~T. thanks tikzcd for patiently enduring his illustration attempts.

M.~M. was supported in part by Funda\c{c}{\~a}o para a Ci{\^e}ncia e a Tecnologia (Portugal), projects UID/MAT/04459/2013 (Center for Mathematical Analysis, Geometry and Dynamical Systems - CAMGSD) and PTDC/MAT-PUR/31089/2017 (Higher Structures and Applications). Vo.~Ma. and X.~Z. were partially supported by the Swedish Research Council and G{\"o}ran Gustafsson Stiftelse. Va. Mi. is partially supported by EPSRC grant EP/S017216/1.



\section{Finitary bicategories and birepresentations}\label{section:fin-fiab}


In this section we briefly discuss finitary and (quasi) fiab bicategories and finitary birepresentations.
The notions and results in bicategory theory and birepresentation theory that are logically
intertwined, are presented together.


\subsection{Notation}\label{subsection:notations}


Throughout the paper, a \emph{category} is always assumed to be essentially small, meaning that it is equivalent to a small category (i.e. one whose classes of objects and morphisms are sets), and is denoted by a letter such as $\mathcal{C}$. A \emph{bicategory} is always assumed to be essentially small as well,
meaning that it is biequivalent to a small bicategory (i.e. one whose classes of objects,
$1$-morphisms and $2$-morphisms are sets), and is denoted by a letter such as $\cC$. These assumptions, which are satisfied by all examples of our
interest, are necessary to avoid set-theoretic problems (for some more comments, see \cite[Section 1.1]{EGNO}). Precise definitions and standard terminology for bicategories, pseudofunctors etc. can be found in many sources
e.g. \cite{Be}, \cite[Section 1.0]{Lei}, \cite{ML} or \cite[Chapter I,3]{Gr}, and the reader is referred
to these texts for details. Their strict versions, i.e. when all coherers are
identities, are called \emph{$2$-categories}, \emph{$2$-functors} etc.

Let us summarize some further notation:
\begin{itemize}

\item Objects in categories (which are not morphism categories in bicategories)
are denoted by letters such as $X\in\mathcal{C}$, and morphisms by $f\in\mathcal{C}$.

\item Objects in bicategories are denoted by letters such as $\mathtt{i}\in\cC$, $1$-morphisms by those such as
$\mathrm{F}\in\cC$, $2$-morphisms by Greek letters such as $\alpha\in\cC$, and the corresponding category of morphisms from $\mathtt{i}$ to $\mathtt{j}$
is denoted by $\cC(\mathtt{i},\mathtt{j})$.

\item Identity $1$-morphisms are denoted by $\mathbbm{1}_{\mathtt{i}}$ and identity $2$-morphisms
by $\mathrm{id}_{\mathrm{F}}$, where the subscripts are
sometimes omitted.

\item We write $\mathrm{F}\mathrm{G}=\mathrm{F}\circ_{\mathsf{h}}\mathrm{G}$ for the composition of $1$-morphisms (which is always horizontal), and $\circ_{\mathsf{v}}$ and $\circ_{\mathsf{h}}$ for the vertical and horizontal compositions of $2$-morphisms, respectively. For both compositions we use the operator convention, e.g. the source and
the target of $\mathrm{F}\mathrm{G}$ are equal to the source of $\mathrm{G}$ and
the target of $\mathrm{F}$, respectively;

\item a bicategory consists of a quadruple
$\cC=(\cC,\alpha,\lunit,\runit)$, where $\alpha$ is the
associator and $\lunit$ and $\runit$ are the left and right unitors.

\end{itemize}
We simplify the notation of
$\alpha,\lunit,\runit$ by only indicating the $1$-morphisms in their subscripts, but not the objects,
e.g. for any $1$-morphisms
$\mathrm{F}\in\cC(\mathtt{i},\mathtt{j})$, $\mathrm{G}\in
\cC(\mathtt{j},\mathtt{k})$, $\mathrm{H}\in\cC(\mathtt{k},\mathtt{l})$ with $\mathtt{i},\mathtt{j},\mathtt{k},
\mathtt{l}\in\cC$, the associator and unitors give isomorphisms
\begin{gather*}
\alpha_{\mathrm{H},\mathrm{G},\mathrm{F}}
:=\alpha^{\mathtt{l}\mathtt{k}\mathtt{j}\mathtt{i}}_{\mathrm{H},\mathrm{G},\mathrm{F}}
\colon
(\mathrm{H}\mathrm{G})\mathrm{F}\xrightarrow{\cong}
\mathrm{H}(\mathrm{G}\mathrm{F}),
\\
\lunit_{\mathrm{F}}:=(\lunit_{\mathrm{F}})^{\mathtt{j}\mathtt{i}}
\colon\mathbbm{1}_{\mathtt{j}}\mathrm{F}\xrightarrow{\cong}\mathrm{F},
\quad
\runit_{\mathrm{F}}:=(\runit_{\mathrm{F}})^{\mathtt{j}\mathtt{i}}
\colon\mathrm{F}\mathbbm{1}_{\mathtt{i}}\xrightarrow{\cong}\mathrm{F}.
\end{gather*}

Recall that $\lunit_{\mathbbm{1}_{\mathtt{i}}}=\runit_{\mathbbm{1}_{\mathtt{i}}}$. In several proofs we will use the following commutative diagrams (cf. \cite[Equations (6) and (7)]{Ke}):
\begin{gather}\label{eq:0.00}
\begin{tikzcd}[ampersand replacement=\&,column sep=3em]
\mathrm{GF}\&\\
\mathbbm{1}_{\mathtt{k}}(\mathrm{GF})\ar[u,"\lunit_{\mathrm{GF}}"]
\ar[r,"\alpha_{\mathbbm{1}_{\mathtt{k}},\mathrm{G},\mathrm{F}}^{\mone}",swap]
\&(\mathbbm{1}_{\mathtt{k}}\mathrm{G})\mathrm{F}\ar[ul,"\lunit_{\mathrm{G}}\circ_{\mathsf{h}}\mathrm{id}_{\mathrm{F}}",swap]
\end{tikzcd}
,\quad
\begin{tikzcd}[ampersand replacement=\&,column sep=3em]
\mathrm{GF}\&\\
(\mathrm{GF})\mathbbm{1}_{\mathtt{i}}\ar[u,"\runit_{\mathrm{GF}}"]
\ar[r,"\alpha_{\mathrm{G},\mathrm{F},\mathbbm{1}_{\mathtt{i}}}",swap]
\& \mathrm{G}(\mathrm{F}\mathbbm{1}_{\mathtt{i}})
\ar[ul,"\mathrm{id}_{\mathrm{G}}\circ_{\mathsf{h}}\runit_{\mathrm{F}}", swap]
\end{tikzcd}
,
\end{gather}
for any $\mathrm{F}\in\cC(\mathtt{i},\mathtt{j}),\mathrm{G}\in\cC(\mathtt{j},\mathtt{k})$ and any $\mathtt{i},\mathtt{j},\mathtt{k}\in\cC$.

\begin{example}\label{example:monoidal}
A monoidal category can be identified with the endomorphism category of a
one-object bicategory, where the monoidal product is defined by the horizontal composition.
This monoidal category is strict if and only if the bicategory is a $2$-category.
\end{example}

\begin{example}\label{example:functorbicat}
Given two bicategories $\cC$ and $\cD$, the pseudofunctors between them together with the strong transformations of pseudofunctors and modifications
form a bicategory $[\cC,\cD]$.	
If $\cD$ is a $2$-category, then
one can show that $[\cC,\cD]$ is also a $2$-category, see e.g.
\cite[Chapter I,3.3]{Gr}.
\end{example}

A \emph{biideal} $\mathcal{I}$ in a bicategory $\cC$ consists of an ideal $\mathcal{I}(\mathtt{i},\mathtt{j})$ inside each $\cC(\mathtt{i},\mathtt{j})$, such that for any $2$-morphisms $\beta\in\cC(\mathtt{k},\mathtt{l}),\gamma\in\mathcal{I}(\mathtt{j},\mathtt{k}),\zeta\in\cC(\mathtt{i},\mathtt{j})$, the horizontal composition satisfies $\beta\circ_{\mathsf{h}}\gamma\circ_{\mathsf{h}}\zeta\in\mathcal{I}(\mathtt{i},\mathtt{l})$. Finally, let
$\cC^{\,\mathrm{op}}$, $\cC^{\,\mathrm{co}}$ and $\cC^{\,\mathrm{co,op}}$ denote the bicategories obtained from $\cC$ by reversing only the horizontal composition,
only the vertical composition and both compositions, respectively.


\subsection{Finitary and fiab bicategories}\label{subsection:fin-fiab}


Let $\Bbbk$ be any field.

\begin{definition}\label{definition:fincat}
A \emph{finitary} category $\mathcal{C}$ (over $\Bbbk$) is a
$\Bbbk$-linear additive idempotent split category with finitely many isomorphism classes of indecomposable objects and finite dimensional morphism spaces.
\end{definition}

\begin{definition}\label{definition:finbicat}
We say that a bicategory $\cC$ is \emph{multifinitary} if $\cC$ has finitely many objects,
for all $\mathtt{i},\mathtt{j}\in\cC$ the categories $\cC(\mathtt{i},\mathtt{j})$ are
finitary, and horizontal composition $\circ_{\mathsf{h}}$ of $2$-morphisms is $\Bbbk$-bilinear.
If additionally the identity $1$-morphism on each object is indecomposable, then $\cC$ is called
\emph{finitary}.
\end{definition}

\begin{definition}\label{definition:fiab}
A \emph{quasi (multi)fiab bicategory} is a (multi)finitary bicategory $\cC$
together with an object-preserving $\Bbbk$-linear biequivalence ${}^{\star}\colon\cC\to\cC^{\,\mathrm{co,op}}$ with the property that, for every pair $\mathtt{i},\mathtt{j}\in\cC$ and every
$\mathrm{F}\in\cC(\mathtt{i},\mathtt{j})$, there exist adjunction $2$-morphisms $\mathrm{ev}_{\mathrm{F}}\colon
\mathrm{F}\mathrm{F}^{\star}\to\mathbbm{1}_{\mathtt{j}}$ and $\mathrm{coev}_{\mathrm{F}}\colon\mathbbm{1}_{\mathtt{i}}\to\mathrm{F}^{\star}\mathrm{F}$ such that
the diagrams
\begin{gather*}
\begin{tikzcd}[ampersand replacement=\&,column sep=3em]
\ar[d,"(\runit_{\mathrm{F}})^{\mone}",swap]\mathrm{F}\ar[r,equal]
\& \mathrm{F}
\\
\ar[d,"\mathrm{id}_{\mathrm{F}}\circ_{\mathsf{h}}\mathrm{coev}_{\mathrm{F}}",swap]\mathrm{F}
\mathbbm{1}_{\mathtt{i}}
\& \mathbbm{1}_{\mathtt{j}}\mathrm{F}
\ar[u,"\lunit_{\mathrm{F}}",swap]
\\
\mathrm{F}(\mathrm{F}^{\star}\mathrm{F})\ar[r,"\alpha_{\mathrm{F},\mathrm{F}^{\star},\mathrm{F}}^{\mone}",swap]
\&
(\mathrm{F}\mathrm{F}^{\star})\mathrm{F}\ar[u,"\mathrm{ev}_{\mathrm{F}}\circ_{\mathsf{h}}\mathrm{id}_{\mathrm{F}}",swap]
\end{tikzcd}
,\quad
\begin{tikzcd}[ampersand replacement=\&,column sep=3em]
\ar[d,"(\lunit_{\mathrm{F}^{\star}})^{\mone}",swap]\mathrm{F}^{\star}\ar[r,equal] \& \mathrm{F}^{\star}
\\
\ar[d,"\mathrm{coev}_{\mathrm{F}}\circ_{\mathsf{h}}\mathrm{id}_{\mathrm{F}^{\star}}",swap]\mathbbm{1}_{\mathtt{i}}\mathrm{F}^{\star}
\& \mathrm{F}^{\star}\mathbbm{1}_{\mathtt{j}}
\ar[u,"\runit_{\mathrm{F}^{\star}}",swap]
\\
(\mathrm{F}^{\star}\mathrm{F})\mathrm{F}^{\star}
\ar[r,"\alpha_{\mathrm{F}^{\star},\mathrm{F},\mathrm{F}^{\star}}",swap]
\&
\mathrm{F}^{\star}(\mathrm{F}\mathrm{F}^{\star})
\ar[u,"\mathrm{id}_{\mathrm{F}^{\star}}\circ_{\mathsf{h}}\mathrm{ev}_{\mathrm{F}}",swap]
\end{tikzcd}
\end{gather*}
commute.

If $\mathrm{Id}_{\ccC}$ and $({}^{\star})^{2}$ are equivalent in  $[\cC,\cC]$, then $\cC$ is called \emph{(multi)fiab}.

Following \cite{MM2}, the strict version of a (quasi) (multi)fiab bicategory is called a \emph{(quasi) (multi)fiat $2$-category}.
\end{definition}

Given a quasi (multi)fiab bicategory, there is also a quasi-inverse of
${}^{\star}\colon\cC\to\cC^{\,\mathrm{co,op}}$, which is usually denoted by the same
symbol but applied to the left side of $1$- and $2$-morphisms, e.g. ${}^{\star}\mathrm{F}$ instead of $\mathrm{F}^{\star}$. There are then additional $2$-morphisms $\mathrm{ev}_{\mathrm{F}}^{\prime}\colon
{}^{\star}\mathrm{F}\mathrm{F}\to\mathbbm{1}_{\mathtt{i}}$ and $\mathrm{coev}_{\mathrm{F}}^{\prime}\colon\mathbbm{1}_{\mathtt{j}}\to\mathrm{F}({}^{\star}\mathrm{F})$. In this paper we will only use this inverse in the multifiab case, where we can identify $\mathrm{F}^{\star}$ and ${}^{\star}\mathrm{F}$ and, in particular, obtain
\begin{gather*}
\mathrm{ev}_{\mathrm{F}}^{\prime}=
\big[\mathrm{F}^{\star}\mathrm{F}\xrightarrow{\cong}\mathrm{F}^{\star}
\mathrm{F}^{\star\star}\xrightarrow{\mathrm{ev}_{\mathrm{F}^{\star}}}
\mathbbm{1}_{\mathtt{i}}\big]
,\quad
\mathrm{coev}_{\mathrm{F}}^{\prime}=\big[\mathbbm{1}_{\mathtt{j}}\xrightarrow{\mathrm{coev}_{\mathrm{F}^{\star}}}
\mathrm{F}^{\star\star}\mathrm{F}^{\star}\xrightarrow{\cong}\mathrm{F}
\mathrm{F}^{\star}\big].
\end{gather*}
These satisfy the conditions expressed by the commutative diagrams
\begin{gather*}
\begin{tikzcd}[ampersand replacement=\&,column sep=3em]
\ar[d,"(\runit_{\mathrm{F}^{\star}})^{\mone}",swap]\mathrm{F}^{\star}\ar[r,equal]
\& \mathrm{F}^{\star}
\\
\ar[d,"\mathrm{id}_{\mathrm{F}^{\star}}\circ_{\mathsf{h}}\mathrm{coev}_{\mathrm{F}}^{\prime}",swap]\mathrm{F}^{\star}
\mathbbm{1}_{\mathtt{j}}
\& \mathbbm{1}_{\mathtt{i}}\mathrm{F}^{\star}
\ar[u,"\lunit_{\mathrm{F}^{\star}}",swap]
\\
\mathrm{F}^{\star}(\mathrm{F}\mathrm{F}^{\star})\ar[r,"\alpha_{\mathrm{F}^{\star},\mathrm{F},\mathrm{F}^{\star}}^{\mone}",swap]
\&
(\mathrm{F}^{\star}\mathrm{F})\mathrm{F}^{\star}\ar[u,"\mathrm{ev}_{\mathrm{F}}^{\prime}\circ_{\mathsf{h}}\mathrm{id}_{\mathrm{F}^{\star}}",swap]
\end{tikzcd}
,\quad
\begin{tikzcd}[ampersand replacement=\&,column sep=3em]
\ar[d,"(\lunit_{\mathrm{F}})^{\mone}",swap]\mathrm{F}\ar[r,equal] \& \mathrm{F}
\\
\ar[d,"\mathrm{coev}_{\mathrm{F}}^{\prime}\circ_{\mathsf{h}}\mathrm{id}_{\mathrm{F}}",swap]\mathbbm{1}_{\mathtt{j}}\mathrm{F}
\& \mathrm{F}\mathbbm{1}_{\mathtt{i}}
\ar[u,"\runit_{\mathrm{F}}",swap]
\\
(\mathrm{F}\mathrm{F}^{\star})\mathrm{F}
\ar[r,"\alpha_{\mathrm{F},\mathrm{F}^{\star},\mathrm{F}}",swap]
\&
\mathrm{F}(\mathrm{F}^{\star}\mathrm{F})
\ar[u,"\mathrm{id}_{\mathrm{F}}\circ_{\mathsf{h}}\mathrm{ev}_{\mathrm{F}}^{\prime}",swap]
\end{tikzcd}
.
\end{gather*}
For readers familiar with string diagrams, we note that the diagrams for
$\mathrm{ev}^{\prime}_{\mathrm{F}}$ and $\mathrm{coev}^{\prime}_{\mathrm{F}}$ are obtained from the ones for $\mathrm{ev}_{\mathrm{F}}$ and $\mathrm{coev}_{\mathrm{F}}$
by inverting the orientation.

\begin{remark}
The term \emph{weakly fiat $2$-category} was introduced in \cite[Subsection 2.5]{MM5}. The corresponding notion for bicategories would be \emph{weakly fiab},
but we decided to change \emph{weakly} to \emph{quasi}, and use that terminology
from now on, to avoid confusion with the notion of weakness in bicategories.
\end{remark}

\begin{remark}\label{remark:fiab-vs-pivotal}
Quasi fiab and fiab one-object bicategories correspond to rigid and pivotal monoidal categories in the terminology of e.g. \cite{EGNO}, respectively. Multifinitary bicategories are the additive analog of multitensor categories, cf. \cite[Definition 4.1.1]{EGNO}.
\end{remark}

\begin{example}\label{example:fiab-vs-pivotal-1}
A particular class of quasi fiab one-object bicategories is that of fusion categories, which are semisimple rigid monoidal categories, e.g.
$\cV\mathrm{ect}^{\omega}(G)$, the category of $G$-graded vector spaces whose monoidal product is twisted by a $3$-cocycle on $G$, and
$U_q(\mathfrak{g})\text{-}\mathrm{mod}_{\mathrm{ss}}$, the semisimplified module category
of the quantum group associated to a complex finite dimensional semisimple Lie algebra $\mathfrak{g}$ for $q$ a root of unity.
\end{example}

\begin{example}\label{example:fiab-vs-pivotal-2}
Let $\Bbbk=\mathbb{C}$ and let $W=(W,\mathtt{S})$ be a Coxeter group
with its reflection representation. To these data one can associate the one-object bicategory of \emph{Soergel bimodules} $\cS=\cS_{\mathbb{C}}(W,\mathtt{S})$, which categorifies the Hecke algebra of $W$ such that the indecomposable $1$-morphisms $\mathrm{C}_w$
correspond to the Kazhdan--Lusztig basis elements $c_w$, for $w\in W$.
The one-object bicategory $\cS$ can be defined over the polynomial algebra, as in e.g.
\cite{EW}, or over the coinvariant algebra, as in e.g. \cite{So2}. For finite $W$,
the bicategory $\cS$ is finitary when defined over the coinvariant algebra.

Based on results in \cite{EW}, Lusztig \cite[\S 18.5]{Lu} associated with each
two-sided cell $\mathcal{J}$ of $W$ a semisimple one-object bicategory
$\cA_{\mathcal{J}}$, called the \emph{asymptotic limit} or the \emph{asymptotic bicategory},
which categorifies the direct summand of the asymptotic Hecke algebra corresponding to $\mathcal{J}$ (or, in Lusztig's terminology, the $J$-ring associated with $\mathcal{J}$).
By \cite[Section 5]{EW6}, the monoidal category $\cS$ is pivotal for any $W$,
and so is $\cA_{\mathcal{J}}$ for any $\mathcal{J}$ of $W$.

Thus, for any finite Coxeter group, Soergel bimodules over the coinvariant algebra form a
one-object fiab bicategory in our terminology.
\end{example}

\begin{example}
There are also fiab bicategories with more than one object which play an important role in
birepresentation theory. For example, the bicategory of \emph{singular Soergel bimodules} \cite{Wi} associated with a Coxeter group $W=(W,\mathtt{S})$, whose objects are indexed by the
parabolic subsets of $\mathtt{S}$. This is why we do not restrict our setup to rigid or pivotal monoidal categories, but always consider (quasi) fiab bicategories in general.
\end{example}

The \emph{abelianizations} discussed in \cite[Section 3]{MMMT}
carry over verbatim to the bicategorical setting. Indeed, a multifinitary bicategory $\cC$ admits an
injective and a projective abelianization, denoted by $\underline{\cC}$ and $\overline{\cC}$, respectively. These are such that their morphism categories are abelian and $\cC$ is biequivalent
to the $2$-full subbicategory of injective, respectively projective, $1$-morphisms. Note that all the $2$-morphisms and coheres from $\cC$ extend to $\underline{\cC}$ and $\overline{\cC}$, and we will usually not distinguish between the ones for $\cC$, or $\underline{\cC}$ and $\overline{\cC}$ to ease notation.

\begin{remark}\label{remark:rather-technical}
Even if $\cC$ is multifiab, its abelianizations need not be, but ${}^{\star}$ gives rise to an  antiequivalence between $\underline{\cC}$ and $\overline{\cC}$.
\end{remark}

Similarly, finitary birepresentations, which will be discussed in Subsection \ref{subsection:bireps}, admit
abelianizations.


\subsection{Finitary birepresentations}\label{subsection:bireps}


\begin{definition}\label{definition:fincatcat}
We let $\mathfrak{A}^{f}_{\Bbbk}$ denote the $2$-category of finitary categories,
$\Bbbk$-linear functors (recall that any $\Bbbk$-linear
functor between $\Bbbk$-linear, additive categories is
automatically additive) and natural transformations.
\end{definition}

Let $\cC$ be a finitary bicategory, defined over $\Bbbk$.

\begin{definition}\label{definition:fin-birepresentation}
A \emph{finitary (left) birepresentation of
$\cC$} is a (covariant) $\Bbbk$-linear pseudofunctor
$\mathbf{M}\colon\cC\to\mathfrak{A}^{f}_{\Bbbk}$.
\end{definition}

Concretely, a finitary birepresentation
$\mathbf{M}$ of $\cC$ associates
\begin{itemize}

\item a finitary category $\mathbf{M}(\mathtt{i})$, defined over $\Bbbk$,
to every $\mathtt{i}\in\cC$;

\item a $\Bbbk$-linear functor
$\mathbf{M}_{\mathtt{j}\mathtt{i}}\colon\cC(\mathtt{i},\mathtt{j})\to\mathfrak{A}^{f}_{\Bbbk}\big(\mathbf{M}(\mathtt{i}),\mathbf{M}(\mathtt{j})\big)$ to every pair $\mathtt{i},\mathtt{j}\in\cC$;

\item a natural isomorphism
\begin{gather*}
\begin{tikzcd}[ampersand replacement=\&]
\mathbf{M}(\mathtt{i})
\ar[rr,bend left=15,"\mathbf{M}_{\mathrm{i}\mathrm{i}}(\mathbbm{1}_{\mathtt{i}})" {name=U}]
\ar[rr,bend right=15,"\mathrm{Id}_{\mathbf{M}(\mathtt{i})}" {name=D},swap]
\ar[d,Rightarrow,shorten <= .1em,shorten >= .1em,from=U,to=D,"\iota_{\mathtt{i}}"]
\&[2em]\&
\mathbf{M}(\mathtt{i})
\end{tikzcd}
\end{gather*}
to every $\mathtt{i}\in\cC$;

\item a natural isomorphism
\begin{gather*}
\begin{tikzcd}[scale=0.5,ampersand replacement=\&]
\ar[d,"\mathbf{M}_{\mathtt{k}\mathtt{j}}\boxtimes\mathbf{M}_{\mathtt{j}\mathtt{i}}",swap]\cC(\mathtt{j},\mathtt{k})\boxtimes\cC(\mathtt{i},\mathtt{j})
\ar[r,"\circ_{\mathsf{h}}"]
\&
\cC(\mathtt{i},\mathtt{k})\ar[d,"\mathbf{M}_{\mathtt{k}\mathtt{i}}"]
\\
\mathfrak{A}^{f}_{\Bbbk}\big(\mathbf{M}(\mathtt{j}),\mathbf{M}(\mathtt{k})\big)\boxtimes
\mathfrak{A}^{f}_{\Bbbk}\big(\mathbf{M}(\mathtt{i}),\mathbf{M}(\mathtt{j})\big)
\ar[r,"\circ_{\mathsf{h}}",swap]
\ar[ur,Rightarrow,shorten >=0.5em,shorten <=0.5em,"\mu_{\mathtt{k}\mathtt{j}\mathtt{i}}"]
\&
\mathfrak{A}^{f}_{\Bbbk}\big(\mathbf{M}(\mathtt{i}),\mathbf{M}(\mathtt{k})\big)
\end{tikzcd}
\end{gather*}
to every triple $\mathtt{i},\mathtt{j},\mathtt{k}\in\cC$.
\end{itemize}

These data are such that the diagrams
\begin{gather}\label{eq:birepresentation2}
\begin{tikzcd}[scale=0.5,ampersand replacement=\&]
\ar[d,"\mathrm{id}_{\mathbf{M}_{\mathtt{j}\mathtt{i}}(\mathrm{F})}\circ_{\mathsf{h}}\iota_{\mathtt{i}}",swap]\mathbf{M}_{\mathtt{j}\mathtt{i}}
(\mathrm{F})\mathbf{M}_{\mathtt{i}\mathtt{i}}(\mathbbm{1}_{\mathtt{i}})
\ar[r,"\mu_{\mathtt{j}\mathtt{i}\mathtt{i}}^{\mathrm{F},\mathbbm{1}_{\mathtt{i}}}"]
\&
\mathbf{M}_{\mathtt{j}\mathtt{i}}(\mathrm{F}\mathbbm{1}_{\mathtt{i}})
\ar[d,"\mathbf{M}_{\mathtt{j}\mathtt{i}}(\runit_{\mathrm{F}})"]
\\
\mathbf{M}_{\mathtt{j}\mathtt{i}}(\mathrm{F})
\mathrm{Id}_{\mathbf{M}(\mathtt{i})}\ar[r,equal]
\&
\mathbf{M}_{\mathtt{j}\mathtt{i}}(\mathrm{F})
\end{tikzcd}
,
\begin{tikzcd}[ampersand replacement=\&]
\ar[d,"\iota_{\mathtt{j}}\circ_{\mathsf{h}}\mathrm{id}_{\mathbf{M}_{\mathtt{j}\mathtt{i}}(\mathrm{F})}",swap]
\mathbf{M}_{\mathtt{j}\mathtt{j}}(\mathbbm{1}_{\mathtt{j}})\mathbf{M}_{\mathtt{j}\mathtt{i}}(\mathrm{F})
\ar[r,"\mu_{\mathtt{j}\mathtt{j}\mathtt{i}}^{\mathbbm{1}_{\mathtt{j}},\mathrm{F}}"]
\& \mathbf{M}_{\mathtt{j}\mathtt{i}}(\mathbbm{1}_{\mathtt{j}}\mathrm{F})
\ar[d,"\mathbf{M}_{\mathtt{j}\mathtt{i}}(\lunit_{\mathrm{F}})"]
\\
\mathrm{Id}_{\mathbf{M}(\mathtt{j})}
\mathbf{M}_{\mathtt{j}\mathtt{i}}(\mathrm{F})\ar[r, equal]
\&
\mathbf{M}_{\mathtt{j}\mathtt{i}}(\mathrm{F})
\end{tikzcd},
\\
\label{eq:birepresentation3}
\begin{tikzcd}[ampersand replacement=\&]
\ar[d,"\mu_{\mathtt{l}\mathtt{k}\mathtt{j}}^{\mathrm{H},\mathrm{G}}\circ_{\mathsf{h}}
\mathrm{id}_{\mathbf{M}_{\mathtt{j}\mathtt{i}}(\mathrm{F})}",swap]
\big(\mathbf{M}_{\mathtt{l}\mathtt{k}}(\mathrm{H})\mathbf{M}_{\mathtt{k}\mathtt{j}}(\mathrm{G})\big)\mathbf{M}_{\mathtt{j}\mathtt{i}}(\mathrm{F})
\ar[r,equal]\& \mathbf{M}_{\mathtt{l}\mathtt{k}}(\mathrm{H})\big(
\mathbf{M}_{\mathtt{k}\mathtt{j}}(\mathrm{G})\mathbf{M}_{\mathtt{j}\mathtt{i}}(\mathrm{F})\big)
\ar[d,"\mathrm{id}_{\mathbf{M}_{\mathtt{l}\mathtt{k}}(\mathrm{H})}\circ_{\mathsf{h}}
\mu_{\mathtt{k}\mathtt{j}\mathtt{i}}^{\mathrm{G},\mathrm{F}}"]
\\
\ar[d,"\mu_{\mathtt{l}\mathtt{j}\mathtt{i}}^{\mathrm{H}\mathrm{G},\mathrm{F}}",swap]\mathbf{M}_{\mathtt{l}\mathtt{j}}(\mathrm{H}\mathrm{G})\mathbf{M}_{\mathtt{j}\mathtt{i}}(\mathrm{F}) \&
\mathbf{M}_{\mathtt{l}\mathtt{k}}(\mathrm{H})\mathbf{M}_{\mathtt{k}\mathtt{i}}(\mathrm{G}\mathrm{F})
\ar[d,"\mu_{\mathtt{l}\mathtt{k}\mathtt{i}}^{\mathrm{H},\mathrm{G}\mathrm{F}}"]
\\
\mathbf{M}_{\mathtt{l}\mathtt{i}}\big((\mathrm{H}\mathrm{G})\mathrm{F}\big)\ar[r,"\mathbf{M}_{\mathtt{l}\mathtt{i}}({\alpha}_{\mathrm{H},\mathrm{G},\mathrm{F}})",swap] \&
\mathbf{M}_{\mathtt{l}\mathtt{i}}\big(\mathrm{H}(\mathrm{G}\mathrm{F})\big)
\end{tikzcd}
\end{gather}
commute for all $\mathtt{i},\mathtt{j},\mathtt{k},\mathtt{l}\in\cC$ and
all $\mathrm{F}\in\cC(\mathtt{i},\mathtt{j}),\mathrm{G}\in
\cC(\mathtt{j},\mathtt{k}),\mathrm{H}\in\cC(\mathtt{k},\mathtt{l})$.

\begin{example}\label{example:fin-birepresentation}
If $\cC$ is a finitary $2$-category, then any finitary $2$-representation of $\cC$ is a finitary birepresentation (namely, one whose coherers are trivial).
\end{example}

\begin{example}\label{example:Yoneda-birep}
For any $\mathtt{i}\in\cC$, the \emph{principal birepresentation} (which is also called \emph{Yoneda birepresentation}) $\mathbf{P}_{\mathtt{i}}:=\cC(\mathtt{i},{}_{-})$ is a
finitary birepresentation of $\cC$. If $\cC$ is a finitary $2$-category, then the principal birepresentations are all finitary $2$-representations.
\end{example}

Let $\mathbf{M}$ and $\mathbf{N}$
be two finitary birepresentations of $\cC$.

\begin{definition}\label{definition:morphism-of-birepresentations}
A \emph{morphism of
finitary birepresentations}
$\Phi\colon\mathbf{M}\to\mathbf{N}$ is a
$\Bbbk$-linear strong transformation of pseudofunctors.
\end{definition}

Concretely, a morphism of birepresentations
$\Phi\colon\mathbf{M}\to\mathbf{N}$ associates
\begin{itemize}

\item a $\Bbbk$-linear functor $\Phi_{\mathtt{i}}\colon\mathbf{M}(\mathtt{i})\to\mathbf{N}(\mathtt{i})$ to each $\mathtt{i}\in\cC$;

\item a natural isomorphism
\begin{gather*}
\begin{tikzcd}[ampersand replacement=\&]
\cC(\mathtt{i},\mathtt{j})
\ar[d,"\mathbf{N}_{\mathtt{j}\mathtt{i}}",swap]
\ar[r,"\mathbf{M}_{\mathtt{j}\mathtt{i}}"]
\&
\mathfrak{A}^{f}_{\Bbbk}\big(\mathbf{M}(\mathtt{i}),\mathbf{M}(\mathtt{j})\big)
\ar[d,"\Phi_{\mathtt{j}}\circ_{\mathsf{h}}{}_{-}"]
\\
\mathfrak{A}^{f}_{\Bbbk}\big(\mathbf{N}(\mathtt{i}),\mathbf{N}(\mathtt{j})\big)
\ar[r,"{}_{-}\circ_{\mathsf{h}}\Phi_{\mathtt{i}}",swap]
\ar[ur, Rightarrow,shorten <= .5em,shorten >= .5em,"\phi_{\mathtt{j}\mathtt{i}}"]
\&
\mathfrak{A}^{f}_{\Bbbk}\big(\mathbf{M}(\mathtt{i}),\mathbf{N}(\mathtt{j})\big)
\end{tikzcd}
\end{gather*}
to every pair $\mathtt{i},\mathtt{j}\in\cC$.
\end{itemize}
These data are such that the diagrams
\begin{gather}\label{eq:morphism1}
\begin{tikzcd}[ampersand replacement=\&]
\ar[d,"\iota_{\mathtt{i}}^{\mathbf{N}}\mathrm{id}_{\Phi_{\mathtt{i}}}",swap]
\mathbf{N}_{\mathtt{i}\mathtt{i}}(\mathbbm{1}_{\mathtt{i}})\Phi_{\mathtt{i}}\ar[rr, "\phi_{\mathrm{i}\mathrm{i}}^{\mathbbm{1}_{\mathtt{i}}}"]
\& \&
\Phi_{\mathtt{i}}\mathbf{M}_{\mathtt{i}\mathtt{i}}(\mathbbm{1}_{\mathtt{i}})\ar[d,"\mathrm{id}_{\Phi_{\mathtt{i}}}\iota_{\mathtt{i}}^{\mathbf{M}}"]
\\
\mathrm{Id}_{\mathbf{N}(\mathtt{i})}\Phi_{\mathtt{i}}\ar[r, equal]
\&
\Phi_{\mathtt{i}}\ar[r,equal]
\&
\Phi_{\mathtt{i}}\mathrm{Id}_{\mathbf{M}(\mathtt{i})}
\end{tikzcd},
\\
\label{eq:morphism2}
\begin{tikzcd}[ampersand replacement=\&,column sep=3.8em]
\ar[d,"\nu_{\mathtt{k}\mathtt{j}\mathtt{i}}^{\mathrm{G},\mathrm{F}}\mathrm{id}_{\Phi_{\mathtt{i}}}",swap]\mathbf{N}_{\mathtt{k}\mathtt{j}}(\mathrm{G})
\mathbf{N}_{\mathtt{j}\mathtt{i}}(\mathrm{F})\Phi_{\mathtt{i}}
\ar[r,"\mathrm{id}_{\mathbf{N}_{\mathtt{k}\mathtt{j}}(\mathrm{G})}\phi_{\mathtt{j}\mathtt{i}}^{\mathrm{F}}"]
\&
\mathbf{N}_{\mathtt{k}\mathtt{j}}(\mathrm{G})\Phi_{\mathtt{j}}\mathbf{M}_{\mathtt{j}\mathtt{i}}(\mathrm{F})
\ar[r,"\phi_{\mathtt{k}\mathtt{j}}^{\mathrm{G}}\mathrm{id}_{\mathbf{M}_{\mathtt{j}\mathtt{i}}(\mathrm{F})}"]
\&
\Phi_{\mathtt{k}}\mathbf{M}_{\mathtt{k}\mathtt{j}}(\mathrm{G})
\mathbf{M}_{\mathtt{j}\mathtt{i}}(\mathrm{F})
\ar[d,"\mathrm{id}_{\Phi_{\mathtt{k}}}\mu_{\mathtt{k}\mathtt{j}\mathtt{i}}^{\mathrm{G},\mathrm{F}}"]
\\
\mathbf{N}_{\mathtt{k}\mathtt{i}}(\mathrm{G}\mathrm{F})\Phi_{\mathtt{i}}
\ar[rr,"\phi_{\mathtt{k}\mathtt{i}}^{\mathrm{G}\mathrm{F}}",swap]
\&\&
\Phi_{\mathtt{k}}\mathbf{M}_{\mathtt{k}\mathtt{i}}(\mathrm{G}\mathrm{F})
\end{tikzcd}
\end{gather}
commute
for all $\mathtt{i},\mathtt{j},\mathtt{k}\in\cC$ and
all $\mathrm{F}\in\cC(\mathtt{i},\mathtt{j}),\mathrm{G}\in
\cC(\mathtt{j},\mathtt{k})$.

Note that we will omit the symbol $\circ_{\mathsf{h}}$ for the horizontal composition of $2$-morphisms if it causes 
no confusion, e.g. \eqref{eq:morphism1}, \eqref{eq:morphism2} and so on.

Let $\Phi,\Psi\colon\mathbf{M}\to\mathbf{N}$ be two morphisms of birepresentations.

\begin{definition}\label{definition:modification}
A \emph{modification} $\sigma\colon\Phi\to\Psi$
is a modification between the strong $\Bbbk$-linear transformations.
\end{definition}

Concretely, a modification $\sigma\colon\Phi\to\Psi$ associates a natural transformation $\sigma_{\mathtt{i}}\colon\Phi_{\mathtt{i}}\to\Psi_{\mathtt{i}}$
to every $\mathtt{i}\in\cC$.
These data are such that the diagram
\begin{gather*}
\begin{tikzcd}[ampersand replacement=\&,column sep=4em]
\ar[d,"\phi_{\mathtt{j}\mathtt{i}}^{\mathrm{F}}",swap]\mathbf{N}_{\mathtt{j}\mathtt{i}}(\mathrm{F})\Phi_{\mathtt{i}}\ar[r,"\mathrm{id}_{\mathbf{N}_{\mathtt{j}\mathtt{i}}(\mathrm{F})}\sigma_{\mathtt{i}}"]
\& \mathbf{N}_{\mathtt{j}\mathtt{i}}(\mathrm{F})\Psi_{\mathtt{i}}
\ar[d,"\psi_{\mathtt{j}\mathtt{i}}^{\mathrm{F}}"]
\\
\Phi_{\mathtt{j}}\mathbf{M}_{\mathtt{j}\mathtt{i}}(\mathrm{F})\ar[r,"\sigma_{\mathtt{j}}
\mathrm{id}_{\mathbf{M}_{\mathtt{j}\mathtt{i}}(\mathrm{F})}",swap]
\& \Psi_{\mathtt{j}}\mathbf{M}_{\mathtt{j}\mathtt{i}}(\mathrm{F})
\end{tikzcd}
\end{gather*}
commutes for all $\mathtt{i},\mathtt{j}\in\cC$ and
all $\mathrm{F}\in\cC(\mathtt{i},\mathtt{j})$.

We say that two finitary birepresentations
$\mathbf{M},\mathbf{N}$ of $\cC$ are \emph{equivalent} if
there are morphisms of birepresentations
$\Phi\colon\mathbf{M}\to\mathbf{N}$ and $\Psi\colon\mathbf{N}\to\mathbf{M}$
and invertible modifications $\Psi\Phi\xrightarrow{\cong}\mathrm{Id}_{\mathbf{M}}$ and
$\Phi\Psi\xrightarrow{\cong}\mathrm{Id}_{\mathbf{N}}$.

\begin{definition}\label{definition:fincatcat-add}
As in Example \ref{example:functorbicat}, the fact that
$\mathfrak{A}^{f}_{\Bbbk}$ is a $\Bbbk$-linear additive $2$-category implies that
\begin{gather*}
\cC\text{-}\mathrm{afmod}:=
[\cC,\mathfrak{A}^{f}_{\Bbbk}]
\end{gather*}
is a $\Bbbk$-linear additive $2$-category, called the \emph{$2$-category of finitary birepresentations of $\cC$}.
\end{definition}

Recall the following terminology, where $\mathrm{add}$
denotes the \emph{additive closure}, meaning the closure under taking finite direct sums and summands.

\begin{definition}\label{definition:generator}
Let $\cC$ be a multifinitary bicategory and $\mathbf{M}$ a finitary birepresentation of $\cC$. Then
\begin{enumerate}[$($i$)$]

\item\label{definition:generator-1}
the birepresentation $\mathbf{M}$ is \emph{generated by} $X\in\mathbf{M}(\mathtt{i})$,
for some $\mathtt{i}\in\cC$, if the embedding
\begin{gather*}
\mathrm{add}\big\{\mathbf{M}_{\mathtt{j}\mathtt{i}}(\mathrm{F})X\mid\mathrm{F}\in\cC(\mathtt{i},\mathtt{j})\big\}
\hookrightarrow
\mathbf{M}(\mathtt{j})
\end{gather*}
is an equivalence for all $\mathtt{j}\in\cC$;

\item\label{definition:generator-2}
the birepresentation $\mathbf{M}$ is \emph{cyclic} if it is generated by some
$X\in\mathbf{M}(\mathtt{i})$ for some $\mathtt{i}\in\cC$;

\item\label{definition:generator-3}
the birepresentation $\mathbf{M}$ is \emph{transitive} if it is non-zero and is generated by any non-zero $X\in\mathbf{M}(\mathtt{i})$ for any $\mathtt{i}\in\cC$.
\end{enumerate}
\end{definition}

By definition, a \emph{$\cC$-stable ideal} $\mathbf{I}$ of a finitary birepresentation
$\mathbf{M}$ of $\cC$ is the assignment of an ideal
$\mathbf{I}(\mathtt{i})\subseteq\mathbf{M}(\mathtt{i})$ to each $\mathtt{i}\in\cC$, such that
$\mathbf{M}_{\mathtt{j}\mathtt{i}}(\mathrm{F})\big(\mathbf{I}(\mathtt{i})\big)
\subseteq\mathbf{I}(\mathtt{j})$, for all $\mathrm{F}\in\cC(\mathtt{i},\mathtt{j})$.
We say that $\mathbf{I}$ is \emph{proper} if $\{0\}\subsetneq\mathbf{I}(\mathtt{i})\subsetneq
\mathbf{M}(\mathtt{i})$ for some
$\mathtt{i}\in\cC$.

\begin{definition}\label{definition:simpletransitive}
A finitary birepresentation $\mathbf{M}$ is said to be \emph{simple transitive} if it has no proper $\cC$-stable ideals.
\end{definition}

It follows immediately from Definition \ref{definition:simpletransitive} that any simple transitive birepresentation is transitive. The converse is false in general, but every transitive birepresentation $\mathbf{M}$ of $\cC$ has a unique simple transitive quotient, by
the straightforward generalization of \cite[Lemma 4]{MM5} to bicategories.

\begin{definition}\label{def:cfmodstmod}
We use the following $1,2$-full $2$-subcategories of $\cC\text{-}\mathrm{afmod}$ (where $1,2$-full means $1$-full and $2$-full, by definition):
\begin{enumerate}[$($i$)$]

\item $\cC\text{-}\mathrm{cfmod}$ denotes the one consisting of all cyclic (finitary) birepresentations;

\item $\cC\text{-}\mathrm{tfmod}$ denotes the one consisting of all transitive (finitary) birepresentations;

\item $\cC\text{-}\mathrm{stmod}$ denotes the one consisting of all simple transitive (finitary) birepresentations.
\end{enumerate}
\end{definition}

Note that we have $\cC\text{-}\mathrm{stmod}\subseteq\cC\text{-}\mathrm{tfmod}\subseteq\cC\text{-}\mathrm{cfmod}\subseteq\cC\text{-}\mathrm{afmod}$.

Recall that we want to generalize some of the results in e.g. \cite{MMMT} and \cite{MMMZ}
to the weak setup.
Fortunately, most previous results carry over to this more general framework, due to two strictification theorems:
\begin{itemize}

\item every multifinitary bicategory $\cC$ is biequivalent to a multifinitary $2$-category, by the classical strictification results in this setting (see e.g. \cite[Section 1.4]{GPS} or \cite[Section 2.3]{Lei});

\item if $\cC$ is a multifinitary $2$-category, then its $2$-category of finitary birepresentations is biequivalent to its $2$-category of finitary $2$-representations, by \cite[Section 4.2]{Pow}.

\end{itemize}
A particular example of results that carry
over verbatim, and that we will need later, is the following \emph{weak Jordan--H{{\"o}}lder theorem},
cf. \cite[Section 4]{MM5}.

\begin{theorem}\label{thm:jh}
Let $\cC$ be a multifinitary bicategory.
For any finitary birepresentation $\mathbf{M}$ of $\cC$,
there is a finite filtration by subbirepresentations of $\cC$
\begin{gather*}
0=\mathbf{M}_{0}\subsetneq\mathbf{M}_{1}
\subsetneq\dots\subsetneq
\mathbf{M}_{m}=\mathbf{M},
\end{gather*}
where every $\mathbf{M}_{k}$ generates a $\cC$-stable ideal $\mathbf{I}_{k}$ in $\mathbf{M}_{k+1}$, such that
$\mathbf{M}_{k+1}/\mathbf{I}_{k}$ is transitive
and has a unique associated simple transitive quotient $\mathbf{L}_{k+1}$. Up to equivalence and ordering, the set $\{\mathbf{L}_{k}\mid 1\leq k\leq m\}$ is an invariant of $\mathbf{M}$.
\end{theorem}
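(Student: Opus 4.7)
The plan is to leverage the two strictification theorems recalled in the paragraph preceding the statement: any multifinitary bicategory $\cC$ is biequivalent to a multifinitary $2$-category $\cC'$, and for such a $\cC'$ the $2$-category $\cC'\text{-}\mathrm{afmod}$ of finitary birepresentations is biequivalent to the $2$-category of finitary $2$-representations of $\cC'$. Composing these two reductions, the statement of the theorem for a birepresentation $\mathbf{M}$ of $\cC$ will be equivalent to the corresponding statement for a finitary $2$-representation $\mathbf{M}'$ of the finitary $2$-category $\cC'$, where it is the classical weak Jordan--H\"older theorem \cite[Section 4]{MM5}.

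The first step is to verify that the notions appearing in the statement---sub-birepresentation, $\cC$-stable ideal, transitivity and simple transitivity---are preserved and reflected by these biequivalences. A biequivalence $\cC\to\cC'$ of multifinitary bicategories induces a biequivalence of the associated $2$-categories of finitary birepresentations by pre-composition of pseudofunctors, and such a pre-composition takes $\cC$-stable ideals to $\cC'$-stable ideals bijectively (since the underlying finitary categories $\mathbf{M}(\mathtt{i})$ are unchanged and the action only changes by an equivalence up to natural isomorphism). Similarly, Power's biequivalence \cite[Section 4.2]{Pow} is given by strictification along a coherence equivalence on each $\mathbf{M}(\mathtt{i})$, so sub-birepresentations, ideals, cyclicity, transitivity and simple transitivity all transfer unambiguously. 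In particular, $\mathbf{M}$ admits a finite filtration with transitive quotients whose simple transitive covers are $\{\mathbf{L}_k\}$ if and only if $\mathbf{M}'$ does, and equivalence of the lists $\{\mathbf{L}_k\}$ is equivalent to equivalence of the corresponding lists for $\mathbf{M}'$.

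The second step is then just to quote \cite[Section 4]{MM5}: for the finitary $2$-category $\cC'$ and the finitary $2$-representation $\mathbf{M}'$, one builds the filtration inductively by choosing, at each stage where the current quotient $\mathbf{M}'/\mathbf{I}_k$ fails to be transitive, a nonzero object $X$ in some $\mathbf{M}'/\mathbf{I}_k(\mathtt{i})$ that does not generate, and letting $\mathbf{M}'_{k+1}$ be the sub-$2$-representation it generates; the unique simple transitive quotient $\mathbf{L}_{k+1}$ of the transitive piece $\mathbf{M}'_{k+1}/\mathbf{I}_k$ exists by the already-quoted generalization of \cite[Lemma 4]{MM5} to bicategories. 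Finite-dimensionality of hom-spaces and finitely many isomorphism classes of indecomposables ensure termination. Uniqueness of the multiset $\{\mathbf{L}_k\}$ up to equivalence is proved by the standard refinement argument, comparing two filtrations via their pairwise intersections and using that a transitive birepresentation has a unique simple transitive quotient.

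The main obstacle, and the only thing that actually requires care, is the bookkeeping in step one: showing that \emph{proper} $\cC$-stable ideals correspond to \emph{proper} $\cC'$-stable ideals under strictification, which amounts to checking that the strictification functor is essentially surjective and faithful on the finitary categories $\mathbf{M}(\mathtt{i})$, so that $\{0\}\subsetneq\mathbf{I}(\mathtt{i})\subsetneq\mathbf{M}(\mathtt{i})$ is preserved. Once this is established, existence, uniqueness and the invariance statement all follow directly from the $2$-categorical case without any further diagrammatic work.
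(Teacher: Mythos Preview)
Your proposal is correct and matches the paper's approach: the paper does not give an explicit proof of this theorem but instead states it as one of the results that ``carry over verbatim'' from \cite[Section 4]{MM5} via the two strictification theorems recalled immediately before the statement, which is precisely the reduction you carry out. Your additional remarks on why subbirepresentations, $\cC$-stable ideals, transitivity and simple transitivity are preserved under these biequivalences simply make explicit what the paper leaves implicit.
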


For any finitary birepresentation $\mathbf{M}$,
we call those simple transitive birepresentations $\mathbf{L}_{k}$,
where $1\leq k\leq m$, defined as in Theorem \ref{thm:jh} the \emph{weak Jordan--H{\"o}lder constituents} of $\mathbf{M}$.

We also briefly need the following.

\begin{definition}\label{definition:fincatcat-abel}
We let $\mathfrak{R}_{\Bbbk}$ denote the $2$-category of abelian finitary
categories, i.e. objects are $\Bbbk$-linear additive categories which
are equivalent to categories of finitely generated modules over finite dimensional associative $\Bbbk$-algebras, $1$-morphisms are
$\Bbbk$-linear functors and $2$-morphisms are natural transformations.

An \emph{abelian finitary (left) birepresentation} of
$\cC$ is a $\Bbbk$-linear pseudofunctor
$\mathbf{M}\colon\cC\to\mathfrak{R}_{\Bbbk}$.

A finitary birepresentation $\mathbf{M}$ can be extended to an abelian birepresentation $\underline{\mathbf{M}}$, so that $\underline{\mathbf{M}}(\mathtt{i})
:=\underline{\mathbf{M}(\mathtt{i})}$ for any $\mathtt{i}\in\cC$. We can similarly abelianize $\mathbf{M}$ projectively to obtain $\overline{\mathbf{M}}$ with $\overline{\mathbf{M}}(\mathtt{i})
:=\overline{\mathbf{M}(\mathtt{i})}$ for any $\mathtt{i}\in\cC$.
\end{definition}

The notions we have seen above for additive finitary birepresentations carry over verbatim to abelian finitary birepresentations and we leave it to the reader to write out the details.

\begin{remark}\label{remark:fin-birepresentation-abel}
We only need abelian birepresentations very rarely in this paper and refer to \cite{MM2} for a comparison of finitary and abelian birepresentations.
\end{remark}


\subsection{The additive closure of a bicategory}\label{subsection:add-closure}


Let $\mathcal{C}_{j}$ be additive categories for $j=1,\dots,n$. We define $\bigoplus_{j=1}^n\mathcal{C}_{j}$ to be the additive category whose objects are formal direct sums $X_{1}\oplus\dots\oplus X_{s}$, where $X_{q}\in\mathcal{C}_j$ for some $j=\jmath(X_{q})\in\{1,\dots,n\}$.
Morphisms in $\mathrm{Hom}_{\bigoplus_{j=1}^n\mathcal{C}_j}(X_{1}\oplus\dots\oplus X_{s},Y_{1}\oplus\dots\oplus Y_{t})$ are matrices of morphisms

\begin{gather*}
(f_{pq})_{p=1,\dots,t;q=1,\dots,s},
\text{ where }
f_{pq}\in
\begin{cases}
\mathrm{Hom}_{\mathcal{C}_{j}}(X_{q},Y_{p})
& \text{if }\jmath(X_{q})=\jmath(Y_{p})=j,
\\
\{0\} & \text{otherwise}.
\end{cases}
\end{gather*}
Composition is given by matrix multiplication. The additive structure is given by concatenation, i.e. $(X_{1}\oplus\dots\oplus X_{s})\oplus(Y_{1}\oplus\dots\oplus Y_{t}):=X_{1}\oplus\dots\oplus X_{s}\oplus Y_{1}\oplus\dots\oplus Y_{t}$.

Let $\cC$ now be a finitary bicategory. Define $\cC^{\,\oplus}$ as follows.
It has one object $\bullet$ and $\cC^{\,\oplus}(\bullet,\bullet)=\bigoplus_{\mathtt{j},\mathtt{k}\in\ccC}\cC(\mathtt{j},\mathtt{k})$, as defined above. If
$\mathrm{F}_{q}\in\cC(\mathtt{j},\mathtt{k})$ for some $\mathtt{j},\mathtt{k}\in\cC$, we set $\mathtt{i}_{s(\mathrm{F}_q)}=\mathtt{j}$ and $\mathtt{i}_{t(\mathrm{F}_{q})}=\mathtt{k}$ for source and target, respectively.

Composition of $1$-morphisms is given by
\begin{gather*}
(\mathrm{F}_{1}\oplus\dots\oplus\mathrm{F}_{s})(\mathrm{G}_{1}\oplus\dots\oplus\mathrm{G}_{t})
\\
:=\mathrm{F}_{1}\mathrm{G}_{1}\oplus\dots\oplus\mathrm{F}_{1}\mathrm{G}_{t}\oplus\mathrm{F}_{2}\mathrm{G}_{1}\oplus\dots\oplus\mathrm{F}_{s}\mathrm{G}_{1}\oplus\dots\mathrm{F}_{s}\mathrm{G}_{t},
\end{gather*}
where we omit components that are not defined, which we interpret as being zero.
Vertical composition of $2$-morphisms is defined componentwise.

Given a matrix $f$ of morphisms in $\mathrm{Hom}_{\ccC^{\oplus}(\bullet,\bullet)}(\mathrm{F}_{1}\oplus\dots\oplus\mathrm{F}_{s},\mathrm{F}_{1}^{\prime}\oplus\dots\oplus\mathrm{F}_{s^{\prime}}^{\prime})$ and $g$ in $\mathrm{Hom}_{\ccC^{\oplus}(\bullet,\bullet)}(\mathrm{G}_{1}\oplus\dots\oplus\mathrm{G}_{t},\mathrm{G}_{1}^{\prime}\oplus\dots\oplus\mathrm{G}_{t^{\prime}}^{\prime})$, their horizontal composition $f\circ_{\mathsf{h}}g$ is a matrix whose
$(p^{\prime}q^{\prime},pq)$-component is given by
$f_{p^{\prime}p}\circ_{\mathsf{h}}g_{q^{\prime}q}$,
whenever this makes sense, and $0$ otherwise.

Taking into account that
\begin{gather*}
\big((\mathrm{F}_{1}\oplus\dots\oplus\mathrm{F}_{s})(\mathrm{G}_{1}\oplus\dots\oplus\mathrm{G}_{t})\big)(\mathrm{H}_{1}\oplus\dots\oplus\mathrm{H}_{u})
=\bigoplus_{p,q,r}(\mathrm{F}_{p}\mathrm{G}_{q})\mathrm{H}_{r},
\\
(\mathrm{F}_{1}\oplus\dots\oplus\mathrm{F}_{s})\big((\mathrm{G}_{1}\oplus\dots\oplus\mathrm{G}_{t})
(\mathrm{H}_{1}\oplus\dots\oplus\mathrm{H}_{u})\big)=\bigoplus_{p,q,r}\mathrm{F}_{p}(\mathrm{G}_{q}\mathrm{H}_{r}),
\end{gather*}
both with the ordering on the summands given by the reverse lexicographic ordering on the indices, the associator is given by the diagonal matrix of the respective associators.

To define the identity $1$-morphism in $\cC^{\,\oplus}$, fix an ordering $\mathtt{i}_1<\dots<\mathtt{i}_{m}$ on the objects of $\cC$. The identity $1$-morphism is then given by $\mathbbm{1}_{\mathtt{i}_{1}}\oplus\dots\oplus\mathbbm{1}_{\mathtt{i}_{m}}$. Note that reordering produces an isomorphic $1$-morphism. The right unitor is given by the component unitors in $\cC$. Indeed,
$(\mathrm{F}_{1}\oplus\dots\oplus\mathrm{F}_{s})(\mathbbm{1}_{\mathtt{i}_{1}}\oplus\dots\oplus\mathbbm{1}_{\mathtt{i}_{m}})$ will only have $s$ direct summands of the form
$\mathrm{F}_{p}\mathbbm{1}_{\mathtt{i}_{s(\mathrm{F}_{p})}}$, so the right unitor will be the diagonal $s\times s$-matrix of the corresponding unitors in $\cC$.
Similarly, the left unitor will be a permutation matrix with the unitors from $\cC$ as entries, since $(\mathbbm{1}_{\mathtt{i}_1}\oplus\dots\oplus\mathbbm{1}_{\mathtt{i}_{m}})(\mathrm{F}_{1}\oplus\dots\oplus\mathrm{F}_{s})$ has summands $\mathbbm{1}_{\mathtt{i}_{t(\mathrm{F}_{p})}}\mathrm{F}_{p}$, but ordered according to the ordering on the $\mathtt{i}_{t(\mathrm{F}_{p})}$.

\begin{lemma}\label{lemma:its-monoidal}
If $\cC$ is a finitary bicategory, then $\cC^{\,\oplus}$ is a multifinitary bicategory.
\end{lemma}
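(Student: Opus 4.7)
The plan is to verify two things: first, that $\cC^{\,\oplus}$ really is a bicategory (pentagon and triangle axioms), and second, that it satisfies the axioms of multifinitary, namely finitely many objects, finitariness of each $\cC^{\,\oplus}(\bullet,\bullet)$, and $\Bbbk$-bilinearity of $\circ_{\mathsf{h}}$.

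For the bicategory axioms, I would argue as follows. Since the associator in $\cC^{\,\oplus}$ was defined as the diagonal matrix of associators in $\cC$, the pentagon axiom for a quadruple $(\mathrm{F}_1\oplus\dots\oplus\mathrm{F}_s,\mathrm{G}_1\oplus\dots\oplus\mathrm{G}_t,\mathrm{H}_1\oplus\dots\oplus\mathrm{H}_u,\mathrm{K}_1\oplus\dots\oplus\mathrm{K}_v)$ reduces, component by component, to the pentagon for $(\mathrm{F}_p,\mathrm{G}_q,\mathrm{H}_r,\mathrm{K}_w)$ in $\cC$, once one checks that the reverse lexicographic ordering used on triple indices is consistent with the one used on quadruple indices via each of the two bracketings. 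This amounts to the trivial observation that the orderings on $\{p\}\times\{q\}\times\{r\}\times\{w\}$ obtained by grouping the first three coordinates or the last three coordinates coincide. For the triangle axioms one must be more careful because the left unitor is not diagonal but a permutation matrix: its nontriviality reshuffles the summands of $(\mathbbm{1}_{\mathtt{i}_1}\oplus\dots\oplus\mathbbm{1}_{\mathtt{i}_m})(\mathrm{F}_1\oplus\dots\oplus\mathrm{F}_s)$ from the reverse lexicographic ordering in $(r,p)$ to the ordering on the $\mathrm{F}_p$ themselves. Once this permutation is matched up with the permutation implicit in the associator component connecting $(\mathbbm{1}\oplus\cdots)(\mathrm{F}\oplus\cdots\mathrm{G}\oplus\cdots)$ with its reassociated version, the triangle axiom again reduces component by component to its counterpart in $\cC$ via the identities \eqref{eq:0.00}.

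For the multifinitary axioms: $\cC^{\,\oplus}$ has a single object $\bullet$, so certainly finitely many. The category $\cC^{\,\oplus}(\bullet,\bullet)=\bigoplus_{\mathtt{j},\mathtt{k}\in\ccC}\cC(\mathtt{j},\mathtt{k})$ is $\Bbbk$-linear and additive by the very definition of the formal direct sum; it is idempotent split because any idempotent endomorphism of $\mathrm{F}_1\oplus\dots\oplus\mathrm{F}_s$ decomposes, via the block structure according to $\bigl(\jmath(\mathtt{i}_{s(\mathrm{F}_q)}),\jmath(\mathtt{i}_{t(\mathrm{F}_q)})\bigr)$, into idempotents in the individual $\cC(\mathtt{j},\mathtt{k})$, which split there. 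The indecomposable objects of $\cC^{\,\oplus}(\bullet,\bullet)$ are precisely the indecomposable $1$-morphisms of $\cC$, of which there are finitely many since $\cC$ has finitely many objects and each $\cC(\mathtt{j},\mathtt{k})$ is finitary. Morphism spaces are finite dimensional as finite direct sums of the finite dimensional morphism spaces of the $\cC(\mathtt{j},\mathtt{k})$. Finally, the $\Bbbk$-bilinearity of $\circ_{\mathsf{h}}$ is immediate from the componentwise formula $(f\circ_{\mathsf{h}} g)_{p'q',pq}=f_{p'p}\circ_{\mathsf{h}} g_{q'q}$ and the $\Bbbk$-bilinearity of $\circ_{\mathsf{h}}$ in $\cC$.

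The main obstacle in a fully written proof would be the triangle axioms (and, to a lesser extent, the pentagon), because of the interplay between the permutation component of the left unitor and the reordering baked into composition. Rather than writing out the matrix permutations explicitly, I would handle this by choosing, for any $(\mathrm{F}_1\oplus\dots\oplus\mathrm{F}_s)$, a canonical isomorphism from $(\mathbbm{1}_{\mathtt{i}_1}\oplus\dots\oplus\mathbbm{1}_{\mathtt{i}_m})(\mathrm{F}_1\oplus\dots\oplus\mathrm{F}_s)$ to $\bigoplus_p \mathbbm{1}_{\mathtt{i}_{t(\mathrm{F}_p)}}\mathrm{F}_p$ given by the permutation, and verifying that with this convention the coherence identities \eqref{eq:0.00} in each $\cC(\mathtt{j},\mathtt{k})$ imply the required identities in $\cC^{\,\oplus}$ after projecting to each $(p,q,\dots)$-component. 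Note that as $\cC^{\,\oplus}$ is defined here with a single object, the condition in Definition~\ref{definition:finbicat} that the identity $1$-morphism on each object be indecomposable is typically not met, which is why the conclusion is only multifinitary.
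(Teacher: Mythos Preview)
Your proposal is correct and follows the same approach as the paper's proof, which is extremely terse: it simply states that the pentagon and unitor compatibility follow from the same axioms in $\cC$, that the finitariness properties of $\cC^{\,\oplus}(\bullet,\bullet)$ are inherited from those of $\cC$, and observes that $\cC^{\,\oplus}$ is multifinitary rather than finitary because the identity $1$-morphism is decomposable. Your write-up supplies the details the paper omits; one small notational slip is that $\jmath$ takes a $1$-morphism (not an object of $\cC$) as input, so you mean the block structure according to the pair $(\mathtt{i}_{s(\mathrm{F}_q)},\mathtt{i}_{t(\mathrm{F}_q)})$ directly.
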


\begin{proof}
The pentagon axiom and the compatibility of left and right unitors follow immediately from the same axioms for $\cC$. The stated properties of $\cC^{\,\oplus}(\bullet,\bullet)$ are inherited from the same properties for $\cC$.
Observe that $\cC^{\,\oplus}$ is not finitary since the identity $1$-morphism is not indecomposable, but $\cC^{\,\oplus}$ is multifinitary.
\end{proof}

The following lemma is immediate.

\begin{lemma}\label{lemma:multifinitary}
If $\cC$ is (quasi) fiab, then $\cC^{\,\oplus}$ is (quasi) multifiab.
\end{lemma}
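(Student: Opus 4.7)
The plan is to extend the involution ${}^{\star}$ and the adjunction data from $\cC$ to $\cC^{\,\oplus}$ componentwise, and then use the fact that all structural $2$-morphisms of $\cC^{\,\oplus}$ (horizontal compositions, associators, unitors) are diagonal/matrix-valued combinations of their counterparts in $\cC$, so that each coherence diagram in $\cC^{\,\oplus}$ reduces entrywise to the analogous diagram in $\cC$.

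Concretely, I would first define a pseudofunctor ${}^{\star}\colon\cC^{\,\oplus}\to(\cC^{\,\oplus})^{\,\mathrm{co,op}}$ on objects by $\bullet\mapsto\bullet$ (which is forced, making it automatically object-preserving), on $1$-morphisms by
\begin{gather*}
(\mathrm{F}_1\oplus\dots\oplus\mathrm{F}_s)^{\star}:=\mathrm{F}_1^{\star}\oplus\dots\oplus\mathrm{F}_s^{\star},
\end{gather*}
and on matrices of $2$-morphisms by applying ${}^{\star}$ entrywise. Since ${}^{\star}$ on $\cC$ reverses source and target, the matrix $f^{\star}$ has its transposed shape, and $\Bbbk$-linearity of ${}^{\star}$ on $\cC$ immediately gives $\Bbbk$-linearity on $\cC^{\,\oplus}$. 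The coherers of this pseudofunctor are obtained by assembling the coherers of ${}^{\star}$ on $\cC$ into matrices summand by summand; the biequivalence property descends from $\cC$ because a quasi-inverse and the invertible modifications can likewise be assembled componentwise.

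Next, for the adjunction data, given $\mathrm{F}=\mathrm{F}_1\oplus\dots\oplus\mathrm{F}_s$ with each $\mathrm{F}_q\in\cC(\mathtt{i}_{s(\mathrm{F}_q)},\mathtt{i}_{t(\mathrm{F}_q)})$, the $1$-morphism $\mathrm{F}\mathrm{F}^{\star}=\bigoplus_{p,q}\mathrm{F}_p\mathrm{F}_q^{\star}$ has only those summands defined for which $\mathtt{i}_{s(\mathrm{F}_p)}=\mathtt{i}_{s(\mathrm{F}_q)}$, and the diagonal summands $\mathrm{F}_p\mathrm{F}_p^{\star}$ are endomorphisms of $\mathtt{i}_{t(\mathrm{F}_p)}$. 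I would define $\mathrm{ev}_{\mathrm{F}}$ as the matrix whose $(p,p)$-entry is $\mathrm{ev}_{\mathrm{F}_p}\colon\mathrm{F}_p\mathrm{F}_p^{\star}\to\mathbbm{1}_{\mathtt{i}_{t(\mathrm{F}_p)}}$, landing in the corresponding summand of $\mathbbm{1}_{\bullet}=\mathbbm{1}_{\mathtt{i}_1}\oplus\dots\oplus\mathbbm{1}_{\mathtt{i}_m}$, with all other entries zero; and dually $\mathrm{coev}_{\mathrm{F}}$ as the diagonal matrix of the $\mathrm{coev}_{\mathrm{F}_p}$'s.

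Then I would verify the two snake-identity diagrams in Definition~\ref{definition:fiab} for $\cC^{\,\oplus}$. Since horizontal composition in $\cC^{\,\oplus}$ is defined componentwise, since the associator $\alpha^{\cC^{\,\oplus}}$ is the diagonal matrix of the $\cC$-associators on the summands $(\mathrm{F}_p\mathrm{F}_q^{\star})\mathrm{F}_r\mapsto\mathrm{F}_p(\mathrm{F}_q^{\star}\mathrm{F}_r)$, and since the unitors are also (permutation-)matrix assembled from those of $\cC$, the big snake diagram becomes a matrix whose nonzero entries are exactly copies of the snake diagram for the individual $\mathrm{F}_p$ in $\cC$. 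These commute by the fiab structure of $\cC$, and entries outside the ``diagonal'' pattern vanish for trivial source/target reasons, so the whole diagram commutes. The second snake identity is analogous.

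Finally, for the fiab case, given an equivalence $\eta\colon\mathrm{Id}_{\ccC}\Rightarrow({}^{\star})^{2}$ in $[\cC,\cC]$, I assemble its components $\eta_{\mathrm{F}_q}$ into a diagonal matrix $\eta^{\,\oplus}_{\mathrm{F}_1\oplus\dots\oplus\mathrm{F}_s}$, which defines a strong transformation $\mathrm{Id}_{\ccC^{\,\oplus}}\Rightarrow({}^{\star})^{2}$ whose naturality square and compatibility with the coherers of ${}^{\star}$ decompose entrywise into those for $\eta$ in $\cC$; invertibility is automatic. Hence $\cC^{\,\oplus}$ is multifiab whenever $\cC$ is fiab. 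The main (notational, not conceptual) obstacle is keeping track of the several indexings — the source/target functions $\jmath\circ s$ and $\jmath\circ t$ on summands, and the reverse lexicographic ordering used in the triple compositions — when writing out the matrices for $\mathrm{ev}$, $\mathrm{coev}$, $\alpha$ and the unitors; but once these bookkeeping conventions are fixed, each relation follows directly from the corresponding relation in $\cC$.
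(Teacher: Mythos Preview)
Your proposal is correct and is exactly the componentwise argument the paper has in mind; the paper simply declares the lemma ``immediate'' without spelling out any details, so your sketch is a more explicit version of the same approach.
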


We now explain how to go back and forth between birepresentations of $\cC$ and $\cC^{\,\oplus}$.
Given a finitary birepresentation $\mathbf{M}$ of $\cC$, we can define a birepresentation $\mathbf{M}^{\oplus}$ by
\begin{itemize}

\item[--] $\mathbf{M}^{\oplus}(\bullet)=\bigoplus_{\mathtt{i}\in\ccC}\mathbf{M}(\mathtt{i})$;

\item[--] $\mathbf{M}^{\oplus}(\mathrm{F}_{1}\oplus\dots\oplus\mathrm{F}_{s})=\mathbf{M}(\mathrm{F}_{1})\oplus\dots\oplus\mathbf{M}(\mathrm{F}_{s})$ for a $1$-morphism $\mathrm{F}_{1}\oplus\dots\oplus\mathrm{F}_{s}$;

\item[--] $\mathbf{M}^{\oplus}\big((\beta_{pq})_{p=1,\dots,t;q=1,\dots,s}\big)=\big(\mathbf{M}(\beta)_{pq}\big)_{p=1,\dots,t;q=1,\dots,s}$ for a $2$-morphism
\begin{gather*}
\beta=(\beta_{pq})_{p=1,\dots,t;q=1,\dots,s}\colon\mathrm{F}_{1}\oplus\dots\oplus\mathrm{F}_{s}\to\mathrm{G}_{1}\oplus\dots\oplus\mathrm{G}_{t}.
\end{gather*}

\end{itemize}
Here we interpret the actions of direct sums of functors and their natural transformations on our chosen biproduct of additive categories in the evident way.

Conversely, given a birepresentation $\mathbf{N}$ of $\cC^{\,\oplus}$, we can associate a birepresentation $\mathbf{N}^{\prime}$ of $\cC$ by noting that projection onto $\mathbbm{1}_{\mathtt{i}}$ as a direct summand of the identity $1$-morphism in $\cC^{\,\oplus}$ defines an endomorphism of the identity functor on $\mathbf{N}(\bullet)$, and we thus have a  decomposition $\mathbf{N}(\bullet)=\bigoplus_{\mathtt{i}\in\ccC}\mathbf{N}(\bullet)_{\mathtt{i}}$. We can then define:

\begin{itemize}

\item[--] $\mathbf{N}^{\prime}(\mathtt{i})=\mathbf{N}(\bullet)_{\mathtt{i}}$
for any object $\mathtt{i}\in\cC$;

\item[--] $\mathbf{N}^{\prime}(\mathrm{F})=\mathbf{N}(\mathrm{F})$ for any $1$-morphism $\mathrm{F}$ in $\cC(\mathtt{i},\mathtt{j})$, where $\mathtt{i},\mathtt{j}\in\cC$;

\item[--] $\mathbf{N}^{\prime}(\beta)=\mathbf{N}(\beta)$ for any $2$-morphism $\beta\colon\mathrm{F}\to\mathrm{G}$, where $\mathrm{F},\mathrm{G}\in\cC(\mathtt{i},\mathtt{j})$ and $\mathtt{i},\mathtt{j}\in\cC$.
\end{itemize}

It is immediate that $(\mathbf{M}^{\,\oplus})^{\prime}$ is equivalent to $\mathbf{M}$ and $(\mathbf{N}^{\prime})^{\oplus}$ is equivalent to $\mathbf{N}$, which proves the following proposition.

\begin{proposition}\label{proposition:bieq-additive}
There is a biequivalence of $2$-categories $\cC^{\,\oplus}\text{-}\mathrm{afmod}\simeq\cC\text{-}\mathrm{afmod}$.
\end{proposition}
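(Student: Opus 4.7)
The plan is to upgrade the object-level correspondences $\mathbf{M}\mapsto\mathbf{M}^{\oplus}$ and $\mathbf{N}\mapsto\mathbf{N}^{\prime}$ to pseudofunctors between the relevant $2$-categories, and then verify the standard biequivalence criterion: essential surjectivity on objects plus equivalences on hom-categories. Essential surjectivity on objects is already covered by the observation preceding the statement, that $(\mathbf{N}^{\prime})^{\oplus}$ is equivalent to $\mathbf{N}$, so the bulk of the work is the hom-category assertion.

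First I would define $(-)^{\oplus}$ on $1$-morphisms and $2$-morphisms of $\cC\text{-}\mathrm{afmod}$. For a morphism of birepresentations $\Phi\colon\mathbf{M}\to\mathbf{N}$, put $\Phi^{\oplus}_{\bullet}=\bigoplus_{\mathtt{i}\in\cC}\Phi_{\mathtt{i}}$ as a $\Bbbk$-linear functor between the biproducts $\mathbf{M}^{\oplus}(\bullet)$ and $\mathbf{N}^{\oplus}(\bullet)$, and let the structural isomorphism on a $1$-morphism $\mathrm{F}_1\oplus\dots\oplus\mathrm{F}_s$ in $\cC^{\,\oplus}(\bullet,\bullet)$ be the diagonal matrix built from the components $\phi_{t(\mathrm{F}_q) s(\mathrm{F}_q)}^{\mathrm{F}_q}$; on non-diagonal entries (including the $1$-morphism matrix entries already set to zero in $\cC^{\,\oplus}$) this is forced to be zero. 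For a modification $\sigma\colon\Phi\to\Psi$, put $\sigma^{\oplus}_{\bullet}=\bigoplus_{\mathtt{i}}\sigma_{\mathtt{i}}$. The coherence diagrams \eqref{eq:morphism1}, \eqref{eq:morphism2} and \eqref{eq:modification} for these componentwise constructions reduce block-diagonally to the very same diagrams for $\Phi$ and $\sigma$, because the coherers $\alpha,\lunit,\runit$ of $\cC^{\,\oplus}$, the action functor $\mathbf{M}^{\oplus}_{\bullet\bullet}$ and the structure $2$-isomorphism $\mu^{\oplus}$ were all defined entry-wise from those of $\cC$, $\mathbf{M}$ and $\mathbf{N}$. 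Functoriality under vertical and horizontal composition is then immediate since matrix composition is entry-wise.

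In the reverse direction, $(-)^{\prime}$ on $1$-morphisms uses the idempotent decomposition $\mathrm{id}_{\mathbf{N}(\bullet)}\cong\bigoplus_{\mathtt{i}\in\cC}e_{\mathtt{i}}$ induced by the decomposition of the identity $1$-morphism in $\cC^{\,\oplus}$: any $\Xi\colon\mathbf{N}_1\to\mathbf{N}_2$ restricts to a family $\Xi^{\prime}_{\mathtt{i}}\colon\mathbf{N}_1(\bullet)_{\mathtt{i}}\to\mathbf{N}_2(\bullet)_{\mathtt{i}}$, and the structural isomorphism $\xi$ restricts on each $\cC(\mathtt{i},\mathtt{j})\hookrightarrow\cC^{\,\oplus}(\bullet,\bullet)$ to give the required natural isomorphism for $\Xi^{\prime}$. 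Modifications restrict in the same way. Checking the coherence diagrams of a morphism of birepresentations of $\cC$ for $\Xi^{\prime}$ amounts to picking out the appropriate summand of the corresponding diagram for $\Xi$, and pseudofunctoriality of $(-)^{\prime}$ follows in the same manner.

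It then remains to show that $(-)^{\oplus}$ and $(-)^{\prime}$ induce mutually quasi-inverse equivalences $\cC\text{-}\mathrm{afmod}(\mathbf{M},\mathbf{N})\simeq\cC^{\,\oplus}\text{-}\mathrm{afmod}(\mathbf{M}^{\oplus},\mathbf{N}^{\oplus})$. The round trip $((-)^{\oplus})^{\prime}$ simply re-extracts the original family $(\Phi_{\mathtt{i}})$ and $(\phi_{\mathtt{j}\mathtt{i}})$, so it is canonically isomorphic to the identity. The opposite round trip $((-)^{\prime})^{\oplus}$ requires the standard universal-property argument for biproducts: any $\Bbbk$-linear functor between biproducts, together with a $\Bbbk$-linear natural isomorphism to another such, decomposes canonically up to natural isomorphism into the direct sum of its components with respect to the biproduct decomposition; applying this to $\Xi_{\bullet}$ and $\xi$ produces an invertible modification $((\Xi)^{\prime})^{\oplus}\xrightarrow{\cong}\Xi$. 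The main obstacle, such as it is, is purely bookkeeping: carefully tracking the order of summands used to define $\mathbbm{1}_{\bullet}$ in $\cC^{\,\oplus}$ and the fact that the left unitor is a permutation matrix, so that the identity-coherence square \eqref{eq:birepresentation2} for $\mathbf{M}^{\oplus}$ really does reduce diagonally to the identity-coherence square for $\mathbf{M}$. Once this is in place, essential surjectivity on objects from the authors' earlier remark combines with the hom-category equivalence to give the desired biequivalence.
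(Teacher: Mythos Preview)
Your approach is essentially the paper's own: the paper simply notes that the object-level constructions $(-)^{\oplus}$ and $(-)^{\prime}$ are mutually inverse up to equivalence and declares the proposition proved, so your proposal is really a careful expansion of what the authors leave implicit.

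There is one soft spot worth tightening. In the round-trip $((\Xi)^{\prime})^{\oplus}\cong\Xi$ you invoke a ``standard universal-property argument for biproducts'' asserting that a $\Bbbk$-linear functor between biproduct categories decomposes, up to natural isomorphism, as the direct sum of its components. That statement is false without further input: a $\Bbbk$-linear functor $\mathcal{A}_1\oplus\mathcal{A}_2\to\mathcal{B}_1\oplus\mathcal{B}_2$ can perfectly well have nonzero off-diagonal pieces. What forces $\Xi_{\bullet}$ to be block-diagonal up to isomorphism is precisely the structural isomorphism $\xi^{\mathbbm{1}_{\mathtt{i}}}\colon \mathbf{N}_2(\mathbbm{1}_{\mathtt{i}})\Xi_{\bullet}\xrightarrow{\cong}\Xi_{\bullet}\mathbf{N}_1(\mathbbm{1}_{\mathtt{i}})$ for each summand $\mathbbm{1}_{\mathtt{i}}$ of the identity: since $\mathbf{N}_k(\mathbbm{1}_{\mathtt{i}})$ is (naturally isomorphic to) the projector $e_{\mathtt{i}}$ onto the $\mathtt{i}$-component, this says $e_{\mathtt{i}}\Xi_{\bullet}\cong\Xi_{\bullet}e_{\mathtt{i}}$, whence $\Xi_{\bullet}$ preserves the decomposition up to isomorphism. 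You do mention the idempotent decomposition coming from the identity $1$-morphism earlier when defining $(-)^{\prime}$, so the ingredient is on the table; you just need to invoke it again here rather than appeal to a biproduct principle that does not hold in this generality. With that correction, the argument goes through.
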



\subsection{Cell theory}\label{subsection:cells-bicats}


The theory of cells carries over verbatim from finitary $2$-categories to multifinitary bicategories.
Let us briefly recall its main features; details and references can be found in
\cite[Subsection 4.5]{MM1}, \cite[Subsection 3.2]{CM}, \cite[Section 3]{MM5} and
\cite[Subsection 4.2]{MMMZ}.

For each multifinitary bicategory $\cC$, one defines the \emph{left partial preorder}
$\geq_{L}$ on indecomposable $1$-morphisms by
\begin{gather*}
\mathrm{F}\geq_{L}\mathrm{G}
\Leftrightarrow
\text{there exists $\mathrm{H}$ such that $\mathrm{F}$ is isomorphic to a direct summand of $\mathrm{H}\mathrm{G}$}.
\end{gather*}
One then defines
\emph{left cells}, denoted by $\mathcal{L}$,
to be the equivalence classes with respect to $\geq_{L}$, on which $\geq_{L}$ naturally induces a partial order denoted by the same symbol.
Similarly, one defines  the \emph{right} and \emph{two-sided partial preorders} $\geq_{R}$ and $\geq_{J}$ and their corresponding \emph{right cells} and \emph{two-sided cells}, denoted by $\mathcal{R}$ and $\mathcal{J}$ respectively. Note that the source map $\mathtt{i}_{s({}_{-})}$ is constant on each left cell and the target map $\mathtt{i}_{t({}_{-})}$ is constant on each right cell.

\begin{example}\label{example:kl-cells}
\leavevmode	
\begin{enumerate}[$($i$)$]
\item A fusion category $\cC$ has only one (left, right and two-sided) cell, because, for any
$1$-morphism $\mathrm{F}\in\cC$, the decomposition of both $\mathrm{F}\mathrm{F}^{\star}$
and $\mathrm{F}^{\star}\mathrm{F}$ contains the identity on the unique object.
\item Recall that for any finite Coxeter group, the one-object bicategory of Soergel bimodules
$\cS$ is finitary when it is defined over the coinvariant algebra. The (left, right, or two-sided, respectively) cells and cell orders in $\cS$ correspond to the Kazhdan--Lusztig \cite{KL} (left, right, or two-sided, respectively) cells and
orders of $W$ by the Soergel--Elias--Williamson categorification theorem \cite[Theorem 1.1]{EW}.
This remains true when $\cS$ is defined over the polynomial algebra and/or the
Coxeter group is non-finite.
\end{enumerate}
\end{example}

For any left cell $\mathcal{L}$, one can define the so-called \emph{cell birepresentation $\mathbf{C}_{\mathcal{L}}$} as follows: Let $\mathtt{i}$ be the source of $\mathcal{L}$. Define
a subbirepresentation $\mathbf{M}^{\geq\mathcal{L}}$ of the principal birepresentation
$\mathbf{P}_{\mathtt{i}}$, using the induced action of $\cC$ on
\begin{gather*}
\mathrm{add}\big(
\{\mathrm{F}\mid\mathrm{F}\geq_{L}\mathcal{L}\}
\big).
\end{gather*}
Then $\mathbf{M}^{\geq\mathcal{L}}$ has a unique maximal ideal $\mathbf{I}$ and we define
\begin{gather*}
\mathbf{C}_{\mathcal{L}}:=\mathbf{M}^{\geq\mathcal{L}}/\mathbf{I},
\end{gather*}
which is always a simple transitive birepresentation.

\begin{example}
\leavevmode	
\begin{enumerate}[$($i$)$]
\item The (unique) cell birepresentation of a fusion category coincides with
its regular birepresentation, for which the action is defined by the monoidal product.
\item The cell birepresentations of $\cS$, for any Coxeter group $W$, categorify
the Kazhdan--Lusztig \cite{KL} cell representations of the Hecke algebra of $W$, by the Soergel--Elias--Williamson categorification theorem \cite[Theorem 1.1]{EW}.
\end{enumerate}
\end{example}

Let $\cC$ be a multifinitary bicategory. By the bicategorical analog of \cite[Subsection 3.2]{CM},
any transitive birepresentation $\mathbf{M}$ of $\cC$ has an associated invariant called \emph{apex}, which is the unique two-sided cell $\mathcal{J}$ of $\cC$ not annihilated by
$\mathbf{M}$ that is maximal with respect to the two-sided order $\geq_{J}$.

\begin{example} 
Suppose that $\cC$ is quasi multifiab. Let $\mathcal{L}$ be a left cell inside a two-sided cell $\mathcal{J}$ of
$\cC$. Then the apex of the cell birepresentation $\mathbf{C}_{\mathcal{L}}$
is equal to $\mathcal{J}$.
\end{example}

\begin{definition}\label{def:mod-apexJ}
Let $\cC$ be a multifinitary bicategory. Denote by $\cC\text{-}\mathrm{afmod}_{\mathcal{J}}$ the $1,2$-full $2$-subcategory of $\cC\text{-}\mathrm{afmod}$ consisting of the finitary birepresentations
whose weak Jordan--H{\"o}lder constituents all have apex $\mathcal{J}$. With respect to those $1,2$-full $2$-subcate\-gories of $\cC\text{-}\mathrm{afmod}$ in Definition \ref{def:cfmodstmod}, we denote by
\begin{enumerate}[$($i$)$]
\item $\cC\text{-}\mathrm{cfmod}_{\mathcal{J}}$ the one consisting of all cyclic (finitary) birepresentations
whose weak Jordan--H{\"o}lder constituents all have apex $\mathcal{J}$;
\item $\cC\text{-}\mathrm{tfmod}_{\mathcal{J}}$  the one consisting of all transitive (finitary) birepresentations with apex $\mathcal{J}$;
\item $\cC\text{-}\mathrm{stmod}_{\mathcal{J}}$ the one consisting of all simple transitive (finitary) birepresentations with apex $\mathcal{J}$.
\end{enumerate}
\end{definition}
Again, we have $\cC\text{-}\mathrm{stmod}_{\mathcal{J}}\subseteq\cC\text{-}\mathrm{tfmod}_{\mathcal{J}}\subseteq\cC\text{-}\mathrm{cfmod}_{\mathcal{J}}\subseteq\cC\text{-}\mathrm{afmod}_{\mathcal{J}}$.

Inside each two-sided cell, we define \emph{$\mathcal{H}$-cells} as the intersection of left and right cells. Note that $\mathtt{i}_{s(\mathcal{H})}$ need not be equal to
$\mathtt{i}_{t(\mathcal{H})}$ in general. Any two-sided cell $\mathcal{J}$ is the disjoint union of the $\mathcal{H}$-cells it contains. If $\cC$ is quasi multifiab, then ${}^{\star}$ exchanges the left and right cells inside each two-sided cell. For any left cell $\mathcal{L}$ inside a two-sided cell $\mathcal{J}$, the intersection
\begin{gather*}
\mathcal{H}(\mathcal{L}):=\mathcal{L}\cap\mathcal{L}^{\star}\subseteq\mathcal{J}
\end{gather*}
is called the $\mathcal{H}$-cell \emph{associated} to $\mathcal{L}$.
Note that all $1$-morphisms in an $\mathcal{H}$-cell $\mathcal{H}(\mathcal{L})$ are
$1$-endomorphisms of one fixed $\mathtt{i}\in\cC$, which we call the \emph{source of $\mathcal{H}$}. By the generalization of \cite[Proposition~17]{MM1} to quasi multifiab bicategories,
each left cell $\mathcal{L}$ contains a unique distinguished $1$-morphism $\mathrm{D}=\mathrm{D}(\mathcal{L})$, called \emph{Duflo involution}. If $\cC$ is multifiab, then every associated $\mathcal{H}$-cell $\mathcal{H}(\mathcal{L})$ is stable under ${}^{\star}$ and is called a \emph{diagonal} $\mathcal{H}$-cell. Since both $\mathrm{D}=\mathrm{D}(\mathcal{L})$ and
$\mathrm{D}^{\star}$ belong to $\mathcal{L}$ (c.f. \cite[Proposition~17]{MM1}),
we have $\mathrm{D}\in\mathcal{H}(\mathcal{L})$ in this case.

\begin{lemma}\label{lemma:supported}
Let $\cC$ be a multifinitary bicategory and let $\mathbf{M}\in\cC\text{-}\mathrm{tfmod}_{\mathcal{J}}$ and $\mathcal{H}$ be any $\mathcal{H}$-cell inside $\mathcal{J}$.
Then there exists some non-zero object $X\in\mathbf{M}(\mathtt{i}_{s(\mathcal{H})})$ which is not annihilated by $\mathcal{H}$.
\end{lemma}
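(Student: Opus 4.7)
The plan is to exploit the definition of apex to produce some $1$-morphism in $\mathcal{J}$ that acts non-trivially on $\mathbf{M}$, and then to ``transport'' this non-triviality, via the two-sided equivalence of $1$-morphisms inside $\mathcal{J}$, to any chosen $\mathcal{H}$-cell. Throughout I use that the source map is constant on each left cell and the target map is constant on each right cell (hence both are constant on $\mathcal{H}$), which guarantees that all compositions appearing below make sense.

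First I would use the defining property of the apex. Since $\mathcal{J}$ is the apex of $\mathbf{M}$, there exists an indecomposable $\mathrm{F}\in\mathcal{J}$ such that $\mathbf{M}_{\mathtt{j}\mathtt{i}}(\mathrm{F})$ is not the zero functor, where $\mathtt{i}=\mathtt{i}_{s(\mathrm{F})}$ and $\mathtt{j}=\mathtt{i}_{t(\mathrm{F})}$. Pick $Y\in\mathbf{M}(\mathtt{i})$ with $\mathbf{M}_{\mathtt{j}\mathtt{i}}(\mathrm{F})Y\neq 0$.

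Next, fix any $\mathrm{G}\in\mathcal{H}\subseteq\mathcal{J}$. Since $\mathrm{F}$ and $\mathrm{G}$ lie in the same two-sided cell, in particular $\mathrm{F}\leq_{J}\mathrm{G}$, so by the definition of $\geq_{J}$ there exist $1$-morphisms $\mathrm{K}_{1},\mathrm{K}_{2}$ in $\cC$ (with sources and targets matching those of $\mathrm{G}$ and $\mathrm{F}$) such that $\mathrm{F}$ is isomorphic to a direct summand of $\mathrm{K}_{1}\mathrm{G}\mathrm{K}_{2}$. Applying $\mathbf{M}$ and composing with the coherers $\mu$ of the pseudofunctor, one obtains an isomorphism
\[
\mathbf{M}(\mathrm{K}_{1}\mathrm{G}\mathrm{K}_{2})Y\;\cong\;\mathbf{M}(\mathrm{K}_{1})\mathbf{M}(\mathrm{G})\mathbf{M}(\mathrm{K}_{2})Y,
\]
and $\mathbf{M}(\mathrm{F})Y$ is a direct summand of the left-hand side. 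Set $X:=\mathbf{M}(\mathrm{K}_{2})Y$. Since $\mathtt{i}_{t(\mathrm{K}_{2})}=\mathtt{i}_{s(\mathrm{G})}=\mathtt{i}_{s(\mathcal{H})}$, we have $X\in\mathbf{M}(\mathtt{i}_{s(\mathcal{H})})$ as required.

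Finally, if $\mathbf{M}(\mathrm{G})X=0$, then $\mathbf{M}(\mathrm{K}_{1})\mathbf{M}(\mathrm{G})\mathbf{M}(\mathrm{K}_{2})Y=0$, which forces its direct summand $\mathbf{M}(\mathrm{F})Y$ to vanish, contradicting the choice of $Y$. Therefore $X\neq 0$ and $X$ is not annihilated by $\mathrm{G}\in\mathcal{H}$, proving the lemma. The only real subtlety here is the bookkeeping of sources and targets (to ensure that $\mathrm{K}_{1}\mathrm{G}\mathrm{K}_{2}$ is defined and that $X$ lives in the correct $\mathbf{M}(\mathtt{i}_{s(\mathcal{H})})$), which is handled automatically by the constancy of the source/target maps on left/right cells; once that is in place the argument is essentially the standard cell-theoretic one from the $2$-categorical setting.
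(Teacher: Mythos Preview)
Your proof is correct and is essentially the direct (constructive) version of the paper's contrapositive argument: both hinge on the inclusion $\mathrm{add}(\mathcal{J})\subset\cC\,\mathrm{add}(\mathcal{H})\,\cC$, i.e.\ on the fact that any $\mathrm{F}\in\mathcal{J}$ occurs as a summand of some $\mathrm{K}_1\mathrm{G}\mathrm{K}_2$ with $\mathrm{G}\in\mathcal{H}$. The paper assumes $\mathcal{H}$ kills all of $\mathbf{M}(\mathtt{i}_{s(\mathcal{H})})$ and deduces that $\mathcal{J}$ kills $\mathbf{M}$, while you start from a witness $Y$ with $\mathbf{M}(\mathrm{F})Y\neq 0$ and transport it to the desired $X=\mathbf{M}(\mathrm{K}_2)Y$; these are the same idea read in opposite directions. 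One tiny slip: with the paper's convention ($\mathrm{F}\geq_J\mathrm{G}$ means $\mathrm{F}$ is a summand of $\mathrm{K}_1\mathrm{G}\mathrm{K}_2$), the relation you want is $\mathrm{F}\geq_J\mathrm{G}$ rather than $\mathrm{F}\leq_J\mathrm{G}$, but since $\mathrm{F}$ and $\mathrm{G}$ lie in the same two-sided cell this is immaterial.
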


\begin{proof}
Let $\mathbf{M}$ be a transitive birepresentation of $\cC$ with apex $\mathcal{J}$. Assume that the category $\mathbf{M}(\mathtt{i}_{s(\mathcal{H})})$ is annihilated by $\mathcal{H}$ and note that each $\mathbf{M}(\mathtt{j})$, where $\mathtt{j}\neq\mathtt{i}_{s(\mathcal{H})}$, is also annihilated by $\mathcal{H}$  by definition.  We deduce that $\mathbf{M}$ annihilates $\mathcal{H}$ and hence annihilates $\mathcal{J}$ as $\mathrm{add}(\mathcal{J})\subset\cC\mathrm{add}(\mathcal{H})\cC$, a contradiction.
\end{proof}

\begin{lemma}\label{lemma:cyclicbirep}
Let $\cC$ be a multifinitary bicategory. Any $\mathbf{M}$ in $\cC\text{-}\mathrm{afmod}_{\mathcal{J}}$ is cyclic, that is
\begin{gather*}
\cC\text{-}\mathrm{afmod}_{\mathcal{J}}=\cC\text{-}\mathrm{cfmod}_{\mathcal{J}}.
\end{gather*}
Moreover, for any $\mathcal{H}$-cell $\mathcal{H}$ inside $\mathcal{J}$, there exists a generator $X\in\mathbf{M}(\mathtt{i})$ of $\mathbf{M}$ such that, for any $\mathrm{F}\in\mathcal{H}$,
$\mathbf{M}_{\mathtt{j}\mathtt{i}}(\mathrm{F})X$ also generates $\mathbf{M}$,
where $\mathtt{i}:=\mathtt{i}_{s(\mathcal{H})}$ and $\mathtt{j}:=\mathtt{i}_{t(\mathcal{H})}$.
\end{lemma}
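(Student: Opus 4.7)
My plan is to prove both assertions simultaneously by induction on the length $m$ of a weak Jordan--H\"older filtration $0 = \mathbf{M}_0 \subsetneq \mathbf{M}_1 \subsetneq \cdots \subsetneq \mathbf{M}_m = \mathbf{M}$ furnished by Theorem~\ref{thm:jh}. The base case is $m=1$, in which case $\mathbf{M}$ is itself transitive. For any $\mathrm{F} \in \mathcal{H} \subseteq \mathcal{J}$ the functor $\mathbf{M}(\mathrm{F})$ is non-zero, because if $\mathbf{M}$ annihilated some $\mathrm{F} \in \mathcal{J}$ then every other $\mathrm{F}' \in \mathcal{J}$ would also be annihilated (as $\mathrm{F}'$ is a summand of some $\mathrm{H}_1 \mathrm{F} \mathrm{H}_2$), contradicting the definition of apex. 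Hence for each $\mathrm{F} \in \mathcal{H}$ I pick $X_\mathrm{F} \in \mathbf{M}(\mathtt{i})$ with $\mathbf{M}(\mathrm{F}) X_\mathrm{F} \neq 0$, and, using that $\mathcal{H}$ is finite because $\cC$ is multifinitary, I form $X := \bigoplus_{\mathrm{F} \in \mathcal{H}} X_\mathrm{F} \in \mathbf{M}(\mathtt{i})$. Then $X$ is non-zero and each $\mathbf{M}(\mathrm{F}) X$ contains $\mathbf{M}(\mathrm{F}) X_\mathrm{F} \neq 0$ as a summand; by transitivity of $\mathbf{M}$, both $X$ and all $\mathbf{M}(\mathrm{F}) X$ generate $\mathbf{M}$.

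For the inductive step, the subbirepresentation $\mathbf{M}_{m-1}$ lies in $\cC\text{-}\mathrm{afmod}_\mathcal{J}$ with Jordan--H\"older length $m-1$, so the induction hypothesis supplies a generator $Y \in \mathbf{M}_{m-1}(\mathtt{i})$ such that every $\mathbf{M}_{m-1}(\mathrm{F}) Y$ also generates $\mathbf{M}_{m-1}$. The quotient $\mathbf{M}/\mathbf{I}_{m-1}$ is transitive with apex $\mathcal{J}$, so the base case yields $Z \in (\mathbf{M}/\mathbf{I}_{m-1})(\mathtt{i})$ with the analogous property. I lift $Z$ to the same underlying object $\tilde{Z} \in \mathbf{M}(\mathtt{i})$ and set $X := Y \oplus \tilde{Z}$, so that $\mathbf{M}(\mathrm{F}) X = \mathbf{M}(\mathrm{F}) Y \oplus \mathbf{M}(\mathrm{F}) \tilde{Z}$ for each $\mathrm{F} \in \mathcal{H}$.

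Let $\mathbf{N}$ denote the subbirepresentation of $\mathbf{M}$ generated by $X$; it contains $\mathbf{M}_{m-1}$ since $Y \in \mathbf{N}$. For any object $W \in \mathbf{M}(\mathtt{j})$, the fact that $Z$ generates $\mathbf{M}/\mathbf{I}_{m-1}$ gives some $U = \bigoplus_k \mathbf{M}(\mathrm{F}_k) \tilde{Z} \in \mathbf{N}$ together with morphisms $f\colon W \to U$ and $g\colon U \to W$ in $\mathbf{M}$ satisfying $gf = \mathrm{id}_W + i$, where $i$ factors as $W \xrightarrow{a} U' \xrightarrow{b} W$ through some $U' \in \mathbf{M}_{m-1} \subseteq \mathbf{N}$. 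The identity $\mathrm{id}_W = [\,g,\,-b\,] \circ [\,f,\,a\,]^{\mathrm{T}}$ exhibits $W$ as a direct summand of $U \oplus U' \in \mathbf{N}$, and the idempotent-splitness of $\mathbf{N}$ yields $W \in \mathbf{N}$; thus $\mathbf{N} = \mathbf{M}$. Applying the verbatim argument with $\mathbf{M}(\mathrm{F}) X$ in place of $X$ shows that $\mathbf{M}(\mathrm{F}) X$ also generates $\mathbf{M}$ for every $\mathrm{F} \in \mathcal{H}$. The main obstacle is precisely this lifting step: the isomorphism data in $\mathbf{M}/\mathbf{I}_{m-1}$ yield only an ``isomorphism modulo $\mathbf{I}_{m-1}$'' in $\mathbf{M}$, and the defect $gf - \mathrm{id}_W$ must be absorbed as an additional summand $U'$ already known to lie in $\mathbf{N}$.
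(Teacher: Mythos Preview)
Your proof is correct and follows essentially the same strategy as the paper: choose in each transitive layer of the weak Jordan--H\"older filtration an object whose image is not annihilated by any $\mathrm{F}\in\mathcal{H}$, and take the direct sum. The paper does this in one stroke (picking $X_q\in\mathbf{M}_q(\mathtt{i})$ for each $q$ via Lemma~\ref{lemma:supported} and setting $X=X_1\oplus\cdots\oplus X_m$), whereas you phrase it as an induction on $m$; your induction unwinds to exactly the same construction. The main difference is that you spell out the ``lifting'' step explicitly: the paper simply asserts that such an $X$ generates $\mathbf{M}$, while your $U\oplus U'$ argument makes precise why an object generating each successive subquotient generates the whole birepresentation.
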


\begin{proof}
Let $\mathbf{M}\in\cC\text{-}\mathrm{afmod}_{\mathcal{J}}$ and recall the existence of weak Jordan--H{\"o}lder series from
Theorem \ref{thm:jh}.
Let $0\subset\mathbf{M}_{1}\subsetneq\dots\subsetneq\mathbf{M}_{m}=\mathbf{M}$ be a filtration of $\mathbf{M}$ by subbirepresentations such that each subquotient is transitive with apex $\mathcal{J}$. For each $q\in\{1,\dots,m\}$, by Lemma \ref{lemma:supported}, we can choose $X_q\in\mathbf{M}_q(\mathtt{i})$ such that $X_q\notin\mathbf{M}_{q-1}(\mathtt{i})$
and $\mathbf{M}_{\mathtt{j}\mathtt{i}}(\mathrm{F})\,X_{q}\notin\mathbf{M}_{q-1}(\mathtt{j})$ for any $\mathrm{F}\in\mathcal{H}$. Then, setting $\mathrm{X}=\mathrm{X}_{1}\oplus\dots\oplus\mathrm{X}_{q}$, both $X$ and $\mathbf{M}_{\mathtt{j}\mathtt{i}}(\mathrm{F})\,X$ generate $\mathbf{M}$. The statements follow.
\end{proof}


\subsection{Quotients of bicategories via cell theory}\label{quotientbicat}

The main reason for introducing the various subbicategories of
$\cC\text{-}\mathrm{afmod}$ associated with a two-sided cell $\mathcal{J}$
in Definition \ref{def:mod-apexJ}, is the reduction of the Classification Problem of all simple transitive birepresentations of $\cC$
to that of the simple transitive birepresentations with apex $\mathcal{J}$ (where $\mathcal{J}$ is arbitrary but fixed). As explained in the introduction, we will show how to reduce
the Classification Problem even further by strong $\mathcal{H}$-reduction
in Theorems \ref{theorem:H-reduction1} and \ref{theorem:H-reduction2}, when $\cC$ is multifiab.
But before we can do that, we first have to prepare the ground. In this subsection we therefore show how the aforementioned subbicategories of $\cC\text{-}\mathrm{afmod}$, and certain generalizations of them, are related to certain (sub)quotients of $\cC$.
At the end of this subsection, we will indicate more precisely the relation with strong $\mathcal{H}$-reduction.

For now, let $\cC$ just be a multifinitary bicategory (i.e. not necessarily (quasi) multifiab)
and $\mathcal{J}$ a two-sided cell of $\cC$. We denote by $\mathcal{I}_{\not\leq\mathcal{J}}$ the biideal in $\cC$ generated by $\mathrm{id}_{\mathrm{F}}$  for all $\mathrm{F}\not\leq_{J}\mathcal{J}$. The quotient $\cC/\mathcal{I}_{\not\leq\mathcal{J}}$ is a multifinitary bicategory whose two-sided cells correspond exactly to the two-sided cells $\mathcal{J}^{\prime}$ of $\cC$ satisfying
$\mathcal{J}^{\prime}\leq_{J}\mathcal{J}$. In particular,
it has a unique maximal two-sided cell, corresponding to $\mathcal{J}$. If $\cC$ is (quasi) multifiab,
then $\cC/\mathcal{I}_{\not\leq\mathcal{J}}$ is (quasi) multifiab. By Theorem \ref{thm:jh}, it is easy to understand the relation between
the finitary birepresentations of $\cC$ and those of $\cC/\mathcal{I}_{\not\leq\mathcal{J}}$. Let $\cC\text{-}\mathrm{afmod}_{\leq\mathcal{J}}$ be the $1,2$-full $2$-subcategory of $\cC\text{-}\mathrm{afmod}$ consisting of birepresentations whose
weak Jordan--H{\"o}lder constituents all have apex $\leq_{J}\mathcal{J}$. Similarly, we have the $1,2$-full $2$-subcategory $\cC\text{-}\mathrm{cfmod}_{\leq\mathcal{J}}$. In the following, we will define various $2$-functors, some of which will be local equivalences. Here, we use the terminology that a pseudofunctor is a \emph{local equivalence} if it induces equivalences on the morphism categories
(but is not necessarily essentially surjective on objects).

\begin{theorem}\label{thm000}
Let $\cC$ be a multifinitary bicategory and $\mathcal{J}$ a two-sided cell in $\cC$.
The pullback via the $2$-full projection $\cC\to\cC/\mathcal{I}_{\not\leq\mathcal{J}}$ defines a $2$-functor
\begin{gather}\label{eq:pullback1}
\cC/\mathcal{I}_{\not\leq\mathcal{J}}\text{-}\mathrm{afmod}\to\cC\text{-}\mathrm{afmod}_{\leq\mathcal{J}},
\end{gather}
which is a local equivalence. It can be restricted to a local equivalence
\begin{gather}\label{eq:pullback1-1}
\cC/\mathcal{I}_{\not\leq\mathcal{J}}\text{-}\mathrm{cfmod}\to\cC\text{-}\mathrm{cfmod}_{\leq\mathcal{J}},
\end{gather}
and, for any two-sided cell $\mathcal{J}^{\prime}\leq_{J}\mathcal{J}$, to biequivalences
\begin{gather}\label{eq:pullback1-3}
\cC/\mathcal{I}_{\not\leq\mathcal{J}}\text{-}\mathrm{tfmod}_{\mathcal{J}^{\prime}}\xrightarrow{\simeq}
\cC\text{-}\mathrm{tfmod}_{\mathcal{J}^{\prime}},
\\
\label{eq:pullback1-4}
\cC/\mathcal{I}_{\not\leq\mathcal{J}}\text{-}\mathrm{stmod}_{\mathcal{J}^{\prime}}\xrightarrow{\simeq}
\cC\text{-}\mathrm{stmod}_{\mathcal{J}^{\prime}}.
\end{gather}
The local equivalences \eqref{eq:pullback1} and \eqref{eq:pullback1-1} preserve
weak Jordan--H{\"o}lder series and, for any two-sided cell $\mathcal{J}^{\prime}\leq_{J}\mathcal{J}$, they descend to a local equivalence
\begin{gather}\label{eq:pullback1-2}
\cC/\mathcal{I}_{\not\leq\mathcal{J}}\text{-}\mathrm{cfmod}_{\mathcal{J}^{\prime}}\to\cC\text{-}\mathrm{cfmod}_{\mathcal{J}^{\prime}}.
\end{gather}
If $\cC$ is quasi multifiab, then \eqref{eq:pullback1} is a biequivalence and hence so are \eqref{eq:pullback1-1} and \eqref{eq:pullback1-2}.
\end{theorem}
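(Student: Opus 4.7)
The plan is to establish, in order, existence of the pullback $2$-functor, its local full faithfulness, essential surjectivity onto the transitive and simple transitive subcategories, and finally essential surjectivity onto all of $\cC\text{-}\mathrm{afmod}_{\leq\mathcal{J}}$ in the quasi multifiab case.

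First, I would analyze the projection $\pi\colon\cC\to\cC/\mathcal{I}_{\not\leq\mathcal{J}}$: it is the identity on objects and on $1$-morphisms and is $2$-full, but the $1$-morphisms $\mathrm{F}$ with $\mathrm{F}\not\leq_J\mathcal{J}$ become zero objects in the morphism categories of the quotient, because their identity $2$-morphisms are killed. Precomposition with $\pi$ defines a strict $2$-functor $\pi^{\ast}\colon\cC/\mathcal{I}_{\not\leq\mathcal{J}}\text{-}\mathrm{afmod}\to\cC\text{-}\mathrm{afmod}$. Since $\pi^{\ast}\mathbf{M}(\mathrm{F})=0$ whenever $\mathrm{F}\not\leq_J\mathcal{J}$, no such $\mathrm{F}$ can lie in the apex of any weak Jordan--H\"older constituent of $\pi^{\ast}\mathbf{M}$, so $\pi^{\ast}$ lands inside $\cC\text{-}\mathrm{afmod}_{\leq\mathcal{J}}$.

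For local equivalence, I would argue directly on the morphism categories. A strong transformation $\Phi\colon\pi^{\ast}\mathbf{M}\to\pi^{\ast}\mathbf{N}$ consists of component functors $\Phi_{\mathtt{i}}$ and coherers $\phi^{\mathrm{F}}$ indexed by $1$-morphisms of $\cC$; for $\mathrm{F}\not\leq_J\mathcal{J}$, the source and target of $\phi^{\mathrm{F}}$ are both zero functors, so $\phi^{\mathrm{F}}=0$ is forced, and naturality with respect to $2$-morphisms in $\mathcal{I}_{\not\leq\mathcal{J}}$ is then automatic. The coherence axioms~\eqref{eq:morphism1},~\eqref{eq:morphism2} for $\cC$ therefore reduce verbatim to those for $\cC/\mathcal{I}_{\not\leq\mathcal{J}}$, and an analogous analysis of~\eqref{eq:modification} handles modifications. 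This gives the equivalence of hom-categories, proving~\eqref{eq:pullback1}. The restrictions~\eqref{eq:pullback1-1} and~\eqref{eq:pullback1-2} follow because $\pi^{\ast}$ manifestly preserves and reflects cyclicity, transitivity, simple transitivity, weak Jordan--H\"older filtrations, and the apex of each layer. Essential surjectivity for~\eqref{eq:pullback1-3} and~\eqref{eq:pullback1-4} then reduces to the observation that any transitive $\mathbf{N}\in\cC\text{-}\mathrm{tfmod}_{\mathcal{J}'}$ satisfies $\mathbf{N}(\mathrm{F})=0$ for every $\mathrm{F}\not\leq_J\mathcal{J}'$---the bicategorical analog of the apex characterization from \cite[Subsection~3.2]{CM}---so such $\mathbf{N}$ factors strictly through $\cC/\mathcal{I}_{\not\leq\mathcal{J}}$.

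The main obstacle is the final claim: in the quasi multifiab setting every $\mathbf{M}\in\cC\text{-}\mathrm{afmod}_{\leq\mathcal{J}}$ itself factors through the quotient. To attack this, I would show that the two-sided annihilator $\mathrm{Ann}(\mathbf{M})=\{\mathrm{F}\mid\mathbf{M}(\mathrm{F})=0\}$ contains every $\mathrm{F}\not\leq_J\mathcal{J}$. It is automatically a biideal closed under direct summands and, using the adjunctions $\mathrm{ev}_{\mathrm{F}}$ and $\mathrm{coev}_{\mathrm{F}}$ supplied by ${}^{\star}$, also closed under ${}^{\star}$: if $\mathbf{M}(\mathrm{F}^{\star})X\neq 0$, then via the pseudofunctoriality isomorphism $\mathbf{M}(\mathrm{F})\mathbf{M}(\mathrm{F}^{\star})\cong\mathbf{M}(\mathrm{F}\mathrm{F}^{\star})$ and the triangle identity, the identity endomorphism of $\mathbf{M}(\mathrm{F}^{\star})X$ factors through $\mathbf{M}(\mathrm{F})\mathbf{M}(\mathrm{F}^{\star})X$, contradicting $\mathbf{M}(\mathrm{F})=0$. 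Combined with a Jordan--H\"older induction along the filtration from Theorem~\ref{thm:jh} (each layer annihilates every $\mathrm{F}\not\leq_J\mathcal{J}$), one deduces the desired vanishing of $\mathbf{M}(\mathrm{F})$ on the whole of $\mathbf{M}$. The delicate point is precisely this induction step: in the purely additive (non-abelian) setting the action of $\mathrm{F}$ on an extension is not a priori zero just because it is zero on each layer, and it is the fiab (adjoint) structure that forces the vanishing and thereby upgrades~\eqref{eq:pullback1} from a local equivalence to a biequivalence.
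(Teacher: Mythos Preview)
Your treatment of the local equivalence and of the biequivalences~\eqref{eq:pullback1-3},~\eqref{eq:pullback1-4} is correct and matches the paper's (terse) reasoning: the paper simply notes that $\mathcal{I}_{\not\leq\mathcal{J}}\subseteq\mathrm{ann}(\mathbf{M})$ for any transitive $\mathbf{M}$ with apex $\leq_J\mathcal{J}$, and that the pullback is ``obviously a local equivalence''. Your more explicit description of why coherers on $1$-morphisms $\mathrm{F}\not\leq_J\mathcal{J}$ are forced to vanish is fine.

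There is, however, a genuine gap in the final step. You correctly isolate the difficulty --- that vanishing of $\mathbf{M}(\mathrm{F})$ on each Jordan--H\"older layer does not immediately force $\mathbf{M}(\mathrm{F})=0$ --- and you correctly name adjunctions as the tool. But the property you actually prove, namely that $\mathrm{Ann}(\mathbf{M})$ is closed under ${}^{\star}$, does not do the job. Knowing that $\mathbf{M}(\mathrm{F})=0\Rightarrow\mathbf{M}(\mathrm{F}^{\star})=0$ gives you nothing toward the induction step, since a priori neither $\mathbf{M}(\mathrm{F})$ nor $\mathbf{M}(\mathrm{F}^{\star})$ vanishes on $\mathbf{M}$. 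What is needed is an \emph{idempotence} statement: for each $\mathrm{F}\not\leq_J\mathcal{J}$, the $2$-morphism $\mathrm{id}_{\mathrm{F}}$ must lie in the biideal generated by $\mathcal{I}_{\not\leq\mathcal{J}}\circ_{\mathsf{h}}\mathcal{I}_{\not\leq\mathcal{J}}$. Then the standard filtration argument gives $\mathcal{I}_{\not\leq\mathcal{J}}\circ_{\mathsf{h}}\mathcal{I}_{\not\leq\mathcal{J}}\subseteq\mathrm{ann}(\mathbf{M})$ (reducing without loss of generality to length $2$, by induction), and idempotence closes the loop.

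The paper obtains idempotence via the Duflo involution: by the bicategorical analog of \cite[Proposition~17]{MM1}, every left cell $\mathcal{L}$ contains a $1$-morphism $\mathrm{D}=\mathrm{D}(\mathcal{L})$ such that $\mathrm{F}$ is a direct summand of $\mathrm{FD}$ for all $\mathrm{F}\in\mathcal{L}$; since $\mathrm{D}$ lies in the same two-sided cell as $\mathrm{F}$, it is also $\not\leq_J\mathcal{J}$, whence $\mathrm{id}_{\mathrm{F}}$ is obtained from $\mathrm{id}_{\mathrm{F}}\circ_{\mathsf{h}}\mathrm{id}_{\mathrm{D}}\in\mathcal{I}_{\not\leq\mathcal{J}}\circ_{\mathsf{h}}\mathcal{I}_{\not\leq\mathcal{J}}$ by vertical composition with a split projection and inclusion. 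Your adjunction idea can in fact be salvaged along the same lines: the triangle identity exhibits $\mathrm{F}$ as a direct summand of $\mathrm{F}(\mathrm{F}^{\star}\mathrm{F})$, and since $\mathrm{F}^{\star}$ is in the same two-sided cell as $\mathrm{F}$, this again places $\mathrm{id}_{\mathrm{F}}$ in the biideal generated by $\mathcal{I}_{\not\leq\mathcal{J}}^{\circ_{\mathsf{h}}3}\subseteq\mathcal{I}_{\not\leq\mathcal{J}}\circ_{\mathsf{h}}\mathcal{I}_{\not\leq\mathcal{J}}$. But this is a different use of the adjunction than the ${}^{\star}$-closure statement you wrote down, and without it your induction does not go through.
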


\begin{proof}
Note that for any $\mathbf{M}\in\cC/\mathcal{I}_{\not\leq\mathcal{J}}\text{-}\mathrm{afmod}$ we have $\mathcal{I}_{\not\leq\mathcal{J}}\subseteq\mathrm{ann}(\mathbf{M})$. Then $\mathcal{I}_{\not\leq\mathcal{J}}$ is annihilated by all weak Jordan--H{\"o}lder constituents of $\mathbf{M}$ which implies that the latter all have apex
$\leq_{J}\mathcal{J}$.
Thus the pullback \eqref{eq:pullback1} is well-defined and obviously a local equivalence which can be restricted to local equivalences \eqref{eq:pullback1-1}-\eqref{eq:pullback1-4}. For any birepresentation $\mathbf{M}$ in $\cC\text{-}\mathrm{tfmod}_{\mathcal{J}^{\prime}}$,
respectively $\cC\text{-}\mathrm{stmod}_{\mathcal{J}^{\prime}}$, we also have $\mathcal{I}_{\not\leq\mathcal{J}}\subseteq\mathrm{ann}(\mathbf{M})$ since $\mathrm{apex}(\mathbf{M})=\mathcal{J}^{\prime}\leq_{J}\mathcal{J}$.
Thus $\mathbf{M}$ belongs to $\cC/\mathcal{I}_{\not\leq\mathcal{J}}\text{-}\mathrm{tfmod}_{\mathcal{J}^{\prime}}$, respectively
$\cC/\mathcal{I}_{\not\leq\mathcal{J}}\text{-}\mathrm{stmod}_{\mathcal{J}^{\prime}}$. Therefore both \eqref{eq:pullback1-3} and \eqref{eq:pullback1-4} are biequivalences.
It follows from the definition and biequivalences \eqref{eq:pullback1-3}-\eqref{eq:pullback1-4} that the local equivalence \eqref{eq:pullback1}, respectively \eqref{eq:pullback1-1}, preserves weak Jordan--H{\"o}lder series and descends to a local equivalence
\eqref{eq:pullback1-2} for any $\mathcal{J}^{\prime} \leq_{J}\mathcal{J}$.

Now assume that $\cC$ is quasi multifiab. It suffices to prove that \eqref{eq:pullback1} is essentially surjective since essential surjectivity of its restrictions \eqref{eq:pullback1-1}
and \eqref{eq:pullback1-2} is straightforward. For any $\mathbf{M}\in\cC\text{-}\mathrm{afmod}_{\leq\mathcal{J}}$,
let $0=\mathbf{M}_{0}\subsetneq\mathbf{M}_{1}
\subsetneq\dots\subsetneq
\mathbf{M}_{m}=\mathbf{M}$ be a filtration by subbirepresentations as in Theorem \ref{thm:jh}.
Without loss of generality, we assume that $m=2$.
Since $\mathcal{I}_{\not\leq\mathcal{J}}\subseteq\mathrm{ann}(\mathbf{M}_{1})\cap\mathrm{ann}(\mathbf{M}_{2}/\mathbf{I}_{1})$,
where $\mathbf{I}_{1}$ is the $\cC$-stable ideal in $\mathbf{M}_{2}$ generated by $\mathbf{M}_{1}$, we obtain
$\mathcal{I}_{\not\leq\mathcal{J}}\circ_{\mathsf{h}}\mathcal{I}_{\not\leq\mathcal{J}}\subseteq\mathrm{ann}(\mathbf{M}_{2})=\mathrm{ann}(\mathbf{M})$.
Recall that each left cell $\mathcal{L}$ in a quasi multifiab bicategory $\cC$
contains the Duflo involution
$\mathrm{D}:=\mathrm{D}(\mathcal{L})$ and, in fact, by the generalization of \cite[Proposition 17]{MM1} to quasi multifiab bicategories, each $1$-morphism $\mathrm{FD}$ contains $\mathrm{F}$ as a direct summand for any $\mathrm{F}\in\mathcal{L}$. Hence, $\mathcal{I}_{\not\leq\mathcal{J}}\circ_{\mathsf{h}}\mathcal{I}_{\not\leq\mathcal{J}}$ contains $\mathrm{id}_{\mathrm{F}}$ for all $\mathrm{F}\not\leq_{J}\mathcal{J}$, that is to say, $\mathrm{id}_{\mathrm{F}}\in\mathrm{ann}(\mathbf{M})$ for all $\mathrm{F}\not\leq_{J}\mathcal{J}$. Finally, we have $\mathcal{I}_{\not\leq\mathcal{J}}\subseteq\mathrm{ann}(\mathbf{M})$, which completes the proof.
\end{proof}

By Lemma \ref{lemma:cyclicbirep}, for each $\mathcal{J}^{\prime}\leq_{J}\mathcal{J}$, the local equivalence \eqref{eq:pullback1-2} can be written as
\begin{gather}\label{eq:pullback1-5}
\cC/\mathcal{I}_{\not\leq\mathcal{J}}\text{-}\mathrm{afmod}_{\mathcal{J}^{\prime}}\to\cC\text{-}\mathrm{afmod}_{\mathcal{J}^{\prime}},
\end{gather}
which is a biequivalence provided that $\cC$ is quasi multifiab.

\begin{definition}\label{definition:j-simple}
Let $\mathcal{J}$ be a two-sided cell in a multifinitary bicategory $\cC$.
Then $\cC$ is called \emph{$\mathcal{J}$-simple} if
any non-zero biideal of $\cC$ contains
the identity $2$-morphisms of all $1$-morphisms in $\mathcal{J}$.
\end{definition}

By the analog of \cite[Theorem 15]{MM2} for multifinitary bicategories, examples of $\mathcal{J}$-simple multifinitary bicategories are not hard to find:
for any two-sided cell $\mathcal{J}$ of a multifinitary bicategory $\cC$, there is a unique quotient bicategory $\cC_{\leq\mathcal{J}}$ that is $\mathcal{J}$-simple  and whose two-sided
cells correspond exactly to those of $\cC/\mathcal{I}_{\not\leq\mathcal{J}}$. The bicategory $\cC_{\leq\mathcal{J}}$ is called the \emph{$\mathcal{J}$-simple quotient of $\cC$} and is unique up to biequivalence. If $\cC$ is (quasi) multifiab, then $\cC_{\leq\mathcal{J}}$ is also (quasi) multifiab.

\begin{remark}\label{remark-j-simple}
Note that $\cC/\mathcal{I}_{\not\leq\mathcal{J}}$ is, in general, not $\mathcal{J}$-simple. However, the $\mathcal{J}$-simple quotients of $\cC$ and $\cC/\mathcal{I}_{\not\leq\mathcal{J}}$ are biequivalent. By definition,
the two-sided cells of $\cC_{\leq\mathcal{J}}$ are the same as those of $\cC/\mathcal{I}_{\not\leq\mathcal{J}}$, but the $2$-morphism spaces of the $\mathcal{J}$-simple quotient
are smaller in general.
\end{remark}

\begin{example}\label{example:j-simple}
If $\cC$ is semisimple, $\cC/\mathcal{I}_{\not\leq\mathcal{J}}$ and $\cC_{\leq\mathcal{J}}$ coincide.
\end{example}

The above example is special, because in general $\cC/\mathcal{I}_{\not\leq\mathcal{J}}$ and $\cC_{\leq\mathcal{J}}$ do not coincide. To show why, let us give one simple example:

\begin{example}\label{example:soergel-quotients}
Let $D\cong\Bbbk[x]/(x^2)$ be the algebra of dual numbers and
$D\text{-}\mathrm{proj}$ the category of complex finite dimensional projective $D$-modules.
Then take $\cC$ to be the one-object finitary $2$-category of $\Bbbk$-linear endofunctors of $D\text{-}\mathrm{proj}$ that are isomorphic to direct sums of copies of the identity functor
$\mathrm{Id}$. By definition, the $2$-morphisms of $\cC$ are the natural transformations between those endofunctors.
Note that $\cC$ has only one two-sided cell $\mathcal{J}$: the one containing only the isomorphism class of
$\mathrm{Id}$. Therefore, $\cC/\mathcal{I}_{\not\leq\mathcal{J}}\cong\cC$ and
\begin{gather*}
\mathrm{End}_{\ccC/\mathcal{I}_{\not\leq\mathcal{J}}}(\mathrm{Id})\cong\mathrm{End}_{\ccC}(\mathrm{Id})\cong
D.
\end{gather*}
However,
\begin{gather*}
\mathrm{End}_{\ccC_{\leq\mathcal{J}}}(\mathrm{Id})\cong\Bbbk,
\end{gather*}
because $(x)$ is the unique maximal ideal of $D$ and $D/(x)\cong\Bbbk$. Thus, $\cC_{\leq\mathcal{J}}$ is a proper quotient of
$\cC/\mathcal{I}_{\not\leq\mathcal{J}}$. Note that $\cC_{\leq\mathcal{J}}$ is semisimple in this case, but that need not be true in general.
\end{example}

The pullback
\begin{gather}\label{eq:pullback2}
\cC_{\leq\mathcal{J}}\text{-}\mathrm{afmod}\to
\cC/\mathcal{I}_{\not\leq\mathcal{J}}\text{-}\mathrm{afmod}
\end{gather}
via the $2$-full projection
$\cC/\mathcal{I}_{\not\leq\mathcal{J}}\to\cC_{\leq\mathcal{J}}$ is a local equivalence.
It is not a biequivalence in general, because not every finitary birepresentation of $\cC/\mathcal{I}_{\not\leq\mathcal{J}}$ is equivalent to the pullback of a
birepresentation of $\cC_{\leq\mathcal{J}}$, e.g. the birepresentation defined by the
natural action of $\cC/\mathcal{I}_{\not\leq\mathcal{J}}$ on the additive closure of $\mathcal{J}$ inside $\cC/\mathcal{I}_{\not\leq\mathcal{J}}$. Restricting \eqref{eq:pullback2}
gives the pullback
\begin{gather}\label{eq:pullback2-1}
\cC_{\leq\mathcal{J}}\text{-}\mathrm{cfmod}\to\cC/\mathcal{I}_{\not\leq\mathcal{J}}\text{-}\mathrm{cfmod},
\end{gather}
which is also a local equivalence and descends to local equivalences
\begin{gather*}\label{eq:pullback2-3}
\cC_{\leq\mathcal{J}}\text{-}\mathrm{tfmod}_{\mathcal{J}^{\prime}}\to\cC/\mathcal{I}_{\not\leq\mathcal{J}}\text{-}\mathrm{tfmod}_{\mathcal{J}^{\prime}},
\end{gather*}
\begin{gather}\label{eq:pullback2-4}
\cC_{\leq\mathcal{J}}\text{-}\mathrm{stmod}_{\mathcal{J}^{\prime}}\to\cC/\mathcal{I}_{\not\leq\mathcal{J}}\text{-}\mathrm{stmod}_{\mathcal{J}^{\prime}}.
\end{gather}
for any two-sided cell $\mathcal{J}^{\prime}\leq_{J}\mathcal{J}$. The pullbacks \eqref{eq:pullback2}
and \eqref{eq:pullback2-1} both preserve weak Jordan--H{\"o}lder series and can be restricted
to a local equivalence
\begin{gather*}
\cC_{\leq\mathcal{J}}\text{-}\mathrm{afmod}_{\mathcal{J}^{\prime}}=\cC_{\leq\mathcal{J}}\text{-}\mathrm{cfmod}_{\mathcal{J}^{\prime}}\to\cC/\mathcal{I}_{\not\leq\mathcal{J}}\text{-}\mathrm{cfmod}_{\mathcal{J}^{\prime}}=\cC/\mathcal{I}_{\not\leq\mathcal{J}}\text{-}\mathrm{afmod}_{\mathcal{J}^{\prime}},\label{eq:pullback2-2}
\end{gather*}
for any two-sided cell $\mathcal{J}^{\prime}\leq_{J}\mathcal{J}$,
where the two equalities hold by Lemma \ref{lemma:cyclicbirep}.
Moreover, if $\cC$ is quasi multifiab, the local equivalence \eqref{eq:pullback2-4} for $\mathcal{J}^{\prime}=\mathcal{J}$ is a biequivalence, see the proof of Proposition \ref{prop:J-simple-descend-stmod}.

By precomposing \eqref{eq:pullback1} with \eqref{eq:pullback2}, we obtain
the pullback
\begin{gather}\label{eq:pullback3}
\cC_{\leq\mathcal{J}}\text{-}\mathrm{afmod}\to\cC\text{-}\mathrm{afmod}_{\leq\mathcal{J}},
\end{gather}
which is a local equivalence. Similarly, we have the local equivalence
\begin{gather}\label{eq:pullback3-1}
\cC_{\leq\mathcal{J}}\text{-}\mathrm{cfmod}\to\cC\text{-}\mathrm{cfmod}_{\leq\mathcal{J}}.
\end{gather}
Both \eqref{eq:pullback3} and \eqref{eq:pullback3-1} can be restricted to a series of local equivalences
\begin{gather*}
\label{eq:pullback3-3}
\cC_{\leq\mathcal{J}}\text{-}\mathrm{tfmod}_{\mathcal{J}^{\prime}}\to\cC\text{-}\mathrm{tfmod}_{\mathcal{J}^{\prime}},
\end{gather*}
\begin{gather}
\label{eq:pullback3-4}
\cC_{\leq\mathcal{J}}\text{-}\mathrm{stmod}_{\mathcal{J}^{\prime}}\to\cC\text{-}\mathrm{stmod}_{\mathcal{J}^{\prime}},
\end{gather}
\begin{gather*}
\label{eq:pullback3-2}
\cC_{\leq\mathcal{J}}\text{-}\mathrm{afmod}_{\mathcal{J}^{\prime}}=\cC_{\leq\mathcal{J}}\text{-}\mathrm{cfmod}_{\mathcal{J}^{\prime}}\to\cC\text{-}\mathrm{cfmod}_{\mathcal{J}^{\prime}}=\cC\text{-}\mathrm{afmod}_{\mathcal{J}^{\prime}},
\end{gather*}
for any two-sided cell $\mathcal{J}^{\prime}\leq_{J}\mathcal{J}$, where the two equalities appearing in the last local equivalence hold by Lemma \ref{lemma:cyclicbirep}.
Moreover, if $\cC$ is quasi multifiab, the local equivalence \eqref{eq:pullback3-4} for $\mathcal{J}^{\prime}=\mathcal{J}$ is a biequivalence, cf. Proposition \ref{prop:J-simple-descend-stmod}.

Denote by $\cC_{(\mathcal{J})}$ be the $2$-full subbicategory of $\cC/\mathcal{I}_{\not\leq\mathcal{J}}$ whose objects are all $\mathtt{i}_{s(\mathrm{F})},\mathtt{i}_{t(\mathrm{F})}$ for $\mathrm{F}\in\mathcal{J}$, and whose morphism categories are given by
\begin{gather*}
\bigoplus_{\mathtt{i},\mathtt{j}\in\ccC_{(\mathcal{J})}}\cC_{(\mathcal{J})}(\mathtt{i},\mathtt{j}):=\mathrm{add}\{\mathrm{F},\mathbbm{1}_{\mathtt{i}}\mid\mathrm{F}\in\mathcal{J},\mathtt{i}\in\cC_{(\mathcal{J})}\}.
\end{gather*}
Define the $2$-full subbicategory $\cC_{\mathcal{J}}$ of $\cC_{\leq\mathcal{J}}$ similarly.
By definition, $\mathcal{J}$ is the only two-sided cell of $\cC_{(\mathcal{J})}$ and $\cC_{\mathcal{J}}$ not necessarily consisting only of identity $1$-morphisms.
If $\cC$ is (quasi) multifiab, then $\cC_{(\mathcal{J})}$ and $\cC_{\mathcal{J}}$ are also (quasi) multifiab.

\begin{lemma}\label{lem:J-simple1}
Suppose that $\cC$ is multifinitary. Then $\cC_{\mathcal{J}}$ is $\mathcal{J}$-simple.
Moreover, $\cC_{\mathcal{J}}$ is the $\mathcal{J}$-simple quotient of $\cC_{(\mathcal{J})}$.
\end{lemma}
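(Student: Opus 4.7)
The plan is to establish the first assertion directly and then to deduce the second as a formal consequence. The projection $\cC/\mathcal{I}_{\not\leq\mathcal{J}}\to \cC_{\leq\mathcal{J}}$ restricts to a $2$-functor $\cC_{(\mathcal{J})}\to \cC_{\mathcal{J}}$ which is the identity on objects and $1$-morphisms and surjective on $2$-morphisms; I denote its kernel by $\mathcal{I}_{0}$.

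For the first assertion, I take a non-zero biideal $\mathcal{K}$ of $\cC_{\mathcal{J}}$ and fix an indecomposable $\mathrm{F}\in\mathcal{J}$. Since $\cC_{\mathcal{J}}$ is $2$-full in $\cC_{\leq\mathcal{J}}$, the biideal $\tilde{\mathcal{K}}$ of $\cC_{\leq\mathcal{J}}$ generated by $\mathcal{K}$ is still non-zero, so the $\mathcal{J}$-simplicity of $\cC_{\leq\mathcal{J}}$ (which holds by construction) yields $\mathrm{id}_{\mathrm{F}}\in \tilde{\mathcal{K}}$. The technical heart is to show that this already forces $\mathrm{id}_{\mathrm{F}}\in \mathcal{K}$. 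Writing $\mathrm{id}_{\mathrm{F}}$ as a sum of compositions of the form $\delta\circ(\beta\circ_{\mathsf{h}}\gamma\circ_{\mathsf{h}}\zeta)\circ \epsilon$ with $\gamma\in\mathcal{K}$ and $\beta,\zeta,\delta,\epsilon$ arbitrary $2$-morphisms of $\cC_{\leq\mathcal{J}}$, the intermediate $1$-morphisms $\mathrm{G}\mathrm{F}'\mathrm{H}$ arising in the horizontal compositions may a priori lie outside $\cC_{\mathcal{J}}$. However, since $\mathrm{F}$ is indecomposable, its endomorphism algebra is local, and since the source $\mathrm{F}'$ of $\gamma$ lies in $\mathrm{add}(\mathcal{J}\cup\{\mathbbm{1}\})$, one can replace each intermediate product by its $\mathrm{F}$-summand via appropriate projections and inclusions inside $\cC_{\mathcal{J}}$, thereby producing a presentation of $\mathrm{id}_{\mathrm{F}}$ entirely inside $\cC_{\mathcal{J}}$; this gives $\mathrm{id}_{\mathrm{F}}\in\mathcal{K}$.

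The second assertion then follows formally. Since every $\mathrm{F}\in\mathcal{J}$ remains a non-zero $1$-morphism in $\cC_{\mathcal{J}}$, $\mathcal{I}_{0}$ does not contain any such $\mathrm{id}_{\mathrm{F}}$; and if $\mathcal{I}'\supsetneq \mathcal{I}_{0}$ were a strictly larger biideal of $\cC_{(\mathcal{J})}$, its image in $\cC_{\mathcal{J}}$ would be a non-zero biideal which, by the first assertion, must contain $\mathrm{id}_{\mathrm{F}}$ for every $\mathrm{F}\in\mathcal{J}$, forcing $\mathrm{id}_{\mathrm{F}}\in \mathcal{I}'+\mathcal{I}_{0}=\mathcal{I}'$. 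Hence $\mathcal{I}_{0}$ is the maximal biideal of $\cC_{(\mathcal{J})}$ not containing any $\mathrm{id}_{\mathrm{F}}$ with $\mathrm{F}\in\mathcal{J}$, so $\cC_{\mathcal{J}}\simeq \cC_{(\mathcal{J})}/\mathcal{I}_{0}$ is the $\mathcal{J}$-simple quotient of $\cC_{(\mathcal{J})}$. The main obstacle is the projection step in the first assertion, where one must ensure that rewriting $\mathrm{id}_{\mathrm{F}}$ in terms of summands stays internal to the $2$-full subbicategory $\cC_{\mathcal{J}}$; the indecomposability of $\mathrm{F}$, the locality of its endomorphism algebra, and the $2$-fullness of $\cC_{\mathcal{J}}\subseteq\cC_{\leq\mathcal{J}}$ together make this possible.
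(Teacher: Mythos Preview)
Your overall strategy (prove $\mathcal{J}$-simplicity of $\cC_{\mathcal{J}}$ first, then deduce the quotient statement formally) is sound, and the deduction of the second assertion from the first is correct. However, the heart of your argument for the first assertion---the ``projection step''---does not go through as written.

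The difficulty is this. In your expression
\[
\mathrm{id}_{\mathrm{F}}=\sum_i \delta_i\circ_{\mathsf{v}}\big(\beta_i\circ_{\mathsf{h}}\gamma_i\circ_{\mathsf{h}}\zeta_i\big)\circ_{\mathsf{v}}\epsilon_i
\]
with $\gamma_i\in\mathcal{K}$, even after reducing to $\gamma_i$ with source and target in $\mathrm{add}(\mathcal{J})$, the intermediate $1$-morphisms $\mathrm{G}_i\mathrm{A}_i\mathrm{H}_i$ and $\mathrm{G}_i\mathrm{B}_i\mathrm{H}_i$ do lie in $\mathrm{add}(\mathcal{J})\subseteq\cC_{\mathcal{J}}$ (since $\mathcal{J}$ is maximal), and by $2$-fullness the $2$-morphisms $\epsilon_i,\delta_i$ and $\mathrm{id}_{\mathrm{G}_i}\circ_{\mathsf{h}}\gamma_i\circ_{\mathsf{h}}\mathrm{id}_{\mathrm{H}_i}$ are all $2$-morphisms \emph{in} $\cC_{\mathcal{J}}$. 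But this does \emph{not} show that $\mathrm{id}_{\mathrm{G}_i}\circ_{\mathsf{h}}\gamma_i\circ_{\mathsf{h}}\mathrm{id}_{\mathrm{H}_i}$ lies in the biideal $\mathcal{K}$: a biideal of $\cC_{\mathcal{J}}$ is by definition only stable under horizontal composition with $2$-morphisms of $\cC_{\mathcal{J}}$, whereas here $\mathrm{id}_{\mathrm{G}_i},\mathrm{id}_{\mathrm{H}_i}$ are identities on $1$-morphisms that may lie outside $\cC_{\mathcal{J}}$. Neither the locality of $\mathrm{End}(\mathrm{F})$ nor ``projecting to $\mathrm{F}$-summands'' addresses this: the issue is not about decomposing objects but about the closure properties of $\mathcal{K}$ under whiskering by foreign $1$-morphisms. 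What is missing is an argument that the map $\gamma\mapsto \mathrm{id}_{\mathrm{G}}\circ_{\mathsf{h}}\gamma\circ_{\mathsf{h}}\mathrm{id}_{\mathrm{H}}$, restricted to $\mathrm{add}(\mathcal{J})$, already sends $\mathcal{K}$ into $\mathcal{K}$ for arbitrary $\mathrm{G},\mathrm{H}\in\cC_{\leq\mathcal{J}}$---and this is precisely the nontrivial content of the lemma.

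The paper proceeds in the opposite order: it takes as its key input the fact that the unique maximal biideal of $\cC_{(\mathcal{J})}$ not containing identities on $1$-morphisms in $\mathcal{J}$ coincides with the restriction of the analogous biideal of $\cC/\mathcal{I}_{\nleq\mathcal{J}}$. From this the second assertion is immediate, and the first then follows because a $\mathcal{J}$-simple quotient is $\mathcal{J}$-simple. This structural identification of maximal biideals bypasses the explicit manipulation of generators that your approach requires.
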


\begin{proof}
This follows from $\mathcal{J}$-simplicity of $\cC_{\leq\mathcal{J}}$ and the fact that the unique maximal biideal of $\cC_{(\mathcal{J})}$ not containing identities on $1$-morphisms in $\mathcal{J}$ is the restriction of the analogous biideal of $\cC/\mathcal{I}_{\nleq\mathcal{J}}$.
\end{proof}

Pulling back via the $2$-fully faithful embedding $\cC_{\mathcal{J}}\to\cC_{\leq\mathcal{J}}$ yields a $2$-functor
\begin{gather}\label{eq:pullback4}
\cC_{\leq\mathcal{J}}\text{-}\mathrm{afmod}\to\cC_{\mathcal{J}}\text{-}\mathrm{afmod},
\end{gather}
which can be restricted to $2$-functors
\begin{gather}\label{eq:pullback4-1}
\cC_{\leq\mathcal{J}}\text{-}\mathrm{tfmod}_{\mathcal{J}}\to\cC_{\mathcal{J}}\text{-}\mathrm{tfmod}_{\mathcal{J}},
\\
\label{eq:pullback4-2}
\cC_{\leq\mathcal{J}}\text{-}\mathrm{stmod}_{\mathcal{J}}\to\cC_{\mathcal{J}}\text{-}\mathrm{stmod}_{\mathcal{J}}.
\end{gather}
Indeed, for a birepresentation $\mathbf{M}\in\cC_{\leq\mathcal{J}}\text{-}\mathrm{tfmod}_{\mathcal{J}}$, its underlying category is equal to $\mathrm{add}\{\mathbf{M}(\mathrm{F})X\vert\,\mathrm{F}\in\mathcal{J}\}$ for any non-zero object $X$ in $\mathbf{M}(\mathtt{i})$ for some $\mathtt{i}$. Thus the $2$-functor \eqref{eq:pullback4-1} is well-defined. Since $\mathcal{J}$ is the unique maximal two-sided cell in both $\cC_{\leq\mathcal{J}}$ and $\cC_{\mathcal{J}}$,
any proper $\cC_{\mathcal{J}}$-stable ideal of $\mathbf{M}\in\cC_{\leq\mathcal{J}}\text{-}\mathrm{tmod}_{\mathcal{J}}$ is $\cC_{\leq\mathcal{J}}$-stable as well, which implies that \eqref{eq:pullback4-2} is also well-defined.
Since the $2$-functor \eqref{eq:pullback4} preserves weak Jordan--H{\"o}lder series, it restricts to a $2$-functor
\begin{gather}\label{eq:pullback4-3}
\cC_{\leq\mathcal{J}}\text{-}\mathrm{afmod}_{\mathcal{J}}\to\cC_{\mathcal{J}}\text{-}\mathrm{afmod}_{\mathcal{J}}.
\end{gather}
In Theorem \ref{cor:MT2} (cf. also Remark \ref{remark0.0}), provided that $\cC$ is quasi multifiab,  we show that \eqref{eq:pullback4-2} is a biequivalence, which can be viewed as the restriction of \eqref{eq:pullback4-3}. The latter is a local equivalence by Theorem \ref{theorem:00}. If $\cC$ is quasi multifiab, composing the biequivalences in \eqref{eq:pullback3-4} for $\mathcal{J}^{\prime}=\mathcal{J}$ and \eqref{eq:pullback4-2} yields a biequivalence
\begin{gather*}
\cC_{\mathcal{J}}\text{-}\mathrm{stmod}_{\mathcal{J}}\to\cC\text{-}\mathrm{stmod}_{\mathcal{J}},
\end{gather*}
see Theorem \ref{cor:MT2} for details.
Recall that a diagonal $\mathcal{H}$-cell of a multifiab bicategory
is the intersection of a left cell and its dual. Of crucial importance for the birepresentation theory of $\cC$, cf. Theorems \ref{theorem:H-reduction1} and \ref{theorem:H-reduction2}, is the following.

\begin{definition}\label{definition:h-qoutient}
Suppose that $\cC$ is multifiab
and let $\mathcal{J}$ be a two-sided cell of $\cC$ and $\mathcal{H}\subseteq\mathcal{J}$ a diagonal $\mathcal{H}$-cell with source
$\mathtt{i}\in\cC$. Define the $2$-full subbicategory $\cC_{(\mathcal{H})}$ of $\cC_{(\mathcal{J})}$ with single object $\mathtt{i}$ and
\begin{gather*}
\cC_{(\mathcal{H})}(\mathtt{i},\mathtt{i}):=\mathrm{add}\{\mathrm{F},\mathbbm{1}_{\mathtt{i}}\mid\mathrm{F}\in\mathcal{H}\}.
\end{gather*}
Define the $2$-full subbicategory $\cC_{\mathcal{H}}$ of $\cC_{\mathcal{J}}$ similarly.
\end{definition}

If $\mathbbm{1}_{\mathtt{i}}$ does not belong to $\mathcal{H}$, then $\cC_{(\mathcal{H})}$ and $\cC_{\mathcal{H}}$ have precisely two cells, which are both left, right and two-sided: the trivial cell $\{\mathbbm{1}_{\mathtt{i}}\}$ and the non-trivial cell
$\mathcal{H}$. Note that both $\cC_{(\mathcal{H})}$ and $\cC_{\mathcal{H}}$ are
multifiab, because $\mathcal{H}$ is preserved by ${}^{\star}$ when $\cC$ is multifiab.

\begin{lemma}\label{lemma:h-qoutient}
The bicategory $\cC_{\mathcal{H}}$ is $\mathcal{H}$-simple. Moreover, it is the $\mathcal{H}$-simple quotient of $\cC_{(\mathcal{H})}$.
\end{lemma}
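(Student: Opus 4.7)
My plan is to follow the same pattern as the proof of Lemma \ref{lem:J-simple1}. Define $\mathcal{K}_{\mathcal{H}}$ to be the kernel of the $2$-full projection $\cC_{(\mathcal{H})}\to\cC_{\mathcal{H}}$, equivalently the restriction to $\cC_{(\mathcal{H})}$ of the kernel $\mathcal{K}_{(\mathcal{J})}$ of $\cC_{(\mathcal{J})}\to\cC_{\mathcal{J}}$; these agree because $\cC_{\mathcal{H}}$ is, by construction, the $2$-full subbicategory of $\cC_{\mathcal{J}}$ generated by $\mathcal{H}\cup\{\mathbbm{1}_{\mathtt{i}}\}$. Both assertions of the lemma then reduce to the single claim that $\mathcal{K}_{\mathcal{H}}$ is the unique maximal biideal of $\cC_{(\mathcal{H})}$ not containing $\mathrm{id}_{\mathrm{F}}$ for any $\mathrm{F}\in\mathcal{H}$: granted this, $\cC_{\mathcal{H}}=\cC_{(\mathcal{H})}/\mathcal{K}_{\mathcal{H}}$ is $\mathcal{H}$-simple by construction, and is by definition the $\mathcal{H}$-simple quotient of $\cC_{(\mathcal{H})}$.

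That $\mathcal{K}_{\mathcal{H}}$ contains no $\mathrm{id}_{\mathrm{F}}$ for $\mathrm{F}\in\mathcal{H}$ is immediate from $\mathcal{J}$-simplicity of $\cC_{\mathcal{J}}$ (Lemma \ref{lem:J-simple1}) together with $\mathcal{H}\subseteq\mathcal{J}$. For maximality, given a biideal $\mathcal{I}$ of $\cC_{(\mathcal{H})}$ with $\mathcal{I}\not\subseteq\mathcal{K}_{\mathcal{H}}$, I would pick $\gamma\in\mathcal{I}\setminus\mathcal{K}_{\mathcal{H}}$, so that its image $\bar{\gamma}$ in $\cC_{\mathcal{H}}\subseteq\cC_{\mathcal{J}}$ is non-zero. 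By $\mathcal{J}$-simplicity of $\cC_{\mathcal{J}}$, for any chosen $\mathrm{F}\in\mathcal{H}$ the $2$-morphism $\mathrm{id}_{\mathrm{F}}$ can be written in $\cC_{\mathcal{J}}$ as a finite sum of vertical composites of horizontal sandwiches $\bar{\beta}_i\circ_{\mathsf{h}}\bar{\gamma}\circ_{\mathsf{h}}\bar{\zeta}_i$ with suitable inclusions and projections into and out of $\mathrm{F}$.

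The crux is then to argue that this decomposition can in fact be arranged so that the $\bar{\beta}_i$ and $\bar{\zeta}_i$ are themselves $2$-morphisms in $\cC_{\mathcal{H}}$, i.e., that $\mathrm{id}_{\mathrm{F}}$ already lies in the biideal of $\cC_{\mathcal{H}}$ generated by $\bar{\gamma}$. For this I would exploit that $\mathcal{H}$ is preserved by ${}^{\star}$ (since it is diagonal) and contains the Duflo involution $\mathrm{D}=\mathrm{D}(\mathcal{L})$: for any $\mathrm{F}\in\mathcal{H}$, both $\mathrm{F}^{\star}\mathrm{F}$ and $\mathrm{F}\mathrm{F}^{\star}$ decompose with summands in $\mathcal{H}$ (containing $\mathrm{D}$ and $\mathrm{D}(\mathcal{L}^{\star})$ respectively). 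Using the adjunction $2$-morphisms $\mathrm{ev}$ and $\mathrm{coev}$ of Definition \ref{definition:fiab}, any ``external'' $1$-morphism outside $\mathcal{H}\cup\{\mathbbm{1}_{\mathtt{i}}\}$ appearing in one of the sandwiches can be absorbed via pre- and post-composition with suitable adjunction morphisms into a composite involving only $1$-morphisms in $\mathcal{H}\cup\{\mathbbm{1}_{\mathtt{i}}\}$. Once this is done, the expression lifts back to $\cC_{(\mathcal{H})}$ to give $\mathrm{id}_{\mathrm{F}}\in\mathcal{I}+\mathcal{K}_{\mathcal{H}}=\mathcal{I}$, proving maximality.

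I expect this replacement step to be the main technical obstacle: systematically tracking how $\mathrm{F}\in\mathcal{H}$ sits as a direct summand of horizontal products $\mathrm{P}\mathrm{X}\mathrm{Q}$ with $\mathrm{X}\in\mathcal{H}$ and exchanging the outer factors $\mathrm{P},\mathrm{Q}$ for factors in $\cC_{(\mathcal{H})}$ will require several rounds of adjunction manipulation and produce sizable diagrams involving associators and unitors. This is essentially the same difficulty that is hidden behind the terse ``fact'' invoked in the proof of Lemma \ref{lem:J-simple1}, now transferred from $\mathcal{J}$ to the diagonal subcell $\mathcal{H}$.
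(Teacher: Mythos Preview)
Your reduction to a single maximality claim is fine, but the ``crux'' step is a genuine gap, and the adjunction trick you sketch does not close it. The sandwiches $\bar{\beta}_i\circ_{\mathsf{h}}\bar{\gamma}\circ_{\mathsf{h}}\bar{\zeta}_i$ coming from $\mathcal{J}$-simplicity of $\cC_{\mathcal{J}}$ may involve $1$-morphisms $\mathrm{P},\mathrm{Q}\in\cC_{\mathcal{J}}(\mathtt{i},\mathtt{i})$ lying in $\mathcal{H}$-cells of $\mathcal{J}$ other than $\mathcal{H}$ (nothing forces $\mathtt{i}$ to support a single diagonal $\mathcal{H}$-cell). Applying adjunctions replaces such a $\mathrm{P}$ by (a summand of) $\mathrm{P}^{\star}\mathrm{P}$, which contains the Duflo involution of the left cell of $\mathrm{P}$, not of $\mathcal{L}$; so you have no mechanism forcing the resulting factors back into $\mathrm{add}(\mathcal{H}\cup\{\mathbbm{1}_{\mathtt{i}}\})$. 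In short, you are trying to deduce $\mathcal{H}$-simplicity of $\cC_{\mathcal{H}}$ from $\mathcal{J}$-simplicity of $\cC_{\mathcal{J}}$ by pure biideal algebra, and that inference is exactly what needs an additional idea.

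The paper sidesteps this entirely. Rather than descending from $\cC_{\mathcal{J}}$, it works directly in $\cC_{\mathcal{H}}$: for any non-zero $\alpha\colon\mathrm{F}\to\mathrm{G}$ in $\cC_{\mathcal{H}}$ it sandwiches once with $\mathrm{id}_{\mathrm{D}}$ on each side, where $\mathrm{D}=\mathrm{D}(\mathcal{L})\in\mathcal{H}$ is the Duflo involution. The substantive input is representation-theoretic, not combinatorial: in the ($2$-faithful) cell birepresentation $\mathbf{C}_{\mathcal{L}}$, each $1$-morphism of $\mathcal{H}$ acts by a projective bimodule (generalization of \cite[Theorem~2]{KMMZ}), and $\mathrm{D}$ does not annihilate the simples indexed by $\mathcal{H}$ (generalization of \cite[Lemma~12]{MM1}). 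Hence $\mathbf{C}_{\mathcal{L}}\big((\mathrm{id}_{\mathrm{D}}\circ_{\mathsf{h}}\alpha)\circ_{\mathsf{h}}\mathrm{id}_{\mathrm{D}}\big)$ is non-radical in bimodules, so $(\mathrm{id}_{\mathrm{D}}\circ_{\mathsf{h}}\alpha)\circ_{\mathsf{h}}\mathrm{id}_{\mathrm{D}}$ already splits off an isomorphism between non-zero summands of $(\mathrm{DF})\mathrm{D}$ and $(\mathrm{DG})\mathrm{D}$. Since $\mathrm{D}\in\mathcal{H}$, this computation never leaves $\cC_{\mathcal{H}}$, and $\mathcal{H}$-simplicity follows immediately; the second claim is then automatic. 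If you want to salvage your outline, the missing ingredient is precisely this: a reason why sandwiching with a \emph{single fixed} $1$-morphism of $\mathcal{H}$ already forces a split isomorphism, and that reason comes from the projective-bimodule description of the cell birepresentation, not from adjunction juggling.
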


\begin{proof}
Consider the cell birepresentation $\mathbf{C}_{\mathcal{L}}$ of $\cC_{\mathcal{J}}$,
where $\mathcal{H}=\mathcal{H}(\mathcal{L})$, and note that it is $2$-faithful by
$\mathcal{J}$-simplicity of $\cC_{\mathcal{J}}$.
By the generalization of \cite[Theorem 2]{KMMZ} to bicategories, the action of each $\mathrm{F}\in\mathcal{H}$ is represented
via $\mathbf{C}_{\mathcal{L}}$ by a projective bimodule over the underlying algebra of
$\mathbf{C}_{\mathcal{L}}$. Let $\mathrm{D}=\mathrm{D}(\mathcal{L})$ be the Duflo involution in $\mathcal{L}$, which also belongs to $\mathcal{H}$
(see Subsection \ref{subsection:cells-bicats}). Then, by the generalization of \cite[Lemma 12]{MM1} to bicategories,
$\mathrm{D}$ does not annihilate any simples indexed by
elements of $\mathcal{H}$ in the (projective) abelianization of $\mathbf{C}_{\mathcal{L}}$. Therefore, given
$\mathrm{F},\mathrm{G}\in\mathcal{H}$ and a non-zero $\alpha\colon\mathrm{F}\to\mathrm{G}$ in $\cC_{\mathcal{H}}$,
the $2$-morphism
\begin{gather*}
(\mathrm{id}_{\mathrm{D}}\circ_{\mathsf{h}}\alpha)\circ_{\mathsf{h}}\mathrm{id}_{\mathrm{D}}\colon
(\mathrm{D}\mathrm{F})\mathrm{D}\to(\mathrm{D}\mathrm{G})\mathrm{D}
\end{gather*}
is non-zero. As $\mathbf{C}_{\mathcal{L}}(\mathrm{D})$ is a projective bimodule,
the morphism $\mathbf{C}_{\mathcal{L}}\big(\mathrm{id}_{\mathrm{D}}
\circ_{\mathsf{h}}\alpha\circ_{\mathsf{h}}\mathrm{id}_{\mathrm{D}}\big)$
is not a radical morphism in the category of bimodules. Thus,
$(\mathrm{id}_{\mathrm{D}}\circ_{\mathsf{h}}\alpha)\circ_{\mathsf{h}}\mathrm{id}_{\mathrm{D}}$
contains, as a direct summand, an isomorphism from some
non-zero summand of $(\mathrm{D}\mathrm{F})\mathrm{D}$
to some summand of $(\mathrm{D}\mathrm{G})\mathrm{D}$. Therefore, $\cC_{\mathcal{H}}$ is $\mathcal{H}$-simple and the second claim follows by definition.
\end{proof}

In Theorem \ref{theorem:strongH}, we will show that there is a biequivalence between $\cC\text{-}\mathrm{stmod}_{\mathcal{J}}$
and $\cC_{\mathcal{H}}\text{-}\mathrm{stmod}_{\mathcal{H}}$.


\section{Coalgebras and comodules in bicategories}\label{section:coalgebras}


In this section, let $\cC$ be a multifinitary bicategory.


\subsection{Coalgebras and comodules}\label{subsection:coalgebras}


The following definitions are analogs of those in \cite[Section 7.8]{EGNO}.

\begin{definition}\label{definition:coalgebra}
A \emph{coalgebra} $\mathrm{C}=(\mathrm{C},\delta_{\mathrm{C}},\epsilon_{\mathrm{C}})$ in $\cC$ consists of a $1$-morphism
$\mathrm{C}\in\cC(\mathtt{i},\mathtt{i})$, for some object $\mathtt{i}\in\cC$, a comultiplication $2$-morphism $\delta_{\mathrm{C}}\colon\mathrm{C}\to\mathrm{C}\mathrm{C}$ and a counit $2$-morphism $\epsilon_{\mathrm{C}}
\colon\mathrm{C}\to\mathbbm{1}_{\mathtt{i}}$.
These should satisfy the usual
coassociativity and counitality axioms of a
coalgebra, i.e. the diagrams
\begin{gather*}
\begin{tikzcd}[ampersand replacement=\&]
\arrow[d,"\delta_{\mathrm{C}}\circ_{\mathsf{h}}\mathrm{id}_{\mathrm{C}}",swap]\mathrm{C}\mathrm{C}
\&
\arrow[l,"\delta_{\mathrm{C}}",swap]
\mathrm{C}\arrow[r,"\delta_{\mathrm{C}}"]
\&
\mathrm{C}\mathrm{C}\arrow[d,"\mathrm{id}_{\mathrm{C}}\circ_{\mathsf{h}}\delta_{\mathrm{C}}"]
\\
(\mathrm{C}\mathrm{C})\mathrm{C}\arrow[rr,"\alpha_{\mathrm{C},\mathrm{C},\mathrm{C}}",swap]
\&\&
\mathrm{C}(\mathrm{C}\mathrm{C})
\end{tikzcd},
\\[.5ex]
\begin{tikzcd}[ampersand replacement=\&]
\arrow[d,"\delta_{\mathrm{C}}",swap]\mathrm{C}\ar[r,equal]
\& \mathrm{C}
\\
\mathrm{C}\mathrm{C}
\arrow[r,"\epsilon_{\mathrm{C}}\circ_{\mathsf{h}}\mathrm{id}_{\mathrm{C}}",swap]
\& \mathbbm{1}_{\mathtt{i}}\mathrm{C}\arrow[u,"\lunit_{\mathrm{C}}",swap]
\end{tikzcd}
,\quad
\begin{tikzcd}[ampersand replacement=\&]
\arrow[d,"\delta_{\mathrm{C}}",swap]\mathrm{C}\ar[r,equal]
\& \mathrm{C}
\\
\mathrm{C}\mathrm{C}\arrow[r,"\mathrm{id}_{\mathrm{C}}\circ_{\mathsf{h}}\epsilon_{\mathrm{C}}",swap]
\&
\mathrm{C}\mathbbm{1}_{\mathtt{i}}\arrow[u,"\runit_{\mathrm{C}}",swap]
\end{tikzcd}
\end{gather*}
should commute.
\end{definition}

\begin{example}\label{example:ADE}
\leavevmode
\begin{enumerate}[$($i$)$]

\item The identity $1$-morphism $\mathbbm{1}_\mathtt{i}$, for any $\mathtt{i}\in\cC$,
is naturally a coalgebra.

\item In finite dihedral type, there is an explicit construction of coalgebras in $\cS$
corresponding to $ADE$ Dynkin diagrams \cite[Section 7]{MMMT}.
\end{enumerate}
\end{example}

\begin{definition}\label{definition:coalg-homo}
Let $\mathrm{C},\mathrm{D}\in\cC(\mathtt{i},\mathtt{i})$ be coalgebras in $\cC$.
A \emph{homomorphism of coalgebras} in $\cC$ is a $2$-morphism
$\phi\colon\mathrm{C}\to\mathrm{D}$ such that the
diagrams
\begin{gather*}
\begin{tikzcd}[ampersand replacement=\&]
\arrow[d,"\delta_{\mathrm{C}}",swap]\mathrm{C}\arrow[r,"\phi"]
\&
\mathrm{D}\arrow[d,"\delta_{\mathrm{D}}"]
\\
\mathrm{C}\mathrm{C}\arrow[r,"\phi\circ_{\mathsf{h}}\phi",swap]
\&
\mathrm{D}\mathrm{D}
\end{tikzcd}
,\quad
\begin{tikzcd}[ampersand replacement=\&]
\arrow[dr,"\epsilon_{\mathrm{C}}",swap]
\mathrm{C}
\arrow[rr,"\phi"]\&
\&
\mathrm{D}
\arrow[dl,"\epsilon_{\mathrm{D}}"]
\\
\& \mathbbm{1}_{\mathtt{i}}
\&
\end{tikzcd}
\end{gather*}
commute.

The coalgebras $\mathrm{C}$ and $\mathrm{D}$ are \emph{isomorphic} if there exists an invertible homomorphism between them.
\end{definition}

Next, let us recall the definitions of
left, right and bicomodules in $\cC$.

\begin{definition}\label{definition:right-comodules}
Let $\mathrm{C}\in\cC(\mathtt{i},\mathtt{i})$ be a coalgebra in $\cC$.
A \emph{left $\mathrm{C}$-comodule}
$\mathrm{M}=(\mathrm{M},
\delta_{\mathrm{C},\mathrm{M}})$ in $\cC$ consists of a $1$-morphism
$\mathrm{M}\in\cC(\mathtt{j},\mathtt{i})$, for some object $\mathtt{j}\in\cC$, and a $2$-morphism $\delta_{\mathrm{C},\mathrm{M}}\colon\mathrm{M}\to\mathrm{C}\mathrm{M}$, called the \emph{left coaction}, such that the diagrams
\begin{gather*}
\begin{tikzcd}[ampersand replacement=\&]
\arrow[d,"\delta_{\mathrm{C}}\circ_{\mathsf{h}}\mathrm{id}_{\mathrm{M}}",swap]\mathrm{C}\mathrm{M}
\&
\arrow[l,"\delta_{\mathrm{C},\mathrm{M}}",swap]
\mathrm{M}\arrow[r,"\delta_{\mathrm{C},\mathrm{M}}"]
\&
\mathrm{C}\mathrm{M}\arrow[d,"\mathrm{id}_{\mathrm{C}}\circ_{\mathsf{h}}\delta_{\mathrm{C},\mathrm{M}}"]
\\
(\mathrm{C}\mathrm{C})\mathrm{M}\arrow[rr,"\alpha_{\mathrm{C},\mathrm{C},\mathrm{M}}",swap]
\&\&
\mathrm{C}(\mathrm{C}\mathrm{M})
\end{tikzcd},\quad
\begin{tikzcd}[ampersand replacement=\&,column sep=4em]
\arrow[d,"\delta_{\mathrm{C},\mathrm{M}}",swap]\mathrm{M}\ar[r,equal]
\& \mathrm{M}
\\
\mathrm{C}\mathrm{M}\arrow[r,"\epsilon_{\mathrm{C}}\circ_{\mathsf{h}}\mathrm{id}_{\mathrm{M}}",swap]
\& \mathbbm{1}_{\mathtt{i}}\mathrm{M}\arrow[u,"\lunit_{\mathrm{M}}",swap]
\end{tikzcd}
\end{gather*}
commute. The definition of
a \emph{right $\mathrm{C}$-comodule} $\mathrm{M}=(\mathrm{M},\delta_{\mathrm{M},\mathrm{C}})$ in $\cC$ is similar.
\end{definition}

\begin{definition}\label{definition:bicomodules}
For coalgebras $\mathrm{C}\in\cC(\mathtt{i},\mathtt{i})$ and
$\mathrm{D}\in\cC(\mathtt{j},\mathtt{j})$ in $\cC$, a \emph{$\mathrm{C}\text{-}\mathrm{D}$-bicomodule} $\mathrm{M}=(\mathrm{M},\delta_{\mathrm{C},\mathrm{M}},\delta_{\mathrm{M},\mathrm{D}})$ in $\cC$
is a left $\mathrm{C}$- and a right $\mathrm{D}$-comodule in $\cC(\mathtt{j},\mathtt{i})$
such that the diagram
\begin{gather*}
\begin{tikzcd}[ampersand replacement=\&]
\arrow[d,"\delta_{\mathrm{C},\mathrm{M}}\circ_{\mathsf{h}}\mathrm{id}_{\mathrm{D}}",swap]
\mathrm{M}\mathrm{D}
\&
\arrow[l,"\delta_{\mathrm{M},\mathrm{D}}",swap]
\mathrm{M}\arrow[r,"\delta_{\mathrm{C},\mathrm{M}}"]
\&
\mathrm{C}\mathrm{M}\arrow[d,"\mathrm{id}_{\mathrm{C}}\circ_{\mathsf{h}}\delta_{\mathrm{M},\mathrm{D}}"]
\\
(\mathrm{C}\mathrm{M})\mathrm{D}\arrow[rr,"\alpha_{\mathrm{C},\mathrm{M},\mathrm{D}}",swap]
\&\&
\mathrm{C}(\mathrm{M}\mathrm{D})
\end{tikzcd}
\end{gather*}
commutes.
\end{definition}

The definitions of \emph{homomorphisms} of left, right and bicomodules
should now be clear and are omitted for brevity.

\begin{remark}\label{remark:algebras}
There are, of course, also the dual notions of \emph{algebras} and \emph{modules} in
$\cC$. Their definition can be obtained from the above by inverting all arrows.

Coalgebras, comodules, bicomodules and the respective homomorphisms in the injective abelianization $\underline{\cC}$ or in the projective abelianization $\overline{\cC}$ are defined just as in $\cC$.
\end{remark}

We say that a coalgebra $\mathrm{D}$
is a \emph{subcoalgebra} of another coalgebra $\mathrm{C}$
if there is a monic $2$-morphism
$\phi\colon\mathrm{D}\to\mathrm{C}$ that is a homomorphism of coalgebras.
A subcoalgebra of $\mathrm{C}$ is called \emph{proper} if it is neither
zero nor isomorphic to $\mathrm{C}$.
A coalgebra $\mathrm{C}$ is \emph{cosimple}
if it has no proper subcoalgebras.

\begin{example} A cosimple coalgebra in $\cV\mathrm{ect}$ is a cosimple coalgebra in the
usual sense.
\end{example}

\begin{example}\label{example:cosimple}
Let $\mathbbm{1}$ and $\mathrm{s}$ denote the two simple
$1$-morphisms in $\cV\mathrm{ect}(\mathbb{Z}/2\mathbb{Z})$.
Then $\mathrm{C}=\mathbbm{1}\oplus\mathrm{s}$ has an essentially unique
structure of a cosimple coalgebra in $\cV\mathrm{ect}(\mathbb{Z}/2\mathbb{Z})$. The forgetful functor $\cV\mathrm{ect}(\mathbb{Z}/2\mathbb{Z})\to\cV\mathrm{ect}$ is monoidal, so $\mathrm{C}$ is also a coalgebra in $\cV\mathrm{ect}$. However, it is not cosimple in $\cV\mathrm{ect}$, as $\mathbbm{1}$ and $\mathrm{s}$ are mapped to isomorphic
$1$-morphisms by the forgetful functor.
\end{example}


\subsection{Cotensor product of bicomodules}\label{subsection:cotensor}


Let us briefly review the cotensor product of bicomodules over a coalgebra
in $\underline{\cC}$
(or in any bicategory having equalizers), see also e.g.
\cite[Subsection 3.3]{MMMZ}.

Let $\mathrm{M}$ be a right $\mathrm{C}$-comodule
and $\mathrm{N}$ a left $\mathrm{C}$-comodule in $\underline{\cC}$. The
\emph{cotensor product} of $\mathrm{M}$ and $\mathrm{N}$ over $\mathrm{C}$, denoted by
$\mathrm{M}\s\mathrm{N}$, is by definition
the equalizer of the diagram
\begin{gather*}
\begin{tikzcd}[ampersand replacement=\&,column sep=4em]
\mathrm{M}\mathrm{N}
\arrow[rr,"\mathrm{id}_{\mathrm{M}}
\delta_{\mathrm{C},\mathrm{N}}",yshift=.1cm]
\arrow[rr,"(\alpha_{\mathrm{M},\mathrm{C},\mathrm{N}})\circ_{\mathsf{v}}(\delta_{\mathrm{M},\mathrm{C}}\mathrm{id}_{\mathrm{N}})",swap,yshift=-.1cm]
\& \&
\mathrm{M}(\mathrm{C}\mathrm{N})
\end{tikzcd}
\leftrightsquigarrow
\begin{tikzcd}[ampersand replacement=\&]
\& \arrow[dl, "\delta_{\mathrm{M},\mathrm{C}}\mathrm{id}_{\mathrm{N}}",swap]\mathrm{M}\mathrm{N}
\arrow[dr,"\mathrm{id}_{\mathrm{M}}
\delta_{\mathrm{C},\mathrm{N}}"]
\&
\\
(\mathrm{M}\mathrm{C})\mathrm{N}\arrow[rr,"\alpha_{\mathrm{M},\mathrm{C},\mathrm{N}}",swap]
\& \& \mathrm{M}(\mathrm{C}\mathrm{N})
\end{tikzcd}.
\end{gather*}
Due to coassociativity of the right coaction, $\delta_{\mathrm{M},\mathrm{C}}$ induces a left comodule isomorphism $\delta_{\mathrm{M},\mathrm{C}}\colon\mathrm{M}\xrightarrow{\cong}
\mathrm{M}\s\mathrm{C}$, see \cite[Lemma~5]{MMMZ} for this statement in the strict setting. Similarly, $\delta_{\mathrm{C},\mathrm{N}}$ induces a right comodule isomorphism
$\delta_{\mathrm{C},\mathrm{N}}\colon\mathrm{N}\xrightarrow{\cong}\mathrm{C}\s\mathrm{N}$. Furthermore, the associator in $\cC$ induces an associator for the cotensor product.

\begin{lemma}\label{lemma:associator-cotensor-product}
Suppose that $\mathrm{K}$ is a
right $\mathrm{C}$-comodule, $\mathrm{M}$
a $\mathrm{C}\text{-}\mathrm{D}$-bicomodule and $\mathrm{N}$ a left $\mathrm{D}$-comodule,
all in $\underline{\cC}$.
Then $\alpha_{\mathrm{K},\mathrm{M},\mathrm{N}}$ induces a natural $2$-isomorphism, for which we use the same
notation,
\begin{gather}\label{eq:associativity-cotensor}
\alpha_{\mathrm{K},\mathrm{M},\mathrm{N}}\colon(\mathrm{K}\s\mathrm{M})\square_{\mathrm{D}}\mathrm{N}\xrightarrow{\cong}\mathrm{K}\s(\mathrm{M}\square_{\mathrm{D}}\mathrm{N}).
\end{gather}
\end{lemma}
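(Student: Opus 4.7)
My plan is to exhibit both $(\mathrm{K}\square_{\mathrm{C}}\mathrm{M})\square_{\mathrm{D}}\mathrm{N}$ and $\mathrm{K}\square_{\mathrm{C}}(\mathrm{M}\square_{\mathrm{D}}\mathrm{N})$ as two descriptions of a single ``triple equalizer'' sitting inside $(\mathrm{K}\mathrm{M})\mathrm{N}$ (equivalently, via $\alpha_{\mathrm{K},\mathrm{M},\mathrm{N}}$, inside $\mathrm{K}(\mathrm{M}\mathrm{N})$), and then to identify the ambient associator of $\underline{\cC}$ as the canonical isomorphism between these two descriptions.

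First I would verify that the iterated constructions make sense. The $2$-morphism $\mathrm{id}_{\mathrm{K}}\circ_{\mathsf{h}}\rho_{\mathrm{M}}$ restricts along the equalizer inclusion $\mathrm{K}\square_{\mathrm{C}}\mathrm{M}\hookrightarrow \mathrm{K}\mathrm{M}$ to a right $\mathrm{D}$-coaction on $\mathrm{K}\square_{\mathrm{C}}\mathrm{M}$; this is a diagram chase using the bicomodule axiom for $\mathrm{M}$ (the compatibility of $\lambda_{\mathrm{M}}$ and $\rho_{\mathrm{M}}$ through $\alpha_{\mathrm{C},\mathrm{M},\mathrm{D}}$) together with the universal property of the equalizer. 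Dually, $\mathrm{M}\square_{\mathrm{D}}\mathrm{N}$ inherits a left $\mathrm{C}$-coaction from $\lambda_{\mathrm{M}}$. With these coactions in hand, the two iterated cotensor products in the statement are well-defined.

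Next, using that horizontal composition with a fixed $1$-morphism in the abelian category $\underline{\cC}$ preserves the relevant finite limits (equalizers embed into horizontal composites), I would identify each of $(\mathrm{K}\square_{\mathrm{C}}\mathrm{M})\square_{\mathrm{D}}\mathrm{N}$ and $\mathrm{K}\square_{\mathrm{C}}(\mathrm{M}\square_{\mathrm{D}}\mathrm{N})$ with the joint equalizer, inside $(\mathrm{K}\mathrm{M})\mathrm{N}$ respectively $\mathrm{K}(\mathrm{M}\mathrm{N})$, of the two pairs of parallel $2$-morphisms testing $\mathrm{C}$- and $\mathrm{D}$-balancedness (each built out of $\rho_{\mathrm{K}}, \lambda_{\mathrm{M}}, \rho_{\mathrm{M}}, \lambda_{\mathrm{N}}$ and appropriate associators). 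Naturality of $\alpha_{\mathrm{K},\mathrm{M},\mathrm{N}}$ with respect to all of these $2$-morphisms shows that it sends the first joint equalizer isomorphically onto the second; transporting this via the two identifications yields the desired $2$-isomorphism $\alpha_{\mathrm{K},\mathrm{M},\mathrm{N}}\colon (\mathrm{K}\square_{\mathrm{C}}\mathrm{M})\square_{\mathrm{D}}\mathrm{N}\xrightarrow{\cong} \mathrm{K}\square_{\mathrm{C}}(\mathrm{M}\square_{\mathrm{D}}\mathrm{N})$, and naturality in $\mathrm{K}$, $\mathrm{M}$ and $\mathrm{N}$ is inherited directly from that of the ambient associator.

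The main obstacle I expect is bookkeeping rather than a conceptual difficulty: the equalizer defining $\square_{\mathrm{C}}$ itself contains an instance of $\alpha_{-,\mathrm{C},-}$, and likewise for $\square_{\mathrm{D}}$, so checking that $\alpha_{\mathrm{K},\mathrm{M},\mathrm{N}}$ intertwines the two joint equalizers produces pentagon- and hexagon-shaped coherence diagrams that one has to pass through. Commutativity of these is ultimately a consequence of the pentagon axiom of $\cC$ combined with the $\mathrm{C}$-$\mathrm{D}$-bicomodule axiom for $\mathrm{M}$, so no input beyond coherence is required, but the diagrams are noticeably bulkier than in the $2$-categorical analogue \cite[Lemma~5]{MMMZ} and careful tracking of the order in which associators are inserted is essential.
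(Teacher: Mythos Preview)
Your proposal is correct and follows essentially the same strategy as the paper. The paper organizes the argument in two explicit stages rather than through a single joint equalizer: it first proves the intermediate $2$-isomorphisms $(\mathrm{K}\square_{\mathrm{C}}\mathrm{M})\mathrm{N}\cong \mathrm{K}\square_{\mathrm{C}}(\mathrm{M}\mathrm{N})$ and $(\mathrm{K}\mathrm{M})\square_{\mathrm{D}}\mathrm{N}\cong \mathrm{K}(\mathrm{M}\square_{\mathrm{D}}\mathrm{N})$ via prism diagrams (using left exactness of one-sided horizontal composition in $\underline{\cC}$), and then runs a second prism, with those intermediate isomorphisms as the vertical edges, to compare the two iterated cotensor products. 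Your joint-equalizer formulation is a repackaging of the same diagram chase; the paper's two-step presentation has the mild advantage of isolating exactly where left exactness is invoked and of making the pentagon/naturality checks one facet at a time, which is the bookkeeping you flagged as the main obstacle.
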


\begin{proof}
First we claim that $\alpha_{\mathrm{K},\mathrm{M},\mathrm{N}}$ induces two
intermediate natural $2$-isomorphisms, for which we also use the same notation,
\begin{gather}\label{eq:associativity-cotensor2}
\alpha_{\mathrm{K},\mathrm{M},\mathrm{N}}\colon(\mathrm{K}\s\mathrm{M})\mathrm{N}\xrightarrow{\cong}\mathrm{K}\s(\mathrm{M}\mathrm{N})
,\quad
\alpha_{\mathrm{K},\mathrm{M},\mathrm{N}}\colon(\mathrm{K}\mathrm{M})\square_{\mathrm{D}}\mathrm{N}\xrightarrow{\cong}\mathrm{K}(\mathrm{M}\square_{\mathrm{D}}\mathrm{N}).
\end{gather}
We only prove the existence of the first one, which is the one we need below. The existence of the second one can be proved analogously. Consider the following diagram.
\begin{gather*}
\begin{tikzcd}[ampersand replacement=\&,column sep=4em]
\arrow[ddd,"\alpha_{\mathrm{K},\mathrm{M},\mathrm{N}}",swap] (\mathrm{K}\mathrm{M})\mathrm{N}\arrow[rr, "(\delta_{\mathrm{K},\mathrm{C}}\mathrm{id}_{\mathrm{M}})\mathrm{id}_{\mathrm{N}}"]\arrow[dr, "(\mathrm{id}_{\mathrm{K}}\delta_{\mathrm{C},\mathrm{M}})\mathrm{id}_{\mathrm{N}}",near end,swap]
\& \&
\big((\mathrm{K}\mathrm{C})\mathrm{M}\big)\mathrm{N}\arrow[ddd,"\alpha_{\mathrm{K}\mathrm{C},\mathrm{M},\mathrm{N}}"]\arrow[dl, "\alpha_{\mathrm{K},\mathrm{C},\mathrm{M}}\mathrm{id}_{\mathrm{N}}"]
\\
\arrow[rd,phantom,"\phantom{aaa}\circled{1}\text{\tiny(front and back)}"]
\&
\big(\mathrm{K}(\mathrm{C}\mathrm{M})\big)\mathrm{N}
\arrow[d,"\alpha_{\mathrm{K},\mathrm{C}\mathrm{M},\mathrm{N}}"]
\&
\arrow[ld,phantom,"\circled{2}"]
\\
\&
\mathrm{K}\big((\mathrm{C}\mathrm{M})\mathrm{N}\big)
\&
\\
\mathrm{K}(\mathrm{M}\mathrm{N})\arrow[rr, "\delta_{\mathrm{K},\mathrm{C}}\mathrm{id}_{\mathrm{M}\mathrm{N}}",near start]\arrow[dr,"\mathrm{id}_{\mathrm{K}}\delta_{\mathrm{C},\mathrm{MN}}",swap]\arrow[ur,"\mathrm{id}_{\mathrm{K}}(\delta_{\mathrm{C},\mathrm{M}}\mathrm{id}_{\mathrm{N}})" near end]\arrow[dr,phantom,"\circled{3}",bend left=20]
\&
\& (\mathrm{K}\mathrm{C})(\mathrm{M}\mathrm{N})\arrow[dl,"\alpha_{\mathrm{K},\mathrm{C},\mathrm{M}\mathrm{N}}"]
\\
\& \mathrm{K}\big(\mathrm{C}(\mathrm{M}\mathrm{N})\big)
\arrow[from=uu,crossing over,"\mathrm{id}_{\mathrm{K}}\alpha_{\mathrm{C},\mathrm{M},\mathrm{N}}",very near start]
\&
\end{tikzcd}
.
\end{gather*}
The vertical faces commute:
the faces labeled $1$ by
naturality of the associator, the one labeled
$2$ by the pentagon coherence condition for the
associator, and the triangle labeled $3$ by definition of $\delta_{\mathrm{C},\mathrm{MN}}$. Since all the vertical maps
are isomorphisms, this implies that $\alpha_{\mathrm{K},\mathrm{M},\mathrm{N}}$ induces
a $2$-isomorphism between the equalizer of the top triangle, which is $ (\mathrm{K}\s\mathrm{M})\mathrm{N}$ since right composition with $\mathrm{N}$ is left exact, and the equalizer of the bottom triangle,
which is $\mathrm{K}\s(\mathrm{M}\mathrm{N})$.
This proves the existence of the first natural $2$-isomorphism in
\eqref{eq:associativity-cotensor2}. Next, consider
\begin{gather*}
\begin{tikzcd}[ampersand replacement=\&,column sep=4em]
\ar[ddd,"\alpha_{\mathrm{K},\mathrm{M},\mathrm{N}}",swap] (\mathrm{K}\s\mathrm{M})\mathrm{N}
\ar[rr,"\delta_{\mathrm{K}\s\mathrm{M},\mathrm{D}}\mathrm{id}_{\mathrm{N}}"]
\ar[dr,"\mathrm{id}_{\mathrm{K}\s\mathrm{M}}\delta_{\mathrm{D},\mathrm{N}}",near end]
\&
\&
\big((\mathrm{K}\s\mathrm{M})\mathrm{D}\big)\mathrm{N}\ar[dl,"\alpha_{\mathrm{K}\s\mathrm{M},\mathrm{D},\mathrm{N}}"]\ar[dd,"\alpha_{\mathrm{K},\mathrm{M},\mathrm{D}}\mathrm{id}_{\mathrm{N}}"]
\\
\&
(\mathrm{K}\s\mathrm{M})(\mathrm{D}\mathrm{N})
\&
\\
\arrow[rd,phantom,"\phantom{aaa}\circled{1}\text{\tiny(front and back)}",near start]
\&
\phantom{.}
\& \big(\mathrm{K}\s(\mathrm{M}\mathrm{D})\big)\mathrm{N}\ar[d,"\alpha_{\mathrm{K},\mathrm{M}\mathrm{D},\mathrm{N}}"]
\ar[from=uull,bend right=20,crossing over,"(\mathrm{id}_{\mathrm{K}}
\square\delta_{\mathrm{M},\mathrm{D}})\mathrm{id}_{\mathrm{N}}",swap]
\arrow[lu,phantom,"\circled{3}"]
\\
\mathrm{K}\s(\mathrm{M}\mathrm{N})
\ar[rr,"\mathrm{id}_{\mathrm{K}}\square (\delta_{\mathrm{M},\mathrm{D}}\mathrm{id}_{\mathrm{N}})",near start]
\ar[dr, "\mathrm{id}_{\mathrm{K}} (\mathrm{id}_{\mathrm{M}}
\delta_{\mathrm{D},\mathrm{N}})",swap]
\&
\phantom{.}
\&
\mathrm{K}\s\big((\mathrm{M}\mathrm{D})\mathrm{N}\big)
\ar[dl,"\mathrm{id}_{\mathrm{K}}\square\alpha_{\mathrm{M},\mathrm{D},\mathrm{N}}"]
\arrow[lu,phantom,"\circled{2}"]
\\
\& \mathrm{K}\s\big(\mathrm{M}(\mathrm{D}\mathrm{N})\big)\ar[from=uuu, crossing over,"\alpha_{\mathrm{K},\mathrm{M},\mathrm{DN}}",very near start]
\&
\end{tikzcd}.
\end{gather*}
Again, all vertical faces commute:
the left and back quadrilaterals labeled $1$ by
naturality of the induced associator,
the right pentagon labeled $2$ by the pentagon coherence
condition for the induced associator, and the
triangle labeled $3$ by definition of $\delta_{\mathrm{K}\s\mathrm{M},\mathrm{D}}$. Recall that equalizers are unique up to isomorphisms,
which implies that the induced associators in \eqref{eq:associativity-cotensor2} are also
natural and satisfy the pentagon coherence condition. Again, the vertical maps
are $2$-isomorphisms, so $\alpha_{\mathrm{K},\mathrm{M},\mathrm{N}}$ induces a $2$-isomorphism between the equalizer of the top triangle, which is
$(\mathrm{K}\s\mathrm{M})\square_{\mathrm{D}}\mathrm{N}$, and the equalizer of the bottom triangle,
which is $\mathrm{K}\s
(\mathrm{M}\square_{\mathrm{D}}\mathrm{N})$.
\end{proof}

\begin{corollary}\label{corollary:coalgebras-cats}
Coalgebras, bicomodules and bicomodule homomorphisms in $\underline{\cC}$ form a bicategory, in which horizontal composition is given by the cotensor product.
\end{corollary}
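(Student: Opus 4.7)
The plan is to exhibit the data of a bicategory and then reduce each coherence axiom to the corresponding axiom in $\underline{\cC}$, leveraging the observation that the morphism categories in $\underline{\cC}$ have equalizers and that horizontal composition is right exact in each variable, so the cotensor product is bifunctorial and interacts well with $\alpha,\lunit,\runit$. The objects are coalgebras in $\underline{\cC}$; the hom-category from $\mathrm{C}\in\underline{\cC}(\mathtt{i},\mathtt{i})$ to $\mathrm{D}\in\underline{\cC}(\mathtt{j},\mathtt{j})$ has as $1$-morphisms the $\mathrm{D}$-$\mathrm{C}$-bicomodules in $\underline{\cC}(\mathtt{i},\mathtt{j})$ and as $2$-morphisms the bicomodule homomorphisms, with vertical composition inherited from $\underline{\cC}$. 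Horizontal composition is $\square$; the identity $1$-morphism on a coalgebra $\mathrm{C}$ is $\mathrm{C}$ itself, viewed as a $\mathrm{C}$-$\mathrm{C}$-bicomodule via $\delta_{\mathrm{C}}$ on both sides (coassociativity guarantees that this really is a bicomodule). The associator is the $2$-isomorphism of Lemma~\ref{lemma:associator-cotensor-product}, and the unitors are the coaction maps $\lambda_{\mathrm{M}}\colon\mathrm{M}\xrightarrow{\cong}\mathrm{C}\square_{\mathrm{C}}\mathrm{M}$ and $\rho_{\mathrm{M}}\colon\mathrm{M}\xrightarrow{\cong}\mathrm{M}\square_{\mathrm{D}}\mathrm{D}$ recalled just before that lemma.

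First I would check that $\square$ is a bifunctor: given bicomodule maps, the induced map on the cotensor product exists and is unique by the universal property of the equalizer, and functoriality (preservation of identities and of vertical composition) is then automatic. Naturality of $\alpha,\lunit,\runit$ on bicomodule maps follows in the same way, using that the corresponding naturality squares already commute in $\underline{\cC}$ and then descending to equalizers; this was essentially done in the diagrams of the proof of Lemma~\ref{lemma:associator-cotensor-product}, so I would just invoke it. The fact that $\lunit_{\mathrm{M}}$ and $\runit_{\mathrm{M}}$ are bicomodule isomorphisms (not merely $2$-isomorphisms in $\underline{\cC}$) is immediate from the comodule axioms, which is precisely the content of the sentence preceding Lemma~\ref{lemma:associator-cotensor-product}.

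For the coherence axioms, the pentagon for $\alpha_{-,-,-}$ follows by the same diagram-chase as in Lemma~\ref{lemma:associator-cotensor-product}: the four instances of the induced associator assemble into a pentagon that maps, via the canonical monomorphism from the four-fold cotensor product into the four-fold horizontal composite, to the pentagon of associators in $\underline{\cC}$, which commutes by the bicategory axiom for $\underline{\cC}$. By the universal property of equalizers, the induced pentagon commutes too. The triangle axiom is handled analogously: one writes out the triangle involving $\runit_{\mathrm{K}\square_{\mathrm{C}}\mathrm{M}}$, $\alpha_{\mathrm{K},\mathrm{C},\mathrm{M}}$ and $\lunit_{\mathrm{M}}$, maps it to the corresponding triangle in $\underline{\cC}$ (the original \eqref{eq:0.00}), and uses the counit axiom for $\mathrm{C}$ together with the monomorphic property of the equalizer inclusions to conclude that it commutes. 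The identity $\lunit_{\mathbbm{1}_{\mathtt{i}}}=\runit_{\mathbbm{1}_{\mathtt{i}}}$ in the new bicategory becomes the trivial equality $\delta_{\mathrm{C}}=\delta_{\mathrm{C}}$ when the identity $1$-morphism on $\mathrm{C}$ is $\mathrm{C}$ itself.

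The main obstacle I expect is bookkeeping rather than conceptual difficulty: one must be careful that the associator and unitors constructed via the universal property of equalizers are compatible with one another, and that the reduction to axioms in $\underline{\cC}$ really goes through after all the equalizer inclusions are inserted. Since horizontal composition in $\underline{\cC}$ is right exact in each variable (because $1$-morphisms in $\underline{\cC}$ come from left exact functors on abelianizations), the relevant equalizers are preserved, so each step of the reduction is legitimate. Once these compatibilities are recorded, the corollary follows from Lemma~\ref{lemma:associator-cotensor-product}.
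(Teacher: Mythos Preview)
Your approach is correct and matches the paper's, though the paper states the corollary without proof, treating it as an immediate consequence of Lemma~\ref{lemma:associator-cotensor-product} together with the comodule isomorphisms $\rho_{\mathrm{M}}\colon\mathrm{M}\cong\mathrm{M}\square_{\mathrm{C}}\mathrm{C}$ and $\lambda_{\mathrm{N}}\colon\mathrm{N}\cong\mathrm{C}\square_{\mathrm{C}}\mathrm{N}$; your write-up simply expands these details.

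One terminological slip: you write that horizontal composition in $\underline{\cC}$ is \emph{right} exact in each variable, but right exactness preserves coequalizers, not equalizers. What you need (and what the paper uses in the proof of Lemma~\ref{lemma:associator-cotensor-product}) is \emph{left} exactness of horizontal composition in each variable, which is what guarantees that the relevant equalizers are preserved. Your conclusion is correct, just the label is swapped.
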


We will denote the bicategory of coalgebras, bicomodules and bicomodule homomorphisms in $\underline{\cC}$ by $\B_{\underline{\ccC}}$.


\subsection{Coalgebras and bicomodules under pseudofunctors}\label{subsection:coalg-pseudofunctors}


Let $\Phi\colon\cC\to\cD$ be a $\Bbbk$-linear pseudofunctor between two multifinitary bicategories with structural $2$-isomor\-phisms
\begin{gather*}
\phi_{\mathrm{F},\mathrm{G}}\colon\Phi(\mathrm{F}\mathrm{G})
\xrightarrow{\cong}\Phi(\mathrm{F})\Phi(\mathrm{G}),\quad\phi_{\mathtt{i}}\colon\Phi(\mathbbm{1}_{\mathtt{i}})\xrightarrow{\cong}\mathbbm{1}_{\Phi(\mathtt{i})}.
\end{gather*}
Denote by $\underline{\Phi}\colon\underline{\cC}\to\underline{\cD}$
its extension to the abelianizations, which is left exact by definition.

\begin{lemma}\label{lemma:coalgebras-transported}
The $\Bbbk$-linear pseudofunctor $\underline{\Phi}$ induces a $\Bbbk$-linear pseudofunctor,
for which we use the same notation, $\underline{\Phi}\colon\B_{\underline{\ccC}}\to\B_{\underline{\ccD}}$.
\end{lemma}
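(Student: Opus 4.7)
The plan is to define $\Phi$ on $\B{\underline{\cC}}$ level by level, using the structural $2$-isomorphisms $\phi_{\mathrm{F},\mathrm{G}}$ and $\phi_{\mathtt{i}}$ of the pseudofunctor together with the fact that $\Phi\colon\underline{\cC}\to\underline{\cD}$ is left exact.

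\textbf{Step 1: Coalgebras.} Given a coalgebra $(\mathrm{C},\delta_{\mathrm{C}},\epsilon_{\mathrm{C}})$ in $\underline{\cC}$ with $\mathrm{C}\in\underline{\cC}(\mathtt{i},\mathtt{i})$, I would declare $\Phi(\mathrm{C})\in\underline{\cD}(\Phi(\mathtt{i}),\Phi(\mathtt{i}))$ to be a coalgebra with comultiplication $\delta_{\Phi(\mathrm{C})}:=\phi_{\mathrm{C},\mathrm{C}}\circ_{\mathsf{v}}\Phi(\delta_{\mathrm{C}})$ and counit $\epsilon_{\Phi(\mathrm{C})}:=\phi_{\mathtt{i}}\circ_{\mathsf{v}}\Phi(\epsilon_{\mathrm{C}})$. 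The coassociativity diagram for $\Phi(\mathrm{C})$ reduces to pasting the image under $\Phi$ of the coassociativity diagram for $\mathrm{C}$ with the hexagon coherence relating $\phi_{\mathrm{F},\mathrm{G}}$ to the associators of $\cC$ and $\cD$; naturality of $\phi$ glues the pieces together. Counitality follows in the same way from the triangle coherences relating $\phi$ to the unitors.

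\textbf{Step 2: Bicomodules and their homomorphisms.} For a $\mathrm{C}$-$\mathrm{D}$-bicomodule $(\mathrm{M},\lambda_{\mathrm{M}},\rho_{\mathrm{M}})$, set $\lambda_{\Phi(\mathrm{M})}:=\phi_{\mathrm{C},\mathrm{M}}\circ_{\mathsf{v}}\Phi(\lambda_{\mathrm{M}})$ and $\rho_{\Phi(\mathrm{M})}:=\phi_{\mathrm{M},\mathrm{D}}\circ_{\mathsf{v}}\Phi(\rho_{\mathrm{M}})$; the left and right comodule axioms plus the bicomodule compatibility all follow from the same diagram-pasting argument as in Step~1. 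For a bicomodule homomorphism $\psi\colon\mathrm{M}\to\mathrm{N}$, the $2$-morphism $\Phi(\psi)$ is a bicomodule homomorphism $\Phi(\mathrm{M})\to\Phi(\mathrm{N})$ by naturality of $\phi_{\mathrm{F},\mathrm{G}}$ in each argument.

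\textbf{Step 3: Horizontal composition via cotensor.} This is the main obstacle. The horizontal composition in $\B{\underline{\cC}}$ is the cotensor product, which is defined as an equalizer in $\underline{\cC}$. I would produce a comparison $2$-isomorphism
\begin{gather*}
\Phi_{\mathrm{M},\mathrm{N}}\colon \Phi(\mathrm{M}\square_{\mathrm{C}}\mathrm{N})\xrightarrow{\cong}\Phi(\mathrm{M})\square_{\Phi(\mathrm{C})}\Phi(\mathrm{N}),
\end{gather*}
by applying $\Phi$ to the defining equalizer diagram for $\mathrm{M}\square_{\mathrm{C}}\mathrm{N}$ and comparing it, via the structural $2$-isomorphisms $\phi_{\mathrm{M},\mathrm{N}}$, $\phi_{\mathrm{M},\mathrm{C}}$ and $\phi_{\mathrm{C},\mathrm{N}}$, with the defining equalizer diagram for $\Phi(\mathrm{M})\square_{\Phi(\mathrm{C})}\Phi(\mathrm{N})$. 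Because $\Phi$ on $\underline{\cC}$ is left exact by assumption, it preserves the equalizer, and the resulting diagram matches the target equalizer up to the $\phi$'s; uniqueness of equalizers then produces $\Phi_{\mathrm{M},\mathrm{N}}$ and shows it is invertible. The unit comparison $\phi_{\mathrm{C}}\colon\Phi(\mathrm{C})\to\Phi(\mathrm{C})$ is just the identity, since the unit bicomodule on $\mathrm{C}$ in $\B{\underline{\cC}}$ is $\mathrm{C}$ itself.

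\textbf{Step 4: Pseudofunctor coherences.} It remains to check that $\Phi_{\mathrm{M},\mathrm{N}}$ and the unit isomorphism satisfy the pentagon and triangle axioms for a pseudofunctor between $\B{\underline{\cC}}$ and $\B{\underline{\cD}}$. These reduce, via uniqueness of equalizers, to the pentagon and triangle axioms of the original pseudofunctor $\Phi$ together with the naturality of the associator $\alpha_{\mathrm{K},\mathrm{M},\mathrm{N}}$ constructed in Lemma~\ref{lemma:associator-cotensor-product}; indeed, that lemma was proved precisely by transferring the pentagon of $\underline{\cC}$ to the cotensor product, so the same pattern of pasting diagrams applies here, with $\phi_{\mathrm{F},\mathrm{G}}$ providing the required comparison on each face.
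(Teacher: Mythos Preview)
Your proposal is correct and follows essentially the same approach as the paper: define the coalgebra/bicomodule structures on $\Phi(-)$ via the structural isomorphisms $\phi_{\mathrm{F},\mathrm{G}}$ and $\phi_{\mathtt{i}}$, and obtain the cotensor comparison $\Phi(\mathrm{M}\square_{\mathrm{C}}\mathrm{N})\cong\Phi(\mathrm{M})\square_{\Phi(\mathrm{C})}\Phi(\mathrm{N})$ by left exactness of $\Phi$ together with a prism diagram relating the two equalizer diagrams through the $\phi$'s. The only difference is that you explicitly flag the pseudofunctor coherence axioms in Step~4, whereas the paper stops after establishing the comparison isomorphism and leaves these as routine.
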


\begin{proof}
The proof consists of five parts, the first four of which are straightforward:
\begin{enumerate}[$($i$)$]

\item
If $\mathrm{C}=(\mathrm{C},\delta_{\mathrm{C}},\epsilon_{\mathrm{C}})$
is a coalgebra in $\underline{\cC}(\mathtt{i},\mathtt{i})$,
then the $1$-morphism $\underline{\Phi}(\mathrm{C})$ is a
coalgebra in $\underline{\cD}(\underline{\Phi}(\mathtt{i}),\underline{\Phi}(\mathtt{i}))$, with comultiplication and counit
\begin{gather*}
\delta_{\underline{\Phi}(\mathrm{C})}:=
\big[
\underline{\Phi}(\mathrm{C})\xrightarrow{\underline{\Phi}(\delta_{\mathrm{C}})}\underline{\Phi}(\mathrm{C}\mathrm{C})\xrightarrow{\phi_{\mathrm{C},\mathrm{C}}}\underline{\Phi}(\mathrm{C})\underline{\Phi}(\mathrm{C})\big]
,
\\
\epsilon_{\underline{\Phi}(\mathrm{C})}:=
\big[
\underline{\Phi}(\mathrm{C})\xrightarrow{\underline{\Phi}(\epsilon_{\mathrm{C}})} \underline{\Phi}(\mathbbm{1}_{\mathtt{i}})\xrightarrow{\phi_{\mathtt{i}}} \mathbbm{1}_{\underline{\Phi}(\mathtt{i})}
\big].
\end{gather*}

\item
If $\mathrm{M}=(\mathrm{M},\delta_{\mathrm{C},\mathrm{M}})$ is a left $\mathrm{C}$-comodule in $\underline{\cC}$, then $\underline{\Phi}(\mathrm{M})$ is a
left $\underline{\Phi}(\mathrm{C})$-comodule in $\underline{\cD}$ with left coaction
\begin{gather*}
\delta_{\underline{\Phi}(\mathrm{C}),\underline{\Phi}(\mathrm{M})}:=
[
\underline{\Phi}(\mathrm{M})\xrightarrow{\underline{\Phi}(\delta_{\mathrm{C},\mathrm{M}})} \underline{\Phi}(\mathrm{CM})
\xrightarrow{\phi_{\mathrm{C},\mathrm{M}}} \underline{\Phi}(\mathrm{C})\underline{\Phi}(\mathrm{M})
].
\end{gather*}

\item
If $\mathrm{M}=(\mathrm{M},\delta_{\mathrm{M},\mathrm{C}})$ is a right $\mathrm{C}$-comodule
in $\underline{\cC}$, then $\underline{\Phi}(\mathrm{M})$ is a right
$\underline{\Phi}(\mathrm{C})$-comodule in $\underline{\cD}$ with right coaction
\begin{gather*}
\delta_{\underline{\Phi}(\mathrm{M}),\underline{\Phi}(\mathrm{C})}:=
\big[
\underline{\Phi}(\mathrm{M})\xrightarrow{\underline{\Phi}(\delta_{\mathrm{M},\mathrm{C}})} \underline{\Phi}(\mathrm{MC})
\xrightarrow{\phi_{\mathrm{M},\mathrm{C}}} \underline{\Phi}(\mathrm{M})\underline{\Phi}(\mathrm{C})
\big].
\end{gather*}

\item
If $\mathrm{C}$ and $\mathrm{D}$ are two coalgebras and $\mathrm{M}$
is a $\mathrm{C}\text{-}\mathrm{D}$-bicomodule in $\underline{\cC}$, then
$\underline{\Phi}(\mathrm{M})$ is a $\underline{\Phi}(\mathrm{C})\text{-}\underline{\Phi}(\mathrm{D})$ bicomodule in $\underline{\cD}$ with bicoactions defined by the previous two points.

\item
Let $\mathrm{C}$ be a coalgebra in $\underline{\cC}$. If $\mathrm{M}$ is a right $\mathrm{C}$-comodule and $\mathrm{N}$ is a left $\mathrm{C}$-comodule
in $\underline{\cC}$, then there is a $2$-isomorphism
\begin{gather*}
\underline{\Phi}(\mathrm{M}\s\mathrm{N})\xrightarrow{\cong}
\underline{\Phi}(\mathrm{M})\square_{\underline{\Phi}(\mathrm{C})}\underline{\Phi}(\mathrm{N})
\end{gather*}
in $\underline{\cD}$. To prove this claim, consider the following diagram,
where we distinguish the associators in the abelianizations of
$\cC$ and $\cD$ by superscripts:
\begin{gather*}
\begin{tikzcd}[ampersand replacement=\&,column sep=4em]
\ar[dd,"\phi_{\mathrm{M},\mathrm{N}}"] \underline{\Phi}(\mathrm{M}\mathrm{N})
\ar[rr,"\underline{\Phi}(\delta_{\mathrm{M},\mathrm{C}}\mathrm{id}_{\mathrm{N}})"]
\ar[dr,"\underline{\Phi}(\mathrm{id}_{\mathrm{M}}\delta_{\mathrm{C},\mathrm{N}})",swap]
\&\&
\underline{\Phi}\big((\mathrm{MC})\mathrm{N}\big)
\ar[dd,"\phi_{\mathrm{MC},\mathrm{N}}"]
\ar[dl,"\underline{\Phi}(\alpha_{\mathrm{M},\mathrm{C},\mathrm{N}}^{\cccC})"]
\\
\&
\underline{\Phi}\big(\mathrm{M}(\mathrm{CN})\big)
\&
\\
\ar[dd,equal]\underline{\Phi}(\mathrm{M})\underline{\Phi}(\mathrm{N})
\ar[rr,"\underline{\Phi}(\delta_{\mathrm{M},\mathrm{C}})\mathrm{id}_{\underline{\Phi}(\mathrm{N})}" near start]
\ar[dr,"\mathrm{id}_{\underline{\Phi}(\mathrm{M})}\underline{\Phi}(\delta_{\mathrm{C},\mathrm{N}})",swap]
\& \&
\underline{\Phi}(\mathrm{MC})\underline{\Phi}(\mathrm{N})
\ar[dd,"\phi_{\mathrm{M},\mathrm{C}}\mathrm{id}_{\underline{\Phi}(\mathrm{N})}"]
\\
\&
\underline{\Phi}(\mathrm{M})\underline{\Phi}(\mathrm{CN})
\ar[from=uu, crossing over, "\phi_{\mathrm{M},\mathrm{CN}}" very near start]
\&
\phantom{.}
\\
\underline{\Phi}(\mathrm{M})\underline{\Phi}(\mathrm{N})
\ar[rr,"\delta_{\underline{\Phi}(\mathrm{M}),\underline{\Phi}(\mathrm{C})}\mathrm{id}_{\underline{\Phi}(\mathrm{N})}" near start]
\ar[dr,"\mathrm{id}_{\underline{\Phi}(\mathrm{M})}\delta_{\underline{\Phi}(\mathrm{C}),\underline{\Phi}(\mathrm{N})}",swap]
\& \&
\big(\underline{\Phi}(\mathrm{M})\underline{\Phi}(\mathrm{C})\big)\underline{\Phi}(\mathrm{N})
\ar[dl,"\alpha_{\underline{\Phi}(\mathrm{M}),\underline{\Phi}(\mathrm{C}),\underline{\Phi}(\mathrm{N})}^{\cccD}"]
\\
\&
\underline{\Phi}(\mathrm{M})\big(\underline{\Phi}(\mathrm{C})\underline{\Phi}(\mathrm{N})\big)
\ar[from=uu, crossing over, "\mathrm{id}_{\underline{\Phi}(\mathrm{M})}\phi_{\mathrm{C},\mathrm{N}}" very near start] \&
\end{tikzcd}
\end{gather*}
As before, all lateral faces commute due to
naturality and the coherence condition
of $\phi_{{}_{-},{}_{-}}$, as well as the definition of $\delta_{\underline{\Phi}(\mathrm{M}),\underline{\Phi}(\mathrm{C})}$ and $\delta_{\underline{\Phi}(\mathrm{C}),\underline{\Phi}(\mathrm{N})}$. Since all vertical arrows are $2$-isomorphisms, this implies that
the equalizer of the top triangle, which is $\underline{\Phi}(\mathrm{M}\s\mathrm{N})$ by left exactness of $\underline{\Phi}$,
is isomorphic to the equalizer
of the bottom triangle, which is
$\underline{\Phi}(\mathrm{M})\square_{\underline{\Phi}(\mathrm{C})}
\underline{\Phi}(\mathrm{N})$.\qedhere
\end{enumerate}
\end{proof}


\section{Coalgebras and birepresentations}\label{section:coalgebras-and-birepresentations}

As before, throughout this section $\cC$ is assumed to be a multifinitary bicategory. We will recall how finitary birepresentations of $\cC$ give rise to coalgebras in $\underline{\cC}$ and vice versa.
This is a bicategorical version of \cite{MMMT}, which in turn was inspired by \cite{Os}.


\subsection{The finitary birepresentation associated to a coalgebra}

Let $\mathrm{C}\in\underline{\cC}(\mathtt{i},\mathtt{i})$ be a coalgebra.
For every $\mathtt{j}\in\cC$, take $\mathrm{comod}_{\underline{\ccC}}(\mathrm{C})_{\mathtt{j}}$
to be the category of right $\mathrm{C}$-comodules and comodule homomorphisms in
$\underline{\cC}(\mathtt{i},\mathtt{j})$. Then there is a birepresentation of $\underline{\cC}$ which assigns
\begin{itemize}

\item the category
$\mathrm{comod}_{\underline{\ccC}}(\mathrm{C})_{\mathtt{j}}$
to an object $\mathtt{j}\in\underline{\cC}$;

\item the functor
\begin{gather*}
\mathrm{X}_{\mathtt{k}\mathtt{j}}\circ_{\mathsf{h}}{}_{-}\colon \mathrm{comod}_{\underline{\ccC}}
(\mathrm{C})_{\mathtt{j}}\to \mathrm{comod}_{\underline{\ccC}}(\mathrm{C})_{\mathtt{k}}
\end{gather*}
to a $1$-morphism $\mathrm{X}_{\mathtt{k}\mathtt{j}}$ in $\underline{\cC}(\mathtt{j},
\mathtt{k})$, for $\mathtt{j},\mathtt{k}\in\underline{\cC}$;

\item
the natural transformation
\begin{gather*}
\beta_{\mathtt{k}\mathtt{j}}\circ_{\mathsf{h}}{}_{-}\colon
\mathrm{X}_{\mathtt{k}\mathtt{j}}\circ_{\mathsf{h}}{}_{-}\to
\mathrm{Y}_{\mathtt{k}\mathtt{j}}\circ_{\mathsf{h}}{}_{-}
\end{gather*}
to a $2$-morphism $\beta_{\mathtt{k}\mathtt{j}}\colon
\mathrm{X}_{\mathtt{k}\mathtt{j}}\to
\mathrm{Y}_{\mathtt{k}\mathtt{j}}$ in $\underline{\cC}(\mathtt{j},
\mathtt{k})$, for $\mathtt{j},\mathtt{k}\in\underline{\cC}$;

\item the natural isomorphism
\begin{gather*}
\begin{tikzcd}[ampersand replacement=\&]
\mathrm{comod}_{\underline{\ccC}}(\mathrm{C})_{\mathtt{j}}
\ar[rr,bend left=15,"\mathbbm{1}_{\mathtt{j}}\circ_{\mathsf{h}}{}_{-}" {name=U}]
\ar[rr, bend right=15,"\mathrm{Id}_{\mathrm{comod}_{\underline{\cccC}}(\mathrm{C})_{\mathtt{j}}}" {name=D}, swap]
\ar[d, Rightarrow, shorten <= .1em, shorten >= .1em, from=U, to=D, "\iota_{\mathtt{j}}"]
\& [2em]\&
\mathrm{comod}_{\underline{\ccC}}(\mathrm{C})_{\mathtt{j}}
\end{tikzcd}
\end{gather*}
to each object $\mathtt{j}\in\underline{\cC}$, where
$\iota_{\mathtt{j}}$ is the natural transformation induced by the left unitor $\lunit_{-}$ in $\cC$;

\item the natural isomorphism
\begin{gather*}
\begin{tikzcd}[ampersand replacement=\&,column sep=4em]
\underline{\cC}(\mathtt{k},\mathtt{l})\boxtimes \underline{\cC}(\mathtt{j},\mathtt{k})
\boxtimes \mathrm{comod}_{\underline{\ccC}}(\mathrm{C})_{\mathtt{j}}
\ar[d,"\mathrm{Id}\,\boxtimes\,\circ_{\mathsf{h}}",swap]
\ar[r,"\circ_{\mathsf{h}}\,\boxtimes\,\mathrm{Id}"]
\&
\underline{\cC}(\mathtt{j},\mathtt{l})
\boxtimes \mathrm{comod}_{\underline{\ccC}}(\mathrm{C})_{\mathtt{j}}
\ar[d,"\circ_{\mathsf{h}}"]
\\[3ex]
\underline{\cC}(\mathtt{k},\mathtt{l})
\boxtimes \mathrm{comod}_{\underline{\ccC}}(\mathrm{C})_{\mathtt{k}}
\ar[r,"\circ_{\mathsf{h}}",swap]
\ar[ur, Rightarrow, shorten <=2em, shorten >=2em, "\mu_{\mathtt{l}\mathtt{k}\mathtt{j}}:=
\alpha^{\mone}_{\mathtt{l}\mathtt{k}\mathtt{j}\mathtt{i}}"]
\&
\mathrm{comod}_{\underline{\ccC}}(\mathrm{C})_{\mathtt{l}}
\end{tikzcd}
\end{gather*}
to each triple of objects $\mathtt{j},\mathtt{k},\mathtt{l}\in\underline{\cC}$.
\end{itemize}

The \emph{underlying category} of this birepresentation is defined as
\begin{gather*}
\mathrm{comod}_{\underline{\ccC}}(\mathrm{C}):=\bigoplus_{\mathtt{j}\in\ccC}
\mathrm{comod}_{\underline{\ccC}}(\mathrm{C})_{\mathtt{j}}.
\end{gather*}
This birepresentation of $\underline{\cC}$ restricts to a finitary birepresentation of $\cC$ sending each $\mathtt{j}\in\cC$ to
$\mathrm{inj}_{\underline{\ccC}}(\mathrm{C})_{\mathtt{j}}$, the full subcategory of injective objects in
$\mathrm{comod}_{\underline{\ccC}}(\mathrm{C})_{\mathtt{j}}$. The underlying category of this
restriction is defined as
\begin{gather*}
\mathrm{inj}_{\underline{\ccC}}(\mathrm{C}):=
\bigoplus_{\mathtt{j}\in\ccC}
\mathrm{inj}_{\underline{\ccC}}(\mathrm{C})_{\mathtt{j}}.
\end{gather*}

We will use the notation $\boldsymbol{\mathrm{comod}}_{\underline{\ccC}}(\mathrm{C})$ and $\boldsymbol{\mathrm{inj}}_{\underline{\ccC}}(\mathrm{C})$ for these two
birepresentations, respectively.

We record a useful fact describing objects in finitary birepresentations:

\begin{lemma}\label{lemma:addGX}
If $\cC$ is quasi multifiab, then, for a coalgebra $\mathrm{C}$ in $\underline{\cC}(\mathtt{i},\mathtt{i})$, the category $\mathrm{inj}_{\underline{\ccC}}(\mathrm{C})_{\mathtt{j}}$
is the additive closure of $\{\mathrm{GC}\mid\mathrm{G}\in\cC(\mathtt{i},\mathtt{j})\}$
inside $\mathrm{comod}_{\underline{\ccC}}(\mathrm{C})_{\mathtt{j}}$.
\end{lemma}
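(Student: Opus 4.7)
The plan is to exploit the adjunction between the forgetful functor $U\colon\mathrm{comod}_{\underline{\ccC}}(\mathrm{C})_{\mathtt{j}}\to\underline{\cC}(\mathtt{i},\mathtt{j})$ and the \emph{cofree comodule} functor $R\colon\underline{\cC}(\mathtt{i},\mathtt{j})\to\mathrm{comod}_{\underline{\ccC}}(\mathrm{C})_{\mathtt{j}}$ that sends $\mathrm{G}$ to $\mathrm{GC}$ equipped with the cofree coaction $\alpha^{\mone}_{\mathrm{G},\mathrm{C},\mathrm{C}}\circ_{\mathsf{v}}(\mathrm{id}_{\mathrm{G}}\circ_{\mathsf{h}}\delta_{\mathrm{C}})$. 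The unit at a comodule $\mathrm{M}$ is the coaction $\rho_{\mathrm{M}}$ itself, and the counit at $\mathrm{G}$ is $\runit_{\mathrm{G}}\circ_{\mathsf{v}}(\mathrm{id}_{\mathrm{G}}\circ_{\mathsf{h}}\epsilon_{\mathrm{C}})$; the triangle identities are then exactly counitality and coassociativity.

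For the inclusion $\mathrm{add}\{\mathrm{GC}\mid\mathrm{G}\in\cC(\mathtt{i},\mathtt{j})\}\subseteq\mathrm{inj}_{\underline{\ccC}}(\mathrm{C})_{\mathtt{j}}$, I would apply the standard principle that a right adjoint to an exact functor preserves injective objects. Every $\mathrm{G}\in\cC(\mathtt{i},\mathtt{j})$ is injective in $\underline{\cC}(\mathtt{i},\mathtt{j})$ by construction of the injective abelianization, and $\mathrm{add}$ preserves injectivity, so it suffices to show that $U$ is exact. Right exactness is automatic since $U$ is a left adjoint. For left exactness, given a comodule monomorphism $i\colon\mathrm{M}'\to\mathrm{M}$ and the kernel $k\colon\mathrm{K}\to\mathrm{M}'$ of $U(i)$ in $\underline{\cC}$, the left exactness of $({}_-)\circ_{\mathsf{h}}\mathrm{C}$ on $\underline{\cC}$ gives $\mathrm{K}\mathrm{C}=\ker(U(i)\circ_{\mathsf{h}}\mathrm{id}_{\mathrm{C}})$, so the comodule compatibility $\rho_{\mathrm{M}}\circ_{\mathsf{v}}i=(i\circ_{\mathsf{h}}\mathrm{id}_{\mathrm{C}})\circ_{\mathsf{v}}\rho_{\mathrm{M}'}$ post-composed with $k$ produces a factorization of $\rho_{\mathrm{M}'}\circ_{\mathsf{v}}k$ through $\mathrm{K}\mathrm{C}$. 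This endows $\mathrm{K}$ with a coaction making it the kernel of $i$ in $\mathrm{comod}_{\underline{\ccC}}(\mathrm{C})_{\mathtt{j}}$, forcing $\mathrm{K}=0$.

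For the reverse inclusion, let $\mathrm{N}\in\mathrm{inj}_{\underline{\ccC}}(\mathrm{C})_{\mathtt{j}}$. The coaction $\rho_{\mathrm{N}}\colon\mathrm{N}\to\mathrm{NC}$ is a split monomorphism in $\underline{\cC}$ by counitality, hence also a monomorphism in $\mathrm{comod}_{\underline{\ccC}}(\mathrm{C})_{\mathtt{j}}$. Since the objects of $\cC(\mathtt{i},\mathtt{j})$ are precisely the injectives of $\underline{\cC}(\mathtt{i},\mathtt{j})$ and the latter has enough injectives, I pick a monomorphism $j\colon U(\mathrm{N})\hookrightarrow\mathrm{G}$ in $\underline{\cC}(\mathtt{i},\mathtt{j})$ with $\mathrm{G}\in\cC(\mathtt{i},\mathtt{j})$. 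A short interchange-and-naturality check shows that $j\circ_{\mathsf{h}}\mathrm{id}_{\mathrm{C}}\colon\mathrm{NC}\to\mathrm{GC}$ is a comodule map between cofree comodules, and it is a monomorphism in $\underline{\cC}$ (hence also in $\mathrm{comod}_{\underline{\ccC}}(\mathrm{C})_{\mathtt{j}}$) by left exactness of $({}_-)\circ_{\mathsf{h}}\mathrm{C}$. The composite $\mathrm{N}\xrightarrow{\rho_{\mathrm{N}}}\mathrm{NC}\xrightarrow{j\circ_{\mathsf{h}}\mathrm{id}_{\mathrm{C}}}\mathrm{GC}$ is therefore a comodule monomorphism, which splits by injectivity of $\mathrm{N}$, realizing $\mathrm{N}$ as a direct summand of $\mathrm{GC}$.

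The only non-formal ingredient, and hence the main obstacle I anticipate, is the left exactness of $({}_-)\circ_{\mathsf{h}}\mathrm{C}$ on $\underline{\cC}$; it is used in both directions of the argument, and is a standard property of horizontal composition in the injective abelianization in the quasi multifiab setting. Everything else reduces to diagrammatic bookkeeping with the adjunction, the split monomorphism $\rho$, and the stability of injectivity under additive closure and direct summands.
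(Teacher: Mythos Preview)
Your argument is correct and follows essentially the same route as the paper. The paper's proof is terser but relies on the same two ingredients: the coaction $\rho_{\mathrm{X}}\colon\mathrm{X}\hookrightarrow\mathrm{XC}$ is a split monomorphism of comodules, and $\mathrm{XC}$ embeds further into $\mathrm{X}_0\mathrm{C}$ for an injective presentation $\mathrm{X}_0\to\mathrm{X}_1$ of $\mathrm{X}$ in $\underline{\cC}$, which is exactly your choice of $\mathrm{G}$; injectivity of $\mathrm{X}$ then splits the composite. The one mild difference is in the forward inclusion: the paper argues that $\mathrm{C}$ is an injective comodule and that left multiplication by $\mathrm{G}\in\cC$ preserves injectives (using the quasi multifiab structure to get an exact left adjoint of $\mathrm{G}\circ_{\mathsf{h}}{}_-$), whereas you package the same conclusion via the adjunction $U\dashv R$ and exactness of $U$. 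Both arguments ultimately rest on the left exactness of $({}_-)\circ_{\mathsf{h}}\mathrm{C}$ in $\underline{\cC}$, which you correctly flag; the paper uses it just as tacitly in the embedding $\mathrm{XC}\hookrightarrow\mathrm{X}_0\mathrm{C}$.
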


\begin{proof}
First note that, since $\mathrm{C}$ is an injective $\mathrm{C}$-comodule and $\cC$ is multifiab, $\mathrm{G}\mathrm{C}$ is an injective $\mathrm{C}$-comodule for any $1$-morphism $\mathrm{G}$ in $\cC$.

Any $\mathrm{X}\in\mathrm{comod}_{\underline{\ccC}}(\mathrm{C})_{\mathtt{j}}$ embeds into $\mathrm{X}\mathrm{C}$, due to counitality and the coaction being a comodule morphism (note that
we are not claiming that this embedding is split in $\mathrm{comod}_{\underline{\ccC}}(\mathrm{C})_{\mathtt{j}}$). Suppose that $\mathrm{X}\in\mathrm{inj}_{\underline{\ccC}}(\mathrm{C})_{\mathtt{j}}$ and that $\mathrm{X}_0\to\mathrm{X}_1$ is an injective presentation of $\mathrm{X}$ in $\underline{\cC}$, where $\mathrm{X}_0,\mathrm{X}_1$ are $1$-morphisms in $\cC$.
Then $\mathrm{XC}$ embeds further into $\mathrm{X}_0\mathrm{C}$, which is injective. The claim follows.
\end{proof}

We call a morphism of finitary birepresentations \emph{exact} if it extends to an exact morphism of the corresponding abelianized birepresentations, i.e. its component functors extend to exact functors between the (injective) abelianizations of the component categories. We say that a functor between additive categories is an \emph{injective functor} if it is injective in the category of functors between the injective abelianizations. We call a morphism of finitary birepresentations \emph{injective} if its extension to the corresponding abelianized birepresentations is given by injective functors.

\begin{lemma}\label{lemma:biinjective-vs-exact}
Assume that $\cC$ is quasi multifiab, and let $\mathrm{C}$ and $\mathrm{D}$ be two coalgebras and $\mathrm{M}$ a biinjective
$\mathrm{C}\text{-}\mathrm{D}$-bicomodule in $\underline{\cC}$.
Cotensoring defines an exact morphism of birepresentations of $\cC$
\begin{gather*}
{}_{-}\s\mathrm{M}\colon
\boldsymbol{\mathrm{comod}}_{\underline{\ccC}}(\mathrm{C})\to \boldsymbol{\mathrm{comod}}_{\underline{\ccC}}(\mathrm{D})
\end{gather*}
which sends injective objects in the underlying categories to injective objects. In particular, it restricts to a morphism of
birepresentations of $\cC$
\begin{gather*}
{}_{-}\s\mathrm{M}\colon
\boldsymbol{\mathrm{inj}}_{\underline{\ccC}}(\mathrm{C})\to \boldsymbol{\mathrm{inj}}_{\underline{\ccC}}(\mathrm{D}).
\end{gather*}
\end{lemma}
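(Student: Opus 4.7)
The plan is to build the promised morphism of birepresentations component by component, verify the two coherence diagrams using the induced associator from Lemma~\ref{lemma:associator-cotensor-product}, and then derive exactness and preservation of injectives from the two separate injectivity hypotheses on $\mathrm{M}$. For each $\mathtt{j}\in\cC$, the functor $({}_-\square_{\mathrm{C}}\mathrm{M})_{\mathtt{j}}\colon\mathrm{comod}_{\underline{\ccC}}(\mathrm{C})_{\mathtt{j}}\to\mathrm{comod}_{\underline{\ccC}}(\mathrm{D})_{\mathtt{j}}$ is defined on objects $\mathrm{X}$ by equipping $\mathrm{X}\square_{\mathrm{C}}\mathrm{M}$ with the right $\mathrm{D}$-coaction induced from $\rho_{\mathrm{M}}$, which is well defined because $\mathrm{M}$ is a $\mathrm{C}$-$\mathrm{D}$-bicomodule and the cotensor is computed as an equalizer that is compatible with the right $\mathrm{D}$-coaction by the bicomodule axiom. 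The natural $2$-isomorphism $\phi_{\mathtt{j}\mathtt{i}}^{\mathrm{F}}\colon\mathrm{F}({}_-\square_{\mathrm{C}}\mathrm{M})\xrightarrow{\cong}(\mathrm{F}({}_-))\square_{\mathrm{C}}\mathrm{M}$ is obtained from the associator of the cotensor product established in the first natural isomorphism of \eqref{eq:associativity-cotensor2}, applied to the triple $(\mathrm{F},\mathrm{X},\mathrm{M})$.

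Next, I would verify the two axioms \eqref{eq:morphism1} and \eqref{eq:morphism2} for a morphism of birepresentations. Axiom \eqref{eq:morphism1} reduces to the identity $\lunit_{\mathrm{X}\square_{\mathrm{C}}\mathrm{M}}$ agreeing with $(\lunit_{\mathrm{X}})\square_{\mathrm{C}}\mathrm{id}_{\mathrm{M}}$ via the associator, which is exactly the unit coherence \eqref{eq:0.00} applied inside the equalizer defining the cotensor product. Axiom \eqref{eq:morphism2} reduces to the pentagon coherence of the associator for $(\mathrm{G},\mathrm{F},\mathrm{X},\mathrm{M})$, which holds by naturality of $\alpha$ and the fact that equalizers are preserved by horizontal composition on the right (i.e. by composing with $\mathrm{M}$ on the right, which commutes with equalizers up to the induced $2$-isomorphisms in \eqref{eq:associativity-cotensor2}). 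Both diagrams are obtained from cubical diagrams exactly analogous to the ones in the proof of Lemma~\ref{lemma:associator-cotensor-product}, whose vertical faces are vertically iterated instances of $\phi_{\mathtt{j}\mathtt{i}}^{\mathrm{F}}$ and whose lateral faces commute by the pentagon axiom in $\underline{\cC}$; the only nontrivial bookkeeping is tracking that horizontal composition with $1$-morphisms from $\cC$ is exact in $\underline{\cC}$, so the equalizers defining cotensor products are preserved.

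For the exactness and the claim about injectives, I would use that $\mathrm{M}$ is biinjective, in particular injective as a \emph{left} $\mathrm{C}$-comodule. Under this hypothesis, the functor ${}_-\square_{\mathrm{C}}\mathrm{M}$ is exact: given a short exact sequence of right $\mathrm{C}$-comodules in $\underline{\cC}(\mathtt{i},\mathtt{j})$, horizontally composing on the right with $\mathrm{M}$ preserves exactness because $\underline{\cC}$ has enough injectives and $\mathrm{M}$ is left-injective (so the canonical map comparing the cotensor product with its derived version is an isomorphism; equivalently, the equalizer defining $\square_{\mathrm{C}}$ is exact in the first variable because the standard two-step cobar resolution of $\mathrm{M}$ by objects of the form $\mathrm{C}\mathrm{N}$ splits on the left when $\mathrm{M}$ is left-injective). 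Then, since $\mathrm{M}$ is also injective as a right $\mathrm{D}$-comodule, Lemma~\ref{lemma:addGX} shows that for $\mathrm{X}=\mathrm{GC}$ in $\mathrm{inj}_{\underline{\ccC}}(\mathrm{C})_{\mathtt{j}}$ we have $\mathrm{GC}\square_{\mathrm{C}}\mathrm{M}\cong\mathrm{GM}$ via the left $\mathrm{C}$-coaction isomorphism on $\mathrm{M}$, and $\mathrm{GM}$ is injective as a right $\mathrm{D}$-comodule by right-injectivity of $\mathrm{M}$ together with quasi multifiabness of $\cC$; additivity extends this to all of $\mathrm{inj}_{\underline{\ccC}}(\mathrm{C})_{\mathtt{j}}$.

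The main obstacle will be the exactness step: unlike the standard setting of coalgebras in vector spaces, here $\underline{\cC}$ is only additive-abelian in each morphism category and left/right composition with $1$-morphisms need not be exact on both sides in general, so I would have to argue carefully (using that $\cC$ is quasi multifiab, hence every $1$-morphism has a two-sided adjoint up to equivalence, which makes $\mathrm{H}({}_-)$ and $({}_-)\mathrm{H}$ exact functors on $\underline{\cC}$) that the equalizer defining $\mathrm{X}\square_{\mathrm{C}}\mathrm{M}$ behaves correctly in short exact sequences of $\mathrm{X}$. Once this is in hand, the two statements of the lemma are immediate.
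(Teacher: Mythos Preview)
Your construction of the morphism of birepresentations and the verification of \eqref{eq:morphism1}--\eqref{eq:morphism2} via the induced associators of Lemma~\ref{lemma:associator-cotensor-product} is fine, and in fact more explicit than what the paper writes (the paper takes this part for granted). Your argument for preservation of injectives is also essentially the paper's: both reduce, via Lemma~\ref{lemma:addGX}, to showing that $\mathrm{G}\mathrm{C}\square_{\mathrm{C}}\mathrm{M}\cong \mathrm{G}\mathrm{M}$ sits inside something of the form $(\mathrm{G}\mathrm{H})\mathrm{D}$.

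The exactness step, however, has a genuine gap. Your cobar argument shows that left-injectivity of $\mathrm{M}$ makes $\mathrm{M}$ a retract of $\mathrm{C}\mathrm{M}$ as left $\mathrm{C}$-comodules, hence ${}_-\square_{\mathrm{C}}\mathrm{M}$ a retract of ${}_-\square_{\mathrm{C}}(\mathrm{C}\mathrm{M})\cong ({}_-)\mathrm{M}$. But $\mathrm{M}$ lives in $\underline{\cC}$, not in $\cC$, and right composition $({}_-)\mathrm{M}$ with a $1$-morphism of $\underline{\cC}$ is \emph{not} known to be exact: your own final paragraph only justifies exactness of $({}_-)\mathrm{H}$ for $\mathrm{H}\in\cC$ (via two-sided adjoints), which does not apply here. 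So the reduction stops one step short.

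The paper closes this gap with a single stroke that you already have in hand for the other half of the lemma: apply (the left-comodule analogue of) Lemma~\ref{lemma:addGX} to $\mathrm{M}$ itself. Left-injectivity of $\mathrm{M}$ gives $\mathrm{M}$ as a direct summand of $\mathrm{C}\mathrm{F}$ for some $\mathrm{F}\in\cC$. Then ${}_-\square_{\mathrm{C}}\mathrm{M}$ is a direct summand of ${}_-\square_{\mathrm{C}}(\mathrm{C}\mathrm{F})\cong({}_-)\mathrm{F}$, and $({}_-)\mathrm{F}$ is exact because $\mathrm{F}\in\cC$ has adjoints. This replaces the cobar/derived discussion entirely and removes the obstacle you flagged.
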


\begin{proof}
Since $\mathrm{M}$ is injective as a left $\mathrm{C}$-comodule, it is a direct summand of $\mathrm{C}\mathrm{F}$ for some $1$-morphism $\mathrm{F}$ in $\cC$, in view of Lemma \ref{lemma:addGX}. The cotensor functor is therefore a direct summand of right multiplication by $\mathrm{F}$, which is exact, so exactness of ${}_{-}\s\mathrm{M}$ follows.

Similarly, if $\mathrm{N}$ is an injective right $\mathrm{C}$-comodule, it is a direct summand of $\mathrm{G}\mathrm{C}$ for some $1$-morphism $\mathrm{G}$ in $\cC$. Moreover, $\mathrm{M}$ being injective as a right $\mathrm{D}$-comodule, it is a direct summand of $\mathrm{H}\mathrm{D}$ for some $1$-morphism $\mathrm{H}$ in $\cC$. Thus $\mathrm{N}\s\mathrm{M}$ is a direct summand of $\mathrm{G}\mathrm{C}
\s\mathrm{H}\mathrm{D}\cong
\mathrm{G}(\mathrm{H}\mathrm{D})\cong(\mathrm{G}\mathrm{H})\mathrm{D}$ which is an injective right $\mathrm{D}$-comodule, so $\mathrm{N}\s\mathrm{M}$ is itself injective as a right $\mathrm{D}$-comodule. This completes the proof.
\end{proof}

Finally, note that if $f\colon\mathrm{M}\to
\mathrm{N}$ is
a homomorphism between two biinjective $\mathrm{C}\text{-}\mathrm{D}$-bicomodules $\mathrm{M},\mathrm{N}$ in $\underline{\cC}$, then
\begin{gather*}
{}_{-}\s f\colon
{}_{-}\s\mathrm{M}\to
{}_{-}\s\mathrm{N}
\end{gather*}
defines a modification.


\subsection{Morita--Takeuchi theory in bicategories}\label{subsection:mt}

We start by discussing the notion of Morita--Takeuchi equivalence in finitary bicategories (MT equivalence for short).

\begin{definition}\label{definition:mt-equi}
We say that two coalgebras $\mathrm{C}$ and $\mathrm{D}$
in $\underline{\cC}$ are \emph{MT equivalent} if
\begin{gather*}
\boldsymbol{\mathrm{inj}}_{\underline{\ccC}}(\mathrm{C})
\simeq
\boldsymbol{\mathrm{inj}}_{\underline{\ccC}}(\mathrm{D})
\end{gather*}
as birepresentations of ${\cC}$.
\end{definition}

The following theorem is a straightforward generalization of \cite[Theorem 5.1]{MMMT} in the context of bicategories, so we omit the proof. It resembles the classical Morita--Takeuchi equivalence for coalgebras over a field.

\begin{theorem}\label{theorem:MT}
Two coalgebras $\mathrm{C}$ and
$\mathrm{D}$ in $\underline{\cC}$ are MT equivalent if and only if
there exist a $\mathrm{C}\text{-}\mathrm{D}$-bicomodule
$\mathrm{M}$ and a $\mathrm{D}\text{-}\mathrm{C}$-bicomodule
$\mathrm{N}$, and
bicomodule isomorphisms
\begin{gather*}
f\colon \mathrm{C}\xrightarrow{\cong}\mathrm{M}\square_{\mathrm{D}}\mathrm{N}
,\quad
g\colon\mathrm{D}\xrightarrow{\cong} \mathrm{N}\s\mathrm{M}
\end{gather*}
in $\underline{\cC}$ such that we have commuting diagrams
\begin{gather*}
\begin{tikzcd}[ampersand replacement=\&]
\arrow[d,"f\s \mathrm{id}_{\mathrm{M}}",swap]
\mathrm{C}\s\mathrm{M}
\&
\arrow[l,"\delta_{\mathrm{C},\mathrm{M}}",swap]\mathrm{M}
\arrow[r,"\delta_{\mathrm{M},\mathrm{D}}"]
\&
\mathrm{M}\square_{\mathrm{D}}\mathrm{D}
\arrow[d,"\mathrm{id}_{\mathrm{M}} \square_{\mathrm{D}} g"]
\\
\big(\mathrm{M}\square_{\mathrm{D}}\mathrm{N}\big)\s\mathrm{M} \arrow[rr,"\alpha_{\mathrm{M},\mathrm{N},\mathrm{M}}",swap]
\&\&
\mathrm{M}\square_{\mathrm{D}}\big(\mathrm{N}\s\mathrm{M}\big)
\end{tikzcd}
,
\\
\begin{tikzcd}[ampersand replacement=\&]
\arrow[d,"g\square_{\mathrm{D}} \mathrm{id}_{\mathrm{N}}",swap]
\mathrm{D}\square_{\mathrm{D}}\mathrm{N}
\&
\arrow[l, "\delta_{\mathrm{D},\mathrm{N}}",swap]
\mathrm{N}
\arrow[r,"\delta_{\mathrm{N},\mathrm{C}}"]
\&
\mathrm{N}
\s\mathrm{C}
\arrow[d,"\mathrm{id}_{\mathrm{N}} \s f"]
\\
\big(\mathrm{N}\s\mathrm{M}\big)\square_{\mathrm{D}}\mathrm{N} \arrow[rr,"\alpha_{\mathrm{N},\mathrm{M},\mathrm{N}}",swap]
\&\&
\mathrm{N}\s\big(\mathrm{M}\square_{\mathrm{D}}\mathrm{N}\big)
\end{tikzcd}
.
\end{gather*}
\end{theorem}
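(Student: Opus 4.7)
The proof I have in mind is the bicategorical adaptation of \cite[Theorem 5.1]{MMMT}: the only genuinely new work is to insert the associators coming from Lemma~\ref{lemma:associator-cotensor-product} and \eqref{eq:associativity-cotensor2} in the right places and verify that every coherence condition still closes up. Since the morphism spaces in $\boldsymbol{\mathrm{inj}}_{\underline{\ccC}}(\mathrm{C})$ coincide with those of the biinjective bicomodule hom-spaces, everything takes place inside $\B{\underline{\cC}}$, and Corollary~\ref{corollary:coalgebras-cats} guarantees that associators and unitors behave sensibly.

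For the implication $(\Leftarrow)$, assume $\mathrm{M}$, $\mathrm{N}$, $f$, $g$ are given (the existence of the isomorphisms $f$ and $g$ forces $\mathrm{M}$ and $\mathrm{N}$ to be biinjective as bicomodules, since $\mathrm{C}$ and $\mathrm{D}$ are biinjective over themselves). I would define the candidate equivalence and its quasi-inverse as the cotensor morphisms
\begin{gather*}
{}_-\square_{\mathrm{C}}\mathrm{M}\colon
\boldsymbol{\mathrm{inj}}_{\underline{\ccC}}(\mathrm{C})\to
\boldsymbol{\mathrm{inj}}_{\underline{\ccC}}(\mathrm{D}),
\quad
{}_-\square_{\mathrm{D}}\mathrm{N}\colon
\boldsymbol{\mathrm{inj}}_{\underline{\ccC}}(\mathrm{D})\to
\boldsymbol{\mathrm{inj}}_{\underline{\ccC}}(\mathrm{C}),
\end{gather*}
which are indeed morphisms of birepresentations by Lemma~\ref{lemma:biinjective-vs-exact}, with the coherers built from the associators in \eqref{eq:associativity-cotensor2}. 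The invertible modification relating the composite to the identity is assembled from the right-coaction isomorphism $\rho\colon\mathrm{id}\cong{}_-\square_{\mathrm{C}}\mathrm{C}$, the morphism ${}_-\square_{\mathrm{C}}f$, and the cotensor associator $\alpha_{{}_-,\mathrm{M},\mathrm{N}}$ of Lemma~\ref{lemma:associator-cotensor-product}; and symmetrically using $g$. The commutativity of the two hexagons in the hypothesis is exactly what is needed to verify the modification axiom \eqref{eq:modification} and the zig-zag-type identities for the resulting equivalence.

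For the implication $(\Rightarrow)$, given an equivalence $\Phi\colon\boldsymbol{\mathrm{inj}}_{\underline{\ccC}}(\mathrm{C})\to \boldsymbol{\mathrm{inj}}_{\underline{\ccC}}(\mathrm{D})$ with quasi-inverse $\Psi$, I would set $\mathrm{M}:=\Phi(\mathrm{C})$ and $\mathrm{N}:=\Psi(\mathrm{D})$. Each is tautologically an (injective) right $\mathrm{D}$- respectively right $\mathrm{C}$-comodule. The left $\mathrm{C}$-comodule structure on $\mathrm{M}$ is induced by the coaction $\delta_{\mathrm{C}}$: applying $\Phi$ to $\delta_{\mathrm{C}}$ and using the coherer $\phi$ of $\Phi$ produces a map $\Phi(\mathrm{C})\to\mathrm{C}\,\Phi(\mathrm{C})$, whose axioms follow from \eqref{eq:morphism1}--\eqref{eq:morphism2}. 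The compatibility of left and right coactions (making $\mathrm{M}$ a bicomodule) is a direct consequence of $\Phi$ being a strong transformation. The isomorphisms $f$ and $g$ are produced as the values of the unit and counit of the equivalence $\Phi\dashv\Psi$ on $\mathrm{C}$ and $\mathrm{D}$, transported through the cotensor identification~\eqref{eq:MT-under-pseudofunctor}; the two required hexagons then translate into the triangle identities for the adjoint equivalence.

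The main obstacle is the bookkeeping for the associators: every equality between $1$-morphisms in the strict proof of \cite[Theorem 5.1]{MMMT} becomes an isomorphism mediated by $\alpha$, and each of these associators has to be pushed past cotensor equalizers using \eqref{eq:associativity-cotensor2} and the pentagon axiom. The most delicate step is the verification of the modification axiom for the unit ${}_-\square_{\mathrm{C}}f\circ\rho$ of the equivalence, which requires combining the first hypothesis diagram with the naturality of $\alpha_{-,\mathrm{M},\mathrm{N}}$ with respect to the action morphisms $\mathbf{M}_{\mathtt{k}\mathtt{j}}(\mathrm{F})$; once this is in place, the remaining diagrams are essentially the same as in the strict case, and the rest of the proof proceeds as in \cite{MMMT}.
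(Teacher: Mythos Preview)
Your proposal is correct and matches the paper's approach: the paper itself omits the proof entirely, stating only that the theorem ``is a straightforward generalization of \cite[Theorem 5.1]{MMMT} in the context of bicategories, so we omit the proof.'' Your sketch carries out precisely this adaptation, and in fact supplies more detail than the paper does; one minor caveat is that the reference to \eqref{eq:MT-under-pseudofunctor} is not quite on the nose (that identity is stated for pseudofunctors between bicategories rather than for morphisms of birepresentations), but the analogous identification you need is established by the same equalizer argument.
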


\begin{remark}\label{remark:biinjective}
Note that $\mathrm{M}$ and $\mathrm{N}$ are automatically biinjective if they satisfy
the conditions in Theorem \ref{theorem:MT}.
\end{remark}

Suppose that $\Phi\colon \cC\to \cD$ is a $\Bbbk$-linear pseudofunctor between two multifinitary bicategories. The following corollary follows immediately
from Lemma \ref{lemma:coalgebras-transported} and Theorem \ref{theorem:MT}:

\begin{corollary}\label{cor:cotensor-under-pseudofunctor}
The extension $\underline{\Phi}\colon\underline{\cC}\to \underline{\cD}$ sends MT equivalent coalgebras in $\underline{\cC}$ to MT equivalent coalgebras in $\underline{\cD}$.
\end{corollary}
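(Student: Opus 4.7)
The plan is to recognize MT equivalence as a special instance of internal equivalence in the bicategory $\B{\underline{\cC}}$, and then to push the equivalence forward along $\Phi$ using the pseudofunctoriality established in Lemma~\ref{lemma:coalgebras-transported}. Concretely, Theorem~\ref{theorem:MT} asserts that $\mathrm{C}$ and $\mathrm{D}$ are MT equivalent precisely when there exist a $\mathrm{C}$-$\mathrm{D}$-bicomodule $\mathrm{M}$ and a $\mathrm{D}$-$\mathrm{C}$-bicomodule $\mathrm{N}$ together with bicomodule isomorphisms $f\colon\mathrm{C}\xrightarrow{\cong}\mathrm{M}\square_{\mathrm{D}}\mathrm{N}$ and $g\colon\mathrm{D}\xrightarrow{\cong}\mathrm{N}\square_{\mathrm{C}}\mathrm{M}$ satisfying the two displayed hexagons. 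Those hexagons are exactly the triangle/zigzag coherences which exhibit $(\mathrm{M},\mathrm{N},f,g)$ as an adjoint equivalence between $\mathrm{C}$ and $\mathrm{D}$ inside $\B{\underline{\cC}}$, so MT equivalence is nothing but equivalence of objects in $\B{\underline{\cC}}$.

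Given this reformulation, the corollary becomes essentially a one-liner. By Lemma~\ref{lemma:coalgebras-transported}, the extension $\Phi\colon\underline{\cC}\to\underline{\cD}$ lifts to a $\Bbbk$-linear pseudofunctor $\Phi\colon\B{\underline{\cC}}\to\B{\underline{\cD}}$, and any pseudofunctor preserves internal equivalences. Thus $\Phi(\mathrm{C})$ and $\Phi(\mathrm{D})$ are equivalent in $\B{\underline{\cD}}$, which by Theorem~\ref{theorem:MT} means that they are MT equivalent.

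For a reader who prefers an explicit construction, I would instead build the data of Theorem~\ref{theorem:MT} for $\Phi(\mathrm{C})$ and $\Phi(\mathrm{D})$ by hand: take the bicomodules to be $\Phi(\mathrm{M})$ and $\Phi(\mathrm{N})$ (as produced by parts (ii)--(iv) of the proof of Lemma~\ref{lemma:coalgebras-transported}), and as the required bicomodule isomorphisms take the composites of $\Phi(f)$ and $\Phi(g)$ with the comparison $2$-isomorphisms \eqref{eq:MT-under-pseudofunctor} and its symmetric analog for $\mathrm{N}\square_{\mathrm{C}}\mathrm{M}$. The step I expect to be the main obstacle is then verifying that the two hexagonal coherences of Theorem~\ref{theorem:MT} transport along $\Phi$; this requires a diagram chase combining naturality of the structural $2$-isomorphisms $\phi_{\mathrm{F},\mathrm{G}}$ with the fact that the cotensor-product associator is induced by the associator of the ambient bicategory via Lemma~\ref{lemma:associator-cotensor-product}. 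However, this diagram chase is exactly what is packaged into the assertion that $\Phi\colon\B{\underline{\cC}}\to\B{\underline{\cD}}$ is a pseudofunctor, so the abstract argument in the preceding paragraph makes it unnecessary. This is why the authors can (and I would) simply cite Lemma~\ref{lemma:coalgebras-transported} and Theorem~\ref{theorem:MT} and call the result immediate.
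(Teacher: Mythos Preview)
Your proposal is correct and is exactly the intended argument: the paper itself simply states that the corollary ``follows immediately from Lemma~\ref{lemma:coalgebras-transported} and Theorem~\ref{theorem:MT}'', and your write-up is a faithful unpacking of that sentence. The extra observation that the hexagons in Theorem~\ref{theorem:MT} are the zigzag identities of an adjoint equivalence in $\B{\underline{\cC}}$ is a nice conceptual gloss, but, as you note, already subsumed by the pseudofunctoriality of $\Phi$ on $\B{\underline{\cC}}$.
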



\subsection{The internal cohom construction}\label{subsection:int-hom}


Let $\cC$ be a multifinitary bicategory, $\mathbf{M}$ a finitary birepresentation of $\cC$,
and let $\underline{\mathbf{M}}$ denote the corresponding abelian
birepresentation of $\underline{\cC}$ (see Definition \ref{definition:fincatcat-abel}).

For all $X\in\mathbf{M}(\mathtt{j})$, $Y\in\mathbf{M}(\mathtt{i})$
the left exact functor
\begin{gather*}
\Gamma_{X,Y}\colon \underline{\cC}(\mathtt{i},\mathtt{j})\to \cV\mathrm{ect},
\quad
\mathrm{F}\mapsto\mathrm{Hom}_{\underline{\mathbf{M}}(\mathtt{j})}
\big(X,\underline{\mathbf{M}}_{\mathtt{j}\mathtt{i}}(\mathrm{F})\,Y\big)
\end{gather*}
is representable by (the dual of) the Eilenberg--Watts theorem, see e.g. 
\cite[Theorem 2.3 on page 58]{}.
This means that there exist a $1$-morphism $[Y,X]\in\underline{\cC}(\mathtt{i},\mathtt{j})$, called the \emph{internal cohom} from $Y$ to $X$, and a natural isomorphism
\begin{gather*}
\gamma_{Y,X}\colon \mathrm{Hom}_{\underline{\ccC}(\mathtt{i},\mathtt{j})}
\big([Y,X],\mathrm{F}\big) \xrightarrow{\cong}\mathrm{Hom}_{\underline{\mathbf{M}}(\mathtt{j})}\big(X,\underline{\mathbf{M}}_{\mathtt{j}\mathtt{i}}(\mathrm{F})\,Y\big),
\text{ for all } \mathrm{F}\in\underline{\cC}(\mathtt{i},\mathtt{j}).
\end{gather*}
By Yoneda's lemma, the pair $\big([Y,X],\gamma_{Y,X}\big)$ is unique up to a unique natural isomorphism in the following sense. If $\big([Y,X],\gamma_{Y,X}\big)$ and $\big([Y,X]^{\prime},\gamma_{Y,X}^{\prime}\big)$
are both internal cohoms from
$Y$ to $X$, then there exists a unique $2$-isomorphism $\phi\colon [Y,X]\to[Y,X]^{\prime}$ such that
\begin{gather*}
\gamma_{Y,X}^{\mone}\big(\gamma_{Y,X}^{\prime}({}_{-})\big)={}_{-}\circ_{\mathsf{v}}\phi
\end{gather*}
as natural isomorphisms
\begin{gather*}
\mathrm{Hom}_{\underline{\ccC}(\mathtt{i},\mathtt{j})}([Y,X]^{\prime},
\mathrm{F})\xrightarrow{\cong}\mathrm{Hom}_{\underline{\ccC}(\mathtt{i},\mathtt{j})}([Y,X], \mathrm{F}),\text{ for all }\mathrm{F}\in\underline{\cC}(\mathtt{i},\mathtt{j}).
\end{gather*}
The coevaluation $\mathrm{coev}_{Y,X}\colon
X\to\underline{\mathbf{M}}_{\mathtt{j}\mathtt{i}}\big([Y,X]\big)Y$ in $\underline{\mathbf{M}}(\mathtt{j})$
is defined as the image of $\mathrm{id}_{[Y,X]}$ under the isomorphism
\begin{gather}\label{eq:coev}
\gamma_{Y,X}\colon\mathrm{Hom}_{\underline{\ccC}(\mathtt{i},\mathtt{j})}\big([Y,X],[Y,X]\big)
\xrightarrow{\cong}
\mathrm{Hom}_{\underline{\mathbf{M}}(\mathtt{j})}\big(X,
\underline{\mathbf{M}}_{\mathtt{j}\mathtt{i}}\big([Y,X]\big)Y\big).
\end{gather}
Using the coevaluation morphisms, we can define a canonical $2$-morphism
\begin{gather*}
\delta_{Z,Y,X}\colon[Z,X]\to[Y,X][Z,Y],
\end{gather*}
for all $X\in\mathbf{M}(\mathtt{i})$, $Y\in\mathbf{M}(\mathtt{j})$, $Z\in\mathbf{M}(\mathtt{k})$ as follows.
Consider the morphism $\tau$ defined by
\begin{gather*}
\begin{tikzcd}[ampersand replacement=\&,column sep=6em]
X
\arrow[d,xshift=-2cm,phantom, ""{coordinate, name=Z}]
\arrow[r,"\mathrm{coev}_{Y,X}"]
\&
\underline{\mathbf{M}}_{\mathtt{i}\mathtt{j}}
\big([Y,X]\big)\,Y
\arrow[ld,"{\underline{\mathbf{M}}_{\mathtt{i}\mathtt{j}}
([Y,X])\mathrm{coev}_{Z,Y}}" description,rounded corners,to path={ --([xshift=2ex]\tikztostart.east)|- (Z)[near end]\tikztonodes-| ([xshift=-2ex]\tikztotarget.west)--(\tikztotarget)}]
\\
\underline{\mathbf{M}}_{\mathtt{i}\mathtt{j}}\big([Y,X]\big)
\underline{\mathbf{M}}_{\mathtt{j}\mathtt{k}}\big([Z,Y]\big)Z
\arrow[r,"(\mu_{\mathtt{i}\mathtt{j}\mathtt{k}}^{[Y,X],[Z,Y]})_{Z}",swap]
\&
\underline{\mathbf{M}}_{\mathtt{i}\mathtt{k}}\big([Y,X][Z,Y]\big)Z
\end{tikzcd}
.
\end{gather*}
The $2$-morphism
$\delta_{Z,Y,X}$ is defined as the image
$\gamma_{Z,X}^{\mone}(\tau)$ under the isomorphism
\begin{gather*}
\gamma_{Z,X}\colon
\mathrm{Hom}_{\underline{\ccC}(\mathtt{k},\mathtt{i})}
\big([Z,X],[Y,X][Z,Y]\big)
\xrightarrow{\cong}
\mathrm{Hom}_{\underline{\mathbf{M}}(\mathtt{i})}\big(X,
\underline{\mathbf{M}}_{\mathtt{i}\mathtt{k}}\big([Y,X][Z,Y]\big)Z\big).
\end{gather*}

\begin{lemma}\label{lemma:coass-gen-internal-hom}
For all $X\in\mathbf{M}(\mathtt{l})$, $Y\in\mathbf{M}(\mathtt{k})$,
$Z\in\mathbf{M}(\mathtt{j})$, $W\in\mathbf{M}(\mathtt{i})$, there is a commutative diagram
\begin{gather}\label{eq:coass-gen-internal-hom1}
\begin{tikzcd}[ampersand replacement=\&]
\arrow[d,"\delta_{Z,Y,X}\mathrm{id}_{[W,Z]}",swap]
[Z,X][W,Z]
\&
\arrow[l,"\delta_{W,Z,X}",swap]
\text{$[W,X]$}
\arrow[r,"\delta_{W,Y,X}"]
\&
\text{$[Y,X] [W,Y]$}
\arrow[d,"\mathrm{id}_{[Y,X]}\delta_{W,Z,Y}"]
\\
\text{$\big([Y,X][Z,Y]\big)[W,Z]$}
\arrow[rr,"\alpha_{[Y,X],[Z,Y],[W,Z]}",swap]
\&
\&
\text{$[Y,X]\big([Z,Y][W,Z]\big)$}
\end{tikzcd}
.
\end{gather}
\end{lemma}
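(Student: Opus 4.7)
Both paths of \eqref{eq:coass-gen-internal-hom1} are $2$-morphisms with common source $[W,X]$ and common target $[Y,X]\big([Z,Y][W,Z]\big)$ in $\underline{\cC}(\mathtt{i},\mathtt{l})$. The strategy is to apply Yoneda via the defining natural isomorphism
\[
\gamma_{W,X}\colon\mathrm{Hom}_{\underline{\ccC}(\mathtt{i},\mathtt{l})}\big([W,X],{}_-\big)\xrightarrow{\cong}\mathrm{Hom}_{\underline{\mathbf{M}}(\mathtt{l})}\big(X,\underline{\mathbf{M}}_{\mathtt{l}\mathtt{i}}({}_-)W\big),
\]
so that commutativity of \eqref{eq:coass-gen-internal-hom1} becomes an equality of two morphisms $X\to \underline{\mathbf{M}}_{\mathtt{l}\mathtt{i}}\big([Y,X]\big([Z,Y][W,Z]\big)\big)W$ in $\underline{\mathbf{M}}(\mathtt{l})$.

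The key tool is naturality of $\gamma_{W,X}$ in its second argument, which gives $\gamma_{W,X}(\phi\circ_{\mathsf{v}} f)=\underline{\mathbf{M}}_{\mathtt{l}\mathtt{i}}(\phi)W\circ \gamma_{W,X}(f)$. Applying this together with the definition of each $\delta_{{}_-,{}_-,{}_-}$ via \eqref{eq:coev}, the image under $\gamma_{W,X}$ of either path unravels to a threefold composition of coevaluations $\mathrm{coev}_{Y,X}$, $\mathrm{coev}_{Z,Y}$, $\mathrm{coev}_{W,Z}$, whiskered appropriately by $\underline{\mathbf{M}}$-images of cohom $1$-morphisms, followed by two instances of $\mu_{\bullet\bullet\bullet}$. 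I would next use naturality of $\mu_{\bullet\bullet\bullet}$ and of the coevaluations to ``slide'' the factors past one another so that the two compositions share a common prefix consisting of the three coevaluations applied in the order $\mathrm{coev}_{Y,X}$, then $\mathrm{coev}_{Z,Y}$ (whiskered by $[Y,X]$), then $\mathrm{coev}_{W,Z}$ (whiskered by $[Y,X][Z,Y]$, in some bracketing).

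At that point the two sides differ only by the bracketing of $\big([Y,X][Z,Y]\big)[W,Z]$ versus $[Y,X]\big([Z,Y][W,Z]\big)$ and by the placement of a single multiplication coherer. The identification is then exactly the birepresentation pentagon coherence \eqref{eq:birepresentation3}, applied to the triple $\big([Y,X],[Z,Y],[W,Z]\big)$, which relates the two possible ways of composing $\mu$ with itself and absorbs the associator $\alpha_{[Y,X],[Z,Y],[W,Z]}$ sitting on the bottom arrow of \eqref{eq:coass-gen-internal-hom1}. Combining naturality with \eqref{eq:birepresentation3} identifies the two images in $\underline{\mathbf{M}}(\mathtt{l})$, and then Yoneda yields the desired equality in $\underline{\cC}(\mathtt{i},\mathtt{l})$.

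The conceptual content of the argument is standard: it is the familiar fact that, in enriched or internal-hom settings, the cocomposition induced by the representing object is coassociative as soon as the action is coherently associative. The main obstacle is purely bookkeeping: each of the two paths produces a sizeable pasting diagram in $\underline{\mathbf{M}}(\mathtt{l})$ whose cells live in $\underline{\mathbf{M}}$-images of differently bracketed horizontal composites of cohom $1$-morphisms, and one has to keep careful track of how each $\mu$ instance and each associator migrates under naturality before \eqref{eq:birepresentation3} can be invoked. I would therefore organize the proof as one large commutative diagram whose outer boundary is the Yoneda image of \eqref{eq:coass-gen-internal-hom1} and whose interior cells are labelled by (i) naturality squares for $\mu_{\bullet\bullet\bullet}$, (ii) naturality squares for the coevaluations, and (iii) a single application of the birepresentation coherence \eqref{eq:birepresentation3}.
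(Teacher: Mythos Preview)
Your proposal is correct and follows essentially the same approach as the paper: translate via $\gamma_{W,X}$ into $\underline{\mathbf{M}}(\mathtt{l})$, assemble a single large diagram whose outer boundary is the Yoneda image of \eqref{eq:coass-gen-internal-hom1}, and fill its interior with facets coming from the definitions of the $\delta_{{}_-,{}_-,{}_-}$ (i.e.\ coevaluations plus $\mu$), naturality of the various $\underline{\mu}_{\bullet\bullet\bullet}$, and one instance of the birepresentation coherence \eqref{eq:birepresentation3}. The only minor discrepancy is terminological: one of the interior cells the paper invokes is ``naturality of $\underline{\mathbf{M}}_{\mathtt{l}\mathtt{j}}(\delta_{Z,Y,X})$'' (an interchange-type square) rather than a ``naturality square for a coevaluation'', but this is the same bookkeeping step you describe.
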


\begin{proof}
By the isomorphisms in \eqref{eq:coev}, the
commutativity of the diagram in
\eqref{eq:coass-gen-internal-hom1} is
equivalent to the commutativity of the boundary
of the diagram
\begin{gather*}
\adjustbox{scale=.52,center}{%
\begin{tikzcd}[ampersand replacement=\&]
\&
\&
X
\ar[dll,"\mathrm{coev}_{W,X}", swap]
\ar[dl,"\mathrm{coev}_{Z,X}", swap]
\ar[d,"\mathrm{coev}_{Y,X}", swap]
\ar[dr,"\mathrm{coev}_{W,X}"]
\&
\\[15ex]
\underline{\mathbf{M}}_{\mathtt{l}\mathtt{i}}\big([W,X]\big)W
\ar[ddd, bend right, dash pattern=on 190pt off 35pt, "\underline{\mathbf{M}}_{\mathtt{l}\mathtt{i}}(\delta_{W,Z,X})_W" near start]
\arrow[r, phantom, "\circled{1}"]
\&
\underline{\mathbf{M}}_{\mathtt{l}\mathtt{j}}\big([Z,X]\big)Z
\ar[ddl,"{\underline{\mathbf{M}}_{\mathtt{l}\mathtt{j}}([Z,X])\mathrm{coev}_{W,Z}}"near start, swap]
\ar[dd,"\underline{\mathbf{M}}_{\mathtt{l}\mathtt{j}}(\delta_{Z,Y,X})_Z" near start]
\arrow[r, phantom, "\circled{2}"]
\&
\underline{\mathbf{M}}_{\mathtt{l}\mathtt{k}}\big([Y,X]\big)Y
\ar[d,"{\underline{\mathbf{M}}_{\mathtt{l}\mathtt{k}}([Y,X])\mathrm{coev}_{Z,Y}}", swap]
\ar[ddr,"{\underline{\mathbf{M}}_{\mathtt{l}\mathtt{k}}([Y,X])\mathrm{coev}_{W,Y}}" near start]
\arrow[r, phantom, "\circled{3}"]
\&
\underline{\mathbf{M}}_{\mathtt{l}\mathtt{i}}\big([W,X]\big)W
\ar[ddd, bend left, dash pattern=on 190pt off 35pt,  "\underline{\mathbf{M}}_{\mathtt{l}\mathtt{i}}(\delta_{W,Y,X})_W" near start, swap]
\\[15ex] 		
\&
\&
\underline{\mathbf{M}}_{\mathtt{l}\mathtt{k}}\big([Y,X]\big)
\underline{\mathbf{M}}_{\mathtt{k}\mathtt{j}}\big([Z,Y]\big)Z
\ar[dl, "(\mu_{\mathtt{l}\mathtt{k}\mathtt{j}}^{[Y,X],
[Z,Y]})_Z", swap]
\ar[d,"{\underline{\mathbf{M}}_{\mathtt{l}\mathtt{k}}([Y,X])
\underline{\mathbf{M}}_{\mathtt{k}\mathtt{j}}([Z,Y])\mathrm{coev}_{W,Z}}"]
\&
\\[15ex]
\underline{\mathbf{M}}_{\mathtt{l}\mathtt{j}}\big([Z,X]\big)
\underline{\mathbf{M}}_{\mathtt{j}\mathtt{i}}\big([W,Z]\big)W
\ar[d,"(\mu_{\mathtt{l}\mathtt{j}\mathtt{i}}^{[Z,X],
[W,Z]})_W" near end]
\ar[dr,  "\underline{\mathbf{M}}_{\mathtt{l}\mathtt{j}}(\delta_{Z,Y,X})_{\underline{\mathbf{M}}_{\mathtt{j}\mathtt{i}}([W,Z])W}" near start]
\arrow[r, phantom, yshift=5ex, "\circled{4}"]
\&
\underline{\mathbf{M}}_{\mathtt{l}\mathtt{j}}\big([Y,X]
[Z,Y]\big)Z
\ar[d,"{\underline{\mathbf{M}}_{\mathtt{l}\mathtt{j}}([Y,X]
[Z,Y])\mathrm{coev}_{W,Z}}"description]
\arrow[r, phantom, yshift=5ex,"\circled{5}"]
\&
\underline{\mathbf{M}}_{\mathtt{l}\mathtt{k}}\big([Y,X]\big)
\underline{\mathbf{M}}_{\mathtt{k}\mathtt{j}}\big([Z,Y]\big)
\underline{\mathbf{M}}_{\mathtt{j}\mathtt{i}}\big([W,Z]\big)W
\ar[dl, "(\mu_{\mathtt{l}\mathtt{k}\mathtt{j}}^{[Y,X],
[Z,Y]})_{\underline{\mathbf{M}}_{\mathtt{j}\mathtt{i}}([W,Z])W}"
near start, swap]
\ar[d,"{\underline{\mathbf{M}}_{\mathtt{l}\mathtt{k}}([Y,X])(\mu_{\mathtt{k}\mathtt{j}
\mathtt{i}}^{[Z,Y],[W,Z]})_{W}}" near end, swap]
\arrow[r, phantom, yshift=5ex,"\circled{6}"]
\&
\underline{\mathbf{M}}_{\mathtt{l}\mathtt{k}}\big([Y,X]\big)
\underline{\mathbf{M}}_{\mathtt{k}\mathtt{i}}\big([W,Y]\big)W
\ar[dl, "{\underline{\mathbf{M}}_{\mathtt{l}\mathtt{k}}([Y,X]) \underline{\mathbf{M}}_{\mathtt{k}\mathtt{i}}
(\delta_{W,Z,Y})_W}" near start, swap]
\ar[d,"(\mu_{\mathtt{l}\mathtt{k}\mathtt{i}}^{[Y,X],
[W,Y]})_{W}", swap]
\\[15ex]
\underline{\mathbf{M}}_{\mathtt{l}\mathtt{i}}\big([Z,X][W,Z]\big)W
\ar[d,"\underline{\mathbf{M}}_{\mathtt{l}\mathtt{i}}
(\delta_{Z,Y,X}\circ_{\mathsf{h}}\mathrm{id})_W", swap]
\arrow[r, phantom, yshift=-5ex, "\circled{7}"]
\&
\underline{\mathbf{M}}_{\mathtt{l}\mathtt{j}}\big([Y,X]
[Z,Y]\big)\underline{\mathbf{M}}_{\mathtt{j}\mathtt{i}}\big([W,Z]\big)W
\ar[dl, "(\mu_{\mathtt{l}\mathtt{j}\mathtt{i}}^{[Y,X][Z,Y], [W,Z]})_W"]
\arrow[r, phantom, yshift=-5ex,"\circled{8}"]
\&
\underline{\mathbf{M}}_{\mathtt{l}\mathtt{k}}\big([Y,X]\big)
\underline{\mathbf{M}}_{\mathtt{k}\mathtt{i}}\big([Z,Y][W,Z]\big)W
\ar[dr, "(\mu_{\mathtt{l}\mathtt{k}\mathtt{i}}^{[Y,X],
[Z,Y] [W,Z]})_W",swap]
\arrow[r,phantom,yshift=-5ex,"\circled{9}"]
\&
\underline{\mathbf{M}}_{\mathtt{l}\mathtt{i}}\big([Y,X][W,Y]\big)W
\ar[d,"\underline{\mathbf{M}}_{\mathtt{l}\mathtt{i}}
(\mathrm{id}\circ_{\mathsf{h}}\delta_{W,Z,Y})_W"]
\\[15ex]	
\underline{\mathbf{M}}_{\mathtt{l}\mathtt{i}}\big(([Y,X]
[Z,Y])[W,Z]\big)W
\ar[rrr,"\underline{\mathbf{M}}_{\mathtt{l}\mathtt{i}}\big(
\alpha_{[Y,X],[Z,Y],[W,Z]}\big)_W",swap]
\&
\&
\&
\underline{\mathbf{M}}_{\mathtt{l}\mathtt{i}}\big([Y,X]
([Z,Y][W,Z])\big)W
\end{tikzcd}
}
\hspace{-.25cm}.
\end{gather*}
We note that
\begin{itemize}

\item the facets labeled $1$, $2$, $3$ and $6$ commute
by definition of $\delta_{W,Z,X}$,
$\delta_{Z,Y,X}$, $\delta_{W,Y,X}$ and $\delta_{W,Z,Y}$, respectively;

\item the facets labeled $4$, $5$, $7$ and $9$ commute
by naturality of $\underline{\mathbf{M}}_{\mathtt{lj}}(\delta_{Z,Y,X})$,
$\mu_{\mathtt{l}\mathtt{k}\mathtt{j}}$,
$\mu_{\mathtt{l}\mathtt{j}\mathtt{i}}$ and $\mu_{\mathtt{l}\mathtt{k}\mathtt{i}}$, respectively;

\item the facet labeled $8$ commutes due to the coherence condition for $\mu$ in \eqref{eq:birepresentation3} and the fact that $\underline{\mathbf{M}}_{\mathtt{l}\mathtt{k}}([Y,X])(\mu_{\mathtt{k}\mathtt{j}
\mathtt{i}}^{[Z,Y],[W,Z]})_{W}=\big(\mathrm{id}_{\underline{\mathbf{M}}_{\mathtt{l}\mathtt{k}}([Y,X])}\circ_{\mathsf{h}}\mu_{\mathtt{k}\mathtt{j}
\mathtt{i}}^{[Z,Y],[W,Z]}\big)_W$.

\end{itemize}
Commutativity of these facets implies that all paths in
the above diagram from $X$ at the top to
$\underline{\mathbf{M}}_{\mathtt{l}\mathtt{i}}\big([Y,X]
\big([Z,Y][W,Z]\big)\big)W$ at the bottom are equal, in particular,
the two paths around the boundary, which is exactly what we had to show.
\end{proof}

For every $X\in\mathbf{M}(\mathtt{i})$ there is also
a canonical $2$-morphism
\begin{gather*}
\epsilon_X\colon[X,X]\to\mathbbm{1}_{\mathtt{i}},
\end{gather*}
defined as the image of $\big(\iota_{\mathtt{i}}^{\mone}\big)_X$ under the isomorphism
\begin{gather*}
\mathrm{Hom}_{\underline{\mathbf{M}}(\mathtt{i})}\big(X,\underline{\mathbf{M}}_{\mathtt{i}\mathtt{i}}(\mathbbm{1}_{\mathtt{i}})X\big)
\xrightarrow{\cong}
\mathrm{Hom}_{\underline{\ccC}(\mathtt{i},\mathtt{i})}\big([X,X],\mathbbm{1}_{\mathtt{i}}\big),
\end{gather*}
where $\iota_{\mathtt{j}}$ was defined in Definition \ref{definition:fin-birepresentation}.

\begin{lemma}\label{lemma:counitality-gen-intern-hom}
For every $X\in\mathbf{M}(\mathtt{i})$, $Y\in\mathbf{M}(\mathtt{j})$, we have commutative diagrams
\begin{gather}\label{eq:right-counit-gen-intern-hom}
\begin{tikzcd}[ampersand replacement=\&,column sep=4em]
[X,Y]
\ar[r,"\delta_{X,X,Y}"]
\ar[dr,"(\runit_{[X,Y]})^{\mone}",swap]
\&
\text{$[X,Y][X,X]$}
\ar[d,"\mathrm{id}_{[X,Y]}\epsilon_X"]
\\
\&
\text{$[X,Y]$}
\mathbbm{1}_{\mathtt{i}}
\end{tikzcd}
,\quad
\begin{tikzcd}[ampersand replacement=\&,column sep=4em]
[X,Y]
\ar[r,"\delta_{X,Y,Y}"]
\ar[dr,"(\lunit_{[X,Y]})^{\mone}", swap]
\&
\text{$[Y,Y][X,Y]$}
\ar[d,"\epsilon_Y\mathrm{id}_{[X,Y]}"]
\\
\&
\mathbbm{1}_{\mathtt{j}}
\text{$[X,Y]$}
\end{tikzcd}
.
\end{gather}
\end{lemma}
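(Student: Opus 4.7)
The plan is to mirror the proof of Lemma~\ref{lemma:coass-gen-internal-hom}: apply the defining natural isomorphism $\gamma_{X,Y}$ to both sides of each proposed equality, thereby translating the question into commutativity of diagrams of morphisms in $\underline{\mathbf{M}}(\mathtt{j})$, and then verify these using naturality together with the coherence axioms \eqref{eq:birepresentation2} of the birepresentation and the defining properties of $\delta$ and $\epsilon$. In both cases the argument is substantially simpler than that of Lemma~\ref{lemma:coass-gen-internal-hom}, because no pentagon coherence condition is involved, only the unit ones.

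For the right-hand diagram of \eqref{eq:right-counit-gen-intern-hom}, applying $\gamma_{X,Y}$ to $(\mathrm{id}_{[X,Y]}\circ_{\mathsf{h}}\epsilon_X)\circ_{\mathsf{v}}\delta_{X,X,Y}$ and using naturality of $\gamma_{X,Y}$ in the $\mathrm{F}$-variable together with the definition of $\delta_{X,X,Y}$ rewrites it as
\begin{gather*}
\underline{\mathbf{M}}_{\mathtt{j}\mathtt{i}}(\mathrm{id}_{[X,Y]}\circ_{\mathsf{h}}\epsilon_X)_X \circ (\underline{\mu}_{\mathtt{j}\mathtt{i}\mathtt{i}}^{[X,Y],[X,X]})_X \circ \underline{\mathbf{M}}_{\mathtt{j}\mathtt{i}}([X,Y])\mathrm{coev}_{X,X} \circ \mathrm{coev}_{X,Y}.
\end{gather*}
Naturality of $\underline{\mu}_{\mathtt{j}\mathtt{i}\mathtt{i}}$ in its second argument (applied to $\epsilon_X\colon[X,X]\to\mathbbm{1}_{\mathtt{i}}$) moves the first factor past $\underline{\mu}$, after which the defining identity $\underline{\mathbf{M}}_{\mathtt{i}\mathtt{i}}(\epsilon_X)_X\circ\mathrm{coev}_{X,X}=(\iota_{\mathtt{i}}^{\mone})_X$ reduces the expression to $(\underline{\mu}_{\mathtt{j}\mathtt{i}\mathtt{i}}^{[X,Y],\mathbbm{1}_{\mathtt{i}}})_X \circ (\mathrm{id}_{\underline{\mathbf{M}}_{\mathtt{j}\mathtt{i}}([X,Y])}\circ_{\mathsf{h}}\iota_{\mathtt{i}}^{\mone})_X \circ \mathrm{coev}_{X,Y}$. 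The right coherence axiom in \eqref{eq:birepresentation2}, read in inverse form, collapses the prefix to $\underline{\mathbf{M}}_{\mathtt{j}\mathtt{i}}(\runit_{[X,Y]}^{\mone})_X$, which is precisely $\gamma_{X,Y}\big((\runit_{[X,Y]})^{\mone}\big)$.

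The left-hand diagram is handled dually. After applying $\gamma_{X,Y}$, one reduces the claim to the equality
\begin{gather*}
\underline{\mathbf{M}}_{\mathtt{j}\mathtt{i}}(\epsilon_Y\circ_{\mathsf{h}}\mathrm{id}_{[X,Y]})_X \circ (\underline{\mu}_{\mathtt{j}\mathtt{j}\mathtt{i}}^{[Y,Y],[X,Y]})_X \circ \underline{\mathbf{M}}_{\mathtt{j}\mathtt{j}}([Y,Y])\mathrm{coev}_{X,Y} \circ \mathrm{coev}_{Y,Y} = \underline{\mathbf{M}}_{\mathtt{j}\mathtt{i}}(\lunit_{[X,Y]}^{\mone})_X\circ\mathrm{coev}_{X,Y}.
\end{gather*}
Naturality of $\underline{\mu}_{\mathtt{j}\mathtt{j}\mathtt{i}}$ in its first argument shifts $\underline{\mathbf{M}}_{\mathtt{j}\mathtt{j}}(\epsilon_Y)$ past $\underline{\mu}$; naturality of the natural transformation $\underline{\mathbf{M}}_{\mathtt{j}\mathtt{j}}(\epsilon_Y)$ applied to $\mathrm{coev}_{X,Y}$ then slides it inward, producing the expression $(\underline{\mathbf{M}}_{\mathtt{j}\mathtt{j}}(\epsilon_Y))_Y\circ\mathrm{coev}_{Y,Y}$, which by definition of $\epsilon_Y$ equals $(\iota_{\mathtt{j}}^{\mone})_Y$. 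A final application of naturality of $\iota_{\mathtt{j}}^{\mone}$ on $\mathrm{coev}_{X,Y}$ followed by the left coherence axiom in \eqref{eq:birepresentation2} yield the right-hand side.

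The main (only) obstacle is bookkeeping: one must carefully track several whiskered natural transformations and their components at specific objects, but each individual step is either a naturality square, the definition of $\delta$ or $\epsilon$, or one of the two unit axioms \eqref{eq:birepresentation2}.
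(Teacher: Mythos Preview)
Your proof is correct and follows essentially the same route as the paper's: translate via $\gamma_{X,Y}$, invoke the identity $\underline{\mathbf{M}}_{\mathtt{i}\mathtt{i}}(\epsilon_X)_X\circ_{\mathsf{v}}\mathrm{coev}_{X,X}=(\iota_{\mathtt{i}}^{\mone})_X$ (which the paper records as \eqref{eq:right-counit-gen-intern-hom2} after a short derivation), use naturality of $\underline{\mu}$, and finish with the unit axiom from \eqref{eq:birepresentation2}; the paper presents this as a diagram with four labeled facets rather than a chain of equalities, and only treats one of the two triangles explicitly. One cosmetic point: you have the ``left'' and ``right'' labels swapped relative to the displayed order in \eqref{eq:right-counit-gen-intern-hom} (and likewise your ``right/left coherence axiom'' refers to the first/second diagram in \eqref{eq:birepresentation2}, which the paper numbers the other way), but the content is unaffected.
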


\begin{proof}
We only prove commutativity of the left diagram in \eqref{eq:right-counit-gen-intern-hom}. Commutativity of the right diagram can be proved by similar arguments.

First, consider the commutative diagram
\begin{gather}\label{eq:right-counit-gen-intern-hom1}
\begin{tikzcd}[ampersand replacement=\&]
\mathrm{Hom}_{\underline{\ccC}(\mathtt{i},\mathtt{i})}\big([X,X],[X,X]\big)
\ar[d,"\epsilon_X\circ_{\mathsf{v}}{}_{-}",swap]
\ar[r,"\cong"]
\&
\mathrm{Hom}_{\underline{\mathbf{M}}(\mathtt{i})}
\big(X,\underline{\mathbf{M}}_{\mathtt{i}\mathtt{i}}\big([X,X]\big) X\big)
\ar[d,"\underline{\mathbf{M}}_{\mathtt{i}\mathtt{i}}(\epsilon_X)_X\circ_{\mathsf{v}}{}_{-}"]
\\
\mathrm{Hom}_{\underline{\ccC}(\mathtt{i},\mathtt{i})}\big([X,X],\mathbbm{1}_{\mathtt{i}} \big)
\ar[r,"\cong",swap]
\&
\mathrm{Hom}_{\underline{\mathbf{M}}(\mathtt{i})}
\big(X, \underline{\mathbf{M}}_{\mathtt{i}\mathtt{i}}(\mathbbm{1}_{\mathtt{i}}) X\big)
\end{tikzcd}.
\end{gather}
By comparing the image of
$\mathrm{id}_{[X,X]}\in\mathrm{Hom}_{\underline{\ccC}(\mathtt{i},\mathtt{i})}\big([X,X],[X,X] \big)$ under the two maps to $\mathrm{Hom}_{\underline{\mathbf{M}}(\mathtt{i})}
\big(X, \underline{\mathbf{M}}_{\mathtt{i}\mathtt{i}}(\mathbbm{1}_{\mathtt{i}})X\big)$ corresponding to the two paths in \eqref{eq:right-counit-gen-intern-hom1}, we see that
\begin{gather}\label{eq:right-counit-gen-intern-hom2}
\underline{\mathbf{M}}_{\mathtt{i}\mathtt{i}}(\epsilon_X)_X \circ_{\mathsf{v}}
\mathrm{coev}_{X,X}=(\iota_{\mathtt{i}})_X^{\mone}.
\end{gather}

The next observation is that commutativity of the left diagram in
\eqref{eq:right-counit-gen-intern-hom} is equivalent to commutativity of the boundary of
\begin{gather*}
\adjustbox{scale=.75,center}{%
\begin{tikzcd}[ampersand replacement=\&,column sep=3.5em]
Y
\ar[rrrr,"\mathrm{coev}_{X,Y}"]
\ar[d, "\mathrm{coev}_{X,Y}",swap]
\& [2em] \&
\& [2em]\&
\underline{\mathbf{M}}_{\mathtt{j}\mathtt{i}}\big([X,Y]\big)X
\ar[d,"\underline{\mathbf{M}}_{\mathtt{j}\mathtt{i}}(\delta_{X,X,Y})_X"]
\\[5ex]
\underline{\mathbf{M}}_{\mathtt{j}\mathtt{i}}\big([X,Y]\big)X
\ar[rr,"{\underline{\mathbf{M}}_{\mathtt{j}\mathtt{i}}([X,Y])\mathrm{coev}_{X,X}}"]
\ar[d,equal]
\ar[rrrr, phantom, yshift=7ex, "\circled{1}"]
\ar[drr, phantom, "\circled{2}"]
\& [2em] \&
\underline{\mathbf{M}}_{\mathtt{j}\mathtt{i}}\big([X,Y]\big)
\underline{\mathbf{M}}_{\mathtt{i}\mathtt{i}}\big([X,X]\big)X
\ar[rr,"(\mu_{\mathtt{j}\mathtt{i}\mathtt{i}}^{[X,Y],[X,X]})_X"]
\ar[d,"{\underline{\mathbf{M}}_{\mathtt{j}\mathtt{i}}([X,Y])
\underline{\mathbf{M}}_{\mathtt{i}\mathtt{i}}(\epsilon_X)_X}"]
\ar[drr, phantom, "\circled{3}"]
\& [2em]\&
\underline{\mathbf{M}}_{\mathtt{j}\mathtt{i}}\big([X,Y][X,X]\big)X
\ar[d,"\underline{\mathbf{M}}_{\mathtt{j}\mathtt{i}}(\mathrm{id}
\circ_{\mathsf{h}}\epsilon_X)_X"]
\\[5ex]
\underline{\mathbf{M}}_{\mathtt{j}\mathtt{i}}\big([X,Y]\big)X
\ar[rr, "{\underline{\mathbf{M}}_{\mathtt{j}\mathtt{i}}([X,Y])(\iota_{\mathtt{i}})_X^{\mone}}",swap]
\ar[rrrr, bend right,"\underline{\mathbf{M}}_{\mathtt{j}\mathtt{i}}\big(
(\runit_{[X,Y]})^{\mone}\big)_X",swap]
\ar[rrrr, phantom, yshift=-7ex,"\circled{4}"]
\&[2em] \&
\underline{\mathbf{M}}_{\mathtt{j}\mathtt{i}}\big([X,Y]\big)
\underline{\mathbf{M}}_{\mathtt{i}\mathtt{i}}(\mathbbm{1}_{\mathtt{i}} )X
\ar[rr, "(\mu_{\mathtt{j}\mathtt{i}\mathtt{i}}^{[X,Y] ,
\mathbbm{1}_{\mathtt{i}}} )_X",swap]
\&[2em] \&
\underline{\mathbf{M}}_{\mathtt{j}\mathtt{i}}\big([X,Y]\mathbbm{1}_{\mathtt{i}}\big)X
\end{tikzcd}
}\hspace*{-.25cm}.
\end{gather*}
Commutativity of the boundary of this diagram follows from commutativity of the internal facets. The latter commute due to
\begin{itemize}

\item the definition of $\delta_{X,X,Y}$ for the facet labeled $1$;

\item \eqref{eq:right-counit-gen-intern-hom2} for the facets labeled $2$;

\item naturality of $\mu_{\mathtt{j}\mathtt{i}\mathtt{i}}$ for the facet labeled $3$;

\item the left coherence condition in \eqref{eq:birepresentation2}
for the facet labeled $4$.
\end{itemize}
This completes the proof.
\end{proof}

The following proposition is an immediate consequence of Lemmas
\ref{lemma:coass-gen-internal-hom} and \ref{lemma:counitality-gen-intern-hom}.

\begin{proposition}\label{proposition:coalg-comod-via-internal-hom}
Let $\mathbf{M}$ be a finitary birepresentation of $\cC$.
For any $X\in\underline{\mathbf{M}}(\mathtt{i})$
and any $Y\in\underline{\mathbf{M}}(\mathtt{j})$,

\begin{enumerate}[$($i$)$]

\item\label{proposition:coalg-comod-via-internal-hom-1}
the triple $\big([X,X],\delta_{X,X,X},\epsilon_X\big)$ is a coalgebra in
$\underline{\cC}$;

\item\label{proposition:coalg-comod-via-internal-hom-2}
the triple $\big([X,Y], \delta_{X,Y,Y},\delta_{X,X,Y}\big)$ is
a $[Y,Y]\text{-}[X,X]$-bicomodule in $\underline{\cC}$.

\end{enumerate}
\end{proposition}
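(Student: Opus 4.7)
Both statements will follow by direct specialization of the two preceding lemmas; no additional diagrammatic work will be needed. My plan is to match each axiom appearing in Definitions \ref{definition:coalgebra}, \ref{definition:right-comodules} and \ref{definition:bicomodules} with an instance of Lemma \ref{lemma:coass-gen-internal-hom} obtained by a suitable choice of the four arguments, supplemented where necessary by one of the two triangles of Lemma \ref{lemma:counitality-gen-intern-hom}.

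For part (i), I would first read off coassociativity of $\delta_{X,X,X}$ from Lemma \ref{lemma:coass-gen-internal-hom} by setting $W=Z=Y=X$: all four occurrences of $\delta$ in the square \eqref{eq:coass-gen-internal-hom1} collapse to $\delta_{X,X,X}$, so the square becomes precisely the coassociativity axiom for $[X,X]$. Counitality is then the $Y=X$ specialization of Lemma \ref{lemma:counitality-gen-intern-hom}, whose two triangles become exactly the left and right counit axioms with counit $\epsilon_X$.

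For part (ii), I would verify the three coaction axioms of a bicomodule together with the two counit axioms as follows. Using the names $(X,Y,Z,W)$ of the arguments in Lemma \ref{lemma:coass-gen-internal-hom}, the right $[X,X]$-coassociativity of $\rho_{[X,Y]}:=\delta_{X,X,Y}$ corresponds to the substitution $(X,Y,Z,W)\mapsto(Y,X,X,X)$; the left $[Y,Y]$-coassociativity of $\lambda_{[X,Y]}:=\delta_{X,Y,Y}$ corresponds to $(X,Y,Z,W)\mapsto(Y,Y,Y,X)$; and the left--right compatibility between $\lambda_{[X,Y]}$ and $\rho_{[X,Y]}$ corresponds to $(X,Y,Z,W)\mapsto(Y,Y,X,X)$. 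In each case one checks that the four occurrences of $\delta$ in \eqref{eq:coass-gen-internal-hom1} specialize correctly and that the resulting commutative square is the required bicomodule axiom. Finally, the right and left counit axioms for $[X,Y]$ are, respectively, the left and right triangles of Lemma \ref{lemma:counitality-gen-intern-hom}, with no further specialization needed.

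Since every diagram that must commute is literally an instance of a diagram already proved to commute, I do not anticipate any genuine obstacle; the only care required is the bookkeeping of the substitutions, in particular verifying at the level of indices that each specialization produces the correct four instances of $\delta$ on the nose.
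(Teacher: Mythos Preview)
Your proposal is correct and matches the paper's own approach exactly: the paper simply states that the proposition is an immediate consequence of Lemmas~\ref{lemma:coass-gen-internal-hom} and~\ref{lemma:counitality-gen-intern-hom}, and your explicit substitutions $(X,Y,Z,W)\mapsto(Y,X,X,X)$, $(Y,Y,Y,X)$, $(Y,Y,X,X)$ are precisely the specializations that make this work.
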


As in \cite{MMMT}, we will often use the notation $\mathrm{C}^{X}$ for the coalgebra $[X,X]$.
The following theorem is the analog of \cite[Theorem 4.7]{MMMT} for quasi fiab bicategories and finitary birepresentations. The proof is entirely analogous and therefore omitted.

\begin{theorem}\label{theorem:generator}
Assume that $\cC$ is quasi multifiab and $\mathbf{M}$ is a finitary birepresentation of $\cC$.
Let $X\in\mathbf{M}(\mathtt{i})$ be a generator of $\mathbf{M}$.
Then there is an equivalence of finitary
birepresentations
\begin{gather*}
\mathbf{M}\simeq\boldsymbol{\mathrm{inj}}_{\underline{\ccC}}(\mathrm{C}^{X})
\end{gather*} such that
$Y\mapsto [X,Y]$
for all $Y\in\mathbf{M}(\mathtt{j})$.
\end{theorem}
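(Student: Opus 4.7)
The plan is to construct an explicit morphism of birepresentations
$\Phi\colon\mathbf{M}\to\boldsymbol{\mathrm{inj}}_{\underline{\ccC}}(\mathrm{C}^X)$
whose component functors are equivalences. For each $\mathtt{j}\in\cC$, the functor $\Phi_{\mathtt{j}}$ sends $Y\in\mathbf{M}(\mathtt{j})$ to the right $\mathrm{C}^X$-comodule $\bigl([X,Y],\delta_{X,X,Y}\bigr)$ supplied by Proposition~\ref{proposition:coalg-comod-via-internal-hom}, and sends a morphism $f\colon Y\to Y'$ to the $2$-morphism $[X,f]\colon[X,Y]\to[X,Y']$ obtained from Yoneda applied to post-composition with $\underline{\mathbf{M}}_{\mathtt{j}\mathtt{i}}({}_-)f$. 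The structural $2$-iso\-mor\-phism $\phi^{\mathrm{F}}_{\mathtt{k}\mathtt{j}}\colon\mathrm{F}[X,Y]\xrightarrow{\cong}[X,\mathbf{M}_{\mathtt{k}\mathtt{j}}(\mathrm{F})Y]$ needed to make $\Phi$ a strong transformation is defined, by Yoneda, from the composite $\mathbf{M}(\mathrm{F})Y\xrightarrow{\mathbf{M}(\mathrm{F})\mathrm{coev}_{X,Y}}\mathbf{M}(\mathrm{F})\mathbf{M}([X,Y])X\xrightarrow{\mu^{\mathrm{F},[X,Y]}_{X}}\mathbf{M}(\mathrm{F}[X,Y])X$; its invertibility is a direct check using that both sides represent the same functor $\mathrm{G}\mapsto\mathrm{Hom}(\mathbf{M}(\mathrm{F})Y,\mathbf{M}(\mathrm{G})X)$ on $\underline{\cC}(\mathtt{i},\mathtt{k})$, and the coherence axioms~\eqref{eq:morphism1}--\eqref{eq:morphism2} are diagram chases entirely parallel to the proofs of Lemmas~\ref{lemma:coass-gen-internal-hom} and \ref{lemma:counitality-gen-intern-hom}.

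With $\Phi$ in place, I would establish that each $\Phi_{\mathtt{j}}$ is fully faithful and essentially surjective onto $\mathrm{inj}_{\underline{\ccC}}(\mathrm{C}^X)_{\mathtt{j}}$. Essential surjectivity, and the fact that $\Phi_{\mathtt{j}}$ lands in the injective subcategory, both use the hypothesis that $X$ generates $\mathbf{M}$: via $\phi^{\mathrm{G}}$ one gets $[X,\mathbf{M}(\mathrm{G})X]\cong\mathrm{G}\mathrm{C}^X$ for every $\mathrm{G}\in\cC(\mathtt{i},\mathtt{j})$, and since every $Y\in\mathbf{M}(\mathtt{j})$ is a direct summand of some $\mathbf{M}(\mathrm{G})X$, the object $[X,Y]$ is a direct summand of $\mathrm{G}\mathrm{C}^X$; by Lemma~\ref{lemma:addGX} such summands exhaust $\mathrm{inj}_{\underline{\ccC}}(\mathrm{C}^X)_{\mathtt{j}}$. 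For full faithfulness, I would realise the comodule hom-space $\mathrm{Hom}_{\mathrm{C}^X\text{-}\mathrm{comod}}([X,Y],[X,Z])$ as the equalizer of the two standard maps into $\mathrm{Hom}_{\underline{\ccC}}([X,Y],[X,Z]\mathrm{C}^X)$, transport through $\gamma_{X,{}_-}$ to obtain an equalizer of hom-spaces in $\underline{\mathbf{M}}(\mathtt{j})$, and identify this equalizer with $\mathrm{Hom}_{\mathbf{M}(\mathtt{j})}(Y,Z)$ by exhibiting $\mathrm{coev}_{X,Z}\colon Z\to\mathbf{M}([X,Z])X$ as the equalizer of the relevant pair of maps out of $\mathbf{M}([X,Z])X$ — a consequence of the counitality square~\eqref{eq:right-counit-gen-intern-hom} for $[X,Z]$ together with the left exactness of $\underline{\mathbf{M}}({}_-)X$.

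I expect the main obstacle to lie in the bookkeeping of associators, unitors and the structural constraints $\mu,\iota$ of the birepresentation throughout the above arguments, in particular when verifying the coherence axioms for $\phi^{\mathrm{F}}_{\mathtt{k}\mathtt{j}}$ and the compatibility of the equalizer identifications with the right $\mathrm{C}^X$-coactions. Each conceptual step has a strict $2$-categorical counterpart in \cite[Theorem 4.7]{MMMT}, but in the bicategorical setting the associated commutative diagrams are substantially larger, and one must invoke the pentagon and triangle coherence axioms for $\cC$ and the axioms~\eqref{eq:birepresentation2}--\eqref{eq:birepresentation3} for $\mathbf{M}$ at essentially every step, in the same spirit as the extended diagram chases of Lemma~\ref{lemma:coass-gen-internal-hom}.
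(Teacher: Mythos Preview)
Your proposal is correct and follows exactly the approach indicated by the paper, which omits the proof and refers to \cite[Theorem~4.7]{MMMT} as being ``entirely analogous''; your outline is precisely the bicategorical adaptation of that argument, with the extra associator/unitor bookkeeping you identify as the main overhead.
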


\begin{remark}
The existence of a single generator can be an obstacle to applying Theorem \ref{theorem:generator} in the setup of quasi multifiab bicategories, since such a generator might not exist in a single $\mathbf{M}(\mathtt{i})$. However, we can always pass to the birepresentation $\mathbf{M}^{\oplus}$ of $\cC^{\,\oplus}$, which will have a generator. We can thus always associate a coalgebra in $\cC^{\,\oplus}$ to $\mathbf{M}$.
\end{remark}

\begin{corollary}\label{corollary:MT}
If, in the setup of Theorem \ref{theorem:generator}, $X,Y$ are two generators of $\mathbf{M}$, then $\mathrm{C}^{X}$ and $\mathrm{C}^{Y}$ are
MT equivalent coalgebras in $\underline{\cC}$.
\end{corollary}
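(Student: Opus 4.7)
The plan is to deduce this almost directly from Theorem~\ref{theorem:generator} together with the definition of MT equivalence (Definition~\ref{definition:mt-equi}). Since both $X\in\mathbf{M}(\mathtt{i})$ and $Y\in\mathbf{M}(\mathtt{j})$ are generators of the same finitary birepresentation $\mathbf{M}$, Theorem~\ref{theorem:generator} supplies two equivalences of finitary birepresentations of $\cC$,
\begin{gather*}
\boldsymbol{\mathrm{inj}}_{\underline{\ccC}}(\mathrm{C}^X)
\xleftarrow{\simeq}\mathbf{M}\xrightarrow{\simeq}
\boldsymbol{\mathrm{inj}}_{\underline{\ccC}}(\mathrm{C}^Y),
\end{gather*}
obtained by sending $Z\mapsto[X,Z]$ and $Z\mapsto[Y,Z]$, respectively.

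Composing the quasi-inverse of the first equivalence with the second yields an equivalence of birepresentations $\boldsymbol{\mathrm{inj}}_{\underline{\ccC}}(\mathrm{C}^X)\simeq \boldsymbol{\mathrm{inj}}_{\underline{\ccC}}(\mathrm{C}^Y)$, which is exactly what Definition~\ref{definition:mt-equi} requires for $\mathrm{C}^X$ and $\mathrm{C}^Y$ to be MT equivalent. There is no hard step here; the content is already packaged in Theorem~\ref{theorem:generator}, and the only thing to observe is that equivalences of birepresentations compose. If one wished to exhibit the bicomodules of Theorem~\ref{theorem:MT} explicitly, the natural candidates would be $[X,Y]$ as a $\mathrm{C}^Y$-$\mathrm{C}^X$-bicomodule and $[Y,X]$ as a $\mathrm{C}^X$-$\mathrm{C}^Y$-bicomodule (with structure maps coming from Proposition~\ref{proposition:coalg-comod-via-internal-hom}), but this level of detail is not needed since MT equivalence is defined purely in terms of the birepresentations $\boldsymbol{\mathrm{inj}}_{\underline{\ccC}}({}_-)$.
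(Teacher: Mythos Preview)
Your proposal is correct and matches the paper's approach: the corollary is stated without proof immediately after Theorem~\ref{theorem:generator}, as it follows directly by composing the two equivalences of birepresentations obtained from that theorem and invoking Definition~\ref{definition:mt-equi}.
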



\subsection{Framing coalgebras}\label{subsection:framed coalgebras}


Let $\cC$ be a quasi multifiab bicategory.
Recall that, for all $1$-morphisms $\mathrm{F}\in\cC$, the tuple
$(\mathrm{F},\mathrm{F}^{\star})$ forms an adjoint pair in $\cC$.

\begin{lemma}\label{lem0.1}
If $\mathrm{C}\in\underline{\cC}(\mathtt{i},\mathtt{i})$ is a coalgebra in $\underline{\cC}$ such that $0\neq(\mathrm{FC})\mathrm{F}^{\star}$ for some $1$-morphism $\mathrm{F}\in\cC(\mathtt{i},\mathtt{j})$,
then the $1$-morphism $(\mathrm{FC})\mathrm{F}^{\star}\in\underline{\cC}(\mathtt{j},\mathtt{j})$ has a coalgebra structure in $\underline{\cC}$ with comultiplication
\begin{gather*}
\begin{aligned}
\delta_{(\mathrm{FC})\mathrm{F}^{\star}}:=&
\big(\mathrm{id}_{(\mathrm{FC})\mathrm{F}^{\star}}\circ_{\mathsf{h}}\alpha_{\mathrm{F},\mathrm{C},\mathrm{F}^{\star}}^{\mone}\big)
\circ_{\mathsf{v}}
\alpha_{(\mathrm{FC})\mathrm{F}^{\star},\mathrm{F},\mathrm{C}\mathrm{F}^{\star}}
\circ_{\mathsf{v}}
(\alpha_{\mathrm{FC},\mathrm{F}^{\star},\mathrm{F}}^{\mone}\circ_{\mathsf{h}}\mathrm{id}_{\mathrm{CF}^{\star}})\\
&\circ_{\mathsf{v}}
\big((\mathrm{id}_{\mathrm{FC}}\circ_{\mathsf{h}}\mathrm{coev}_{\mathrm{F}})\circ_{\mathsf{h}}\mathrm{id}_{\mathrm{CF}^{\star}}\big)
\circ_{\mathsf{v}}
\big((\runit_{\mathrm{FC}})^{\mone}\circ_{\mathsf{h}}\mathrm{id}_{\mathrm{CF}^{\star}}\big)
\circ_{\mathsf{v}}
\alpha_{\mathrm{FC},\mathrm{C},\mathrm{F}^{\star}}\\
&\circ_{\mathsf{v}}
\big(\alpha_{\mathrm{F},\mathrm{C},\mathrm{C}}^{\mone}\circ_{\mathsf{h}}\mathrm{id}_{\mathrm{F}^{\star}}\big)
\circ_{\mathsf{v}}
\big((\mathrm{id}_{\mathrm{F}}\circ_{\mathsf{h}}\delta_{\mathrm{C}})\circ_{\mathsf{h}}\mathrm{id}_{\mathrm{F}^{\star}}\big)
\end{aligned}
\end{gather*}
and counit
\begin{gather*}
\epsilon_{(\mathrm{FC})\mathrm{F}^{\star}}:=\mathrm{ev}_{\mathrm{F}}\circ_{\mathsf{v}}(\runit_{\mathrm{F}}\circ_{\mathsf{h}}\mathrm{id}_{\mathrm{F}^{\star}})
\circ_{\mathsf{v}}
\big((\mathrm{id}_{\mathrm{F}}\circ_{\mathsf{h}}\epsilon_{\mathrm{C}})\circ_{\mathsf{h}}\mathrm{id}_{\mathrm{F}^{\star}}\big).
\end{gather*}
\end{lemma}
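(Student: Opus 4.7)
The plan is to verify the three defining axioms of a coalgebra: the coassociativity pentagon and the two counitality triangles for the claimed $\delta_{(\mathrm{FC})\mathrm{F}^{\star}}$ and $\epsilon_{(\mathrm{FC})\mathrm{F}^{\star}}$. Conceptually, $(\mathrm{FC})\mathrm{F}^{\star}$ is a \emph{conjugate} of $\mathrm{C}$ by the adjoint pair $(\mathrm{F},\mathrm{F}^{\star})$: the comultiplication first comultiplies in the middle copy of $\mathrm{C}$ and then separates the two resulting copies by inserting $\mathrm{coev}_{\mathrm{F}}\colon\mathbbm{1}_{\mathtt{i}}\to\mathrm{F}^{\star}\mathrm{F}$ between them, while the counit applies $\epsilon_{\mathrm{C}}$ and then pairs the adjacent $\mathrm{F}$ and $\mathrm{F}^{\star}$ via $\mathrm{ev}_{\mathrm{F}}$; the many associators and unitors in the stated formulas only bookkeep this construction in the weak setting.

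The cleanest approach is to appeal to strictification: by the strictification theorems quoted in Subsection~\ref{subsection:bireps}, $\cC$ is biequivalent to a quasi multifiat $2$-category $\cC^{\mathrm{st}}$ via a $\Bbbk$-linear pseudofunctor $\Phi$, and by Lemma~\ref{lemma:coalgebras-transported} both $\Phi$ and a quasi-inverse transport coalgebras, bicomodules, and the comultiplication/counit formulas of the lemma (after absorbing the coherers of $\Phi$ into the associators already present). It is therefore enough to prove the statement in $\cC^{\mathrm{st}}$, where all associators and unitors are identities and the formulas collapse to
\begin{gather*}
\delta_{\mathrm{FCF}^{\star}}=\bigl(\mathrm{id}_{\mathrm{FC}}\circ_{\mathsf{h}}\mathrm{coev}_{\mathrm{F}}\circ_{\mathsf{h}}\mathrm{id}_{\mathrm{CF}^{\star}}\bigr)\circ_{\mathsf{v}}\bigl(\mathrm{id}_{\mathrm{F}}\circ_{\mathsf{h}}\delta_{\mathrm{C}}\circ_{\mathsf{h}}\mathrm{id}_{\mathrm{F}^{\star}}\bigr),\qquad \epsilon_{\mathrm{FCF}^{\star}}=\mathrm{ev}_{\mathrm{F}}\circ_{\mathsf{v}}\bigl(\mathrm{id}_{\mathrm{F}}\circ_{\mathsf{h}}\epsilon_{\mathrm{C}}\circ_{\mathsf{h}}\mathrm{id}_{\mathrm{F}^{\star}}\bigr).
\end{gather*}

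In this strict form, coassociativity is immediate: both iterated comultiplications $(\delta_{\mathrm{FCF}^{\star}}\circ_{\mathsf{h}}\mathrm{id})\circ_{\mathsf{v}}\delta_{\mathrm{FCF}^{\star}}$ and $(\mathrm{id}\circ_{\mathsf{h}}\delta_{\mathrm{FCF}^{\star}})\circ_{\mathsf{v}}\delta_{\mathrm{FCF}^{\star}}$ expand, by the interchange law, to the $2$-morphism obtained from the two-fold application of $\delta_{\mathrm{C}}$ (on the two branches compared by coassociativity of $\mathrm{C}$) flanked by two independent insertions of $\mathrm{coev}_{\mathrm{F}}$, so they coincide by coassociativity of $(\mathrm{C},\delta_{\mathrm{C}})$. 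Counitality is equally short: after composing $\delta_{\mathrm{FCF}^{\star}}$ with $\epsilon_{\mathrm{FCF}^{\star}}$ on one side, $\epsilon_{\mathrm{C}}$ absorbs one of the two copies of $\mathrm{C}$, leaving a composite of the shape $\mathrm{ev}_{\mathrm{F}}\circ_{\mathsf{v}}(\mathrm{id}_{\mathrm{F}}\circ_{\mathsf{h}}\mathrm{coev}_{\mathrm{F}}\circ_{\mathsf{h}}\mathrm{id}_{\mathrm{F}^{\star}})$, which collapses to the identity by the appropriate snake (zigzag) identity recorded in Definition~\ref{definition:fiab}; counitality of $(\mathrm{C},\epsilon_{\mathrm{C}})$ then cleans up the remaining factor, yielding the required identity morphism.

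The genuine obstacle, if one insists on arguing directly in the weak setting instead of invoking strictification, is purely combinatorial: expanding both sides of the coassociativity pentagon for the stated $\delta_{(\mathrm{FC})\mathrm{F}^{\star}}$ produces a large diagram in which all associators must be transported past the horizontal insertions of $\mathrm{coev}_{\mathrm{F}}$ and $\delta_{\mathrm{C}}$ by repeated use of naturality of $\alpha$, the pentagon axiom and the interchange law, until both branches reduce to a common expression to which coassociativity of $\mathrm{C}$ applies. The counitality diagrams are similar in flavour but shorter, requiring the triangle coherences~\eqref{eq:0.00} for the unitors and the triangle identities for the adjunction $(\mathrm{ev}_{\mathrm{F}},\mathrm{coev}_{\mathrm{F}})$ from Definition~\ref{definition:fiab}.
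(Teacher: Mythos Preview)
Your proposal is correct, and your final paragraph is essentially the paper's entire proof: the paper simply states that coassociativity and counitality for $(\mathrm{FC})\mathrm{F}^{\star}$ follow from those for $\mathrm{C}$, the coherence conditions for the associator and unitors of $\underline{\cC}$, and the adjunction conditions for $\mathrm{F}$, without writing out any diagrams.

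Your main argument via strictification is a different route. The paper does not invoke strictification in the proof; instead, a remark immediately following the lemma observes that in the fiat $2$-category case the string-diagrammatic formulas make the verification ``an easy exercise in planar topology'', which is exactly the strict computation you carry out. You promote this heuristic into an actual proof strategy by transporting along a biequivalence to $\cC^{\mathrm{st}}$ and back, using Lemma~\ref{lemma:coalgebras-transported}. This buys you a short, conceptual verification at the cost of checking that the pseudofunctor coherers of $\Phi$ really do absorb into the associators and unitors of the stated formulas (your parenthetical ``after absorbing the coherers of $\Phi$ into the associators already present'' hides a small but genuine bookkeeping step). The paper's approach, by contrast, stays in the weak setting and appeals directly to coherence, trading conceptual clarity for the implicit promise that the large diagram chase your last paragraph describes can in principle be carried out.
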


\begin{proof}
Associativity and counitality for $(\mathrm{FC})\mathrm{F}^{\star}$ follow
from those for $\mathrm{C}$, the coherence conditions
for the associator and the unitors of $\underline{\cC}$ and the adjunction conditions for $\mathrm{F}$.
\end{proof}

\begin{remark}
The $1$-morphism $\mathrm{F}(\mathrm{C}\mathrm{F}^{\star})$, if non-zero, acquires a coalgebra structure
via the isomorphism
$\alpha_{\mathrm{F},\mathrm{C},\mathrm{F}^{\star}}\colon(\mathrm{FC})\mathrm{F}^{\star}\xrightarrow{\cong}\mathrm{F}(\mathrm{C}\mathrm{F}^{\star})$.
\end{remark}

\begin{remark}
If $\cC$ is a fiat $2$-category, we can picture the coalgebra structure of
$\mathrm{F}\mathrm{C}\mathrm{F}^{\star}=
(\mathrm{F}\mathrm{C})\mathrm{F}^{\star}=
\mathrm{F}(\mathrm{C}\mathrm{F}^{\star})$
from Lemma \ref{lem0.1} in the form of string diagrams.
Using solid black strands for $\mathrm{C}$ and
dotted blue strands for $\mathrm{F}$ and $\mathrm{F}^{\star}$, we denote
the $2$-morphisms $\delta_{\mathrm{C}}, \epsilon_{\mathrm{C}}, \mathrm{ev}_{\mathrm{F}},
\mathrm{coev}_{\mathrm{F}}$ by
\begin{gather*}
\delta_{\mathrm{C}}
=
\begin{tikzpicture}[anchorbase,scale=.4,smallnodes]
\draw[cstrand] (0,0) node[below,black]{$\mathrm{C}$} to (0,1);
\draw[cstrand] (0,1) to (-1,2) node[above,black,yshift=-2pt]{$\mathrm{C}$};
\draw[cstrand] (0,1) to (1,2) node[above,black,yshift=-2pt]{$\mathrm{C}$};
\end{tikzpicture},
\quad\quad
\epsilon_{\mathrm{C}}
=
\begin{tikzpicture}[anchorbase,scale=.4,smallnodes]
\draw[white] (0,1) to (0,2) node[above,black,yshift=-2pt]{$\mathbbm{1}$};
\draw[cstrand,marked=1] (0,0) node[below,black]{$\mathrm{C}$} to (0,1);
\end{tikzpicture},
\quad\quad
\mathrm{ev}_{\mathrm{F}}=
\begin{tikzpicture}[anchorbase,scale=.4,smallnodes]
\draw[white] (.5,.5) to (.5,1) node[above,black,yshift=-2pt]{$\mathbbm{1}$};
\draw[dstrand] (0,0) node[below,black]{$\mathrm{F}$} to[out=90, in=180] (.5,.5) to[out=0, in=90] (1,0) node[below,black]{$\mathrm{F}^{\star}$};
\draw[dstrand,directed=1] (.6,.5) to (.61,.5);
\end{tikzpicture},
\quad\quad
\mathrm{coev}_{\mathrm{F}}=
\begin{tikzpicture}[anchorbase,scale=.4,smallnodes]
\draw[white] (.5,-.5) to (.5,-1) node[below,black]{$\mathbbm{1}$};
\draw[dstrand] (0,0) node[above,black,yshift=-2pt]{$\mathrm{F}^{\star}$}
to[out=270, in=180] (.5,-.5) to[out=0, in=270] (1,0) node[above,black,yshift=-2pt]{$\mathrm{F}$};
\draw[dstrand,directed=1] (.6,-.5) to (.61,-.5);
\end{tikzpicture}
.
\end{gather*}
In this diagrammatic notation, the comultiplication and counit of $\mathrm{F}\mathrm{C}\mathrm{F}^{\star}$ become
\begin{gather*}
\delta_{\mathrm{FCF}^{\star}}=
\begin{tikzpicture}[anchorbase,scale=.4,smallnodes]
\draw[dstrand] (-.5,2) to[out=270, in=180] (0,1.5) to[out=0, in=270] (.5,2);
\draw[dstrand,directed=1] (-.5,0) to (-.5,1) to (-1.5,2);
\draw[dstrand,directed=1] (1.5,2) to (.5,1) to (.5,0);
\draw[cstrand] (0,0) to (0,1);
\draw[cstrand] (0,1) to (-1,2);
\draw[cstrand] (0,1) to (1,2);
\draw[dstrand,directed=1] (.1,1.5) to (.11,1.5);
\end{tikzpicture}
,\quad\quad
\epsilon_{\mathrm{FCF}^{\star}}=
\begin{tikzpicture}[anchorbase,scale=.4,smallnodes]
\draw[dstrand,directed=1] (.5,2) to[out=90, in=180] (1,3.5) to[out=0, in=90] (1.5,2);
\draw[cstrand,marked=1] (1,2) to (1,3);
\end{tikzpicture}
.
\end{gather*}
This explains our choice of the term \emph{framing}. Using these diagrams, the proof of Lemma \ref{lem0.1} becomes an easy exercise in planar topology and many of the statements below also have natural topological interpretations.

The idea to use duals for the construction of (co)algebras is not new,
see e.g. \cite[Section 3]{Mu} for framings of the identity object in a strict tensor category,
although we do not know of any reference for the general content of Lemma \ref{lem0.1} (either in the framework of $2$-categories or bicategories).
\end{remark}

Note that for any $\mathrm{F}\in\cC(\mathtt{i},\mathtt{j})$  the adjoint pair $(\mathrm{F},\mathrm{F}^{\star})$ gives rise to the natural isomorphism
\begin{gather*}
\mathrm{Hom}_{\underline{\ccC}(\mathtt{k},\mathtt{j})}(\mathrm{F}\mathrm{H},\mathrm{G})
\xrightarrow{\cong} \mathrm{Hom}_{\underline{\ccC}(\mathtt{k},\mathtt{i})}(\mathrm{H},\mathrm{F}^{\star}\mathrm{G}),
\end{gather*}
where $\mathrm{G}\in\underline{\cC}(\mathtt{k},\mathtt{j}),\mathrm{H}\in\underline{\cC}(\mathtt{k},\mathtt{i})$,
given by sending $\beta\in\mathrm{Hom}_{\underline{\ccC}(\mathtt{k},\mathtt{j})}(\mathrm{F}\mathrm{H},\mathrm{G})$ to the element
\begin{gather*}
(\mathrm{id}_{\mathrm{F}^{\star}}\circ_{\mathsf{h}}\beta)\circ_{\mathsf{v}}\alpha_{\mathrm{F}^{\star},\mathrm{F},\mathrm{H}}\circ_{\mathsf{v}}(\mathrm{coev}_{\mathrm{F}}\circ_{\mathsf{h}}\mathrm{id}_{\mathrm{H}})\circ_{\mathsf{v}}(\lunit_{\mathrm{H}})^{\mone}
\in\mathrm{Hom}_{\underline{\ccC}(\mathtt{k},\mathtt{i})}(\mathrm{H},\mathrm{F}^{\star}\mathrm{G})
\end{gather*}
with inverse given by sending $\gamma\in\mathrm{Hom}_{\underline{\ccC}(\mathtt{k},\mathtt{i})}(\mathrm{H},\mathrm{F}^{\star}\mathrm{G})$
to
\begin{gather*}
\lunit_{\mathrm{G}}\circ_{\mathsf{v}}(\mathrm{ev}_{\mathrm{F}}\circ_{\mathsf{h}}\mathrm{id}_{\mathrm{G}})\circ_{\mathsf{v}}\alpha_{\mathrm{F},\mathrm{F}^{\star},\mathrm{G}}^{\mone}\circ_{\mathsf{v}}(\mathrm{id}_{\mathrm{F}}\circ_{\mathsf{h}}\gamma)
\in\mathrm{Hom}_{\underline{\ccC}(\mathtt{k},\mathtt{j})}(\mathrm{F}\mathrm{H},\mathrm{G}).
\end{gather*}
We also have the natural isomorphism
\begin{gather*}
\mathrm{Hom}_{\underline{\ccC}(\mathtt{j},\mathtt{k})}(\mathrm{H}\mathrm{F}^{\star},\mathrm{G})\xrightarrow{\cong} \mathrm{Hom}_{\underline{\ccC}(\mathtt{i},\mathtt{k})}(\mathrm{H},\mathrm{G}\mathrm{F}),
\end{gather*}
where $\mathrm{G}\in\underline{\cC}(\mathtt{j},\mathtt{k}),\mathrm{H}\in\underline{\cC}(\mathtt{i},\mathtt{k})$,
given by sending $\beta\in\mathrm{Hom}_{\underline{\ccC}(\mathtt{j},\mathtt{k})}(\mathrm{H}\mathrm{F}^{\star},\mathrm{G})$ to the element
\begin{gather*}
(\beta\circ_{\mathsf{h}}\mathrm{id}_{\mathrm{F}})\circ_{\mathsf{v}}\alpha_{\mathrm{H},\mathrm{F}^{\star},\mathrm{F}}^{\mone}\circ_{\mathsf{v}}(\mathrm{id}_{\mathrm{H}}\circ_{\mathsf{h}}\mathrm{coev}_{\mathrm{F}})\circ_{\mathsf{v}}(\runit_{\mathrm{H}})^{\mone}
\in\mathrm{Hom}_{\underline{\ccC}(\mathtt{i},\mathtt{k})}(\mathrm{H},\mathrm{G}\mathrm{F})
\end{gather*}
with inverse given by sending $\gamma\in\mathrm{Hom}_{\underline{\ccC}(\mathtt{i},\mathtt{k})}(\mathrm{H},\mathrm{G}\mathrm{F})$
to
\begin{gather*}
\runit_{\mathrm{G}}\circ_{\mathsf{v}}(\mathrm{id}_{\mathrm{G}}\circ_{\mathsf{h}}\mathrm{ev}_{\mathrm{F}})\circ_{\mathsf{v}}\alpha_{\mathrm{G},\mathrm{F},\mathrm{F}^{\star}}\circ_{\mathsf{v}}
(\gamma\circ_{\mathsf{h}}\mathrm{id}_{\mathrm{F}^{\star}})
\in\mathrm{Hom}_{\underline{\ccC}(\mathtt{j},\mathtt{k})}(\mathrm{H}\mathrm{F}^{\star},\mathrm{G}).
\end{gather*}

\begin{theorem}\label{prop0.4}
Suppose that, additionally to the hypotheses of Theorem \ref{theorem:generator},
$\mathrm{F}$ is a $1$-morphism in $\cC(\mathtt{i},\mathtt{j})$ such that $\mathbf{M}_{\mathtt{ji}}(\mathrm{F})\,X$ generates $\mathbf{M}$. Then the $1$-morphism $(\mathrm{FC}^{X})\mathrm{F}^{\star}\in\underline{\cC}$ with coalgebra structure defined in Lemma \ref{lem0.1}
is, up to isomorphism, the coalgebra associated with the object $\mathbf{M}_{\mathtt{ji}}(\mathrm{F})\,X$.
\end{theorem}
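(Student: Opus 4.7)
The plan is to establish an isomorphism of $1$-morphisms $[\mathbf{M}_{\mathtt{ji}}(\mathrm{F})X,\mathbf{M}_{\mathtt{ji}}(\mathrm{F})X]\cong(\mathrm{F}\mathrm{C}^X)\mathrm{F}^\star$ in $\underline{\cC}(\mathtt{j},\mathtt{j})$ via Yoneda, and then to verify that this isomorphism intertwines the two coalgebra structures. Setting $Y:=\mathbf{M}_{\mathtt{ji}}(\mathrm{F})X$, for any $\mathrm{G}\in\underline{\cC}(\mathtt{j},\mathtt{j})$ the two natural isomorphisms stated immediately before Theorem~\ref{prop0.4}, combined with the defining property of $\mathrm{C}^X=[X,X]$, yield
\begin{align*}
\mathrm{Hom}\bigl((\mathrm{F}\mathrm{C}^X)\mathrm{F}^\star,\mathrm{G}\bigr)
&\cong\mathrm{Hom}(\mathrm{F}\mathrm{C}^X,\mathrm{G}\mathrm{F})
\cong\mathrm{Hom}(\mathrm{C}^X,\mathrm{F}^\star\mathrm{G}\mathrm{F})\\
&\cong\mathrm{Hom}\bigl(X,\underline{\mathbf{M}}(\mathrm{F}^\star\mathrm{G}\mathrm{F})X\bigr).
\end{align*}
Applying $\mu$ to identify $\underline{\mathbf{M}}(\mathrm{F}^\star\mathrm{G}\mathrm{F})$ with $\underline{\mathbf{M}}(\mathrm{F}^\star)\underline{\mathbf{M}}(\mathrm{G})\underline{\mathbf{M}}(\mathrm{F})$ and then invoking the induced adjunction $\bigl(\underline{\mathbf{M}}(\mathrm{F}),\underline{\mathbf{M}}(\mathrm{F}^\star)\bigr)$---which exists because $\Bbbk$-linear pseudofunctors transport adjunctions---produces
$$\mathrm{Hom}\bigl(X,\underline{\mathbf{M}}(\mathrm{F}^\star)\underline{\mathbf{M}}(\mathrm{G})\underline{\mathbf{M}}(\mathrm{F})X\bigr)\cong\mathrm{Hom}\bigl(Y,\underline{\mathbf{M}}(\mathrm{G})Y\bigr),$$
which is precisely the functor represented by $[Y,Y]$. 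Yoneda then produces an isomorphism $\phi\colon[Y,Y]\xrightarrow{\cong}(\mathrm{F}\mathrm{C}^X)\mathrm{F}^\star$.

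To upgrade $\phi$ to a coalgebra isomorphism, I would track the universal element $\mathrm{id}_{(\mathrm{F}\mathrm{C}^X)\mathrm{F}^\star}$ through the chain of isomorphisms above in order to identify $\mathrm{coev}_{Y,Y}$ explicitly as a canonical \emph{framed coevaluation} built from $\mathrm{coev}_{X,X}$ together with the adjunction $2$-morphism $\mathrm{coev}_{\mathrm{F}}$. Proposition~\ref{proposition:coalg-comod-via-internal-hom} and the formulas in Subsection~\ref{subsection:int-hom} characterise $\delta_{Y,Y,Y}$ and $\epsilon_Y$ uniquely in terms of $\mathrm{coev}_{Y,Y}$ and $\iota_{\mathtt{j}}$. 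Transporting each of these composites back through $\phi$ by means of the naturality of the adjunction isomorphisms, the coherers $\alpha$, $\iota$, $\mu$ and the definitions of $\mathrm{ev}_{\mathrm{F}}$ and $\mathrm{coev}_{\mathrm{F}}$, I would verify that they recover exactly the formulas for $\delta_{(\mathrm{F}\mathrm{C}^X)\mathrm{F}^\star}$ and $\epsilon_{(\mathrm{F}\mathrm{C}^X)\mathrm{F}^\star}$ given in Lemma~\ref{lem0.1}. This shows that $\phi$ is a homomorphism of coalgebras, and hence an isomorphism between $(\mathrm{F}\mathrm{C}^X)\mathrm{F}^\star$ and the coalgebra $\mathrm{C}^Y$ associated with $Y$ via Theorem~\ref{theorem:generator}.

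The main obstacle will be the bookkeeping in the second step. Each individual face of the resulting diagram commutes for a standard reason---the coherence \eqref{eq:0.00} of the associator and unitors, the coherence \eqref{eq:birepresentation2}--\eqref{eq:birepresentation3} of $\iota$ and $\mu$, naturality of the transformations involved, and the zig-zag identities for $(\mathrm{ev}_{\mathrm{F}},\mathrm{coev}_{\mathrm{F}})$---but the overall diagram assembling these pieces is sizeable, in line with the lengthy explicit formulas given in Lemma~\ref{lem0.1}. A helpful bookkeeping device is the string-diagram notation introduced in the remark following Lemma~\ref{lem0.1}: under it, each of the required identities reduces to a planar isotopy together with an application of the coassociativity or counitality of $\mathrm{C}^X$, which is essentially how one would read off the formula for $\delta_{(\mathrm{F}\mathrm{C}^X)\mathrm{F}^\star}$ in the first place.
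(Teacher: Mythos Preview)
Your approach is essentially the same as the paper's: both set up the same chain of adjunction isomorphisms to obtain $[Y,Y]\cong(\mathrm{F}\mathrm{C}^X)\mathrm{F}^\star$ via Yoneda, then chase the universal element to pin down $\mathrm{coev}_{Y,Y}$, and finally verify that the transported counit and comultiplication agree with the formulas in Lemma~\ref{lem0.1} using naturality of $\alpha,\lunit,\runit,\mu$, the coherence conditions \eqref{eq:0.00} and \eqref{eq:birepresentation2}--\eqref{eq:birepresentation3}, and the zig-zag identities for $(\mathrm{F},\mathrm{F}^\star)$.

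One caveat worth flagging: your suggestion to let string diagrams absorb the bookkeeping is only a heuristic here. The remark after Lemma~\ref{lem0.1} explicitly restricts the string calculus to the fiat (strict) case, whereas the entire point of this theorem in the paper is to carry out the verification in a genuine bicategory, where associators and unitors must be tracked explicitly. The paper therefore executes precisely the diagram chase you describe in full detail---several pages of large commutative diagrams, each facet justified by one of the ingredients you list---rather than appealing to planar isotopy. Your plan is correct, but the substance of the proof is that explicit execution; invoking string diagrams would require either a separate coherence argument or a reduction to the strict case, which the paper deliberately avoids.
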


\begin{proof}
By adjunction and the natural isomorphism $\gamma_{X,X}$, we have natural isomorphisms
\begin{gather}\label{eq:0.4}
\begin{aligned}
\mathrm{Hom}_{\underline{\mathbf{M}}(\mathtt{j})}\big(\underline{\mathbf{M}}_{\mathtt{ji}}(\mathrm{F})\,X,
\underline{\mathbf{M}}_{\mathtt{jj}}(\mathrm{G})\underline{\mathbf{M}}_{\mathtt{ji}}(\mathrm{F})\,X\big)
&\cong
\mathrm{Hom}_{\underline{\mathbf{M}}(\mathtt{j})}\big(\underline{\mathbf{M}}_{\mathtt{ji}}(\mathrm{F})\,X,\underline{\mathbf{M}}_{\mathtt{ji}}(\mathrm{GF})\,X\big)
\\
&\cong
\mathrm{Hom}_{\underline{\mathbf{M}}(\mathtt{i})}\big(X,\underline{\mathbf{M}}_{\mathtt{ij}}(\mathrm{F}^{\star})\underline{\mathbf{M}}_{\mathtt{ji}}(\mathrm{GF})\,X\big)
\\
&\cong
\mathrm{Hom}_{\underline{\mathbf{M}}(\mathtt{i})}\big(X,\underline{\mathbf{M}}_{\mathtt{ii}}\big(\mathrm{F}^{\star}(\mathrm{GF})\big)\,X\big)
\\
&\cong
\mathrm{Hom}_{\underline{\ccC}(\mathtt{i},\mathtt{i})}\big(\mathrm{C}^{X},\mathrm{F}^{\star}(\mathrm{GF})\big)
\\
&\cong
\mathrm{Hom}_{\underline{\ccC}(\mathtt{i},\mathtt{j})}(\mathrm{F}\mathrm{C}^{X},\mathrm{GF})
\\
&\cong
\mathrm{Hom}_{\underline{\ccC}(\mathtt{j},\mathtt{j})}\big((\mathrm{FC}^{X})\mathrm{F}^{\star},\mathrm{G}\big),
\end{aligned}
\end{gather}
for all $1$-morphisms $\mathrm{G}\in\underline{\cC}(\mathtt{j},\mathtt{j})$. Below we will
give and use these isomorphisms explicitly, e.g. the first and third isomorphisms are given by $(\mu_{\mathtt{j}\mathtt{j}\mathtt{i}}^{\mathrm{G},\mathrm{F}})_X\circ_{\mathsf{v}}{}_{-}$
and $(\mu_{\mathtt{i}\mathtt{j}\mathtt{i}}^{\mathrm{F}^{\star},\mathrm{GF}})_X\circ_{\mathsf{v}}{}_{-}$, respectively.

Considering $\mathrm{G}=\mathbbm{1}_{\mathtt{j}}$, we now prove that
$\epsilon_{(\mathrm{FC}^{X})\mathrm{F}^{\star}}$ (recall the notation $\epsilon_{X}:=\epsilon_{\mathrm{C}^{X}}$) is the image of $(\iota_{\mathtt{j}}^{\mone})_{\underline{\mathbf{M}}_{\mathtt{ji}}(\mathrm{F}) X}$ under the isomorphisms in \eqref{eq:0.4}.
It suffices to show that the image of $\epsilon_{(\mathrm{FC}^{X})\mathrm{F}^{\star}}$ and the image of $(\iota_{\mathtt{j}}^{\mone})_{\underline{\mathbf{M}}_{\mathtt{ji}}(\mathrm{F})\,X}$
under those isomorphisms coincide in any of the morphism spaces appearing in \eqref{eq:0.4}.
On one hand, the first isomorphism
\begin{gather*}
\mathrm{Hom}_{\underline{\mathbf{M}}(\mathtt{j})}
\big(\underline{\mathbf{M}}_{\mathtt{ji}}(\mathrm{F})\,X,
\underline{\mathbf{M}}_{\mathtt{jj}}(\mathbbm{1}_{\mathtt{j}})
\underline{\mathbf{M}}_{\mathtt{ji}}(\mathrm{F})\,X\big)
\xrightarrow{\cong}
\mathrm{Hom}_{\underline{\mathbf{M}}(\mathtt{j})}
\big(\underline{\mathbf{M}}_{\mathtt{ji}}(\mathrm{F})\,X,
\underline{\mathbf{M}}_{\mathtt{ji}}(\mathbbm{1}_{\mathtt{j}}\mathrm{F})\,X\big)
\end{gather*}
sends $(\iota_{\mathtt{j}}^{\mone})_{\underline{\mathbf{M}}_{\mathtt{ji}}(\mathrm{F})\,X}$ to $\underline{\mathbf{M}}_{\mathtt{ji}}\big((\lunit_{\mathrm{F}})^{\mone}\big)_X$,
by \eqref{eq:birepresentation2}. The second isomorphism
\begin{gather*}
\mathrm{Hom}_{\underline{\mathbf{M}}(\mathtt{j})}
\big(\underline{\mathbf{M}}_{\mathtt{ji}}(\mathrm{F})\,X,
\underline{\mathbf{M}}_{\mathtt{ji}}(\mathbbm{1}_{\mathtt{j}}\mathrm{F})\,X\big)
\xrightarrow{\cong}
\mathrm{Hom}_{\underline{\mathbf{M}}(\mathtt{i})}\big(X,\underline{\mathbf{M}}_{\mathtt{ij}}(\mathrm{F}^{\star})
\underline{\mathbf{M}}_{\mathtt{ji}}(\mathbbm{1}_{\mathtt{j}}\mathrm{F})\,X\big)
\end{gather*}
sends $\underline{\mathbf{M}}_{\mathtt{ji}}\big((\lunit_{\mathrm{F}})^{\mone}\big)_X$ to
\begin{gather*}
\begin{aligned}
&\Big(\mathrm{id}_{\underline{\mathbf{M}}_{\mathtt{ij}}(\mathrm{F}^{\star})}\circ_{\mathsf{h}}
\underline{\mathbf{M}}_{\mathtt{ji}}\big((\lunit_{\mathrm{F}})^{\mone}\big)_X\Big)\circ_{\mathsf{v}}
(\mu_{\mathtt{i}\mathtt{j}\mathtt{i}}^{\mathrm{F}^{\star},\mathrm{F}})_X^{\mone}\circ_{\mathsf{v}}
\underline{\mathbf{M}}_{\mathtt{ii}}(\mathrm{coev}_{\mathrm{F}})_{X}\circ_{\mathsf{v}}
(\iota_{\mathtt{i}}^{\mone})_{X}\\
&=(\mu_{\mathtt{i}\mathtt{j}\mathtt{i}}^{\mathrm{F}^{\star},\mathbbm{1}_{\mathtt{j}}\mathrm{F}})_X^{\mone}\circ_{\mathsf{v}}
\underline{\mathbf{M}}_{\mathtt{ii}}\big(\mathrm{id}_{\mathrm{F}^{\star}}\circ_{\mathsf{h}}(\lunit_{\mathrm{F}})^{\mone}\big)_X\circ_{\mathsf{v}}
\underline{\mathbf{M}}_{\mathtt{ii}}(\mathrm{coev}_{\mathrm{F}})_{X}\circ_{\mathsf{v}}
(\iota_{\mathtt{i}}^{\mone})_{X},
\end{aligned}
\end{gather*}
where the equality holds by the naturality of $\mu_{\mathtt{i}\mathtt{j}\mathtt{i}}$. Further, the latter is sent to $\underline{\mathbf{M}}_{\mathtt{ii}}\big(\mathrm{id}_{\mathrm{F}^{\star}}\circ_{\mathsf{h}}(\lunit_{\mathrm{F}})^{\mone}\big)_X\circ_{\mathsf{v}}
\underline{\mathbf{M}}_{\mathtt{ii}}(\mathrm{coev}_{\mathrm{F}})_{X}\circ_{\mathsf{v}}
(\iota_{\mathtt{i}}^{\mone})_{X}$ under the third isomorphism in \eqref{eq:0.4} for $\mathrm{G}=\mathbbm{1}_{\mathtt{j}}$.
On the other hand, consider the commutative diagram
\begin{gather}\label{diag:01}
\adjustbox{scale=.79,center}{%
\begin{tikzcd}[ampersand replacement=\&, column sep=3.5em, row sep=2.5em]
\mathrm{F}\mathrm{C}^{X}
\ar[rr,"(\runit_{\mathrm{FC}^{X}})^{\mone}"]
\ar[rr, phantom, yshift=-7ex, "\circled{1}"]
\ar[d, "{\mathrm{id}_{\mathrm{F}}\circ_{\mathsf{h}}\epsilon_{X}}"description]
\&\& (\mathrm{F}\mathrm{C}^{X})\mathbbm{1}_{\mathtt{i}}
\ar[rr,"\mathrm{id}_{\mathrm{FC}^{X}}\circ_{\mathsf{h}}\mathrm{coev}_{\mathrm{F}}"]
\ar[rr, phantom, yshift=-7ex, "\circled{2}"]
\ar[d, "{(\mathrm{id}_{\mathrm{F}}\circ_{\mathsf{h}}\epsilon_{X})\circ_{\mathsf{h}}\mathrm{id}_{\mathbbm{1}_{\mathtt{i}}}}"description]
\&\&(\mathrm{F}\mathrm{C}^{X})(\mathrm{F}^{\star}\mathrm{F})
\ar[rr,"\alpha_{\mathrm{F}\mathrm{C}^{X},\mathrm{F}^{\star},\mathrm{F}}^{\mone}"]
\ar[rr, phantom, yshift=-7ex, "\circled{3}"]
\ar[d, "{(\mathrm{id}_{\mathrm{F}}\circ_{\mathsf{h}}\epsilon_{X})\circ_{\mathsf{h}}\mathrm{id}_{\mathrm{F}^{\star}\mathrm{F}}}"description]
\&\&\big((\mathrm{F}\mathrm{C}^{X})\mathrm{F}^{\star}\big)\mathrm{F}
\ar[d, "{\big((\mathrm{id}_{\mathrm{F}}\circ_{\mathsf{h}}\epsilon_{X})
\circ_{\mathsf{h}}\mathrm{id}_{\mathrm{F}^{\star}}\big)\circ_{\mathsf{h}}\mathrm{id}_{\mathrm{F}}}"description]
\\[5ex]
\mathrm{F}\mathbbm{1}_{\mathtt{i}}
\ar[rr,"(\runit_{\mathrm{F}\mathbbm{1}_{\mathtt{i}}})^{\mone}"]
\ar[rr, phantom,xshift=7ex, yshift=-10ex, "\circled{5}"]
\ar[rr, phantom,xshift=-3ex, yshift=-4ex, "\circled{4}"]
\ar[d, "{\mathrm{id}_{\mathrm{F}}\circ_{\mathsf{h}}(\runit_{\mathbbm{1}_{\mathtt{i}}})^{\mone}}"description]
\&\& (\mathrm{F}\mathbbm{1}_{\mathtt{i}})\mathbbm{1}_{\mathtt{i}}
\ar[dll,"{\alpha_{\mathrm{F},\mathbbm{1}_{\mathtt{i}},\mathbbm{1}_{\mathtt{i}}}}"description]
\ar[rr,"\mathrm{id}_{\mathrm{F}\mathbbm{1}_{\mathtt{i}}}\circ_{\mathsf{h}}\mathrm{coev}_{\mathrm{F}}"]
\ar[rr, phantom, yshift=-7ex, "\circled{6}"]
\ar[d, "{\runit_{\mathrm{F}}\circ_{\mathsf{h}}\mathrm{id}_{\mathbbm{1}_{\mathtt{i}}}}"description]
\&\&
(\mathrm{F}\mathbbm{1}_{\mathtt{i}})(\mathrm{F}^{\star}\mathrm{F})
\ar[rr,"\alpha_{\mathrm{F}\mathbbm{1}_{\mathtt{i}},\mathrm{F}^{\star},\mathrm{F}}^{\mone}"]
\ar[rr, phantom, yshift=-7ex, "\circled{7}"]
\ar[d, "{\runit_{\mathrm{F}}\circ_{\mathsf{h}}\mathrm{id}_{\mathrm{F}^{\star}\mathrm{F}}}"description]
\&\& \big((\mathrm{F}\mathbbm{1}_{\mathtt{i}})\mathrm{F}^{\star}\big)\mathrm{F}
\ar[d, "{(\runit_{\mathrm{F}}\circ_{\mathsf{h}}\mathrm{id}_{\mathrm{F}^{\star}})\circ_{\mathsf{h}}\mathrm{id}_{\mathrm{F}}}"description]
\\[5ex]
\mathrm{F}(\mathbbm{1}_{\mathtt{i}}\mathbbm{1}_{\mathtt{i}})
\ar[rr,"\mathrm{id}_{\mathrm{F}}\circ_{\mathsf{h}}\lunit_{\mathbbm{1}_{\mathtt{i}}}",swap]
\&\&\mathrm{F}\mathbbm{1}_{\mathtt{i}}
\ar[rr,"\mathrm{id}_{\mathrm{F}}\circ_{\mathsf{h}}\mathrm{coev}_{\mathrm{F}}",swap]
\&\&\mathrm{F}(\mathrm{F}^{\star}\mathrm{F})
\ar[rr,"\alpha_{\mathrm{F},\mathrm{F}^{\star},\mathrm{F}}^{\mone}",swap]
\&\&(\mathrm{F}\mathrm{F}^{\star})\mathrm{F}
\ar[d,"\mathrm{ev}_{\mathrm{F}}\circ_{\mathsf{h}}\mathrm{id}_{\mathrm{F}}",swap]
\\[5ex]
\&\&\&\&\&\&\mathbbm{1}_{\mathtt{j}}\mathrm{F}
\end{tikzcd}
}
\end{gather}
Its commutativity follows from
\begin{itemize}

\item naturality of $(\runit)^{\mone}$ for the facet labeled $1$ (noting that $(\runit)^{\mone}_{\mathrm{H}}=(\runit_{\mathrm{H}})^{\mone}$ for any $1$-morphism $\mathrm{H}\in\cC$);

\item the interchange law for the facets labeled $2$ and $6$;

\item naturality of $\alpha^{\mone}$ for the facets labeled $3$ and $7$;

\item the right diagram in \eqref{eq:0.00} for the facet labeled $4$;

\item the triangle coherence condition of the unitors for the facet labeled $5$.
\end{itemize}

The inverse
$\mathrm{Hom}_{\underline{\ccC}}\big((\mathrm{FC}^{X})\mathrm{F}^{\star},\mathbbm{1}_{\mathtt{j}}\big)\xrightarrow{\cong}
\mathrm{Hom}_{\underline{\ccC}}\big(\mathrm{FC}^{X},\mathbbm{1}_{\mathtt{j}}\mathrm{F}\big)$ of the last isomorphism in \eqref{eq:0.4}
sends $\epsilon_{(\mathrm{FC}^{X})\mathrm{F}^{\star}}$ to
the composite of the paths going right and then down to the bottom along the boundary of the diagram in \eqref{diag:01}. This composite,
due to the commutativity of the diagram, is the same as the composite of the path going first down and then right, and down as the final step. Since $\lunit_{\mathbbm{1}_{\mathtt{i}}}=\runit_{\mathbbm{1}_{\mathtt{i}}}$, the latter equals
\begin{gather}\label{eq:0.5}
(\mathrm{ev}_{\mathrm{F}}\circ_{\mathsf{h}}\mathrm{id}_{\mathrm{F}})\circ_{\mathsf{v}}\alpha_{\mathrm{F},\mathrm{F}^{\star},\mathrm{F}}^{\mone}
\circ_{\mathsf{v}}(\mathrm{id}_{\mathrm{F}}\circ_{\mathsf{h}}\mathrm{coev}_{\mathrm{F}})
\circ_{\mathsf{v}}(\mathrm{id}_{\mathrm{F}}\circ_{\mathsf{h}}\epsilon_{X}).
\end{gather}

Consider now the diagram
\begin{gather}\label{diag:02}
\adjustbox{scale=.79,center}{%
\begin{tikzcd}[ampersand replacement=\&, column sep=3.5em, row sep=3em]
\mathrm{C}^{X}
\ar[rr,"(\lunit_{\mathrm{C}^{X}})^{\mone}"]
\ar[rr, phantom, yshift=-7ex, "\circled{1}"]
\ar[d, "\epsilon_{X}", swap]
\&\&\mathbbm{1}_{\mathtt{i}} \mathrm{C}^{X}
\ar[rr,"\mathrm{coev}_{\mathrm{F}}\circ_{\mathsf{h}}\mathrm{id}_{\mathrm{C}^{X}}"]
\ar[rr, phantom, yshift=-7ex, "\circled{2}"]
\ar[d, "{\mathrm{id}_{\mathbbm{1}_{\mathtt{i}}}\circ_{\mathsf{h}}\epsilon_{X}}"description]
\&\&(\mathrm{F}^{\star}\mathrm{F})\mathrm{C}^{X}
\ar[rr,"\alpha_{\mathrm{F}^{\star},\mathrm{F},\mathrm{C}^{X}}"]
\ar[rr, phantom, yshift=-7ex, "\circled{3}"]
\ar[d, "{\mathrm{id}_{\mathrm{F}^{\star}\mathrm{F}}\circ_{\mathsf{h}}\epsilon_{X}}"description]
\&\&\mathrm{F}^{\star}(\mathrm{F}\mathrm{C}^{X})
\ar[d, "{\mathrm{id}_{\mathrm{F}^{\star}}\circ_{\mathsf{h}}(\mathrm{id}_{\mathrm{F}}\circ_{\mathsf{h}}\epsilon_{X})}"description]
\\[5ex]
\mathbbm{1}_{\mathtt{i}}
\ar[rr,"(\lunit_{\mathbbm{1}_{\mathtt{i}}})^{\mone}"]
\ar[rr, phantom,yshift=-7ex, "\circled{4}"]
\ar[d, "\mathrm{coev}_{\mathrm{F}}",swap]
\&\& \mathbbm{1}_{\mathtt{i}}\mathbbm{1}_{\mathtt{i}}
\ar[rr,"\mathrm{coev}_{\mathrm{F}}\circ_{\mathsf{h}}\mathrm{id}_{\mathbbm{1}_{\mathtt{i}}}"]
\ar[rr, phantom, yshift=-7ex, "\circled{5}"]
\ar[d, "{\mathrm{id}_{\mathbbm{1}_{\mathtt{i}}}\circ_{\mathsf{h}}\mathrm{coev}_{\mathrm{F}}}"description]
\&\&
(\mathrm{F}^{\star}\mathrm{F})\mathbbm{1}_{\mathtt{i}}
\ar[rr,"\alpha_{\mathrm{F}^{\star},\mathrm{F},\mathbbm{1}_{\mathtt{i}}}"]
\ar[rr, phantom, yshift=-7ex, "\circled{6}"]
\ar[d, "{\mathrm{id}_{\mathrm{F}^{\star}\mathrm{F}}\circ_{\mathsf{h}}\mathrm{coev}_{\mathrm{F}}}"description]
\&\& \mathrm{F}^{\star}(\mathrm{F}\mathbbm{1}_{\mathtt{i}})
\ar[d, "{\mathrm{id}_{\mathrm{F}^{\star}}\circ_{\mathsf{h}}(\mathrm{id}_{\mathrm{F}}\circ_{\mathsf{h}}\mathrm{coev}_{\mathrm{F}})}"description]
\\[5ex]
\mathrm{F}^{\star}\mathrm{F}
\ar[ddd,equal]
\ar[rr,"(\lunit_{\mathrm{F}^{\star}\mathrm{F}})^{\mone}"]
\ar[drr, "{(\lunit_{\mathrm{F}^{\star}})^{\mone}\circ_{\mathsf{h}}\mathrm{id}_{\mathrm{F}}}"description]
\ar[rr, phantom, yshift=-7ex, xshift=8ex, "\circled{7}"]
\&\&\mathbbm{1}_{\mathtt{i}}(\mathrm{F}^{\star}\mathrm{F})
\ar[rr,"\mathrm{coev}_{\mathrm{F}}\circ_{\mathsf{h}}\mathrm{id}_{\mathrm{F}^{\star}\mathrm{F}}"]
\ar[d,"\alpha_{\mathbbm{1}_{\mathtt{i}},\mathrm{F}^{\star},\mathrm{F}}^{\mone}"]
\ar[rr, phantom, yshift=-7ex, "\circled{8}"]
\&\&(\mathrm{F}^{\star}\mathrm{F})(\mathrm{F}^{\star}\mathrm{F})
\ar[rr, "\alpha_{\mathrm{F}^{\star},\mathrm{F},\mathrm{F}^{\star}\mathrm{F}}"]
\ar[d,"{\alpha_{\mathrm{F}^{\star}\mathrm{F},\mathrm{F}^{\star},\mathrm{F}}^{\mone}}"description]
\ar[rr, phantom, yshift=-16ex, "\circled{9}"]
\&\&\mathrm{F}^{\star}\big(\mathrm{F}(\mathrm{F}^{\star}\mathrm{F})\big)
\ar[dd, "{\mathrm{id}_{\mathrm{F}^{\star}}\circ_{\mathsf{h}}\alpha_{\mathrm{F},\mathrm{F}^{\star},\mathrm{F}}^{\mone}}"description]
\\[5ex]
\&\&(\mathbbm{1}_{\mathtt{i}}\mathrm{F}^{\star})\mathrm{F}
\ar[rr,"(\mathrm{coev}_{\mathrm{F}}\circ_{\mathsf{h}}\mathrm{id}_{\mathrm{F}^{\star}})\circ_{\mathsf{h}}\mathrm{id}_{\mathrm{F}}",swap]
\ar[rr, phantom, yshift=-16ex, xshift=-12ex, "\circled{10}"]
\&\&\big((\mathrm{F}^{\star}\mathrm{F})\mathrm{F}^{\star}\big)\mathrm{F}
\ar[d, "{\alpha_{\mathrm{F}^{\star},\mathrm{F},\mathrm{F}^{\star}}\circ_{\mathsf{h}}\mathrm{id}_{\mathrm{F}}}"description]
\&\&
\\[5ex]
\&\&
\&\&\big(\mathrm{F}^{\star}(\mathrm{F}\mathrm{F}^{\star})\big)\mathrm{F}
\ar[rr, "\alpha_{\mathrm{F}^{\star},\mathrm{F}\mathrm{F}^{\star},\mathrm{F}}"]
\ar[d, "{(\mathrm{id}_{\mathrm{F}^{\star}}\circ_{\mathsf{h}}\mathrm{ev}_{\mathrm{F}})\circ_{\mathsf{h}}\mathrm{id}_{\mathrm{F}}}"description]
\ar[rr,phantom,yshift=-7ex,"\circled{11}"]
\&\&\mathrm{F}^{\star}\big((\mathrm{F}\mathrm{F}^{\star})\mathrm{F}\big)
\ar[d, "{\mathrm{id}_{\mathrm{F}^{\star}}\circ_{\mathsf{h}}(\mathrm{ev}_{\mathrm{F}}\circ_{\mathsf{h}}\mathrm{id}_{\mathrm{F}})}"description]
\\[5ex]
\mathrm{F}^{\star}\mathrm{F}
\ar[rrrr,"(\runit_{\mathrm{F}^{\star}})^{\mone}\circ_{\mathsf{h}}\mathrm{id}_{\mathrm{F}}"]
\ar[rrrrrr, bend right,"\mathrm{id}_{\mathrm{F}^{\star}}\circ_{\mathsf{h}}(\lunit_{\mathrm{F}})^{\mone}",swap]
\ar[rrrrrr, phantom, yshift=-7ex,"\circled{12}"]
\&\&
\&\&(\mathrm{F}^{\star}\mathbbm{1}_{\mathtt{j}})\mathrm{F}
\ar[rr, "\alpha_{\mathrm{F}^{\star},\mathbbm{1}_{\mathtt{j}},\mathrm{F}}"]
\&\&\mathrm{F}^{\star}(\mathbbm{1}_{\mathtt{j}}\mathrm{F})
\end{tikzcd}
}
\end{gather}
which commutes due to
\begin{itemize}

\item naturality of $(\lunit_{})^{\mone}$ for the facets labeled $1$ and $4$;

\item the interchange law for the facets labeled $2$ and $5$;

\item naturality of $\alpha$ and $\alpha^{\mone}$ for the facets labeled $3$, $6$, $8$ and $11$, respectively;

\item the left diagram in \eqref{eq:0.00} for the facet labeled $7$;

\item the pentagon coherence condition of the associator for the facet labeled $9$;

\item the adjunction condition of the adjoint pair $(\mathrm{F},\mathrm{F}^{\star})$ for the facet labeled $10$;
\item the triangle coherence condition of the unitors for the facet labeled $12$.
\end{itemize}
Then the inverse $\mathrm{Hom}_{\underline{\ccC}}(\mathrm{F}\mathrm{C}^{X},\mathbbm{1}_{\mathtt{j}}\mathrm{F})
\xrightarrow{\cong}\mathrm{Hom}_{\underline{\ccC}}\big(\mathrm{C}^{X},\mathrm{F}^{\star}(\mathbbm{1}_{\mathtt{j}}\mathrm{F})\big)$
of the fifth isomorphism in \eqref{eq:0.4} sends \eqref{eq:0.5} to
the composite of the paths going right and then down to the bottom along the boundary of the diagram in \eqref{diag:02}. As before, this is
the same as the composite of the paths going down and then right,
i.e.,
\begin{gather}\label{eq:0.6}
\big(\mathrm{id}_{\mathrm{F}^{\star}}\circ_{\mathsf{h}}(\lunit_{\mathrm{F}})^{\mone}\big)\circ_{\mathsf{v}}\mathrm{coev}_{\mathrm{F}}\circ_{\mathsf{v}}\epsilon_{X}.
\end{gather}
The inverse  $\mathrm{Hom}_{\underline{\ccC}}(\mathrm{C}^{X},\mathrm{F}^{\star}(\mathbbm{1}_{\mathtt{j}}\mathrm{F}))
\xrightarrow{\cong}\mathrm{Hom}_{\underline{\mathbf{M}}(\mathtt{i})}(X,\underline{\mathbf{M}}_{\mathtt{ii}}\big(\mathrm{F}^{\star}(\mathbbm{1}_{\mathtt{j}}\mathrm{F})\big)\,X)$ of the fourth isomorphism in \eqref{eq:0.4}
sends \eqref{eq:0.6} to
\begin{gather*}
\underline{\mathbf{M}}_{\mathtt{i}\mathtt{i}}(\mathrm{id}_{\mathrm{F}^{\star}}\circ_{\mathsf{h}}(\lunit_{\mathrm{F}})^{\mone})_{X}\circ_{\mathsf{v}}
\underline{\mathbf{M}}_{\mathtt{i}\mathtt{i}}(\mathrm{coev}_{\mathrm{F}})_{X}\circ_{\mathsf{v}}
\underline{\mathbf{M}}_{\mathtt{i}\mathtt{i}}(\epsilon_{X})_{X}
\circ_{\mathsf{v}}\mathrm{coev}_{X,X},
\end{gather*}
which equals $\underline{\mathbf{M}}_{\mathtt{ii}}\big(\mathrm{id}_{\mathrm{F}^{\star}}\circ_{\mathsf{h}}(\lunit_{\mathrm{F}})^{\mone}\big)_X\circ_{\mathsf{v}}
\underline{\mathbf{M}}_{\mathtt{ii}}(\mathrm{coev}_{\mathrm{F}})_{X}\circ_{\mathsf{v}}
(\iota_{\mathtt{i}}^{\mone})_{X}$, by \eqref{eq:right-counit-gen-intern-hom2}.
Hence, we obtain that $\epsilon_{(\mathrm{FC}^{X})\mathrm{F}^{\star}}\in\mathrm{Hom}_{\underline{\ccC}}\big((\mathrm{FC}^{X})\mathrm{F}^{\star},\mathbbm{1}_{\mathtt{j}}\big)$
is equal to the image of $(\iota_{\mathtt{j}}^{\mone})_{\underline{\mathbf{M}}_{\mathtt{ji}}(\mathrm{F}) X}$ under the isomorphisms in \eqref{eq:0.4}.

Next, taking $\mathrm{G}=(\mathrm{FC}^{X})\mathrm{F}^{\star}$
in \eqref{eq:0.4}, we have to determine the image of $\mathrm{id}_{(\mathrm{FC}^{X})\mathrm{F}^{\star}}$ in $\mathrm{Hom}_{\underline{\mathbf{M}}(\mathtt{j})}(\underline{\mathbf{M}}_{\mathtt{ji}}(\mathrm{F})\,X,
\underline{\mathbf{M}}_{\mathtt{jj}}\big((\mathrm{FC}^{X})\mathrm{F}^{\star}\big)\underline{\mathbf{M}}_{\mathtt{ji}}(\mathrm{F}) X)$
under the (inverses of the) isomorphisms in \eqref{eq:0.4}, which is equal to
$\mathrm{coev}_{\underline{\mathbf{M}}_{\mathtt{ji}}(\mathrm{F}) X,\underline{\mathbf{M}}_{\mathtt{ji}}(\mathrm{F}) X}$.
In detail, the inverse of the last isomorphism sends $\mathrm{id}_{(\mathrm{FC}^{X})\mathrm{F}^{\star}}$ to
\begin{gather}\label{eq:0.7}
\alpha_{\mathrm{F}\mathrm{C}^{X},\mathrm{F}^{\star},\mathrm{F}}^{\mone}\circ_{\mathsf{v}}
(\mathrm{id}_{\mathrm{F}\mathrm{C}^{X}}\circ_{\mathsf{h}}\mathrm{coev}_{\mathrm{F}})\circ_{\mathsf{v}}(\runit_{\mathrm{F}\mathrm{C}^{X}})^{\mone}
\end{gather}
in $\mathrm{Hom}_{\underline{\ccC}}\Big(\mathrm{F}\mathrm{C}^{X},\big((\mathrm{FC}^{X})\mathrm{F}^{\star}\big)\mathrm{F}\Big)$.

Further, the inverse
$\mathrm{Hom}_{\underline{\ccC}}\big(\mathrm{F}\mathrm{C}^{X},\big((\mathrm{FC}^{X})\mathrm{F}^{\star}\big)\mathrm{F}\big)
\xrightarrow{\cong}
\mathrm{Hom}_{\underline{\ccC}}\Big(\mathrm{C}^{X},\mathrm{F}^{\star}\big(\big((\mathrm{FC}^{X})\mathrm{F}^{\star}\big)\mathrm{F}\big)\Big)$
of the fifth isomorphism
sends \eqref{eq:0.7} to
\begin{gather*}
\begin{aligned}
&(\mathrm{id}_{\mathrm{F}^{\star}}\circ_{\mathsf{h}}\alpha_{\mathrm{F}\mathrm{C}^{X},\mathrm{F}^{\star},\mathrm{F}}^{\mone})\circ_{\mathsf{v}}
\big(\mathrm{id}_{\mathrm{F}^{\star}}\circ_{\mathsf{h}}(\mathrm{id}_{\mathrm{F}\mathrm{C}^{X}}\circ_{\mathsf{h}}\mathrm{coev}_{\mathrm{F}})\big)
\circ_{\mathsf{v}}\big(\mathrm{id}_{\mathrm{F}^{\star}}\circ_{\mathsf{h}}(\runit_{\mathrm{F}\mathrm{C}^{X}})^{\mone}\big)
\\
&\circ_{\mathsf{v}}\alpha_{\mathrm{F}^{\star},\mathrm{F},\mathrm{C}^{X}}\circ_{\mathsf{v}}(\mathrm{coev}_{\mathrm{F}}\circ_{\mathsf{h}}\mathrm{id}_{\mathrm{C}^{X}})\circ_{\mathsf{v}}
(\lunit_{\mathrm{C}^{X}})^{\mone}
.
\end{aligned}
\end{gather*}
This $2$-morphism equals
\begin{gather}\label{eq:0.8}
\begin{aligned}
&(\mathrm{id}_{\mathrm{F}^{\star}}\circ_{\mathsf{h}}\alpha_{\mathrm{F}\mathrm{C}^{X},\mathrm{F}^{\star},\mathrm{F}}^{\mone})\circ_{\mathsf{v}}
\big(\mathrm{id}_{\mathrm{F}^{\star}}\circ_{\mathsf{h}}(\mathrm{id}_{\mathrm{F}\mathrm{C}^{X}}\circ_{\mathsf{h}}\mathrm{coev}_{\mathrm{F}})\big)
\circ_{\mathsf{v}}(\mathrm{id}_{\mathrm{F}^{\star}}\circ_{\mathsf{h}}\alpha_{\mathrm{F},\mathrm{C}^{X},\mathbbm{1}_{\mathtt{i}}}^{\mone})\\
&\circ_{\mathsf{v}}\alpha_{\mathrm{F}^{\star},\mathrm{F},\mathrm{C}^{X}\mathbbm{1}_{\mathtt{i}}}\circ_{\mathsf{v}}
(\mathrm{coev}_{\mathrm{F}}\circ_{\mathsf{h}}\mathrm{id}_{\mathrm{C}^{X}\mathbbm{1}_{\mathtt{i}}})\circ_{\mathsf{v}}
(\lunit_{\mathrm{C}^{X}\mathbbm{1}_{\mathtt{i}}})^{\mone}\circ_{\mathsf{v}}
(\runit_{\mathrm{C}^{X}})^{\mone}
\end{aligned}
\end{gather}
due to commutativity of
\begin{gather*}
\scalebox{0.7}{
$\begin{tikzcd}[ampersand replacement=\&, column sep=3.5em, row sep=3em]
\mathrm{C}^{X}
\ar[rr,"(\lunit_{\mathrm{C}^{X}})^{\mone}"]
\ar[rr, phantom, yshift=-7ex, "\circled{1}"]
\ar[d, "{(\runit_{\mathrm{C}^{X}})^{\mone}}"description]
\&\&\mathbbm{1}_{\mathtt{i}}\mathrm{C}^{X}
\ar[rr,"\mathrm{coev}_{\mathrm{F}}\circ_{\mathsf{h}}\mathrm{id}_{\mathrm{C}^{X}}"]
\ar[rr, phantom, yshift=-7ex, "\circled{2}"]
\ar[d, "{\mathrm{id}_{\mathbbm{1}_{\mathtt{i}}}\circ_{\mathsf{h}}(\runit_{\mathrm{C}^{X}})^{\mone}}"description]
\&\&(\mathrm{F}^{\star}\mathrm{F})\mathrm{C}^{X}
\ar[rr,"\alpha_{\mathrm{F}^{\star},\mathrm{F},\mathrm{C}^{X}}"]
\ar[rr, phantom, yshift=-7ex, "\circled{3}"]
\ar[rr, phantom, yshift=-14ex, xshift=10ex,"\circled{4}"]
\ar[d, "{\mathrm{id}_{\mathrm{F}^{\star}\mathrm{F}}\circ_{\mathsf{h}}(\runit_{\mathrm{C}^{X}})^{\mone}}"description]
\&\&\mathrm{F}^{\star}(\mathrm{F}\mathrm{C}^{X})
\ar[dd, "\mathrm{id}_{\mathrm{F}^{\star}}\circ_{\mathsf{h}}(\runit_{\mathrm{F}\mathrm{C}^{X}})^{\mone}"]
\ar[ddll, "\mathrm{id}_{\mathrm{F}^{\star}}\circ_{\mathsf{h}}\big(\mathrm{id}_{\mathrm{F}}\circ_{\mathsf{h}}(\runit_{\mathrm{C}^{X}})^{\mone}\big)", sloped]
\\[5ex]
\mathrm{C}^{X}\mathbbm{1}_{\mathtt{i}}
\ar[rr,"(\lunit_{\mathrm{C}^{X}\mathbbm{1}_{\mathtt{i}}})^{\mone}",swap]
\&\& \mathbbm{1}_{\mathtt{i}} (\mathrm{C}^{X}\mathbbm{1}_{\mathtt{i}})
\ar[rr,"\mathrm{coev}_{\mathrm{F}}\circ_{\mathsf{h}}\mathrm{id}_{\mathrm{C}^{X}\mathbbm{1}_{\mathtt{i}}}",swap]
\&\&
(\mathrm{F}^{\star}\mathrm{F})(\mathrm{C}^{X}\mathbbm{1}_{\mathtt{i}})
\ar[d,"\alpha_{\mathrm{F}^{\star},\mathrm{F},\mathrm{C}^{X}\mathbbm{1}_{\mathtt{i}}}", swap]
\&\&
\\[5ex]
\&\&
\&\&\mathrm{F}^{\star}\big(\mathrm{F}(\mathrm{C}^{X}\mathbbm{1}_{\mathtt{i}})\big)
\ar[rr,"\mathrm{id}_{\mathrm{F}^{\star}}\circ_{\mathsf{h}}\alpha_{\mathrm{F},\mathrm{C}^{X},\mathbbm{1}_{\mathtt{i}}}^{\mone}",swap]
\&\&\mathrm{F}^{\star}\big((\mathrm{F}\mathrm{C}^{X})\mathbbm{1}_{\mathtt{i}}\big)
\end{tikzcd}$}
.
\end{gather*}
To see that this diagram commutes we note that
\begin{itemize}

\item the facets labeled $1$ and $3$ commute by naturality of $(\lunit_{})^{\mone}$ and $\alpha$, respectively;

\item the facet labeled $2$ commutes by the interchange law;

\item the facet labeled $4$ commutes by the right diagram in \eqref{eq:0.00}.
\end{itemize}

The map $\scalebox{0.9}{$\mathrm{Hom}_{\underline{\ccC}}\Big(\mathrm{C}^{X},\mathrm{F}^{\star}\big(\big((\mathrm{FC}^{X})\mathrm{F}^{\star}\big)\mathrm{F}\big)\Big)
{\xrightarrow{\cong}}
\mathrm{Hom}_{\underline{\mathbf{M}}(\mathtt{i})}\Big(X,\underline{\mathbf{M}}_{\mathtt{ii}}
\big(\mathrm{F}^{\star}\big(\big((\mathrm{FC}^{X})\mathrm{F}^{\star}\big)\mathrm{F}\big)\big)\,X\Big)$}$
sends \eqref{eq:0.8} to
\[
\scalebox{0.91}{$\begin{aligned}
&\underline{\mathbf{M}}_{\mathtt{ii}}\big(\mathrm{id}_{\mathrm{F}^{\star}}\circ_{\mathsf{h}}\alpha_{\mathrm{F}\mathrm{C}^{X},\mathrm{F}^{\star},\mathrm{F}}^{\mone}\big)_X
\circ_{\mathsf{v}}
\underline{\mathbf{M}}_{\mathtt{ii}}\big(\mathrm{id}_{\mathrm{F}^{\star}}\circ_{\mathsf{h}}(\mathrm{id}_{\mathrm{F}\mathrm{C}^{X}}\circ_{\mathsf{h}}\mathrm{coev}_{\mathrm{F}})\big)_X
\circ_{\mathsf{v}}
\underline{\mathbf{M}}_{\mathtt{ii}}\big(\mathrm{id}_{\mathrm{F}^{\star}}\circ_{\mathsf{h}}\alpha_{\mathrm{F},\mathrm{C}^{X},\mathbbm{1}_{\mathtt{i}}}^{\mone}\big)_X\\
&\circ_{\mathsf{v}}
\underline{\mathbf{M}}_{\mathtt{ii}}\big(\alpha_{\mathrm{F}^{\star},\mathrm{F},\mathrm{C}^{X}\mathbbm{1}_{\mathtt{i}}}\big)_X
\circ_{\mathsf{v}}
\underline{\mathbf{M}}_{\mathtt{ii}}\big(\mathrm{coev}_{\mathrm{F}}\circ_{\mathsf{h}}\mathrm{id}_{\mathrm{C}^{X}\mathbbm{1}_{\mathtt{i}}}\big)_X
\circ_{\mathsf{v}}
\underline{\mathbf{M}}_{\mathtt{ii}}\big((\lunit_{\mathrm{C}^{X}\mathbbm{1}_{\mathtt{i}}})^{\mone}\big)_X\\
&
\circ_{\mathsf{v}}
\underline{\mathbf{M}}_{\mathtt{ii}}\big((\runit_{\mathrm{C}^{X}})^{\mone}\big)_X
\circ_{\mathsf{v}}
\mathrm{coev}_{X,X}.
\end{aligned}$}
\]
The latter, under the composite of the inverses of the third and second isomorphisms, is sent to
\begin{gather}\label{eq:0.9}\scalebox{0.86}{$
\begin{aligned}
&\big(\iota_{\mathtt{j}}\big)_{\underline{\mathbf{M}}_{\mathtt{ji}}\Big(\big((\mathrm{FC}^{X})\mathrm{F}^{\star}\big)\mathrm{F}\Big)X}
\circ_{\mathsf{v}}
\underline{\mathbf{M}}_{\mathtt{jj}}\big(\mathrm{ev}_{\mathrm{F}}\big)_{\underline{\mathbf{M}}_{\mathtt{ji}}\Big(\big((\mathrm{FC}^{X})\mathrm{F}^{\star}\big)\mathrm{F}\Big)X}
\circ_{\mathsf{v}}
\big(\mu_{\mathtt{jij}}^{\mathrm{F},\mathrm{F}^{\star}}\big)_{\underline{\mathbf{M}}_{\mathtt{ji}}\Big(\big((\mathrm{FC}^{X})\mathrm{F}^{\star}\big)\mathrm{F}\Big)X}\\
&\circ_{\mathsf{v}}
\big(\underline{\mathbf{M}}_{\mathtt{ji}}(\mathrm{F})\big(\mu_{\mathtt{iji}}^{\mathrm{F}^{\star},\big((\mathrm{FC}^{X})\mathrm{F}^{\star}\big)\mathrm{F}}\big)^{\mone}_X\big)
\circ_{\mathsf{v}}
\Big(\underline{\mathbf{M}}_{\mathtt{ji}}(\mathrm{F})\underline{\mathbf{M}}_{\mathtt{ii}}\big(\mathrm{id}_{\mathrm{F}^{\star}}\circ_{\mathsf{h}}\alpha_{\mathrm{F}\mathrm{C}^{X},\mathrm{F}^{\star},\mathrm{F}}^{\mone}\big)_X\Big)\\
&\circ_{\mathsf{v}}
\Big({\underline{\mathbf{M}}_{\mathtt{ji}}(\mathrm{F})}\underline{\mathbf{M}}_{\mathtt{ii}}\big(\mathrm{id}_{\mathrm{F}^{\star}}\circ_{\mathsf{h}}(\mathrm{id}_{\mathrm{F}\mathrm{C}^{X}}\circ_{\mathsf{h}}\mathrm{coev}_{\mathrm{F}})\big)_X\Big)
\circ_{\mathsf{v}}
\Big({\underline{\mathbf{M}}_{\mathtt{ji}}(\mathrm{F})}\underline{\mathbf{M}}_{\mathtt{ii}}\big(\mathrm{id}_{\mathrm{F}^{\star}}\circ_{\mathsf{h}}\alpha_{\mathrm{F},\mathrm{C}^{X},\mathbbm{1}_{\mathtt{i}}}^{\mone}\big)_X\Big)\\
&\circ_{\mathsf{v}}
\Big({\underline{\mathbf{M}}_{\mathtt{ji}}(\mathrm{F})}\underline{\mathbf{M}}_{\mathtt{ii}}\big(\alpha_{\mathrm{F}^{\star},\mathrm{F},\mathrm{C}^{X}\mathbbm{1}_{\mathtt{i}}}\big)_X\Big)
\circ_{\mathsf{v}}
\Big({\underline{\mathbf{M}}_{\mathtt{ji}}(\mathrm{F})}\underline{\mathbf{M}}_{\mathtt{ii}}\big(\mathrm{coev}_{\mathrm{F}}\circ_{\mathsf{h}}\mathrm{id}_{\mathrm{C}^{X}\mathbbm{1}_{\mathtt{i}}}\big)_X\Big)
\\&
\circ_{\mathsf{v}}
\Big({\underline{\mathbf{M}}_{\mathtt{ji}}(\mathrm{F})}\underline{\mathbf{M}}_{\mathtt{ii}}\big((\lunit_{\mathrm{C}^{X}\mathbbm{1}_{\mathtt{i}}})^{\mone}\big)_X\Big)
\circ_{\mathsf{v}}
\Big({\underline{\mathbf{M}}_{\mathtt{ji}}(\mathrm{F})}\underline{\mathbf{M}}_{\mathtt{ii}}\big((\runit_{\mathrm{C}^{X}})^{\mone}\big)_X\Big)
\\&
\circ_{\mathsf{v}}
({\underline{\mathbf{M}}_{\mathtt{ji}}(\mathrm{F})}\mathrm{coev}_{X,X}),
\end{aligned}$}
\end{gather}
which is an element in $\mathrm{Hom}_{\underline{\mathbf{M}}(\mathtt{j})}\Big(\underline{\mathbf{M}}_{\mathtt{ji}}(\mathrm{F})\,X,\underline{\mathbf{M}}_{\mathtt{ji}}\big(\big((\mathrm{FC}^{X})\mathrm{F}^{\star}\big)\mathrm{F}\big)X\Big)$.

Consider now the diagram
\begin{gather*}
\adjustbox{scale=.52,center}{%
\begin{tikzcd}[ampersand replacement=\&, column sep=4em, row sep=3em]
\underline{\mathbf{M}}_{\mathtt{ji}}(\mathrm{F})\underline{\mathbf{M}}_{\mathtt{ii}}(\mathrm{C}^{X}\mathbbm{1}_{\mathtt{i}})X
\ar[rrr,"\big(\mu_{\mathtt{jii}}^{\mathrm{F},\mathrm{C}^{X}\mathbbm{1}_{\mathtt{i}}}\big)_X"]
\ar[rrr, phantom,yshift=-7ex, "\circled{1}"]
\ar[d, "{{\underline{\mathbf{M}}_{\mathtt{ji}}(\mathrm{F})}\underline{\mathbf{M}}_{\mathtt{ii}}\big((\lunit_{\mathrm{C}^{X}\mathbbm{1}_{\mathtt{i}}})^{\mone}\big)_X}"description]
\&\&\&
\underline{\mathbf{M}}_{\mathtt{ji}}\big(\mathrm{F}(\mathrm{C}^{X}\mathbbm{1}_{\mathtt{i}})\big)X
\ar[d, "{\underline{\mathbf{M}}_{\mathtt{ji}}\big(\mathrm{id}_{\mathrm{F}}\circ_{\mathsf{h}}(\lunit_{\mathrm{C}^{X}\mathbbm{1}_{\mathtt{i}}})^{\mone}\big)_X}"description]
\ar[rr, "\underline{\mathbf{M}}_{\mathtt{ji}}\big((\runit_{\mathrm{F}})^{\mone}\circ_{\mathsf{h}}\mathrm{id}_{\mathrm{C}^{X}\mathbbm{1}_{\mathtt{i}}}\big)_X"]
\ar[rr, phantom,xshift=-6ex, yshift=-4ex, "\circled{2}"]
\&\&
\underline{\mathbf{M}}_{\mathtt{ji}}\big((\mathrm{F}\mathbbm{1}_{\mathtt{i}})(\mathrm{C}^{X}\mathbbm{1}_{\mathtt{i}})\big)X
\ar[d, "{\underline{\mathbf{M}}_{\mathtt{ji}}\big((\mathrm{id}_{\mathrm{F}}\circ_{\mathsf{h}}\mathrm{coev}_{\mathrm{F}})\circ_{\mathsf{h}}\mathrm{id}_{\mathrm{C}^{X}\mathbbm{1}_{\mathtt{i}}}\big)_X}"description]
\\[5ex]
\underline{\mathbf{M}}_{\mathtt{ji}}(\mathrm{F})\underline{\mathbf{M}}_{\mathtt{ii}}\big(\mathbbm{1}_{\mathtt{i}}(\mathrm{C}^{X}\mathbbm{1}_{\mathtt{i}})\big)X
\ar[d, "{{\underline{\mathbf{M}}_{\mathtt{ji}}(\mathrm{F})}\underline{\mathbf{M}}_{\mathtt{ii}}(\mathrm{coev}_{\mathrm{F}}\circ_{\mathsf{h}}\mathrm{id}_{\mathrm{C}^{X}\mathbbm{1}_{\mathtt{i}}})_X}"description]
\ar[rrr,"\big(\mu_{\mathtt{jii}}^{\mathrm{F},\mathbbm{1}_{\mathtt{i}}(\mathrm{C}^{X}\mathbbm{1}_{\mathtt{i}})}\big)_X"]
\ar[rrr, phantom, yshift=-7ex, "\circled{3}"]
\&\&\&
\underline{\mathbf{M}}_{\mathtt{ji}}\Big(\mathrm{F}\big(\mathbbm{1}_{\mathtt{i}}(\mathrm{C}^{X}\mathbbm{1}_{\mathtt{i}})\big)\Big)X
\ar[urr,"\underline{\mathbf{M}}_{\mathtt{ji}}\big(\alpha_{\mathrm{F},\mathbbm{1}_{\mathtt{i}},\mathrm{C}^{X}\mathbbm{1}_{\mathtt{i}}}^{\mone}\big)_X", sloped]
\ar[d, "{\underline{\mathbf{M}}_{\mathtt{ji}}\big(\mathrm{id}_{\mathrm{F}}\circ_{\mathsf{h}}(\mathrm{coev}_{\mathrm{F}}\circ_{\mathsf{h}}\mathrm{id}_{\mathrm{C}^{X}\mathbbm{1}_{\mathtt{i}}})\big)_X}"description]
\ar[urr, phantom, yshift=-7ex, "\circled{4}"]
\&\&
\underline{\mathbf{M}}_{\mathtt{ji}}\Big(\big(\mathrm{F}(\mathrm{F}^{\star}\mathrm{F})\big)(\mathrm{C}^{X}\mathbbm{1}_{\mathtt{i}})\Big)X
\ar[d,"{\underline{\mathbf{M}}_{\mathtt{ji}}(\alpha_{\mathrm{F},\mathrm{F}^{\star},\mathrm{F}}^{\mone}\circ_{\mathsf{h}}\mathrm{id}_{\mathrm{C}^{X}\mathbbm{1}_{\mathtt{i}}})_X}"description]
\\[5ex]
\underline{\mathbf{M}}_{\mathtt{ji}}(\mathrm{F})\underline{\mathbf{M}}_{\mathtt{ii}}\big((\mathrm{F}^{\star}\mathrm{F})(\mathrm{C}^{X}\mathbbm{1}_{\mathtt{i}})\big)X
\ar[d,"{{\underline{\mathbf{M}}_{\mathtt{ji}}(\mathrm{F})}\underline{\mathbf{M}}_{\mathtt{ii}}(\alpha_{\mathrm{F}^{\star},\mathrm{F},\mathrm{C}^{X}\mathbbm{1}_{\mathtt{i}}})_X}"description]
\ar[rrr,"\big(\mu_{\mathtt{jii}}^{\mathrm{F},(\mathrm{F}^{\star}\mathrm{F})(\mathrm{C}^{X}\mathbbm{1}_{\mathtt{i}})}\big)_X"]
\ar[rrr, phantom,yshift=-7ex, "\circled{5}"]
\&\&\&
\underline{\mathbf{M}}_{\mathtt{ji}}\Big(\mathrm{F}\big((\mathrm{F}^{\star}\mathrm{F})(\mathrm{C}^{X}\mathbbm{1}_{\mathtt{i}})\big)\Big)X
\ar[d,"{\underline{\mathbf{M}}_{\mathtt{ji}}(\mathrm{id}_{\mathrm{F}}\circ_{\mathsf{h}}\alpha_{\mathrm{F}^{\star},\mathrm{F},\mathrm{C}^{X}\mathbbm{1}_{\mathtt{i}}})_X}"description]
\ar[urr,"\underline{\mathbf{M}}_{\mathtt{ji}}\big(\alpha_{\mathrm{F},\mathrm{F}^{\star}\mathrm{F},\mathrm{C}^{X}\mathbbm{1}_{\mathtt{i}}}^{\mone}\big)_X", sloped]
\ar[urr, phantom, yshift=-12ex, "\circled{6}"]
\&\&
\underline{\mathbf{M}}_{\mathtt{ji}}\Big(\big((\mathrm{F}\mathrm{F}^{\star})\mathrm{F}\big)(\mathrm{C}^{X}\mathbbm{1}_{\mathtt{i}})\Big)X
\ar[d,"{\underline{\mathbf{M}}_{\mathtt{ji}}\big(\alpha_{\mathrm{F}\mathrm{F}^{\star},\mathrm{F},\mathrm{C}^{X}\mathbbm{1}_{\mathtt{i}}}\big)_X}"description]
\\[5ex]
\underline{\mathbf{M}}_{\mathtt{ji}}(\mathrm{F})\underline{\mathbf{M}}_{\mathtt{ii}}\Big(\mathrm{F}^{\star}\big(\mathrm{F}(\mathrm{C}^{X}\mathbbm{1}_{\mathtt{i}})\big)\Big)X
\ar[d,"{{\underline{\mathbf{M}}_{\mathtt{ji}}(\mathrm{F})}\underline{\mathbf{M}}_{\mathtt{ii}}(\mathrm{id}_{\mathrm{F}^{\star}}\circ_{\mathsf{h}}\alpha_{\mathrm{F},\mathrm{C}^{X},\mathbbm{1}_{\mathtt{i}}}^{\mone})_X}"description]
\ar[rrr,"\big(\mu_{\mathtt{jii}}^{\mathrm{F},\mathrm{F}^{\star}\left(\mathrm{F}(\mathrm{C}^{X}\mathbbm{1}_{\mathtt{i}})\right)}\big)_X"]
\ar[rrr, phantom,yshift=-7ex, "\circled{7}"]
\&\&\&
\underline{\mathbf{M}}_{\mathtt{ji}}\bigg(\mathrm{F}\Big(\mathrm{F}^{\star}\big(\mathrm{F}(\mathrm{C}^{X}\mathbbm{1}_{\mathtt{i}})\big)\Big)\bigg)X
\ar[d,"{\underline{\mathbf{M}}_{\mathtt{ji}}\big(\mathrm{id}_{\mathrm{F}}\circ_{\mathsf{h}}(\mathrm{id}_{\mathrm{F}^{\star}}\circ_{\mathsf{h}}\alpha_{\mathrm{F},\mathrm{C}^{X},\mathbbm{1}_{\mathtt{i}}}^{\mone})\big)_X}"description]
\ar[rr,"\underline{\mathbf{M}}_{\mathtt{ji}}(\alpha_{\mathrm{F},\mathrm{F}^{\star},\mathrm{F}(\mathrm{C}^{X}\mathbbm{1}_{\mathtt{i}})}^{\mone})_X"]
\ar[rr, phantom,yshift=-7ex, "\circled{8}"]
\&\&
\underline{\mathbf{M}}_{\mathtt{ji}}\Big((\mathrm{F}\mathrm{F}^{\star})\big(\mathrm{F}(\mathrm{C}^{X}\mathbbm{1}_{\mathtt{i}})\big)\Big)X
\ar[d, "{\underline{\mathbf{M}}_{\mathtt{ji}}(\mathrm{id}_{\mathrm{FF}^{\star}}\circ_{\mathsf{h}}\alpha_{\mathrm{F},\mathrm{C}^{X},\mathbbm{1}_{\mathtt{i}}}^{\mone})_X}"description]
\\[5ex]
\underline{\mathbf{M}}_{\mathtt{ji}}(\mathrm{F})\underline{\mathbf{M}}_{\mathtt{ii}}\Big(\mathrm{F}^{\star}\big((\mathrm{F}\mathrm{C}^{X})\mathbbm{1}_{\mathtt{i}}\big)\Big)X
\ar[d, "{{\underline{\mathbf{M}}_{\mathtt{ji}}(\mathrm{F})}\underline{\mathbf{M}}_{\mathtt{ii}}\big(\mathrm{id}_{\mathrm{F}^{\star}}\circ_{\mathsf{h}}(\mathrm{id}_{\mathrm{F}\mathrm{C}^{X}}\circ_{\mathsf{h}}\mathrm{coev}_{\mathrm{F}})\big)_X}"description]
\ar[rrr,"\big(\mu_{\mathtt{jii}}^{\mathrm{F},\mathrm{F}^{\star}\left((\mathrm{F}\mathrm{C}^{X})\mathbbm{1}_{\mathtt{i}}\right)}\big)_X"]
\ar[rrr, phantom, yshift=-7ex,"\circled{9}"]
\&\&\&
\underline{\mathbf{M}}_{\mathtt{ji}}\bigg(\mathrm{F}\Big(\mathrm{F}^{\star}\big((\mathrm{F}\mathrm{C}^{X})\mathbbm{1}_{\mathtt{i}}\big)\Big)\bigg)X
\ar[d, "{\underline{\mathbf{M}}_{\mathtt{ji}}\Big(\mathrm{id}_{\mathrm{F}}\circ_{\mathsf{h}}\big(\mathrm{id}_{\mathrm{F}^{\star}}\circ_{\mathsf{h}}(\mathrm{id}_{\mathrm{F}\mathrm{C}^{X}}\circ_{\mathsf{h}}\mathrm{coev}_{\mathrm{F}})\big)\Big)_X}"description]
\ar[rr,"\underline{\mathbf{M}}_{\mathtt{ji}}\big(\alpha_{\mathrm{F},\mathrm{F}^{\star},(\mathrm{F}\mathrm{C}^{X})\mathbbm{1}_{\mathtt{i}}}^{\mone}\big)_X"]
\ar[rr, phantom,yshift=-7ex, "\circled{10}"]
\&\&
\underline{\mathbf{M}}_{\mathtt{ji}}\Big((\mathrm{F}\mathrm{F}^{\star})\big((\mathrm{F}\mathrm{C}^{X})\mathbbm{1}_{\mathtt{i}}\big)\Big)X
\ar[d, "{\underline{\mathbf{M}}_{\mathtt{ji}}\big(\mathrm{id}_{\mathrm{FF}^{\star}}\circ_{\mathsf{h}}(\mathrm{id}_{\mathrm{FC}^{X}}\circ_{\mathsf{h}}
\mathrm{coev}_{\mathrm{F}})\big)_X}"description]
\\[5ex]
\underline{\mathbf{M}}_{\mathtt{ji}}(\mathrm{F})\underline{\mathbf{M}}_{\mathtt{ii}}\Big(\mathrm{F}^{\star}\big((\mathrm{F}\mathrm{C}^{X})(\mathrm{F}^{\star}\mathrm{F})\big)\Big)X
\ar[d, "{{\underline{\mathbf{M}}_{\mathtt{ji}}(\mathrm{F})}\underline{\mathbf{M}}_{\mathtt{ii}}(\mathrm{id}_{\mathrm{F}^{\star}}\circ_{\mathsf{h}}\alpha_{\mathrm{F}\mathrm{C}^{X},\mathrm{F}^{\star},\mathrm{F}}^{\mone})_X}"description]
\ar[rrr,"\big(\mu_{\mathtt{jii}}^{\mathrm{F},\mathrm{F}^{\star}\left((\mathrm{F}\mathrm{C}^{X})\left(\mathrm{F}^{\star}\mathrm{F}\right)\right)}\big)_X"]
\ar[rrr, phantom,yshift=-7ex, "\circled{11}"]
\&\&\&
\underline{\mathbf{M}}_{\mathtt{ji}}\bigg(\mathrm{F}\Big(\mathrm{F}^{\star}\big((\mathrm{F}\mathrm{C}^{X})(\mathrm{F}^{\star}\mathrm{F})\big)\Big)\bigg)X
\ar[d, "{\underline{\mathbf{M}}_{\mathtt{ji}}\big(\mathrm{id}_{\mathrm{F}}\circ_{\mathsf{h}}(\mathrm{id}_{\mathrm{F}^{\star}}\circ_{\mathsf{h}}\alpha_{\mathrm{F}\mathrm{C}^{X},\mathrm{F}^{\star},\mathrm{F}}^{\mone})\big)_X}"description]
\ar[rr,"\underline{\mathbf{M}}_{\mathtt{ji}}\big(\alpha_{\mathrm{F},\mathrm{F}^{\star},(\mathrm{F}\mathrm{C}^{X})(\mathrm{F}^{\star}\mathrm{F})}^{\mone}\big)_X"]
\ar[rr, phantom,yshift=-7ex, "\circled{12}"]
\&\&
\underline{\mathbf{M}}_{\mathtt{ji}}\bigg((\mathrm{F}\mathrm{F}^{\star})\big((\mathrm{F}\mathrm{C}^{X})(\mathrm{F}^{\star}\mathrm{F})\big)\bigg)X
\ar[d, "{\underline{\mathbf{M}}_{\mathtt{ji}}\big(\mathrm{id}_{\mathrm{F}\mathrm{F}^{\star}}\circ_{\mathsf{h}}\alpha_{\mathrm{F}\mathrm{C}^{X},\mathrm{F}^{\star},\mathrm{F}}^{\mone})\big)_X}"description]
\\[5ex]
\underline{\mathbf{M}}_{\mathtt{ji}}(\mathrm{F})\underline{\mathbf{M}}_{\mathtt{ii}}\bigg(\mathrm{F}^{\star}\Big(\big((\mathrm{FC}^{X})\mathrm{F}^{\star}\big)\mathrm{F}\Big)\bigg)X
\ar[d, "{{\underline{\mathbf{M}}_{\mathtt{ji}}(\mathrm{F})}\big(\mu_{\mathtt{iji}}^{\mathrm{F}^{\star},((\mathrm{F}\mathrm{C}^{X})\mathrm{F}^{\star})\mathrm{F}}\big)^{\mone}_X}"description]
\ar[rrr,"\big(\mu_{\mathtt{jii}}^{\mathrm{F},\mathrm{F}^{\star}\big(\left((\mathrm{FC}^{X})\mathrm{F}^{\star}\right)\mathrm{F}\big)}\big)_X"]
\ar[rrr, phantom, yshift=-7ex, "\circled{13}"]
\&\&\&
\underline{\mathbf{M}}_{\mathtt{ji}}\Bigg(\mathrm{F}\bigg(\mathrm{F}^{\star}\Big(\big((\mathrm{FC}^{X})\mathrm{F}^{\star}\big)\mathrm{F}\Big)\bigg)\Bigg)X
\ar[rr,"\underline{\mathbf{M}}_{\mathtt{ji}}\big(\alpha_{\mathrm{F},\mathrm{F}^{\star},\left(\left(\mathrm{FC}^{X}\right)\mathrm{F}^{\star}\right)\mathrm{F}}^{\mone}\big)_X"]
\&\&
\underline{\mathbf{M}}_{\mathtt{ji}}\bigg((\mathrm{F}\mathrm{F}^{\star})\Big(\big((\mathrm{FC}^{X})\mathrm{F}^{\star}\big)\mathrm{F}\Big)\bigg)X
\ar[d, "{\underline{\mathbf{M}}_{\mathtt{ji}}(\mathrm{ev}_{\mathrm{F}}\circ_{\mathsf{h}}\mathrm{id}_{\left(\left(\mathrm{FC}^{X}\right)\mathrm{F}^{\star}\right)\mathrm{F}})_X}"description]
\\[5ex]
\underline{\mathbf{M}}_{\mathtt{ji}}(\mathrm{F})\underline{\mathbf{M}}_{\mathtt{ij}}(\mathrm{F}^{\star})
\underline{\mathbf{M}}_{\mathtt{ji}}\Big(\big((\mathrm{FC}^{X})\mathrm{F}^{\star}\big)\mathrm{F}\Big)X
\ar[rrr, "\big(\mu_{\mathtt{jij}}^{\mathrm{F},\mathrm{F}^{\star}}\big)_{\underline{\mathbf{M}}_{\mathtt{ji}}\big(\left((\mathrm{FC}^{X})\mathrm{F}^{\star}\right)\mathrm{F}\big)X}"]
\&\&\&
\underline{\mathbf{M}}_{\mathtt{jj}}(\mathrm{F}\mathrm{F}^{\star})
\underline{\mathbf{M}}_{\mathtt{ji}}\Big(\big((\mathrm{FC}^{X})\mathrm{F}^{\star}\big)\mathrm{F}\Big)X
\ar[d, "{\underline{\mathbf{M}}_{\mathtt{jj}}\big(\mathrm{ev}_{\mathrm{F}}\big)_{\underline{\mathbf{M}}_{\mathtt{ji}}\big(\left((\mathrm{FC}^{X})\mathrm{F}^{\star}\right)\mathrm{F}\big)X}}"description]
\ar[urr,"\big(\mu_{\mathtt{jji}}^{\mathrm{F}\mathrm{F}^{\star},\left((\mathrm{FC}^{X})\mathrm{F}^{\star}\right)\mathrm{F}}\big)_X",sloped]
\ar[urr, phantom,yshift=-7ex, "\circled{14}"]
\&\&
\underline{\mathbf{M}}_{\mathtt{ji}}\bigg(\mathbbm{1}_{\mathtt{j}}\Big(\big((\mathrm{FC}^{X})\mathrm{F}^{\star}\big)\mathrm{F}\Big)\bigg)X
\ar[d,"{\underline{\mathbf{M}}_{\mathtt{ji}}\big(\lunit_{\left((\mathrm{FC}^{X})\mathrm{F}^{\star}\right)\mathrm{F}}\big)}"description]
\\[5ex]
\&\&\&
\underline{\mathbf{M}}_{\mathtt{jj}}(\mathbbm{1}_{\mathtt{j}})
\underline{\mathbf{M}}_{\mathtt{ji}}\Big(\big((\mathrm{FC}^{X})\mathrm{F}^{\star}\big)\mathrm{F}\Big)X
\ar[rr, "\big(\iota_{\mathtt{j}}\big)_{\underline{\mathbf{M}}_{\mathtt{ji}}\big(\left((\mathrm{FC}^{X})\mathrm{F}^{\star}\right)\mathrm{F}\big)X}", swap]
\ar[urr,"\big(\mu_{\mathtt{jii}}^{\mathbbm{1}_{\mathtt{j}},\left((\mathrm{FC}^{X})\mathrm{F}^{\star}\right)\mathrm{F}}\big)_X",sloped]
\ar[rr, phantom, xshift=7ex, yshift=6ex, "\circled{15}"]
\&\&
\underline{\mathbf{M}}_{\mathtt{ji}}\Big(\big((\mathrm{FC}^{X})\mathrm{F}^{\star}\big)\mathrm{F}\Big)X
\end{tikzcd}
}
\end{gather*}
in which the path going down the left side and then right along the bottom includes all but the first two morphisms in \eqref{eq:0.9}. The diagram commutes by
\begin{itemize}

\item naturality of $\mu_{\mathtt{jii}}$ for the facets labeled $1$, $3$, $5$, $7$, $9$ and $11$;

\item the triangle coherence condition of the unitors for the facet labeled $2$;

\item naturality of $\alpha$ for the facets labeled $4$, $8$, $10$ and $12$;

\item the pentagon coherence condition of the associator for the facet labeled $6$;

\item the diagram in \eqref{eq:birepresentation3} for the facet labeled $13$;

\item naturality of $\mu_{\mathtt{jji}}$ for the facet labeled $14$;

\item the right diagram in \eqref{eq:birepresentation2} for the facet labeled $15$.
\end{itemize}
Let us also consider the diagram
\begin{gather*}
\adjustbox{scale=.62,center}{%
\begin{tikzcd}[ampersand replacement=\&, column sep=3.9em, row sep=3em]
\underline{\mathbf{M}}_{\mathtt{ji}}\Big(\big(\mathrm{F}(\mathrm{F}^{\star}\mathrm{F})\big)(\mathrm{C}^{X}\mathbbm{1}_{\mathtt{i}})\Big)X
\ar[d,"{\underline{\mathbf{M}}_{\mathtt{ji}}(\alpha_{\mathrm{F},\mathrm{F}^{\star},\mathrm{F}}^{\mone}\circ_{\mathsf{h}}\mathrm{id}_{\mathrm{C}^{X}\mathbbm{1}_{\mathtt{i}}})_X}"description]
\&\&\&
\underline{\mathbf{M}}_{\mathtt{ji}}\big((\mathrm{F}\mathbbm{1}_{\mathtt{i}})(\mathrm{C}^{X}\mathbbm{1}_{\mathtt{i}})\big)X
\ar[lll, "\underline{\mathbf{M}}_{\mathtt{ji}}\big((\mathrm{id}_{\mathrm{F}}\circ_{\mathsf{h}}\mathrm{coev}_{\mathrm{F}})\circ_{\mathsf{h}}\mathrm{id}_{\mathrm{C}^{X}\mathbbm{1}_{\mathtt{i}}}\big)_X", swap]
\ar[lll, phantom,yshift=-9ex, xshift=16ex,"\circled{1}"]
\&\&
\underline{\mathbf{M}}_{\mathtt{ji}}\big(\mathrm{F}(\mathrm{C}^{X}\mathbbm{1}_{\mathtt{i}})\big)X
\ar[ll, "\underline{\mathbf{M}}_{\mathtt{ji}}\big((\runit_{\mathrm{F}})^{\mone}\circ_{\mathsf{h}}\mathrm{id}_{\mathrm{C}^{X}\mathbbm{1}_{\mathtt{i}}}\big)_X", swap]
\ar[d,equal]
\\[5ex]
\underline{\mathbf{M}}_{\mathtt{ji}}\Big(\big((\mathrm{F}\mathrm{F}^{\star})\mathrm{F}\big)(\mathrm{C}^{X}\mathbbm{1}_{\mathtt{i}})\Big)X
\ar[d,"{\underline{\mathbf{M}}_{\mathtt{ji}}(\alpha_{\mathrm{F}\mathrm{F}^{\star},\mathrm{F},\mathrm{C}^{X}\mathbbm{1}_{\mathtt{i}}})_X}"description]
\ar[rrr,"\underline{\mathbf{M}}_{\mathtt{ji}}\big((\mathrm{ev}_{\mathrm{F}}\circ_{\mathsf{h}}\mathrm{id}_{\mathrm{F}})\circ_{\mathsf{h}}\mathrm{id}_{\mathrm{C}^{X}\mathbbm{1}_{\mathtt{i}}}\big)_X"]
\ar[rrr, phantom,yshift=-7ex, "\circled{2}"]
\&\&\&
\underline{\mathbf{M}}_{\mathtt{ji}}\Big(\big(\mathbbm{1}_{\mathtt{j}}\mathrm{F}\big)(\mathrm{C}^{X}\mathbbm{1}_{\mathtt{i}})\Big)X
\ar[d,"{\underline{\mathbf{M}}_{\mathtt{ji}}(\alpha_{\mathbbm{1}_{\mathtt{j}},\mathrm{F},\mathrm{C}^{X}\mathbbm{1}_{\mathtt{i}}})_X}"description]
\ar[rr,"\underline{\mathbf{M}}_{\mathtt{ji}}(\lunit_{\mathrm{F}}\circ_{\mathsf{h}}\mathrm{id}_{\mathrm{C}^{X}\mathbbm{1}_{\mathtt{i}}})_X"]
\ar[rr, phantom,yshift=-7ex, "\circled{3}"]
\&\&
\underline{\mathbf{M}}_{\mathtt{ji}}\big(\mathrm{F}(\mathrm{C}^{X}\mathbbm{1}_{\mathtt{i}})\big)X
\ar[d,equal]
\\[5ex]
\underline{\mathbf{M}}_{\mathtt{ji}}\Big((\mathrm{F}\mathrm{F}^{\star})\big(\mathrm{F}(\mathrm{C}^{X}\mathbbm{1}_{\mathtt{i}})\big)\Big)X
\ar[d, "{\underline{\mathbf{M}}_{\mathtt{ji}}(\mathrm{id}_{\mathrm{FF}^{\star}}\circ_{\mathsf{h}}\alpha_{\mathrm{F},\mathrm{C}^{X},\mathbbm{1}_{\mathtt{i}}}^{\mone})_X}"description]
\ar[rrr,"\underline{\mathbf{M}}_{\mathtt{ji}}\big(\mathrm{ev}_{\mathrm{F}}\circ_{\mathsf{h}}\mathrm{id}_{\mathrm{F}(\mathrm{C}^{X}\mathbbm{1}_{\mathtt{i}})}\big)_X"]
\ar[rrr, phantom,yshift=-7ex, "\circled{4}"]
\&\&\&
\underline{\mathbf{M}}_{\mathtt{ji}}\Big(\mathbbm{1}_{\mathtt{j}}\big(\mathrm{F}(\mathrm{C}^{X}\mathbbm{1}_{\mathtt{i}})\big)\Big)X
\ar[rr,"\underline{\mathbf{M}}_{\mathtt{ji}}\big(\lunit_{\mathrm{F}(\mathrm{C}^{X}\mathbbm{1}_{\mathtt{i}})}\big)_X"]
\ar[d, "{\underline{\mathbf{M}}_{\mathtt{ji}}(\mathrm{id}_{\mathbbm{1}_{\mathtt{j}}}\circ_{\mathsf{h}}\alpha_{\mathrm{F},\mathrm{C}^{X},\mathbbm{1}_{\mathtt{i}}}^{\mone})_X}"description]
\ar[rr, phantom,yshift=-7ex, "\circled{5}"]
\&\&
\underline{\mathbf{M}}_{\mathtt{ji}}\big(\mathrm{F}(\mathrm{C}^{X}\mathbbm{1}_{\mathtt{i}})\big)X
\ar[d, "{\underline{\mathbf{M}}_{\mathtt{ji}}(\alpha_{\mathrm{F},\mathrm{C}^{X},\mathbbm{1}_{\mathtt{i}}}^{\mone})_X}"description]
\\[5ex]
\underline{\mathbf{M}}_{\mathtt{ji}}\Big((\mathrm{F}\mathrm{F}^{\star})\big((\mathrm{F}\mathrm{C}^{X})\mathbbm{1}_{\mathtt{i}}\big)\Big)X
\ar[d, "{\underline{\mathbf{M}}_{\mathtt{ji}}\big(\mathrm{id}_{\mathrm{FF}^{\star}}\circ_{\mathsf{h}}(\mathrm{id}_{\mathrm{FC}^{X}}\circ_{\mathsf{h}}
\mathrm{coev}_{\mathrm{F}})\big)_X}"description]
\ar[rrr,"\underline{\mathbf{M}}_{\mathtt{ji}}\big(\mathrm{ev}_{\mathrm{F}}\circ_{\mathsf{h}}\mathrm{id}_{(\mathrm{F}\mathrm{C}^{X})\mathbbm{1}_{\mathtt{i}}}\big)_X"]
\ar[rrr, phantom,yshift=-7ex, "\circled{6}"]
\&\&\&
\underline{\mathbf{M}}_{\mathtt{ji}}\Big(\mathbbm{1}_{\mathtt{j}}\big((\mathrm{F}\mathrm{C}^{X})\mathbbm{1}_{\mathtt{i}}\big)\Big)X
\ar[d, "{\underline{\mathbf{M}}_{\mathtt{ji}}\big(\mathrm{id}_{\mathbbm{1}_{\mathtt{j}}}\circ_{\mathsf{h}}(\mathrm{id}_{\mathrm{FC}^{X}}\circ_{\mathsf{h}}
\mathrm{coev}_{\mathrm{F}})\big)_X}"description]
\ar[rr,"\underline{\mathbf{M}}_{\mathtt{ji}}\big(\lunit_{(\mathrm{F}\mathrm{C}^{X})\mathbbm{1}_{\mathtt{i}}}\big)_X"]
\ar[rr, phantom,yshift=-7ex, "\circled{7}"]
\&\&
\underline{\mathbf{M}}_{\mathtt{ji}}\big((\mathrm{F}\mathrm{C}^{X})\mathbbm{1}_{\mathtt{i}}\big)X
\ar[d, "{\underline{\mathbf{M}}_{\mathtt{ji}}(\mathrm{id}_{\mathrm{FC}^{X}}\circ_{\mathsf{h}}
\mathrm{coev}_{\mathrm{F}})_X}"description]
\\[5ex]
\underline{\mathbf{M}}_{\mathtt{ji}}\Big((\mathrm{F}\mathrm{F}^{\star})\big((\mathrm{F}\mathrm{C}^{X})(\mathrm{F}^{\star}\mathrm{F})\big)\Big)X
\ar[d, "{\underline{\mathbf{M}}_{\mathtt{ji}}(\mathrm{id}_{\mathrm{F}\mathrm{F}^{\star}}\circ_{\mathsf{h}}\alpha_{\mathrm{F}\mathrm{C}^{X},\mathrm{F}^{\star},\mathrm{F}}^{\mone})_X}"description]
\ar[rrr, "\underline{\mathbf{M}}_{\mathtt{ji}}(\mathrm{ev}_{\mathrm{F}}\circ_{\mathsf{h}}\mathrm{id}_{(\mathrm{F}\mathrm{C}^{X})(\mathrm{F}^{\star}\mathrm{F})})_X"]
\ar[rrr, phantom,yshift=-7ex, "\circled{8}"]
\&\&\&
\underline{\mathbf{M}}_{\mathtt{ji}}\Big(\mathbbm{1}_{\mathtt{j}}\big((\mathrm{F}\mathrm{C}^{X})(\mathrm{F}^{\star}\mathrm{F})\big)\Big)X
\ar[d, "{\underline{\mathbf{M}}_{\mathtt{ji}}(\mathrm{id}_{\mathbbm{1}_{\mathtt{j}}}\circ_{\mathsf{h}}\alpha_{\mathrm{F}\mathrm{C}^{X},\mathrm{F}^{\star},\mathrm{F}}^{\mone})_X}"description]
\ar[rr,"\underline{\mathbf{M}}_{\mathtt{ji}}\big(\lunit_{\left((\mathrm{F}\mathrm{C}^{X})(\mathrm{F}^{\star}\mathrm{F})\right)}\big)_X"]
\ar[rr, phantom,yshift=-7ex, "\circled{9}"]
\&\&
\underline{\mathbf{M}}_{\mathtt{ji}}\big((\mathrm{F}\mathrm{C}^{X})(\mathrm{F}^{\star}\mathrm{F})\big)X
\ar[d, "{\underline{\mathbf{M}}_{\mathtt{ji}}(\alpha_{\mathrm{F}\mathrm{C}^{X},\mathrm{F}^{\star},\mathrm{F}}^{\mone})_X}"description]
\\[5ex]
\underline{\mathbf{M}}_{\mathtt{ji}}\bigg((\mathrm{F}\mathrm{F}^{\star})\Big(\big((\mathrm{FC}^{X})\mathrm{F}^{\star}\big)\mathrm{F}\Big)\bigg)X
\ar[rrr, "\underline{\mathbf{M}}_{\mathtt{ji}}\big(\mathrm{ev}_{\mathrm{F}}\circ_{\mathsf{h}}\mathrm{id}_{\left((\mathrm{FC}^{X})\mathrm{F}^{\star}\right)\mathrm{F}}\big)_X"]
\&\&\&
\underline{\mathbf{M}}_{\mathtt{ji}}\bigg(\mathbbm{1}_{\mathtt{j}}\Big(\big((\mathrm{FC}^{X})\mathrm{F}^{\star}\big)\mathrm{F}\Big)\bigg)X
\ar[rr,"\underline{\mathbf{M}}_{\mathtt{ji}}\big(\lunit_{\left((\mathrm{FC}^{X})\mathrm{F}^{\star}\right)\mathrm{F}}\big)_X"]
\&\&
\underline{\mathbf{M}}_{\mathtt{ji}}\Big(\big((\mathrm{FC}^{X})\mathrm{F}^{\star}\big)\mathrm{F}\Big)X
\end{tikzcd}}
\end{gather*}
which commutes by
\begin{itemize}
\item the adjunction condition of the adjoint pair $(\mathrm{F},\mathrm{F}^{\star})$ for the facet labeled $1$;
\item naturality of $\alpha$ for the facet labeled $2$;
\item the first condition in \eqref{eq:0.00} for the facet labeled $3$;
\item the interchange law for the facets labeled $4$, $6$ and $8$;

\item naturality of $\lunit_{}$ for the facets labeled $5$, $7$ and $9$.
\end{itemize}
In this diagram, going from the top right corner to the bottom right corner by first going all the way left, then down and then right again corresponds to going right and then down in the previous diagram, starting from the second entry in the first row.

Hence, we obtain that \eqref{eq:0.9} equals
\begin{gather}\label{eq:0.100}
\begin{aligned}
&\underline{\mathbf{M}}_{\mathtt{ji}}(\alpha_{\mathrm{F}\mathrm{C}^{X},\mathrm{F}^{\star},\mathrm{F}}^{\mone})_X
\circ_{\mathsf{v}}
\underline{\mathbf{M}}_{\mathtt{ji}}(\mathrm{id}_{\mathrm{FC}^{X}}\circ_{\mathsf{h}}\mathrm{coev}_{\mathrm{F}})_X
\circ_{\mathsf{v}}
\underline{\mathbf{M}}_{\mathtt{ji}}(\alpha_{\mathrm{F},\mathrm{C}^{X},\mathbbm{1}_{\mathtt{i}}}^{\mone})_X
\\&
\circ_{\mathsf{v}}
\big(\mu_{\mathtt{jii}}^{\mathrm{F},\mathrm{C}^{X}\mathbbm{1}_{\mathtt{i}}}\big)_X
\circ_{\mathsf{v}}
\Big({\underline{\mathbf{M}}_{\mathtt{ji}}(\mathrm{F})}\underline{\mathbf{M}}_{\mathtt{ii}}\big((\runit_{\mathrm{C}^{X}})^{\mone}\big)_X\Big)
\circ_{\mathsf{v}}
({\underline{\mathbf{M}}_{\mathtt{ji}}(\mathrm{F})}\mathrm{coev}_{X,X})
\\&
=\underline{\mathbf{M}}_{\mathtt{ji}}(\alpha_{\mathrm{F}\mathrm{C}^{X},\mathrm{F}^{\star},\mathrm{F}}^{\mone})_X
\circ_{\mathsf{v}}
\underline{\mathbf{M}}_{\mathtt{ji}}(\mathrm{id}_{\mathrm{FC}^{X}}\circ_{\mathsf{h}}\mathrm{coev}_{\mathrm{F}})_X
\circ_{\mathsf{v}}
\underline{\mathbf{M}}_{\mathtt{ji}}\big((\runit_{\mathrm{FC}^{X}})^{\mone}\big)_X
\\&
\circ_{\mathsf{v}}
\big(\mu_{\mathtt{jii}}^{\mathrm{F},\mathrm{C}^{X}}\big)_X
\circ_{\mathsf{v}}
({\underline{\mathbf{M}}_{\mathtt{ji}}(\mathrm{F})}\mathrm{coev}_{X,X}),
\end{aligned}
\end{gather}
due to commutativity of the diagram
\begin{gather*}
\scalebox{0.75}{
$\begin{tikzcd}[ampersand replacement=\&, column sep=3.5em, row sep=2.5em]
\underline{\mathbf{M}}_{\mathtt{ji}}(\mathrm{F})\underline{\mathbf{M}}_{\mathtt{ii}}(\mathrm{C}^{X})X
\ar[rr,"\big(\mu_{\mathtt{jii}}^{\mathrm{F},\mathrm{C}^{X}}\big)_X"]
\ar[d, "{{\underline{\mathbf{M}}_{\mathtt{ji}}(\mathrm{F})}\underline{\mathbf{M}}_{\mathtt{ii}}\big((\runit_{\mathrm{C}^{X}})^{\mone}\big)_X}"description]
\&\&
\underline{\mathbf{M}}_{\mathtt{ji}}(\mathrm{F}\mathrm{C}^{X})X
\ar[d, "{\underline{\mathbf{M}}_{\mathtt{ji}}\big(\mathrm{id}_{\mathrm{F}}\circ_{\mathsf{h}}(\runit_{\mathrm{C}^{X}})^{\mone}\big)_X}"description]
\ar[drr, "\underline{\mathbf{M}}_{\mathtt{ji}}\big((\runit_{\mathrm{FC}^{X}})^{\mone}\big)_X"]
\&\&
\\[5ex]
\underline{\mathbf{M}}_{\mathtt{ji}}(\mathrm{F})\underline{\mathbf{M}}_{\mathtt{ii}}(\mathrm{C}^{X}\mathbbm{1}_{\mathtt{i}})X
\ar[rr,"\big(\mu_{\mathtt{jii}}^{\mathrm{F},\mathrm{C}^{X}\mathbbm{1}_{\mathtt{i}}}\big)_X",swap]
\&\&
\underline{\mathbf{M}}_{\mathtt{ji}}\big(\mathrm{F}(\mathrm{C}^{X}\mathbbm{1}_{\mathtt{i}})\big)X
\ar[rr,"\underline{\mathbf{M}}_{\mathtt{ji}}\big(\alpha_{\mathrm{F},\mathrm{C}^{X},\mathbbm{1}_{\mathtt{i}}}^{\mone}\big)_X",swap]
\&\&
\underline{\mathbf{M}}_{\mathtt{ji}}\big((\mathrm{F}\mathrm{C}^{X})\mathbbm{1}_{\mathtt{i}}\big)X
\end{tikzcd}$
}
,
\end{gather*}
where the left square and the right triangle commute by naturality of $\mu_{\mathtt{jii}}$ and the right diagram in \eqref{eq:0.00}, respectively.
This shows that $\mathrm{coev}_{\underline{\mathbf{M}}_{\mathtt{ji}}(\mathrm{F}) X,\underline{\mathbf{M}}_{\mathtt{ji}}(\mathrm{F})\,X}$, i.e. the image of \eqref{eq:0.100} under the inverse of the first isomorphism in \eqref{eq:0.4}, is equal to
\begin{gather*}\scalebox{0.95}{$
\begin{aligned}
&(\mu_{\mathtt{j}\mathtt{j}\mathtt{i}}^{(\mathrm{F}\mathrm{C}^{X})\mathrm{F}^{\star},\mathrm{F}})_X^{\mone}
\circ_{\mathsf{v}}
\underline{\mathbf{M}}_{\mathtt{ji}}(\alpha_{\mathrm{F}\mathrm{C}^{X},\mathrm{F}^{\star},\mathrm{F}}^{\mone})_X
\circ_{\mathsf{v}}
\underline{\mathbf{M}}_{\mathtt{ji}}(\mathrm{id}_{\mathrm{FC}^{X}}\circ_{\mathsf{h}}\mathrm{coev}_{\mathrm{F}})_X
\circ_{\mathsf{v}}
\underline{\mathbf{M}}_{\mathtt{ii}}\big((\runit_{\mathrm{FC}^{X}})^{\mone}\big)_X
\\&
\circ_{\mathsf{v}}
\big(\mu_{\mathtt{jii}}^{\mathrm{F},\mathrm{C}^{X}}\big)_X
\circ_{\mathsf{v}}
({\underline{\mathbf{M}}_{\mathtt{ji}}(\mathrm{F})}\mathrm{coev}_{X,X}).
\end{aligned}$}
\end{gather*}

By \eqref{eq:0.4}, we have $\mathrm{C}^{\underline{\mathbf{M}}_{\mathtt{ji}}(\mathrm{F})X}\cong (\mathrm{F}\mathrm{C}^{X}) \mathrm{F}^{\star}$ as $1$-morphisms and the comultiplication $\delta_{\underline{\mathbf{M}}_{\mathtt{ji}}(\mathrm{F})X}$ corresponds to the element
\begin{gather*}
\begin{aligned}
f:=&\big(\mu_{\mathtt{jjj}}^{(\mathrm{FC}^{X})\mathrm{F}^{\star},(\mathrm{FC}^{X})\mathrm{F}^{\star}}\big)_{\underline{\mathbf{M}}_{\mathtt{ji}}(\mathrm{F}) X}
\circ_{\mathsf{v}}
\big({\underline{\mathbf{M}}_{\mathtt{jj}}\big((\mathrm{FC}^{X})\mathrm{F}^{\star}\big)}\mathrm{coev}_{\underline{\mathbf{M}}_{\mathtt{ji}}(\mathrm{F}) X,\underline{\mathbf{M}}_{\mathtt{ji}}(\mathrm{F}) X}\big)\\
&\circ_{\mathsf{v}}
\mathrm{coev}_{\underline{\mathbf{M}}_{\mathtt{ji}}(\mathrm{F}) X,\underline{\mathbf{M}}_{\mathtt{ji}}(\mathrm{F}) X}
\end{aligned}
\end{gather*}
in $\mathrm{Hom}_{\underline{\mathbf{M}}(\mathtt{j})}\Big(\underline{\mathbf{M}}_{\mathtt{ji}}(\mathrm{F})X,
\underline{\mathbf{M}}_{\mathtt{jj}}\Big(\big((\mathrm{FC}^{X})\mathrm{F}^{\star}\big)\big((\mathrm{FC}^{X})\mathrm{F}^{\star}\big)\Big)
\underline{\mathbf{M}}_{\mathtt{ji}}(\mathrm{F})X\Big)$. It remains to show that
$f$ corresponds to the comultiplication of $(\mathrm{FC}^{X})\mathrm{F}^{\star}$ via the isomorphisms in \eqref{eq:0.4}, for $\mathrm{G}=\big((\mathrm{FC}^{X})\mathrm{F}^{\star}\big)\big((\mathrm{FC}^{X})\mathrm{F}^{\star}\big)$. First consider the diagram
\begin{gather*}
\adjustbox{scale=.51,center}{%
\begin{tikzcd}[ampersand replacement=\&, column sep=2.8em, row sep=3.5em]
\underline{\mathbf{M}}_{\mathtt{ji}}(\mathrm{F})\underline{\mathbf{M}}_{\mathtt{ii}}(\mathrm{C}^{X})X
\ar[d,"{\big(\mu_{\mathtt{jii}}^{\mathrm{F},\mathrm{C}^{X}}\big)_X}"description]
\ar[rrrr, "{\underline{\mathbf{M}}_{\mathtt{ji}}(\mathrm{F})\underline{\mathbf{M}}_{\mathtt{ii}}(\mathrm{C}^{X})}
\mathrm{coev}_{X,X}"]
\ar[rrrr, phantom,yshift=-7ex, "\circled{1}"]
\&\&\&\&
\underline{\mathbf{M}}_{\mathtt{ji}}(\mathrm{F})\underline{\mathbf{M}}_{\mathtt{ii}}(\mathrm{C}^{X})\underline{\mathbf{M}}_{\mathtt{ii}}(\mathrm{C}^{X})X
\ar[d, "\big(\mu_{\mathtt{jii}}^{\mathrm{F},\mathrm{C}^{X}}\big)_{\underline{\mathbf{M}}_{\mathtt{ii}}(\mathrm{C}^{X})X}"]
\&\&\&\&\&
\\[5ex]
\underline{\mathbf{M}}_{\mathtt{ji}}(\mathrm{F}\mathrm{C}^{X})X
\ar[d,"{\underline{\mathbf{M}}_{\mathtt{ji}}\big((\runit_{\mathrm{FC}^{X}})^{\mone}\big)_X}"description]
\ar[rrrr,"{\underline{\mathbf{M}}_{\mathtt{ji}}(\mathrm{F}\mathrm{C}^{X})}
\mathrm{coev}_{X,X}"]
\ar[rrrr, phantom,yshift=-7ex, "\circled{2}"]
\&\&\&\&
\underline{\mathbf{M}}_{\mathtt{ji}}(\mathrm{F}\mathrm{C}^{X})\underline{\mathbf{M}}_{\mathtt{ii}}(\mathrm{C}^{X})X
\ar[d,"{\underline{\mathbf{M}}_{\mathtt{ji}}\big((\runit_{\mathrm{FC}^{X}})^{\mone}\big)_{\underline{\mathbf{M}}_{\mathtt{ii}}(\mathrm{C}^{X})X}}"description]
\ar[rrrrr,"{\underline{\mathbf{M}}_{\mathtt{ji}}(\mathrm{F}\mathrm{C}^{X})}
\underline{\mathbf{M}}_{\mathtt{ii}}\big((\runit_{\mathrm{C}^{X}})^{\mone}\big)_X"]
\ar[rrrrr, phantom,yshift=-7ex, "\circled{3}"]
\&\&\&\&\&
\underline{\mathbf{M}}_{\mathtt{ji}}(\mathrm{F}\mathrm{C}^{X})\underline{\mathbf{M}}_{\mathtt{ii}}(\mathrm{C}^{X}\mathbbm{1}_{\mathtt{i}})X
\ar[d,"{\underline{\mathbf{M}}_{\mathtt{ii}}\big((\runit_{\mathrm{FC}^{X}})^{\mone}\big)_{\underline{\mathbf{M}}_{\mathtt{ii}}(\mathrm{C}^{X}\mathbbm{1}_{\mathtt{i}})X}}"description]
\\[5ex]
\underline{\mathbf{M}}_{\mathtt{ji}}\big((\mathrm{F}\mathrm{C}^{X})\mathbbm{1}_{\mathtt{i}}\big)X
\ar[d,"{\underline{\mathbf{M}}_{\mathtt{ji}}\big(\mathrm{id}_{\mathrm{F}\mathrm{C}^{X}}\circ_{\mathsf{h}}\mathrm{coev}_{\mathrm{F}}\big)_X}"description]
\ar[rrrr,"{\underline{\mathbf{M}}_{\mathtt{ji}}\left(\left(\mathrm{F}\mathrm{C}^{X}\right)\mathbbm{1}_{\mathtt{i}}\right)}
\mathrm{coev}_{X,X}"]
\ar[rrrr, phantom,yshift=-7ex, "\circled{4}"]
\&\&\&\&
\underline{\mathbf{M}}_{\mathtt{ji}}\big((\mathrm{F}\mathrm{C}^{X})\mathbbm{1}_{\mathtt{i}}\big)\underline{\mathbf{M}}_{\mathtt{ii}}(\mathrm{C}^{X})X
\ar[d,"{\underline{\mathbf{M}}_{\mathtt{ji}}\big(\mathrm{id}_{\mathrm{F}\mathrm{C}^{X}}\circ_{\mathsf{h}}\mathrm{coev}_{\mathrm{F}}\big)_{\underline{\mathbf{M}}_{\mathtt{ii}}(\mathrm{C}^{X})X}}"description]
\ar[rrrrr,"{\underline{\mathbf{M}}_{\mathtt{ji}}\left(\left(\mathrm{F}\mathrm{C}^{X}\right)\mathbbm{1}_{\mathtt{i}}\right)}
\underline{\mathbf{M}}_{\mathtt{ii}}\big((\runit_{\mathrm{C}^{X}})^{\mone}\big)_X"]
\ar[rrrrr, phantom,yshift=-7ex, "\circled{5}"]
\&\&\&\&\&
\underline{\mathbf{M}}_{\mathtt{ji}}\big((\mathrm{F}\mathrm{C}^{X})\mathbbm{1}_{\mathtt{i}}\big)
\underline{\mathbf{M}}_{\mathtt{ii}}(\mathrm{C}^{X}\mathbbm{1}_{\mathtt{i}})X
\ar[d,"{\underline{\mathbf{M}}_{\mathtt{ji}}\big(\mathrm{id}_{\mathrm{F}\mathrm{C}^{X}}\circ_{\mathsf{h}}\mathrm{coev}_{\mathrm{F}}\big)_{\underline{\mathbf{M}}_{\mathtt{ii}}(\mathrm{C}^{X}\mathbbm{1}_{\mathtt{i}})X}}"description]
\\[5ex]
\underline{\mathbf{M}}_{\mathtt{ji}}\big((\mathrm{F}\mathrm{C}^{X})(\mathrm{F}^{\star}\mathrm{F})\big)X
\ar[d, "{\underline{\mathbf{M}}_{\mathtt{ji}}\big(\alpha_{\mathrm{F}\mathrm{C}^{X},\mathrm{F}^{\star},\mathrm{F}}^{\mone}\big)_X}"description]
\ar[rrrr,"{\underline{\mathbf{M}}_{\mathtt{ji}}\left(\left(\mathrm{F}\mathrm{C}^{X}\right)\left(\mathrm{F}^{\star}\mathrm{F}\right)\right)}
\mathrm{coev}_{X,X}"]
\ar[rrrr, phantom,yshift=-7ex, "\circled{6}"]
\&\&\&\&
\underline{\mathbf{M}}_{\mathtt{ji}}\big((\mathrm{F}\mathrm{C}^{X})(\mathrm{F}^{\star}\mathrm{F})\big)\underline{\mathbf{M}}_{\mathtt{ii}}(\mathrm{C}^{X})X
\ar[d, "{\underline{\mathbf{M}}_{\mathtt{ji}}\big(\alpha_{\mathrm{F}\mathrm{C}^{X},\mathrm{F}^{\star},\mathrm{F}}^{\mone}\big)_{\underline{\mathbf{M}}_{\mathtt{ii}}(\mathrm{C}^{X})X}}"description]
\ar[rrrrr,"{\underline{\mathbf{M}}_{\mathtt{ji}}\left(\left(\mathrm{F}\mathrm{C}^{X}\right)\left(\mathrm{F}^{\star}\mathrm{F}\right)\right)}
\underline{\mathbf{M}}_{\mathtt{ii}}\big((\runit_{\mathrm{C}^{X}})^{\mone}\big)_X"]
\ar[rrrrr, phantom,yshift=-7ex, "\circled{7}"]
\&\&\&\&\&
\underline{\mathbf{M}}_{\mathtt{ji}}\big((\mathrm{F}\mathrm{C}^{X})(\mathrm{F}^{\star}\mathrm{F})\big)\underline{\mathbf{M}}_{\mathtt{ii}}(\mathrm{C}^{X}\mathbbm{1}_{\mathtt{i}})X
\ar[d, "{\underline{\mathbf{M}}_{\mathtt{ji}}\big(\alpha_{\mathrm{F}\mathrm{C}^{X},\mathrm{F}^{\star},\mathrm{F}}^{\mone}\big)_{\underline{\mathbf{M}}_{\mathtt{ii}}(\mathrm{C}^{X}\mathbbm{1}_{\mathtt{i}})X}}"description]
\\[5ex]
\underline{\mathbf{M}}_{\mathtt{ji}}\Big(\big((\mathrm{F}\mathrm{C}^{X})\mathrm{F}^{\star}\big)\mathrm{F}\Big)X
\ar[d, "{\big(\mu_{\mathtt{j}\mathtt{j}\mathtt{i}}^{(\mathrm{F}\mathrm{C}^{X})\mathrm{F}^{\star},\mathrm{F}}\big)_X^{\mone}}"description]
\ar[rrrr,"{\underline{\mathbf{M}}_{\mathtt{ji}}\left(\left(\left(\mathrm{F}\mathrm{C}^{X}\right)\mathrm{F}^{\star}\right)\mathrm{F}\right)}
\mathrm{coev}_{X,X}"]
\ar[rrrr, phantom,yshift=-7ex, "\circled{8}"]
\&\&\&\&
\underline{\mathbf{M}}_{\mathtt{ji}}\Big(\big((\mathrm{F}\mathrm{C}^{X})\mathrm{F}^{\star}\big)\mathrm{F}\Big)\underline{\mathbf{M}}_{\mathtt{ii}}(\mathrm{C}^{X})X
\ar[d, "{\big(\mu_{\mathtt{j}\mathtt{j}\mathtt{i}}^{(\mathrm{F}\mathrm{C}^{X})\mathrm{F}^{\star},\mathrm{F}}\big)^{\mone}_{\underline{\mathbf{M}}_{\mathtt{ii}}(\mathrm{C}^{X})X}}"description]
\ar[rrrrr,"{\underline{\mathbf{M}}_{\mathtt{ji}}\left(\left(\left(\mathrm{F}\mathrm{C}^{X}\right)\mathrm{F}^{\star}\right)\mathrm{F}\right)}
\underline{\mathbf{M}}_{\mathtt{ii}}\big((\runit_{\mathrm{C}^{X}})^{\mone}\big)_X"]
\ar[rrrrr, phantom,yshift=-7ex, "\circled{9}"]
\&\&\&\&\&
\underline{\mathbf{M}}_{\mathtt{ji}}\Big(\big((\mathrm{F}\mathrm{C}^{X})\mathrm{F}^{\star}\big)\mathrm{F}\Big)\underline{\mathbf{M}}_{\mathtt{ii}}(\mathrm{C}^{X}\mathbbm{1}_{\mathtt{i}})X
\ar[d, "{\big(\mu_{\mathtt{j}\mathtt{j}\mathtt{i}}^{(\mathrm{F}\mathrm{C}^{X})\mathrm{F}^{\star},\mathrm{F}}\big)^{\mone}_{\underline{\mathbf{M}}_{\mathtt{ii}}(\mathrm{C}^{X}\mathbbm{1}_{\mathtt{i}})X}}"description]
\\[5ex]
\underline{\mathbf{M}}_{\mathtt{jj}}\big((\mathrm{F}\mathrm{C}^{X})\mathrm{F}^{\star}\big)\underline{\mathbf{M}}_{\mathtt{ji}}(\mathrm{F})X
\ar[rrrr,"{\underline{\mathbf{M}}_{\mathtt{jj}}\left(\left(\mathrm{F}\mathrm{C}^{X}\right)\mathrm{F}^{\star}\right)
\underline{\mathbf{M}}_{\mathtt{ji}}(\mathrm{F})}\mathrm{coev}_{X,X}"]
\&\&\&\&
\underline{\mathbf{M}}_{\mathtt{jj}}\big((\mathrm{F}\mathrm{C}^{X})\mathrm{F}^{\star}\big)\underline{\mathbf{M}}_{\mathtt{ji}}(\mathrm{F})\underline{\mathbf{M}}_{\mathtt{ii}}(\mathrm{C}^{X})X
\ar[rrrrr,"{\underline{\mathbf{M}}_{\mathtt{jj}}\left(\left(\mathrm{F}\mathrm{C}^{X}\right)\mathrm{F}^{\star}\right)\underline{\mathbf{M}}_{\mathtt{ji}}(\mathrm{F})}
\underline{\mathbf{M}}_{\mathtt{ii}}\big((\runit_{\mathrm{C}^{X}})^{\mone}\big)_X"]
\ar[rrrrr, phantom,yshift=-7ex, "\circled{10}"]
\ar[d,"{\underline{\mathbf{M}}_{\mathtt{jj}}\left(\left(\mathrm{F}\mathrm{C}^{X}\right)\mathrm{F}^{\star}\right)\big(\mu_{\mathtt{jii}}^{\mathrm{F},\mathrm{C}^{X}}\big)_X}"description]
\&\&\&\&\&
\underline{\mathbf{M}}_{\mathtt{jj}}\big((\mathrm{F}\mathrm{C}^{X})\mathrm{F}^{\star}\big)\underline{\mathbf{M}}_{\mathtt{ji}}(\mathrm{F})\underline{\mathbf{M}}_{\mathtt{ii}}(\mathrm{C}^{X}\mathbbm{1}_{\mathtt{i}})X
\ar[d,"{\underline{\mathbf{M}}_{\mathtt{jj}}\left(\left(\mathrm{F}\mathrm{C}^{X}\right)\mathrm{F}^{\star}\right)\big(\mu_{\mathtt{jii}}^{\mathrm{F},\mathrm{C}^{X}\mathbbm{1}_{\mathtt{i}}}\big)_X}"description]
\\[5ex]
\&\&\&\&
\underline{\mathbf{M}}_{\mathtt{jj}}\big((\mathrm{F}\mathrm{C}^{X})\mathrm{F}^{\star}\big)\underline{\mathbf{M}}_{\mathtt{ji}}(\mathrm{F}\mathrm{C}^{X})X
\ar[rrrrr,"\underline{\mathbf{M}}_{\mathtt{jj}}\left(\left(\mathrm{F}\mathrm{C}^{X}\right)\mathrm{F}^{\star}\right)\underline{\mathbf{M}}_{\mathtt{ji}}(\mathrm{F})\underline{\mathbf{M}}_{\mathtt{ii}}\big(\mathrm{id}_{\mathrm{F}}\circ_{\mathsf{h}}(\runit_{\mathrm{C}^{X}})^{\mone}\big)_X"]
\ar[drrrrr,"\underline{\mathbf{M}}_{\mathtt{jj}}\left(\left(\mathrm{F}\mathrm{C}^{X}\right)\mathrm{F}^{\star}\right)\underline{\mathbf{M}}_{\mathtt{ji}}\big((\runit_{\mathrm{FC}^{X}})^{\mone}\big)_X", swap]
\ar[rrrrr, phantom,yshift=-6ex, xshift=7ex, "\circled{11}"]
\&\&\&\&\&
\underline{\mathbf{M}}_{\mathtt{jj}}\big((\mathrm{F}\mathrm{C}^{X})\mathrm{F}^{\star}\big)\underline{\mathbf{M}}_{\mathtt{ji}}\big(\mathrm{F}(\mathrm{C}^{X}\mathbbm{1}_{\mathtt{i}})\big)X
\ar[d,"{\underline{\mathbf{M}}_{\mathtt{jj}}\left(\left(\mathrm{F}\mathrm{C}^{X}\right)\mathrm{F}^{\star}\right)\underline{\mathbf{M}}_{\mathtt{ji}}\big(\alpha_{\mathrm{F},\mathrm{C}^{X},\mathbbm{1}_{\mathtt{i}}}^{\mone}\big)_X}"description]
\\[5ex]
\&\&\&\&
\&\&\&\&\&
\underline{\mathbf{M}}_{\mathtt{jj}}\big((\mathrm{F}\mathrm{C}^{X})\mathrm{F}^{\star}\big)\underline{\mathbf{M}}_{\mathtt{ji}}\big((\mathrm{F}\mathrm{C}^{X})\mathbbm{1}_{\mathtt{i}}\big)X
\end{tikzcd}}
\end{gather*}
which commutes by
\begin{itemize}
\item naturality of $\mu_{\mathtt{jii}}^{\mathrm{F},\mathrm{C}^{X}}$ for the facet labeled $1$;

\item naturality of $\underline{\mathbf{M}}_{\mathtt{ii}}\big((\runit_{\mathrm{FC}^{X}})^{\mone}\big)$ for the facets labeled $2$ and $3$;

\item naturality of $\underline{\mathbf{M}}_{\mathtt{ji}}\big(\mathrm{id}_{\mathrm{F}\mathrm{C}^{X}}\circ_{\mathsf{h}}\mathrm{coev}_{\mathrm{F}}\big)$
for the facets labeled $4$ and $5$;

\item naturality of $\underline{\mathbf{M}}_{\mathtt{ji}}\big(\alpha_{\mathrm{F}\mathrm{C}^{X},\mathrm{F}^{\star},\mathrm{F}}^{\mone}\big)$ for the facets labeled $6$ and $7$;

\item naturality of $\mu_{\mathtt{j}\mathtt{j}\mathtt{i}}^{(\mathrm{F}\mathrm{C}^{X})\mathrm{F}^{\star},\mathrm{F}}$ for the facets labeled $8$ and $9$;

\item the diagram below \eqref{eq:0.100} for the facets labeled $10$ and $11$.
\end{itemize}
To simplify notation, we set $\mathrm{H}_1:=\mathrm{F}^{\star}\mathrm{F}$ and
$\mathrm{H}_2:=(\mathrm{F}\mathrm{C}^{X})\mathrm{F}^{\star}$.
Then $\mathrm{coev}_{\mathrm{F}}$ is a $2$-morphism from $\mathbbm{1}_{\mathtt{i}}$ to $\mathrm{H}_1$
and $\alpha_{\mathrm{FC}^{X},\mathrm{F}^{\star},\mathrm{F}}^{\mone}$ is a $2$-morphism from $(\mathrm{F}\mathrm{C}^{X})\mathrm{H}_1$ to
$\mathrm{H}_2\mathrm{F}$. Consider the diagram
\begin{gather*}
\scalebox{0.47}{
$\begin{tikzcd}[ampersand replacement=\&, column sep=2.5em, row sep=3.5em]
\underline{\mathbf{M}}_{\mathtt{ji}}(\mathrm{F}\mathrm{C}^{X})\underline{\mathbf{M}}_{\mathtt{ii}}(\mathrm{C}^{X}\mathbbm{1}_{\mathtt{i}})X
\ar[d,"{\underline{\mathbf{M}}_{\mathtt{ii}}\big((\runit_{\mathrm{FC}^{X}})^{\mone}\big)_{\underline{\mathbf{M}}_{\mathtt{ii}}(\mathrm{C}^{X}\mathbbm{1}_{\mathtt{i}})X}}"description]
\ar[rrrrrr,"{\underline{\mathbf{M}}_{\mathtt{ji}}(\mathrm{F}\mathrm{C}^{X})}\underline{\mathbf{M}}_{\mathtt{ii}}\big(\mathrm{id}_{\mathrm{C}^{X}}\circ_{\mathsf{h}}
\mathrm{coev}_{\mathrm{F}}\big)_X"]
\ar[rrrrrr, phantom,yshift=-7ex, "\circled{1}"]
\&\&\&\&\&\&
\underline{\mathbf{M}}_{\mathtt{ji}}(\mathrm{F}\mathrm{C}^{X})\underline{\mathbf{M}}_{\mathtt{ii}}(\mathrm{C}^{X}\mathrm{H}_1)X
\ar[rrrrr, "{\underline{\mathbf{M}}_{\mathtt{ji}}(\mathrm{F}\mathrm{C}^{X})}
\underline{\mathbf{M}}_{\mathtt{ii}}\big(\alpha_{\mathrm{C}^{X},\mathrm{F}^{\star},\mathrm{F}}^{\mone}\big)_X"]
\ar[d,"{\underline{\mathbf{M}}_{\mathtt{ii}}\big((\runit_{\mathrm{FC}^{X}})^{\mone}\big)_{\underline{\mathbf{M}}_{\mathtt{ii}}(\mathrm{C}^{X}\mathrm{H}_1)X}}"description]
\ar[rrrrr, phantom,yshift=-7ex, "\circled{2}"]
\&\&\&\&\&
\underline{\mathbf{M}}_{\mathtt{ji}}(\mathrm{F}\mathrm{C}^{X})\underline{\mathbf{M}}_{\mathtt{ii}}\big((\mathrm{C}^{X}\mathrm{F}^{\star})\mathrm{F}\big)X
\ar[d,"{\underline{\mathbf{M}}_{\mathtt{ii}}\big((\runit_{\mathrm{FC}^{X}})^{\mone}\big)_{\underline{\mathbf{M}}_{\mathtt{ii}}\left(\left(\mathrm{C}^{X}\mathrm{F}^{\star}\right)\mathrm{F}\right)X}}"description]
\\[5ex]
\underline{\mathbf{M}}_{\mathtt{ji}}\big((\mathrm{F}\mathrm{C}^{X})\mathbbm{1}_{\mathtt{i}}\big)
\underline{\mathbf{M}}_{\mathtt{ii}}(\mathrm{C}^{X}\mathbbm{1}_{\mathtt{i}})X
\ar[d,"{\underline{\mathbf{M}}_{\mathtt{ji}}\big(\mathrm{id}_{\mathrm{F}\mathrm{C}^{X}}\circ_{\mathsf{h}}\mathrm{coev}_{\mathrm{F}}\big)_{\underline{\mathbf{M}}_{\mathtt{ii}}(\mathrm{C}^{X}\mathbbm{1}_{\mathtt{i}})X}}"description]
\ar[rrrrrr,"{\underline{\mathbf{M}}_{\mathtt{ji}}\left((\mathrm{F}\mathrm{C}^{X})\mathbbm{1}_{\mathtt{i}}\right)}\underline{\mathbf{M}}_{\mathtt{ii}}\big(\mathrm{id}_{\mathrm{C}^{X}}\circ_{\mathsf{h}}
\mathrm{coev}_{\mathrm{F}}\big)_X"]
\ar[rrrrrr, phantom,yshift=-7ex, "\circled{3}"]
\&\&\&\&\&\&
\underline{\mathbf{M}}_{\mathtt{ji}}\big((\mathrm{F}\mathrm{C}^{X})\mathbbm{1}_{\mathtt{i}}\big)\underline{\mathbf{M}}_{\mathtt{ii}}(\mathrm{C}^{X}\mathrm{H}_1)X
\ar[d, "{\underline{\mathbf{M}}_{\mathtt{ji}}(\mathrm{id}_{\mathrm{F}\mathrm{C}^{X}}\circ_{\mathsf{h}}\mathrm{coev}_{\mathrm{F}})_{\underline{\mathbf{M}}_{\mathtt{ii}}(\mathrm{C}^{X}\mathrm{H}_1)X}}"description]
\ar[rrrrr, "{\underline{\mathbf{M}}_{\mathtt{ji}}\left(\left(\mathrm{F}\mathrm{C}^{X}\right)\mathbbm{1}_{\mathtt{i}}\right)}
\underline{\mathbf{M}}_{\mathtt{ii}}\big(\alpha_{\mathrm{C}^{X},\mathrm{F}^{\star},\mathrm{F}}^{\mone}\big)_X"]
\ar[rrrrr, phantom,yshift=-7ex, "\circled{4}"]
\&\&\&\&\&
\underline{\mathbf{M}}_{\mathtt{ji}}\big((\mathrm{F}\mathrm{C}^{X})\mathbbm{1}_{\mathtt{i}}\big)\underline{\mathbf{M}}_{\mathtt{ii}}\big((\mathrm{C}^{X}\mathrm{F}^{\star})\mathrm{F}\big)X
\ar[d, "{\underline{\mathbf{M}}_{\mathtt{ji}}(\mathrm{id}_{\mathrm{F}\mathrm{C}^{X}}\circ_{\mathsf{h}}\mathrm{coev}_{\mathrm{F}})_{\underline{\mathbf{M}}_{\mathtt{ii}}\left(\left(\mathrm{C}^{X}\mathrm{F}^{\star}\right)\mathrm{F}\right)X}}"description]
\\[5ex]
\underline{\mathbf{M}}_{\mathtt{ji}}\big((\mathrm{F}\mathrm{C}^{X})\mathrm{H}_1\big)\underline{\mathbf{M}}_{\mathtt{ii}}(\mathrm{C}^{X}\mathbbm{1}_{\mathtt{i}})X
\ar[d, "{\underline{\mathbf{M}}_{\mathtt{ji}}\big(\alpha_{\mathrm{F}\mathrm{C}^{X},\mathrm{F}^{\star},\mathrm{F}}^{\mone}\big)_{\underline{\mathbf{M}}_{\mathtt{ii}}(\mathrm{C}^{X}\mathbbm{1}_{\mathtt{i}})X}}"description]
\ar[rrrrrr,"{\underline{\mathbf{M}}_{\mathtt{ji}}\left((\mathrm{F}\mathrm{C}^{X})\mathrm{H}_1\right)}\underline{\mathbf{M}}_{\mathtt{ii}}\big(\mathrm{id}_{\mathrm{C}^{X}}\circ_{\mathsf{h}}
\mathrm{coev}_{\mathrm{F}}\big)_X"]
\ar[rrrrrr, phantom,yshift=-7ex, "\circled{5}"]
\&\&\&\&\&\&
\underline{\mathbf{M}}_{\mathtt{ji}}\big((\mathrm{F}\mathrm{C}^{X})\mathrm{H}_1\big)\underline{\mathbf{M}}_{\mathtt{ii}}(\mathrm{C}^{X}\mathrm{H}_1)X
\ar[rrrrr, "{\underline{\mathbf{M}}_{\mathtt{ji}}\left(\left(\mathrm{F}\mathrm{C}^{X}\right)\mathrm{H}_1\right)}
\underline{\mathbf{M}}_{\mathtt{ii}}\big(\alpha_{\mathrm{C}^{X},\mathrm{F}^{\star},\mathrm{F}}^{\mone}\big)_X"]
\ar[d, "{\underline{\mathbf{M}}_{\mathtt{ji}}\big(\alpha_{\mathrm{F}\mathrm{C}^{X},\mathrm{F}^{\star},\mathrm{F}}^{\mone}\big)_{\underline{\mathbf{M}}_{\mathtt{ii}}(\mathrm{C}^{X}\mathrm{H}_1)X}}"description]
\ar[rrrrr, phantom,yshift=-7ex, "\circled{6}"]
\&\&\&\&\&
\underline{\mathbf{M}}_{\mathtt{ji}}\big((\mathrm{F}\mathrm{C}^{X})\mathrm{H}_1\big)
\underline{\mathbf{M}}_{\mathtt{ii}}\big((\mathrm{C}^{X}\mathrm{F}^{\star})\mathrm{F}\big)X
\ar[d, "{\underline{\mathbf{M}}_{\mathtt{ji}}\big(\alpha_{\mathrm{F}\mathrm{C}^{X},\mathrm{F}^{\star},\mathrm{F}}^{\mone}\big)_{\underline{\mathbf{M}}_{\mathtt{ii}}\left(\left(\mathrm{C}^{X}\mathrm{F}^{\star}\right)\mathrm{F}\right)X}}"description]
\\[5ex]
\underline{\mathbf{M}}_{\mathtt{ji}}(\mathrm{H}_2\mathrm{F})\underline{\mathbf{M}}_{\mathtt{ii}}(\mathrm{C}^{X}\mathbbm{1}_{\mathtt{i}})X
\ar[d, "{\big(\mu_{\mathtt{j}\mathtt{j}\mathtt{i}}^{\mathrm{H}_2,\mathrm{F}}\big)^{\mone}_{\underline{\mathbf{M}}_{\mathtt{ii}}(\mathrm{C}^{X}\mathbbm{1}_{\mathtt{i}})X}}"description]
\ar[rrrrrr,"{\underline{\mathbf{M}}_{\mathtt{ji}}(\mathrm{H}_2\mathrm{F})}
\underline{\mathbf{M}}_{\mathtt{ii}}\big(\mathrm{id}_{\mathrm{C}^{X}}\circ_{\mathsf{h}}\mathrm{coev}_{\mathrm{F}}\big)_X"]
\ar[rrrrrr, phantom,yshift=-7ex, "\circled{7}"]
\&\&\&\&\&\&
\underline{\mathbf{M}}_{\mathtt{ji}}(\mathrm{H}_2\mathrm{F})\underline{\mathbf{M}}_{\mathtt{ii}}(\mathrm{C}^{X}\mathrm{H}_1)X
\ar[d, "{\big(\mu_{\mathtt{j}\mathtt{j}\mathtt{i}}^{\mathrm{H}_2,\mathrm{F}}\big)^{\mone}_{\underline{\mathbf{M}}_{\mathtt{ii}}(\mathrm{C}^{X}\mathrm{H}_1)X}}"description]
\ar[rrrrr, "{\underline{\mathbf{M}}_{\mathtt{ji}}(\mathrm{H}_2\mathrm{F})}
\underline{\mathbf{M}}_{\mathtt{ii}}\big(\alpha_{\mathrm{C}^{X},\mathrm{F}^{\star},\mathrm{F}}^{\mone}\big)_X"]
\ar[rrrrr, phantom,yshift=-7ex, "\circled{8}"]
\&\&\&\&\&
\underline{\mathbf{M}}_{\mathtt{ji}}(\mathrm{H}_2\mathrm{F})\underline{\mathbf{M}}_{\mathtt{ii}}\big((\mathrm{C}^{X}\mathrm{F}^{\star})\mathrm{F}\big)X
\ar[d, "{\big(\mu_{\mathtt{j}\mathtt{j}\mathtt{i}}^{\mathrm{H}_2,\mathrm{F}}\big)^{\mone}_{\underline{\mathbf{M}}_{\mathtt{ii}}\left(\left(\mathrm{C}^{X}\mathrm{F}^{\star}\right)\mathrm{F}\right)X}}"description]
\\[5ex]
\underline{\mathbf{M}}_{\mathtt{jj}}(\mathrm{H}_2)\underline{\mathbf{M}}_{\mathtt{ji}}(\mathrm{F})\underline{\mathbf{M}}_{\mathtt{ii}}(\mathrm{C}^{X}\mathbbm{1}_{\mathtt{i}})X
\ar[rrrrrr,"{\underline{\mathbf{M}}_{\mathtt{jj}}(\mathrm{H}_2)\underline{\mathbf{M}}_{\mathtt{ji}}(\mathrm{F})}
\underline{\mathbf{M}}_{\mathtt{ii}}\big(\mathrm{id}_{\mathrm{C}^{X}}\circ_{\mathsf{h}}\mathrm{coev}_{\mathrm{F}}\big)_X"]
\ar[d, "{{\underline{\mathbf{M}}_{\mathtt{jj}}(\mathrm{H}_2)}
\big(\mu_{\mathtt{j}\mathtt{i}\mathtt{i}}^{\mathrm{F},\mathrm{C}^{X}\mathbbm{1}_{\mathtt{i}}}\big)_X}"description]
\ar[rrrrrr, phantom,yshift=-7ex, "\circled{9}"]
\&\&\&\&\&\&
\underline{\mathbf{M}}_{\mathtt{jj}}(\mathrm{H}_2)\underline{\mathbf{M}}_{\mathtt{ji}}(\mathrm{F})\underline{\mathbf{M}}_{\mathtt{ii}}(\mathrm{C}^{X}\mathrm{H}_1)X
\ar[d, "{{\underline{\mathbf{M}}_{\mathtt{jj}}(\mathrm{H}_2)}
\big(\mu_{\mathtt{j}\mathtt{i}\mathtt{i}}^{\mathrm{F},\mathrm{C}^{X}\mathrm{H}_1}\big)_X}"description]
\ar[rrrrr,"{\underline{\mathbf{M}}_{\mathtt{jj}}(\mathrm{H}_2)\underline{\mathbf{M}}_{\mathtt{ji}}(\mathrm{F})}
\underline{\mathbf{M}}_{\mathtt{ji}}\big(\alpha_{\mathrm{C}^{X},\mathrm{F}^{\star},\mathrm{F}}^{\mone}\big)_X"]
\ar[rrrrr, phantom,yshift=-7ex, "\circled{10}"]
\&\&\&\&\&
\underline{\mathbf{M}}_{\mathtt{jj}}(\mathrm{H}_2)\underline{\mathbf{M}}_{\mathtt{ji}}(\mathrm{F})\underline{\mathbf{M}}_{\mathtt{ii}}\big((\mathrm{C}^{X}\mathrm{F}^{\star})\mathrm{F}\big)X
\ar[d, "{{\underline{\mathbf{M}}_{\mathtt{jj}}(\mathrm{H}_2)}
\big(\mu_{\mathtt{j}\mathtt{i}\mathtt{i}}^{\mathrm{F},(\mathrm{C}^{X}\mathrm{F}^{\star})\mathrm{F}}\big)_X}"description]
\\[5ex]
\underline{\mathbf{M}}_{\mathtt{jj}}(\mathrm{H}_2)\underline{\mathbf{M}}_{\mathtt{ji}}\big(\mathrm{F}(\mathrm{C}^{X}\mathbbm{1}_{\mathtt{i}})\big)X
\ar[rrrrrr,"{\underline{\mathbf{M}}_{\mathtt{jj}}(\mathrm{H}_2)}
\underline{\mathbf{M}}_{\mathtt{ii}}\big(\mathrm{id}_{\mathrm{F}}\circ_{\mathsf{h}}(\mathrm{id}_{\mathrm{C}^{X}}\circ_{\mathsf{h}}\mathrm{coev}_{\mathrm{F}})\big)_X"]
\ar[d, "{{\underline{\mathbf{M}}_{\mathtt{jj}}(\mathrm{H}_2)}\underline{\mathbf{M}}_{\mathtt{ji}}(\alpha_{\mathrm{F},\mathrm{C}^{X},\mathbbm{1}_{\mathtt{i}}}^{\mone})_X}"description]
\ar[rrrrrr, phantom,yshift=-7ex, "\circled{11}"]
\&\&\&\&\&\&
\underline{\mathbf{M}}_{\mathtt{jj}}(\mathrm{H}_2)\underline{\mathbf{M}}_{\mathtt{ji}}\big(\mathrm{F}(\mathrm{C}^{X}\mathrm{H}_1)\big)X
\ar[d, "{{\underline{\mathbf{M}}_{\mathtt{jj}}(\mathrm{H}_2)}\underline{\mathbf{M}}_{\mathtt{ji}}(\alpha_{\mathrm{F},\mathrm{C}^{X},\mathrm{H}_1}^{\mone})_X}"description]
\ar[rrrrr,"{\underline{\mathbf{M}}_{\mathtt{jj}}(\mathrm{H}_2)}
\underline{\mathbf{M}}_{\mathtt{ji}}\big(\mathrm{id}_{\mathrm{F}}\circ_{\mathsf{h}}\alpha_{\mathrm{C}^{X},\mathrm{F}^{\star},\mathrm{F}}^{\mone}\big)_X"]
\ar[rrrrr, phantom,yshift=-14ex, "\circled{12}"]
\&\&\&\&\&
\underline{\mathbf{M}}_{\mathtt{jj}}(\mathrm{H}_2)\underline{\mathbf{M}}_{\mathtt{ji}}\Big(\mathrm{F}\big((\mathrm{C}^{X}\mathrm{F}^{\star})\mathrm{F}\big)\Big)X
\ar[d,"{{\underline{\mathbf{M}}_{\mathtt{jj}}(\mathrm{H}_2)}
\underline{\mathbf{M}}_{\mathtt{ji}}\big(\alpha_{\mathrm{F},\mathrm{C}^{X}\mathrm{F}^{\star},\mathrm{F}}^{\mone}\big)_X}"description]
\\[5ex]
\underline{\mathbf{M}}_{\mathtt{jj}}(\mathrm{H}_2)\underline{\mathbf{M}}_{\mathtt{ji}}\big((\mathrm{F}\mathrm{C}^{X})\mathbbm{1}_{\mathtt{i}}\big)X
\ar[rrrrrr,"{\underline{\mathbf{M}}_{\mathtt{jj}}(\mathrm{H}_2)}
\underline{\mathbf{M}}_{\mathtt{ii}}\big(\mathrm{id}_{\mathrm{F}\mathrm{C}^{X}}\circ_{\mathsf{h}}\mathrm{coev}_{\mathrm{F}}\big)_X",swap]
\&\&\&\&\&\&
\underline{\mathbf{M}}_{\mathtt{jj}}(\mathrm{H}_2)\underline{\mathbf{M}}_{\mathtt{ji}}\big((\mathrm{F}\mathrm{C}^{X})\mathrm{H}_1)\big)X
\ar[drrrrr,"{\underline{\mathbf{M}}_{\mathtt{jj}}(\mathrm{H}_2)}
\underline{\mathbf{M}}_{\mathtt{ji}}(\alpha_{\mathrm{F}\mathrm{C}^{X},\mathrm{F}^{\star},\mathrm{F}}^{\mone})_X", swap]
\&\&\&\&\&
\underline{\mathbf{M}}_{\mathtt{jj}}(\mathrm{H}_2)\underline{\mathbf{M}}_{\mathtt{ji}}\Big(\big(\mathrm{F}(\mathrm{C}^{X}\mathrm{F}^{\star})\big)\mathrm{F}\Big)X
\ar[d,"{{\underline{\mathbf{M}}_{\mathtt{jj}}(\mathrm{H}_2)}\underline{\mathbf{M}}_{\mathtt{ji}}\big(\alpha_{\mathrm{F},\mathrm{C}^{X},\mathrm{F}^{\star}}^{\mone}\circ_{\mathsf{h}}\mathrm{id}_{\mathrm{F}}\big)_X}"description]
\\[5ex]
\&\&\&\&\&\&
\&\&\&\&\&
\underline{\mathbf{M}}_{\mathtt{jj}}(\mathrm{H}_2)\underline{\mathbf{M}}_{\mathtt{ji}}(\mathrm{H}_2\mathrm{F})X
\end{tikzcd}$}
.
\end{gather*}
This diagram commutes by
\begin{itemize}
\item naturality of $\underline{\mathbf{M}}_{\mathtt{ii}}\big((\runit_{\mathrm{FC}^{X}})^{\mone}\big)$ for the facets labeled $1$ and $2$;

\item naturality of $\underline{\mathbf{M}}_{\mathtt{ji}}\big(\mathrm{id}_{\mathrm{F}\mathrm{C}^{X}}\circ_{\mathsf{h}}\mathrm{coev}_{\mathrm{F}}\big)$
for the facets labeled $3$ and $4$;

\item naturality of $\underline{\mathbf{M}}_{\mathtt{ji}}\big(\alpha_{\mathrm{F}\mathrm{C}^{X},\mathrm{F}^{\star},\mathrm{F}}^{\mone}\big)$ for the facets labeled $5$ and $6$;

\item naturality of $\big(\mu_{\mathtt{jji}}^{\mathrm{H}_2,\mathrm{F}}\big)^{\mone}$ for the facets labeled $7$ and $8$;

\item naturality of $\mu_{\mathtt{j}\mathtt{i}\mathtt{i}}$ for the facets labeled $9$ and $10$;

\item naturality of $\alpha^{\mone}$ for the facet labeled $11$;

\item the pentagon coherence condition of the associator for the facet labeled $12$.
\end{itemize}
Further, we consider
\begin{gather*}
\scalebox{0.47}{
$\begin{tikzcd}[ampersand replacement=\&, column sep=2.5em, row sep=3.5em]
\underline{\mathbf{M}}_{\mathtt{ji}}(\mathrm{F}\mathrm{C}^{X})\underline{\mathbf{M}}_{\mathtt{ii}}\big((\mathrm{C}^{X}\mathrm{F}^{\star})\mathrm{F}\big)X
\ar[d,"{\underline{\mathbf{M}}_{\mathtt{ji}}\big((\runit_{\mathrm{FC}^{X}})^{\mone}\big)_{\underline{\mathbf{M}}_{\mathtt{ii}}\left(\left(\mathrm{C}^{X}\mathrm{F}^{\star}\right)\mathrm{F}\right)X}}"description]
\ar[rrrrr,"{\underline{\mathbf{M}}_{\mathtt{ji}}(\mathrm{F}\mathrm{C}^{X})}\big(\mu_{\mathtt{i}\mathtt{j}\mathtt{i}}^{\mathrm{C}^{X}\mathrm{F}^{\star},\mathrm{F}}\big)_X^{\mone}"]
\ar[rrrrr, phantom,yshift=-8ex, "\circled{1}"]
\&\&\&\&\&
\underline{\mathbf{M}}_{\mathtt{ji}}(\mathrm{F}\mathrm{C}^{X})\underline{\mathbf{M}}_{\mathtt{ij}}(\mathrm{C}^{X}\mathrm{F}^{\star})\underline{\mathbf{M}}_{\mathtt{ji}}(\mathrm{F})X
\ar[d,"{\underline{\mathbf{M}}_{\mathtt{ji}}\big((\runit_{\mathrm{FC}^{X}})^{\mone}\big)_{\underline{\mathbf{M}}_{\mathtt{ij}}(\mathrm{C}^{X}\mathrm{F}^{\star})\underline{\mathbf{M}}_{\mathtt{ji}}(\mathrm{F})X}}"description]
\ar[rrrrr,"\big(\mu_{\mathtt{jij}}^{\mathrm{F}\mathrm{C}^{X},\mathrm{C}^{X}\mathrm{F}^{\star}}\big)_{\underline{\mathbf{M}}_{\mathtt{ji}}(\mathrm{F})X}"]
\ar[rrrrr, phantom,yshift=-8ex, "\circled{2}"]
\&\&\&\&\&
\underline{\mathbf{M}}_{\mathtt{jj}}\big((\mathrm{F}\mathrm{C}^{X})(\mathrm{C}^{X}\mathrm{F}^{\star})\big)\underline{\mathbf{M}}_{\mathtt{ji}}(\mathrm{F})X
\ar[d,"{\underline{\mathbf{M}}_{\mathtt{jj}}\big((\runit_{\mathrm{FC}^{X}})^{\mone}\circ_{\mathsf{h}}\mathrm{id}_{\mathrm{C}^{X}\mathrm{F}^{\star}}\big)_{\underline{\mathbf{M}}_{\mathtt{ji}}(\mathrm{F})X}}"description]
\\[5ex]
\underline{\mathbf{M}}_{\mathtt{ji}}\big((\mathrm{F}\mathrm{C}^{X})\mathbbm{1}_{\mathtt{i}}\big)\underline{\mathbf{M}}_{\mathtt{ii}}\big((\mathrm{C}^{X}\mathrm{F}^{\star})\mathrm{F}\big)X
\ar[d, "{\underline{\mathbf{M}}_{\mathtt{ji}}\big(\mathrm{id}_{\mathrm{F}\mathrm{C}^{X}}\circ_{\mathsf{h}}\mathrm{coev}_{\mathrm{F}}\big)_{\underline{\mathbf{M}}_{\mathtt{ii}}\left(\left(\mathrm{C}^{X}\mathrm{F}^{\star}\right)\mathrm{F}\right)X}}"description]
\ar[rrrrr,"{\underline{\mathbf{M}}_{\mathtt{ji}}\left(\left(\mathrm{F}\mathrm{C}^{X}\right)\mathbbm{1}_{\mathtt{i}}\right)}\big(\mu_{\mathtt{i}\mathtt{j}\mathtt{i}}^{\mathrm{C}^{X}\mathrm{F}^{\star},\mathrm{F}}\big)_X^{\mone}"]
\ar[rrrrr, phantom,yshift=-8ex, "\circled{3}"]
\&\&\&\&\&
\underline{\mathbf{M}}_{\mathtt{ji}}\big((\mathrm{F}\mathrm{C}^{X})\mathbbm{1}_{\mathtt{i}}\big)\underline{\mathbf{M}}_{\mathtt{ij}}(\mathrm{C}^{X}\mathrm{F}^{\star})\underline{\mathbf{M}}_{\mathtt{ji}}(\mathrm{F})X
\ar[d, "{\underline{\mathbf{M}}_{\mathtt{jj}}(\mathrm{id}_{\mathrm{F}\mathrm{C}^{X}}\circ_{\mathsf{h}}\mathrm{coev}_{\mathrm{F}})_{\underline{\mathbf{M}}_{\mathtt{ij}}(\mathrm{C}^{X}\mathrm{F}^{\star})\underline{\mathbf{M}}_{\mathtt{ji}}(\mathrm{F})X}}"description]
\ar[rrrrr,"\big(\mu_{\mathtt{jij}}^{(\mathrm{F}\mathrm{C}^{X})\mathbbm{1}_{\mathtt{i}},\mathrm{C}^{X}\mathrm{F}^{\star}}\big)_{\underline{\mathbf{M}}_{\mathtt{ji}}(\mathrm{F})X}"]
\ar[rrrrr, phantom,yshift=-8ex, "\circled{4}"]
\&\&\&\&\&
\underline{\mathbf{M}}_{\mathtt{jj}}\Big(\big((\mathrm{F}\mathrm{C}^{X})\mathbbm{1}_{\mathtt{i}}\big)(\mathrm{C}^{X}\mathrm{F}^{\star})\Big)\underline{\mathbf{M}}_{\mathtt{ji}}(\mathrm{F})X
\ar[d, "{\underline{\mathbf{M}}_{\mathtt{jj}}\big((\mathrm{id}_{\mathrm{F}\mathrm{C}^{X}}\circ_{\mathsf{h}}\mathrm{coev}_{\mathrm{F}})_{\mathrm{C}^{X}\mathrm{F}^{\star}}\big)_{\underline{\mathbf{M}}_{\mathtt{ji}}(\mathrm{F})X}}"description]
\\[5ex]
\underline{\mathbf{M}}_{\mathtt{ji}}\big((\mathrm{F}\mathrm{C}^{X})\mathrm{H}_1\big)
\underline{\mathbf{M}}_{\mathtt{ii}}\big((\mathrm{C}^{X}\mathrm{F}^{\star})\mathrm{F}\big)X
\ar[d, "{\underline{\mathbf{M}}_{\mathtt{ji}}\big(\alpha_{\mathrm{F}\mathrm{C}^{X},\mathrm{F}^{\star},\mathrm{F}}^{\mone}\big)_{\underline{\mathbf{M}}_{\mathtt{ii}}\left(\left(\mathrm{C}^{X}\mathrm{F}^{\star}\right)\mathrm{F}\right)X}}"description]
\ar[rrrrr,"{\underline{\mathbf{M}}_{\mathtt{ji}}\left(\left(\mathrm{F}\mathrm{C}^{X}\right)\mathrm{H}_1\right)}\big(\mu_{\mathtt{i}\mathtt{j}\mathtt{i}}^{\mathrm{C}^{X}\mathrm{F}^{\star},\mathrm{F}}\big)_X^{\mone}"]
\ar[rrrrr, phantom,yshift=-8ex, "\circled{5}"]
\&\&\&\&\&
\underline{\mathbf{M}}_{\mathtt{ji}}\big((\mathrm{F}\mathrm{C}^{X})\mathrm{H}_1\big)
\underline{\mathbf{M}}_{\mathtt{ij}}(\mathrm{C}^{X}\mathrm{F}^{\star})\underline{\mathbf{M}}_{\mathtt{ji}}(\mathrm{F})X
\ar[d, "{\underline{\mathbf{M}}_{\mathtt{ji}}\big(\alpha_{\mathrm{F}\mathrm{C}^{X},\mathrm{F}^{\star},\mathrm{F}}^{\mone}\big)_{\underline{\mathbf{M}}_{\mathtt{ij}}(\mathrm{C}^{X}\mathrm{F}^{\star})\underline{\mathbf{M}}_{\mathtt{ji}}(\mathrm{F})X}}"description]
\ar[rrrrr,"\big(\mu_{\mathtt{jij}}^{(\mathrm{F}\mathrm{C}^{X})\mathrm{H}_1,\mathrm{C}^{X}\mathrm{F}^{\star}}\big)_{\underline{\mathbf{M}}_{\mathtt{ji}}(\mathrm{F})X}"]
\ar[rrrrr, phantom,yshift=-8ex, "\circled{6}"]
\&\&\&\&\&
\underline{\mathbf{M}}_{\mathtt{jj}}\Big(\big((\mathrm{F}\mathrm{C}^{X})\mathrm{H}_1\big)(\mathrm{C}^{X}\mathrm{F}^{\star})\Big)\underline{\mathbf{M}}_{\mathtt{ji}}(\mathrm{F})X
\ar[d, "{\underline{\mathbf{M}}_{\mathtt{jj}}\big(\alpha_{\mathrm{F}\mathrm{C}^{X},\mathrm{F}^{\star},\mathrm{F}}^{\mone}\circ_{\mathsf{h}}
\mathrm{id}_{\mathrm{C}^{X}\mathrm{F}^{\star}}\big)_{\underline{\mathbf{M}}_{\mathtt{ji}}(\mathrm{F})X}}"description]
\\[5ex]
\underline{\mathbf{M}}_{\mathtt{ji}}(\mathrm{H}_2\mathrm{F})\underline{\mathbf{M}}_{\mathtt{ii}}\big((\mathrm{C}^{X}\mathrm{F}^{\star})\mathrm{F}\big)X
\ar[d, "{\big(\mu_{\mathtt{j}\mathtt{j}\mathtt{i}}^{\mathrm{H}_2,\mathrm{F}}\big)^{\mone}_{\underline{\mathbf{M}}_{\mathtt{ii}}\left(\left(\mathrm{C}^{X}\mathrm{F}^{\star}\right)\mathrm{F}\right)X}}"description]
\ar[rrrrr,"{\underline{\mathbf{M}}_{\mathtt{jj}}(\mathrm{H}_2\mathrm{F})}\big(\mu_{\mathtt{i}\mathtt{j}\mathtt{i}}^{\mathrm{C}^{X}\mathrm{F}^{\star},\mathrm{F}}\big)_X^{\mone}"]
\ar[rrrrr, phantom,yshift=-8ex, "\circled{7}"]
\&\&\&\&\&
\underline{\mathbf{M}}_{\mathtt{ji}}(\mathrm{H}_2\mathrm{F})\underline{\mathbf{M}}_{\mathtt{ij}}(\mathrm{C}^{X}\mathrm{F}^{\star})\underline{\mathbf{M}}_{\mathtt{ji}}(\mathrm{F})X
\ar[d, "{\big(\mu_{\mathtt{j}\mathtt{j}\mathtt{i}}^{\mathrm{H}_2,\mathrm{F}}\big)^{\mone}_{\underline{\mathbf{M}}_{\mathtt{ij}}(\mathrm{C}^{X}\mathrm{F}^{\star})\underline{\mathbf{M}}_{\mathtt{ji}}(\mathrm{F})X}}"description]
\ar[rrrrr,"\big(\mu_{\mathtt{jij}}^{\mathrm{H}_2\mathrm{F},\mathrm{C}^{X}\mathrm{F}^{\star}}\big)_{\underline{\mathbf{M}}_{\mathtt{ji}}(\mathrm{F})X}"]
\ar[rrrrr, phantom,yshift=-27ex, "\circled{9}"]
\&\&\&\&\&
\underline{\mathbf{M}}_{\mathtt{jj}}\big((\mathrm{H}_2\mathrm{F})(\mathrm{C}^{X}\mathrm{F}^{\star})\big)\underline{\mathbf{M}}_{\mathtt{ji}}(\mathrm{F})X
\ar[ddd, "{\underline{\mathbf{M}}_{\mathtt{jj}}\big(\alpha_{\mathrm{H}_2,\mathrm{F},\mathrm{C}^{X}\mathrm{F}^{\star}}\big)_{\underline{\mathbf{M}}_{\mathtt{ji}}(\mathrm{F})X}}"description]
\\[5ex]
\underline{\mathbf{M}}_{\mathtt{jj}}(\mathrm{H}_2)\underline{\mathbf{M}}_{\mathtt{ji}}(\mathrm{F})\underline{\mathbf{M}}_{\mathtt{ii}}\big((\mathrm{C}^{X}\mathrm{F}^{\star})\mathrm{F}\big)X
\ar[d, "{{\underline{\mathbf{M}}_{\mathtt{jj}}(\mathrm{H}_2)}
\big(\mu_{\mathtt{j}\mathtt{i}\mathtt{i}}^{\mathrm{F},(\mathrm{C}^{X}\mathrm{F}^{\star})\mathrm{F}}\big)_X}"description]
\ar[rrrrr,"{\underline{\mathbf{M}}_{\mathtt{jj}}(\mathrm{H}_2)\underline{\mathbf{M}}_{\mathtt{ji}}(\mathrm{F})}\big(\mu_{\mathtt{i}\mathtt{j}\mathtt{i}}^{\mathrm{C}^{X}\mathrm{F}^{\star},\mathrm{F}}\big)_X^{\mone}"]
\ar[rrrrr, phantom,yshift=-18ex, "\circled{8}"]
\&\&\&\&\&
\underline{\mathbf{M}}_{\mathtt{jj}}(\mathrm{H}_2)\underline{\mathbf{M}}_{\mathtt{ji}}(\mathrm{F})\underline{\mathbf{M}}_{\mathtt{ij}}(\mathrm{C}^{X}\mathrm{F}^{\star})\underline{\mathbf{M}}_{\mathtt{ji}}(\mathrm{F})X
\ar[dd, "{{\underline{\mathbf{M}}_{\mathtt{jj}}(\mathrm{H}_2)}
\big(\mu_{\mathtt{j}\mathtt{i}\mathtt{j}}^{\mathrm{F},\mathrm{C}^{X}\mathrm{F}^{\star}}\big)_{\underline{\mathbf{M}}_{\mathtt{ji}}(\mathrm{F})X}}"description]
\&\&\&\&\&
\\[5ex]
\underline{\mathbf{M}}_{\mathtt{jj}}(\mathrm{H}_2)\underline{\mathbf{M}}_{\mathtt{ji}}\Big(\mathrm{F}\big((\mathrm{C}^{X}\mathrm{F}^{\star})\mathrm{F}\big)\Big)X
\ar[d,"{{\underline{\mathbf{M}}_{\mathtt{jj}}(\mathrm{H}_2)}
\underline{\mathbf{M}}_{\mathtt{ji}}\big(\alpha_{\mathrm{F},\mathrm{C}^{X}\mathrm{F}^{\star},\mathrm{F}}^{\mone}\big)_X}"description]
\&\&\&\&\&
\&\&\&\&\&
\\[5ex]
\underline{\mathbf{M}}_{\mathtt{jj}}(\mathrm{H}_2)\underline{\mathbf{M}}_{\mathtt{ji}}\Big(\big(\mathrm{F}(\mathrm{C}^{X}\mathrm{F}^{\star})\big)\mathrm{F}\Big)X
\ar[d,"{{\underline{\mathbf{M}}_{\mathtt{jj}}(\mathrm{H}_2)}
\underline{\mathbf{M}}_{\mathtt{ji}}\big(\alpha_{\mathrm{F},\mathrm{C}^{X},\mathrm{F}^{\star}}^{\mone}\circ_{\mathsf{h}}\mathrm{id}_{\mathrm{F}}\big)_X}"description]
\ar[rrrrr,"{\underline{\mathbf{M}}_{\mathtt{jj}}(\mathrm{H}_2)}\big(\mu_{\mathtt{j}\mathtt{j}\mathtt{i}}^{\mathrm{F}(\mathrm{C}^{X}\mathrm{F}^{\star}),\mathrm{F}}\big)_X^{\mone}"]
\ar[rrrrr, phantom,yshift=-8ex, "\circled{10}"]
\&\&\&\&\&
\underline{\mathbf{M}}_{\mathtt{jj}}(\mathrm{H}_2)\underline{\mathbf{M}}_{\mathtt{jj}}\big(\mathrm{F}(\mathrm{C}^{X}\mathrm{F}^{\star})\big)\underline{\mathbf{M}}_{\mathtt{ji}}(\mathrm{F})X
\ar[d,"{{\underline{\mathbf{M}}_{\mathtt{jj}}(\mathrm{H}_2)}
\underline{\mathbf{M}}_{\mathtt{ji}}\big(\alpha_{\mathrm{F},\mathrm{C}^{X},\mathrm{F}^{\star}}^{\mone}\big)_{\underline{\mathbf{M}}_{\mathtt{ji}}(\mathrm{F})X}}"description]
\ar[rrrrr,"\big(\mu_{\mathtt{jjj}}^{\mathrm{H}_2,\mathrm{F}(\mathrm{C}^{X}\mathrm{F}^{\star})}\big)_{\underline{\mathbf{M}}_{\mathtt{ji}}(\mathrm{F})X}"]
\ar[rrrrr, phantom,yshift=-8ex, "\circled{11}"]
\&\&\&\&\&
\underline{\mathbf{M}}_{\mathtt{jj}}\Big(\mathrm{H}_2\big(\mathrm{F}(\mathrm{C}^{X}\mathrm{F}^{\star})\big)\Big)\underline{\mathbf{M}}_{\mathtt{ji}}(\mathrm{F})X
\ar[d,"{\underline{\mathbf{M}}_{\mathtt{jj}}\big(\mathrm{id}_{\mathrm{H}_2}\circ_{\mathsf{h}}\alpha_{\mathrm{F},\mathrm{C}^{X},\mathrm{F}^{\star}}^{\mone}\big)_{\underline{\mathbf{M}}_{\mathtt{ji}}(\mathrm{F})X}}"description]
\\[5ex]
\underline{\mathbf{M}}_{\mathtt{jj}}(\mathrm{H}_2)\underline{\mathbf{M}}_{\mathtt{ji}}(\mathrm{H}_2\mathrm{F})X
\ar[rrrrr,"{\underline{\mathbf{M}}_{\mathtt{jj}}(\mathrm{H}_2)}\big(\mu_{\mathtt{j}\mathtt{j}\mathtt{i}}^{\mathrm{H}_2,\mathrm{F}}\big)_X^{\mone}",swap]
\&\&\&\&\&
\underline{\mathbf{M}}_{\mathtt{jj}}(\mathrm{H}_2)\underline{\mathbf{M}}_{\mathtt{jj}}(\mathrm{H}_2)\underline{\mathbf{M}}_{\mathtt{ji}}(\mathrm{F})X
\ar[rrrrr,"\big(\mu_{\mathtt{jjj}}^{\mathrm{H}_2,\mathrm{H}_2}\big)_{\underline{\mathbf{M}}_{\mathtt{ji}}(\mathrm{F})X}",swap]
\&\&\&\&\&
\underline{\mathbf{M}}_{\mathtt{jj}}(\mathrm{H}_2\mathrm{H}_2)\underline{\mathbf{M}}_{\mathtt{ji}}(\mathrm{F})X
\end{tikzcd}$}
.
\end{gather*}
This diagram is commutative by
\begin{itemize}
\item naturality of $\underline{\mathbf{M}}_{\mathtt{ii}}\big((\runit_{\mathrm{FC}^{X}})^{\mone}\big)$ for the facet labeled $1$;

\item naturality of $\mu_{\mathtt{jij}}$ for the facets labeled $2$, $4$ and $6$;

\item naturality of $\underline{\mathbf{M}}_{\mathtt{ji}}\big(\mathrm{id}_{\mathrm{F}\mathrm{C}^{X}}\circ_{\mathsf{h}}\mathrm{coev}_{\mathrm{F}}\big)$
for the facet labeled $3$;

\item naturality of $\underline{\mathbf{M}}_{\mathtt{ji}}\big(\alpha_{\mathrm{F}\mathrm{C}^{X},\mathrm{F}^{\star},\mathrm{F}}^{\mone}\big)$ for the facet labeled $5$;

\item naturality of $\big(\mu_{\mathtt{jji}}^{\mathrm{H}_2,\mathrm{F}}\big)^{\mone}$ for the facet labeled $7$;
\item the diagram in \eqref{eq:birepresentation3} for the facets labeled $8$ and $9$;

\item naturality of $\mu_{\mathtt{j}\mathtt{j}\mathtt{i}}$ and $\mu_{\mathtt{j}\mathtt{j}\mathtt{j}}$ for the facets labeled $10$ and $11$ respectively.
\end{itemize}
In the last three diagrams, the last column of the previous diagram coincides with the first
column of the next, so we can glue these three diagrams from left to right into one big diagram. By commutativity of this big diagram, the two paths along its boundary
from northwest to southeast represent the same $2$-morphism.
The path which first goes down and then to the right, after precomposing
the composite $2$-morphism with $\underline{\mathbf{M}}_{\mathtt{ji}}(\mathrm{F}) \mathrm{coev}_{X,X}$, corresponds to $f$. By considering the other path along the
boundary of the big diagram, we obtain
\begin{gather*}\scalebox{0.88}{$
\begin{aligned}
f=&\underline{\mathbf{M}}_{\mathtt{ji}}\big(\mathrm{id}_{\mathrm{H}_2}\circ_{\mathsf{h}}\alpha_{\mathrm{F},\mathrm{C}^{X},\mathrm{F}^{\star}}^{\mone}\big)
_{\underline{\mathbf{M}}_{\mathtt{ji}}(\mathrm{F})X}
\circ_{\mathsf{v}}
\underline{\mathbf{M}}_{\mathtt{jj}}\big(\alpha_{\mathrm{H}_2,\mathrm{F},\mathrm{C}^{X}\mathrm{F}^{\star}}\big)_{\underline{\mathbf{M}}_{\mathtt{ji}}(\mathrm{F})X}
\\&
\circ_{\mathsf{v}}
\underline{\mathbf{M}}_{\mathtt{jj}}\big(\alpha_{\mathrm{F}\mathrm{C}^{X},\mathrm{F}^{\star},\mathrm{F}}^{\mone}\circ_{\mathsf{h}}
\mathrm{id}_{\mathrm{C}^{X}\mathrm{F}^{\star}}\big)_{\underline{\mathbf{M}}_{\mathtt{ji}}(\mathrm{F})X}
\circ_{\mathsf{v}}
\underline{\mathbf{M}}_{\mathtt{jj}}\big((\mathrm{id}_{\mathrm{F}\mathrm{C}^{X}}\circ_{\mathsf{h}}\mathrm{coev}_{\mathrm{F}})\circ_{\mathsf{h}}\mathrm{id}_{\mathrm{C}^{X}\mathrm{F}^{\star}}\big)_{\underline{\mathbf{M}}_{\mathtt{ji}}(\mathrm{F})X}
\\&
\circ_{\mathsf{v}}
\underline{\mathbf{M}}_{\mathtt{jj}}\big((\runit_{\mathrm{FC}^{X}})^{\mone}\circ_{\mathsf{h}}\mathrm{id}_{\mathrm{C}^{X}\mathrm{F}^{\star}}\big)_{\underline{\mathbf{M}}_{\mathtt{ji}}(\mathrm{F})X}
\circ_{\mathsf{v}}
\big(\mu_{\mathtt{jij}}^{\mathrm{F}\mathrm{C}^{X},\mathrm{C}^{X}\mathrm{F}^{\star}}\big)_{\underline{\mathbf{M}}_{\mathtt{ji}}(\mathrm{F})X}
\\&
\circ_{\mathsf{v}}
\Big({\underline{\mathbf{M}}_{\mathtt{ji}}(\mathrm{F}\mathrm{C}^{X})}\big(\mu_{\mathtt{j}\mathtt{j}\mathtt{i}}^{\mathrm{C}^{X}\mathrm{F}^{\star},\mathrm{F}}\big)_X^{\mone}\Big)
\circ_{\mathsf{v}}
\big({\underline{\mathbf{M}}_{\mathtt{ji}}(\mathrm{F}\mathrm{C}^{X})}
\underline{\mathbf{M}}_{\mathtt{ii}}(\alpha_{\mathrm{C}^{X},\mathrm{F}^{\star},\mathrm{F}}^{\mone})_X\big)
\\&
\circ_{\mathsf{v}}
\Big({\underline{\mathbf{M}}_{\mathtt{ji}}(\mathrm{F}\mathrm{C}^{X})}\underline{\mathbf{M}}_{\mathtt{ii}}\big(\mathrm{id}_{\mathrm{C}^{X}}\circ_{\mathsf{h}}
\mathrm{coev}_{\mathrm{F}}\big)_X\Big)
\circ_{\mathsf{v}}
\Big({\underline{\mathbf{M}}_{\mathtt{ji}}(\mathrm{F}\mathrm{C}^{X})}
\underline{\mathbf{M}}_{\mathtt{ii}}\big((\runit_{\mathrm{C}^{X}})^{\mone}\big)_X\Big)
\\&
\circ_{\mathsf{v}}
\big(\mu_{\mathtt{jii}}^{\mathrm{F},\mathrm{C}^{X}}\big)_{\underline{\mathbf{M}}_{\mathtt{ii}}(\mathrm{C}^{X})X}
\circ_{\mathsf{v}}
\big({\underline{\mathbf{M}}_{\mathtt{ji}}(\mathrm{F})\underline{\mathbf{M}}_{\mathtt{ii}}(\mathrm{C}^{X})}
\mathrm{coev}_{X,X}\big)
\\&
\circ_{\mathsf{v}}
\big({\underline{\mathbf{M}}_{\mathtt{ji}}(\mathrm{F})}\mathrm{coev}_{X,X}\big)
\end{aligned}$}
\end{gather*}
Similarly, applying naturality of $\mu$ and using the diagram in \eqref{eq:birepresentation3}, we obtain that
the composite $\big(\mu_{\mathtt{jji}}^{\mathrm{H}_2\mathrm{H}_2,\mathrm{F}}\big)_X
\circ_{\mathsf{v}} f$ equals
\begin{gather*}
\begin{aligned}
&\underline{\mathbf{M}}_{\mathtt{ji}}\big((\mathrm{id}_{\mathrm{H}_2}\circ_{\mathsf{h}}\alpha_{\mathrm{F},\mathrm{C}^{X},\mathrm{F}^{\star}}^{\mone})
\circ_{\mathsf{h}}\mathrm{id}_{\mathrm{F}}\big)_X
\circ_{\mathsf{v}}
\underline{\mathbf{M}}_{\mathtt{ji}}(\alpha_{\mathrm{H}_2,\mathrm{F},\mathrm{C}^{X}\mathrm{F}^{\star}}\circ_{\mathsf{h}}\mathrm{id}_{\mathrm{F}})_X
\\&
\circ_{\mathsf{v}}
\underline{\mathbf{M}}_{\mathtt{ji}}\big((\alpha_{\mathrm{F}\mathrm{C}^{X},\mathrm{F}^{\star},\mathrm{F}}^{\mone}\circ_{\mathsf{h}}
\mathrm{id}_{\mathrm{C}^{X}\mathrm{F}^{\star}})\circ_{\mathsf{h}}\mathrm{id}_{\mathrm{F}}\big)_X
\\&
\circ_{\mathsf{v}}
\underline{\mathbf{M}}_{\mathtt{ji}}\Big(\big((\mathrm{id}_{\mathrm{F}\mathrm{C}^{X}}\circ_{\mathsf{h}}\mathrm{coev}_{\mathrm{F}})\circ_{\mathsf{h}}\mathrm{id}_{\mathrm{C}^{X}\mathrm{F}^{\star}}\big)\circ_{\mathsf{h}}\mathrm{id}_{\mathrm{F}}\Big)_X
\\&
\circ_{\mathsf{v}}
\underline{\mathbf{M}}_{\mathtt{ji}}\Big(\big((\runit_{\mathrm{FC}^{X}})^{\mone}\circ_{\mathsf{h}}\mathrm{id}_{\mathrm{C}^{X}\mathrm{F}^{\star}}\big)\circ_{\mathsf{h}}
\mathrm{id}_{\mathrm{F}}\Big)_X
\circ_{\mathsf{v}}
\underline{\mathbf{M}}_{\mathtt{ji}}(\alpha_{\mathrm{F}\mathrm{C}^{X}, \mathrm{C}^{X}\mathrm{F}^{\star},\mathrm{F}}^{\mone})_X
\\&
\circ_{\mathsf{v}}
\underline{\mathbf{M}}_{\mathtt{ji}}\big(\mathrm{id}_{\mathrm{F}\mathrm{C}^{X}}\circ_{\mathsf{h}}\alpha_{\mathrm{C}^{X},\mathrm{F}^{\star},\mathrm{F}}^{\mone}\big)_X
\circ_{\mathsf{v}}
\underline{\mathbf{M}}_{\mathtt{ji}}\big(\mathrm{id}_{\mathrm{F}\mathrm{C}^{X}}\circ_{\mathsf{h}}(\mathrm{id}_{\mathrm{C}^{X}}\circ_{\mathsf{h}}
\mathrm{coev}_{\mathrm{F}})\big)_X
\\&
\circ_{\mathsf{v}}
\underline{\mathbf{M}}_{\mathtt{ji}}\big(\mathrm{id}_{\mathrm{F}\mathrm{C}^{X}}\circ_{\mathsf{h}}(\runit_{\mathrm{C}^{X}})^{\mone}\big)_X
\circ_{\mathsf{v}}
\underline{\mathbf{M}}_{\mathtt{ji}}(\alpha_{\mathrm{F},\mathrm{C}^{X},\mathrm{C}^{X}}^{\mone})_X
\circ_{\mathsf{v}}
\big(\mu_{\mathtt{jii}}^{\mathrm{F},\mathrm{C}^{X}\mathrm{C}^{X}}\big)_X
\\&
\circ_{\mathsf{v}}
\Big({\underline{\mathbf{M}}_{\mathtt{ji}}(\mathrm{F})}\big(\mu_{\mathtt{iii}}^{\mathrm{C}^{X},\mathrm{C}^{X}}\big)_X\Big)
\circ_{\mathsf{v}}
\Big({\underline{\mathbf{M}}_{\mathtt{ji}}(\mathrm{F})\underline{\mathbf{M}}_{\mathtt{ii}}(\mathrm{C}^{X})}
\mathrm{coev}_{X,X}\Big)
\\&
\circ_{\mathsf{v}}
({\underline{\mathbf{M}}_{\mathtt{ji}}(\mathrm{F})}\mathrm{coev}_{X,X})\\
&=\underline{\mathbf{M}}_{\mathtt{ji}}\big((\mathrm{id}_{\mathrm{H}_2}\circ_{\mathsf{h}}\alpha_{\mathrm{F},\mathrm{C}^{X},\mathrm{F}^{\star}}^{\mone})
\circ_{\mathsf{h}}\mathrm{id}_{\mathrm{F}}\big)_X
\circ_{\mathsf{v}}
\underline{\mathbf{M}}_{\mathtt{ji}}(\alpha_{\mathrm{H}_2,\mathrm{F},\mathrm{C}^{X}\mathrm{F}^{\star}}\circ_{\mathsf{h}}\mathrm{id}_{\mathrm{F}})_X
\\&
\circ_{\mathsf{v}}
\underline{\mathbf{M}}_{\mathtt{ji}}\big((\alpha_{\mathrm{F}\mathrm{C}^{X},\mathrm{F}^{\star},\mathrm{F}}^{\mone}\circ_{\mathsf{h}}
\mathrm{id}_{\mathrm{C}^{X}\mathrm{F}^{\star}})\circ_{\mathsf{h}}\mathrm{id}_{\mathrm{F}}\big)_X
\\&
\circ_{\mathsf{v}}
\underline{\mathbf{M}}_{\mathtt{ji}}\Big(\big((\mathrm{id}_{\mathrm{F}\mathrm{C}^{X}}\circ_{\mathsf{h}}\mathrm{coev}_{\mathrm{F}})\circ_{\mathsf{h}}\mathrm{id}_{\mathrm{C}^{X}\mathrm{F}^{\star}}\big)\circ_{\mathsf{h}}\mathrm{id}_{\mathrm{F}}\Big)_X
\\&
\circ_{\mathsf{v}}
\underline{\mathbf{M}}_{\mathtt{ji}}\Big(\big((\runit_{\mathrm{FC}^{X}})^{\mone}\circ_{\mathsf{h}}\mathrm{id}_{\mathrm{C}^{X}\mathrm{F}^{\star}}\big)\circ_{\mathsf{h}}
\mathrm{id}_{\mathrm{F}}\Big)_X
\circ_{\mathsf{v}}
\underline{\mathbf{M}}_{\mathtt{ji}}(\alpha_{\mathrm{F}\mathrm{C}^{X}, \mathrm{C}^{X}\mathrm{F}^{\star},\mathrm{F}}^{\mone})_X
\\&
\circ_{\mathsf{v}}
\underline{\mathbf{M}}_{\mathtt{ji}}\big(\mathrm{id}_{\mathrm{F}\mathrm{C}^{X}}\circ_{\mathsf{h}}\alpha_{\mathrm{C}^{X},\mathrm{F}^{\star},\mathrm{F}}^{\mone}\big)_X
\circ_{\mathsf{v}}
\underline{\mathbf{M}}_{\mathtt{ji}}\big(\mathrm{id}_{\mathrm{F}\mathrm{C}^{X}}\circ_{\mathsf{h}}(\mathrm{id}_{\mathrm{C}^{X}}\circ_{\mathsf{h}}
\mathrm{coev}_{\mathrm{F}})\big)_X
\\&
\circ_{\mathsf{v}}
\underline{\mathbf{M}}_{\mathtt{ji}}\big(\mathrm{id}_{\mathrm{F}\mathrm{C}^{X}}\circ_{\mathsf{h}}(\runit_{\mathrm{C}^{X}})^{\mone}\big)_X
\circ_{\mathsf{v}}
\underline{\mathbf{M}}_{\mathtt{ji}}(\alpha_{\mathrm{F},\mathrm{C}^{X},\mathrm{C}^{X}}^{\mone})_X
\circ_{\mathsf{v}}
\big(\mu_{\mathtt{jii}}^{\mathrm{F},\mathrm{C}^{X}\mathrm{C}^{X}}\big)_X
\\&
\circ_{\mathsf{v}}
\Big({\underline{\mathbf{M}}_{\mathtt{ji}}(\mathrm{F})}\underline{\mathbf{M}}_{\mathtt{ii}}\big(\delta_{X,X,X}\big)_X\Big)
\circ_{\mathsf{v}}
\big({\underline{\mathbf{M}}_{\mathtt{ji}}(\mathrm{F})}\mathrm{coev}_{X,X}\big),
\end{aligned}
\end{gather*}
where the equality follows from the definition of $\delta_{X,X,X}=\delta_{\mathrm{C}^{X}}$.

Consider the diagram
\begin{gather*}
\adjustbox{scale=.44,center}{%
\begin{tikzcd}[ampersand replacement=\&, column sep=3em, row sep=3.5em]
\underline{\mathbf{M}}_{\mathtt{ji}}(\mathrm{F})\underline{\mathbf{M}}_{\mathtt{ii}}(\mathrm{C}^{X})X
\ar[d,"{{\underline{\mathbf{M}}_{\mathtt{ji}}(\mathrm{F})}\underline{\mathbf{M}}_{\mathtt{ii}}\big(\delta_{X,X,X}\big)_X}"description]
\ar[rr, "\big(\mu_{\mathtt{jii}}^{\mathrm{F},\mathrm{C}^{X}}\big)_X"]
\ar[rr, phantom,yshift=-9ex, xshift=-4ex, "\circled{1}"]
\&\&
\underline{\mathbf{M}}_{\mathtt{ji}}(\mathrm{F}\mathrm{C}^{X})X
\ar[ddll,bend left=10,"\underline{\mathbf{M}}_{\mathtt{ji}}\big(\mathrm{id}_{\mathrm{F}}\circ_{\mathsf{h}}\delta_{X,X,X}\big)_X",sloped,near start]
\ar[rr,"\underline{\mathbf{M}}_{\mathtt{ji}}\Big(\big(\runit_{\mathrm{FC}^{X}}\big)^{\mone}\Big)_X"]
\ar[rr, phantom,yshift=-9ex, xshift=-3ex, "\circled{2}"]
\&\&
\underline{\mathbf{M}}_{\mathtt{ji}}\big((\mathrm{F}\mathrm{C}^{X})\mathbbm{1}_{\mathtt{i}}\big)X
\ar[d,"{\underline{\mathbf{M}}_{\mathtt{ji}}\big((\mathrm{id}_{\mathrm{F}}\circ_{\mathsf{h}}\delta_{X,X,X})\circ_{\mathsf{h}}\mathrm{id}_{\mathbbm{1}_{\mathtt{i}}}\big)_X}"description]
\ar[rrr,"\underline{\mathbf{M}}_{\mathtt{ji}}\big(\mathrm{id}_{\mathrm{FC}^{X}}\circ_{\mathsf{h}}\mathrm{coev}_{\mathrm{F}}\big)_X"]
\ar[rrr, phantom,yshift=-12ex, xshift=-4ex, "\circled{3}"]
\&\&\&
\underline{\mathbf{M}}_{\mathtt{ji}}\big((\mathrm{F}\mathrm{C}^{X})\mathrm{H}_1\big)X
\ar[ddl,"\underline{\mathbf{M}}_{\mathtt{ji}}\big((\mathrm{id}_{\mathrm{F}}\circ_{\mathsf{h}}\delta_{X,X,X})\circ_{\mathsf{h}}\mathrm{id}_{\mathrm{H}_1}\big)_X",
near start, swap]
\ar[dddd,"{\underline{\mathbf{M}}_{\mathtt{ji}}\big(\alpha_{\mathrm{FC}^{X},\mathrm{F}^{\star},\mathrm{F}}^{\mone}\big)_X}"description]
\ar[dddd, phantom,yshift=-8ex, xshift=-14ex, "\circled{9}"]
\\[5ex]
\underline{\mathbf{M}}_{\mathtt{ji}}(\mathrm{F})\underline{\mathbf{M}}_{\mathtt{ii}}(\mathrm{C}^{X}\mathrm{C}^{X})X
\ar[d,"{\big(\mu_{\mathtt{jii}}^{\mathrm{F},\mathrm{C}^{X}\mathrm{C}^{X}}\big)_X}"description]
\&\&
\&\&
\underline{\mathbf{M}}_{\mathtt{ji}}\Big(\big(\mathrm{F}(\mathrm{C}^{X}\mathrm{C}^{X})\big)\mathbbm{1}_{\mathtt{i}}\Big)X
\ar[d,"{\underline{\mathbf{M}}_{\mathtt{ji}}\big(\alpha_{\mathrm{F},\mathrm{C}^{X}\mathrm{C}^{X},\mathbbm{1}_{\mathtt{i}}}^{\mone}\big)_X}"description]
\ar[ddd,bend left=60,"{\underline{\mathbf{M}}_{\mathtt{ji}}\big(\alpha_{\mathrm{F},\mathrm{C}^{X},\mathrm{C}^{X}}^{\mone}\circ_{\mathsf{h}}\mathrm{id}_{\mathbbm{1}_{\mathtt{i}}}\big)_X}"description]
\ar[drr,"\underline{\mathbf{M}}_{\mathtt{ji}}\big(\mathrm{id}_{\mathrm{F}(\mathrm{C}^{X}\mathrm{C}^{X})}\circ_{\mathsf{h}}\mathrm{coev}_{\mathrm{F}}\big)_X"]
\ar[drr, phantom,yshift=-30ex, xshift=5ex, "\circled{8}"]
\&\&\&
\\[5ex]
\underline{\mathbf{M}}_{\mathtt{ji}}\big(\mathrm{F}(\mathrm{C}^{X}\mathrm{C}^{X})\big)X
\ar[d,"{\underline{\mathbf{M}}_{\mathtt{ji}}\big(\alpha_{\mathrm{F},\mathrm{C}^{X},\mathrm{C}^{X}}^{\mone}\big)_X}"description]
\ar[urrrr,"\underline{\mathbf{M}}_{\mathtt{ji}}\Big(\big(\runit_{\mathrm{F}(\mathrm{C}^{X}\mathrm{C}^{X})}\big)^{\mone}\Big)_X", near end]
\ar[drr,"\underline{\mathbf{M}}_{\mathtt{ji}}\bigg(\mathrm{id}_{\mathrm{F}}\circ_{\mathsf{h}}\Big(\mathrm{id}_{\mathrm{C}^{X}}\circ_{\mathsf{h}}\big(\runit_{\mathrm{C}^{X}}\big)^{\mone}\Big)\bigg)_X"]
\ar[rrrr,"\underline{\mathbf{M}}_{\mathtt{ji}}\Big(\mathrm{id}_{\mathrm{F}}\circ_{\mathsf{h}}\big(\runit_{\mathrm{C}^{X}\mathrm{C}^{X}}\big)^{\mone}\Big)_X",near end]
\ar[rrrr, phantom,yshift=8ex, xshift=18ex, "\circled{4}"]
\ar[rrrr, phantom,yshift=-12ex, "\circled{5}"]
\ar[drr, phantom,yshift=-8ex, "\circled{6}"]
\&\&\&\&
\ar[dll,"\underline{\mathbf{M}}_{\mathtt{ji}}\big(\mathrm{id}_{\mathrm{F}}\circ_{\mathsf{h}}\alpha_{\mathrm{C}^{X},\mathrm{C}^{X},\mathbbm{1}_{\mathtt{i}}}\big)_X"]
\underline{\mathbf{M}}_{\mathtt{ji}}\Big(\mathrm{F}\big((\mathrm{C}^{X}\mathrm{C}^{X})\mathbbm{1}_{\mathtt{i}}\big)\Big)X
\ar[dll, phantom,yshift=-15ex, xshift=3ex, "\circled{7}"]
\&\&
\underline{\mathbf{M}}_{\mathtt{ji}}\Big(\big(\mathrm{F}(\mathrm{C}^{X}\mathrm{C}^{X})\big)\mathrm{H}_1\Big)X
\ar[dddll,bend left=25, "\underline{\mathbf{M}}_{\mathtt{ji}}\big(\alpha_{\mathrm{F},\mathrm{C}^{X},\mathrm{C}^{X}}^{\mone}\circ_{\mathsf{h}}\mathrm{id}_{\mathrm{H}_1}\big)_X",near end]
\ar[dddd, "\underline{\mathbf{M}}_{\mathtt{ji}}\big(\alpha_{\mathrm{F}(\mathrm{C}^{X}\mathrm{C}^{X}),\mathrm{F}^{\star},\mathrm{F}}^{\mone}\big)_X",near start]
\&
\\[5ex]
\underline{\mathbf{M}}_{\mathtt{ji}}\big((\mathrm{F}\mathrm{C}^{X})\mathrm{C}^{X}\big)X
\ar[d, "{\underline{\mathbf{M}}_{\mathtt{ji}}\big(\mathrm{id}_{\mathrm{F}\mathrm{C}^{X}}\circ_{\mathsf{h}}(\runit_{\mathrm{C}^{X}})^{\mone}\big)_X}"description]
\&\&
\underline{\mathbf{M}}_{\mathtt{ji}}\Big(\mathrm{F}\big(\mathrm{C}^{X}(\mathrm{C}^{X}\mathbbm{1}_{\mathtt{i}})\big)\Big)X
\ar[dll, "\underline{\mathbf{M}}_{\mathtt{ji}}\big(\alpha_{\mathrm{F},\mathrm{C}^{X},\mathrm{C}^{X}\mathbbm{1}_{\mathtt{i}}}^{\mone}\big)_X"]
\&\&
\&\&\&
\\[5ex]
\underline{\mathbf{M}}_{\mathtt{ji}}\big((\mathrm{F}\mathrm{C}^{X})(\mathrm{C}^{X}\mathbbm{1}_{\mathtt{i}})\big)X
\ar[d, "{\underline{\mathbf{M}}_{\mathtt{ji}}\big(\mathrm{id}_{\mathrm{F}\mathrm{C}^{X}}\circ_{\mathsf{h}}(\mathrm{id}_{\mathrm{C}^{X}}\circ_{\mathsf{h}}\mathrm{coev}_{\mathrm{F}})\big)_X}"description]
\&\&\&\&
\underline{\mathbf{M}}_{\mathtt{ji}}\Big(\big((\mathrm{F}\mathrm{C}^{X})\mathrm{C}^{X}\big)\mathbbm{1}_{\mathtt{i}}\Big)X
\ar[d, "{\underline{\mathbf{M}}_{\mathtt{ji}}\big((\mathrm{id}_{\mathrm{F}\mathrm{C}^{X}}\circ_{\mathsf{h}}\mathrm{id}_{\mathrm{C}^{X}})\circ_{\mathsf{h}}\mathrm{coev}_{\mathrm{F}}\big)_X}"description]
\ar[llll,"\underline{\mathbf{M}}_{\mathtt{ji}}\big(\alpha_{\mathrm{F}\mathrm{C}^{X},\mathrm{C}^{X},\mathbbm{1}_{\mathtt{i}}}\big)_X",near start,swap]
\ar[llll, phantom,yshift=-8ex, "\circled{10}"]
\&\&\&
\underline{\mathbf{M}}_{\mathtt{ji}}(\mathrm{H}_2\mathrm{F})X
\ar[ddl,"{\underline{\mathbf{M}}_{\mathtt{ji}}\Big(\big((\mathrm{id}_{\mathrm{F}}\circ_{\mathsf{h}}\delta_{X,X,X})\circ_{\mathsf{h}}\mathrm{id}_{\mathrm{F}^{\star}}\big)\circ_{\mathsf{h}}\mathrm{id}_{\mathrm{F}}\Big)_X}"description]
\\[5ex]
\underline{\mathbf{M}}_{\mathtt{ji}}\big((\mathrm{F}\mathrm{C}^{X})(\mathrm{C}^{X}\mathrm{H}_1)\big)X
\ar[d,"{\underline{\mathbf{M}}_{\mathtt{ji}}\big(\mathrm{id}_{\mathrm{FC}^{X}}\circ_{\mathsf{h}}\alpha_{\mathrm{C}^{X},\mathrm{F}^{\star},\mathrm{F}}^{\mone}\big)_X}"description]
\&\&\&\&
\underline{\mathbf{M}}_{\mathtt{ji}}\Big(\big((\mathrm{F}\mathrm{C}^{X})\mathrm{C}^{X}\big)\mathrm{H}_1\Big)X
\ar[llll,"\underline{\mathbf{M}}_{\mathtt{ji}}\big(\alpha_{\mathrm{F}\mathrm{C}^{X},\mathrm{C}^{X},\mathrm{H}_1}\big)_X",swap]
\ar[dd,"\underline{\mathbf{M}}_{\mathtt{ji}}\big(\alpha_{(\mathrm{FC}^{X})\mathrm{C}^{X},\mathrm{F}^{\star},\mathrm{F}}^{\mone}\big)_X"]
\ar[llll, phantom,yshift=-18ex, "\circled{11}"]
\&\&\&
\\[5ex]
\underline{\mathbf{M}}_{\mathtt{ji}}\Big((\mathrm{F}\mathrm{C}^{X})\big((\mathrm{C}^{X}\mathrm{F}^{\star})\mathrm{F}\big)\Big)X
\ar[d,"{\underline{\mathbf{M}}_{\mathtt{ji}}\big(\alpha_{\mathrm{FC}^{X},\mathrm{C}^{X}\mathrm{F}^{\star},\mathrm{F}}^{\mone}\big)_X}"description]
\&\&\&\&
\&\&
\underline{\mathbf{M}}_{\mathtt{ji}}\bigg(\Big(\big(\mathrm{F}(\mathrm{C}^{X}\mathrm{C}^{X})\big)\mathrm{F}^{\star}\Big)\mathrm{F}\bigg)X
\ar[dll,"\underline{\mathbf{M}}_{\mathtt{ji}}\Big(\big(\alpha_{\mathrm{F},\mathrm{C}^{X},\mathrm{C}^{X}}^{\mone}\circ_{\mathsf{h}}\mathrm{id}_{\mathrm{F}^{\star}}\big)\circ_{\mathsf{h}}\mathrm{id}_{\mathrm{F}}\Big)_X"]
\ar[dll, phantom,yshift=18ex, "\circled{12}"]
\&
\\[5ex]
\underline{\mathbf{M}}_{\mathtt{ji}}\Big(\big((\mathrm{F}\mathrm{C}^{X})(\mathrm{C}^{X}\mathrm{F}^{\star})\big)\mathrm{F}\Big)X
\&\&\&\&
\underline{\mathbf{M}}_{\mathtt{ji}}\bigg(\Big(\big((\mathrm{F}\mathrm{C}^{X})\mathrm{C}^{X}\big)\mathrm{F}^{\star}\Big)\mathrm{F}\bigg)X
\ar[llll,"\underline{\mathbf{M}}_{\mathtt{ji}}\big(\alpha_{\mathrm{F}\mathrm{C}^{X},\mathrm{C}^{X},\mathrm{F}^{\star}}\circ_{\mathsf{h}}\mathrm{id}_{\mathrm{F}}\big)_X"]
\&\&
\&
\end{tikzcd}}
\end{gather*}
which commutes due to
\begin{itemize}
\item naturality of $\mu_{\mathtt{jii}}$ for the facet labeled $1$;

\item naturality of $(\runit_{})^{\mone}$ for the facet labeled $2$;

\item the interchange law for the facets labeled $3$ and $8$;

\item the right diagram in \eqref{eq:0.00} for the facets labeled $4$ and $5$;

\item naturality of $\alpha^{\mone}$(respectively $\alpha$) for the facets labeled $6$, $9$  and $12$ (respectively $10$);

\item the pentagon coherence condition of the associator for the facets labeled $7$ and $11$.
\end{itemize}
By commutativity, the $2$-morphisms corresponding to the paths along the boundary of this diagram from northwest to southwest are equal. The path going straight down corresponds to
the composite of seven consecutive factors of the expression for $\big(\mu_{\mathtt{jji}}^{\mathrm{H}_2\mathrm{H}_2,\mathrm{F}}\big)_X
\circ_{\mathsf{v}} f$ above (reading from left to right, these are the factors six until twelve). Replacing those factors by the ones corresponding to the other path along the boundary in the diagram above, yields the equation

\begin{gather}\label{eq:0.10}
\scalebox{0.83}{$
\begin{aligned}
\big(\mu_{\mathtt{jji}}^{\mathrm{H}_2\mathrm{H}_2,\mathrm{F}}\big)_X
\circ_{\mathsf{v}} f&=\underline{\mathbf{M}}_{\mathtt{ji}}\big((\mathrm{id}_{\mathrm{H}_2}\circ_{\mathsf{h}}\alpha_{\mathrm{F},\mathrm{C}^{X},\mathrm{F}^{\star}}^{\mone})
\circ_{\mathsf{h}}\mathrm{id}_{\mathrm{F}}\big)_X
\circ_{\mathsf{v}}
\underline{\mathbf{M}}_{\mathtt{ji}}(\alpha_{\mathrm{H}_2,\mathrm{F},\mathrm{C}^{X}\mathrm{F}^{\star}}\circ_{\mathsf{h}}\mathrm{id}_{\mathrm{F}})_X
\\&
\circ_{\mathsf{v}}
\underline{\mathbf{M}}_{\mathtt{ji}}\big((\alpha_{\mathrm{F}\mathrm{C}^{X},\mathrm{F}^{\star},\mathrm{F}}^{\mone}\circ_{\mathsf{h}}
\mathrm{id}_{\mathrm{C}^{X}\mathrm{F}^{\star}})\circ_{\mathsf{h}}\mathrm{id}_{\mathrm{F}}\big)_X
\\&
\circ_{\mathsf{v}}
\underline{\mathbf{M}}_{\mathtt{ji}}\Big(\big((\mathrm{id}_{\mathrm{F}\mathrm{C}^{X}}\circ_{\mathsf{h}}\mathrm{coev}_{\mathrm{F}})\circ_{\mathsf{h}}\mathrm{id}_{\mathrm{C}^{X}\mathrm{F}^{\star}}\big)\circ_{\mathsf{h}}\mathrm{id}_{\mathrm{F}}\Big)_X
\\&
\circ_{\mathsf{v}}
\underline{\mathbf{M}}_{\mathtt{ji}}\Big(\big((\runit_{\mathrm{FC}^{X}})^{\mone}\circ_{\mathsf{h}}\mathrm{id}_{\mathrm{C}^{X}\mathrm{F}^{\star}}\big)\circ_{\mathsf{h}}
\mathrm{id}_{\mathrm{F}}\Big)_X
\circ_{\mathsf{v}}
\underline{\mathbf{M}}_{\mathtt{ji}}\big(\alpha_{\mathrm{F}\mathrm{C}^{X},\mathrm{C}^{X},\mathrm{F}^{\star}}\circ_{\mathsf{h}}\mathrm{id}_{\mathrm{F}}\big)_X
\\&
\circ_{\mathsf{v}}
\underline{\mathbf{M}}_{\mathtt{ji}}\Big(\big(\alpha_{\mathrm{F},\mathrm{C}^{X},\mathrm{C}^{X}}^{\mone}\circ_{\mathsf{h}}\mathrm{id}_{\mathrm{F}^{\star}}\big)\circ_{\mathsf{h}}\mathrm{id}_{\mathrm{F}}\Big)_X
\circ_{\mathsf{v}}
\underline{\mathbf{M}}_{\mathtt{ji}}\Big(\big((\mathrm{id}_{\mathrm{F}}\circ_{\mathsf{h}}\delta_{X,X,X})\circ_{\mathsf{h}}\mathrm{id}_{\mathrm{F}^{\star}}\big)\circ_{\mathsf{h}}\mathrm{id}_{\mathrm{F}}\Big)_X
\\&
\circ_{\mathsf{v}}
\underline{\mathbf{M}}_{\mathtt{ji}}\big(\alpha_{\mathrm{FC}^{X},\mathrm{F}^{\star},\mathrm{F}}^{\mone}\big)_X
\circ_{\mathsf{v}}
\underline{\mathbf{M}}_{\mathtt{ji}}\big(\mathrm{id}_{\mathrm{FC}^{X}}\circ_{\mathsf{h}}\mathrm{coev}_{\mathrm{F}}\big)_X
\circ_{\mathsf{v}}
\underline{\mathbf{M}}_{\mathtt{ji}}\Big(\big(\runit_{\mathrm{FC}^{X}}\big)^{\mone}\Big)_X
\\&
\circ_{\mathsf{v}}
\big(\mu_{\mathtt{jii}}^{\mathrm{F},\mathrm{C}^{X}}\big)_X
\circ_{\mathsf{v}}
\big({\underline{\mathbf{M}}_{\mathtt{ji}}(\mathrm{F})}\mathrm{coev}_{X,X}\big).
\end{aligned}$}
\end{gather}

Now we prove that
$\delta_{\underline{\mathbf{M}}_{\mathtt{ji}}(\mathrm{F})X}$ is equal to
$\delta_{(\mathrm{FC}^{X})\mathrm{F}^{\star}}$, as defined in Lemma \ref{lem0.1}.
On one hand, the composite of the first three isomorphisms in \eqref{eq:0.4} sends $f$ to
\begin{gather}\label{eq:0.11}
\begin{aligned}
&\big(\mu_{\mathtt{iji}}^{\mathrm{F}^{\star},(\mathrm{H}_2\mathrm{H}_2)\mathrm{F}}\big)_X
\circ_{\mathsf{v}}
\Big({\underline{\mathbf{M}}_{\mathtt{ij}}(\mathrm{F}^{\star})}\big(\mu_{\mathtt{jji}}^{\mathrm{H}_2\mathrm{H}_2,\mathrm{F}}\big)_X\Big)
\circ_{\mathsf{v}}
\big({\underline{\mathbf{M}}_{\mathtt{ij}}(\mathrm{F}^{\star})}f\big)\\
&\circ_{\mathsf{v}}
\big(\mu_{\mathtt{iji}}^{\mathrm{F}^{\star},\mathrm{F}}\big)^{\mone}_X
\circ_{\mathsf{v}}
\underline{\mathbf{M}}_{\mathtt{ii}}(\mathrm{coev}_{\mathrm{F}})
\circ_{\mathsf{v}}
(\iota_{\mathtt{i}}^{\mone})_X
\end{aligned}
\end{gather}
in $\mathrm{Hom}_{\underline{\mathbf{M}}(\mathtt{i})}\Big(X,
\underline{\mathbf{M}}_{\mathtt{ii}}\Big(\mathrm{F}^{\star}\big((\mathrm{H}_2\mathrm{H}_2)\mathrm{F}\big)\Big)X\Big)$.

On the other hand, chasing the image of $\delta_{\mathrm{H}_2}$, where $\mathrm{H}_2=(\mathrm{FC}^{X})\mathrm{F}^{\star}$, the last isomorphism
\begin{gather*}
\mathrm{Hom}_{\underline{\ccC}}(\mathrm{H}_2,\mathrm{H}_2\mathrm{H}_2)\xrightarrow{\cong}
\mathrm{Hom}_{\underline{\ccC}}\big(\mathrm{FC}^{X},(\mathrm{H}_2\mathrm{H}_2)\mathrm{F}\big)
\end{gather*}
sends $\delta_{\mathrm{H}_2}$ to
\begin{gather}\label{eq:0.12}
\big(\delta_{\mathrm{H}_2}\circ_{\mathsf{h}}\mathrm{id}_{\mathrm{F}}\big)
\circ_{\mathsf{v}}
\alpha_{\mathrm{FC}^{X},\mathrm{F}^{\star},\mathrm{F}}^{\mone}
\circ_{\mathsf{v}}
\big(\mathrm{id}_{\mathrm{FC}^{X}}\circ_{\mathsf{h}}\mathrm{coev}_{\mathrm{F}}\big)
\circ_{\mathsf{v}}
(\runit_{\mathrm{FC}^{X}})^{\mone}.
\end{gather}
The fifth isomorphism
\begin{gather*}
\mathrm{Hom}_{\underline{\ccC}}\big(\mathrm{FC}^{X},(\mathrm{H}_2\mathrm{H}_2)\mathrm{F}\big)\cong
\mathrm{Hom}_{\underline{\ccC}}\Big(\mathrm{C}^{X},\mathrm{F}^{\star}\big((\mathrm{H}_2\mathrm{H}_2)\mathrm{F}\big)\Big)
\end{gather*}
sends \eqref{eq:0.12} to
\begin{gather}\label{eq:0.13}
\begin{aligned}
&\Big(\mathrm{id}_{\mathrm{F}^{\star}}\circ_{\mathsf{h}}\big(\delta_{\mathrm{H}_2}\circ_{\mathsf{h}}\mathrm{id}_{\mathrm{F}}\big)\Big)
\circ_{\mathsf{v}}
\big(\mathrm{id}_{\mathrm{F}^{\star}}\circ_{\mathsf{h}}\alpha_{\mathrm{FC}^{X},\mathrm{F}^{\star},\mathrm{F}}^{\mone}\big)
\circ_{\mathsf{v}}
\Big(\mathrm{id}_{\mathrm{F}^{\star}}\circ_{\mathsf{h}}\big(\mathrm{id}_{\mathrm{FC}^{X}}\circ_{\mathsf{h}}\mathrm{coev}_{\mathrm{F}}\big)\Big)
\\&
\circ_{\mathsf{v}}
\big(\mathrm{id}_{\mathrm{F}^{\star}}\circ_{\mathsf{h}}(\runit_{\mathrm{FC}^{X}})^{\mone}\big)
\circ_{\mathsf{v}}
\alpha_{\mathrm{F}^{\star},\mathrm{F},\mathrm{C}^{X}}^{\mone}
\circ_{\mathsf{v}}
\big(\mathrm{coev}_{\mathrm{F}}\circ_{\mathsf{h}}\mathrm{id}_{\mathrm{C}^{X}}\big)
\circ_{\mathsf{v}}
(\lunit_{\mathrm{C}^{X}})^{\mone}.
\end{aligned}
\end{gather}
The fourth isomorphism
\begin{gather*}
\mathrm{Hom}_{\underline{\ccC}}\Big(\mathrm{C}^{X},\mathrm{F}^{\star}\big((\mathrm{H}_2\mathrm{H}_2)\mathrm{F}\big)\Big)\xrightarrow{\cong}
\mathrm{Hom}_{\underline{\mathbf{M}}(\mathtt{i})}\bigg(X,
\underline{\mathbf{M}}_{\mathtt{ii}}\Big(\mathrm{F}^{\star}\big((\mathrm{H}_2\mathrm{H}_2)\mathrm{F}\big)\Big)X\bigg)
\end{gather*}
sends \eqref{eq:0.13} to
\begin{gather}\label{eq:0.14}
\begin{aligned}
&\underline{\mathbf{M}}_{\mathtt{ii}}\Big(\mathrm{id}_{\mathrm{F}^{\star}}\circ_{\mathsf{h}}\big(\delta_{\mathrm{H}_2}\circ_{\mathsf{h}}\mathrm{id}_{\mathrm{F}}\big)\Big)_X
\circ_{\mathsf{v}}
\underline{\mathbf{M}}_{\mathtt{ii}}\big(\mathrm{id}_{\mathrm{F}^{\star}}\circ_{\mathsf{h}}\alpha_{\mathrm{FC}^{X},\mathrm{F}^{\star},\mathrm{F}}^{\mone}\big)_X
\\&
\circ_{\mathsf{v}}
\underline{\mathbf{M}}_{\mathtt{ii}}\Big(\mathrm{id}_{\mathrm{F}^{\star}}\circ_{\mathsf{h}}\big(\mathrm{id}_{\mathrm{FC}^{X}}\circ_{\mathsf{h}}\mathrm{coev}_{\mathrm{F}}\big)\Big)_X
\circ_{\mathsf{v}}
\underline{\mathbf{M}}_{\mathtt{ii}}\big(\mathrm{id}_{\mathrm{F}^{\star}}\circ_{\mathsf{h}}(\runit_{\mathrm{FC}^{X}})^{\mone}\big)_X
\\&
\circ_{\mathsf{v}}
\underline{\mathbf{M}}_{\mathtt{ii}}\big(\alpha_{\mathrm{F}^{\star},\mathrm{F},\mathrm{C}^{X}}^{\mone}\big)_X
\circ_{\mathsf{v}}
\underline{\mathbf{M}}_{\mathtt{ii}}\big(\mathrm{coev}_{\mathrm{F}}\circ_{\mathsf{h}}\mathrm{id}_{\mathrm{C}^{X}}\big)_X
\circ_{\mathsf{v}}
\underline{\mathbf{M}}_{\mathtt{ii}}\big((\lunit_{\mathrm{C}^{X}})^{\mone}\big)_X
\\&
\circ_{\mathsf{v}}
\mathrm{coev}_{X,X}.
\end{aligned}
\end{gather}
Finally, consider the diagram
\begin{gather*}
\adjustbox{scale=.6,center}{%
\begin{tikzcd}[ampersand replacement=\&, column sep=4em, row sep=3.5em]
X
\ar[d,"{\mathrm{coev}_{X,X}}"description]
\ar[rr,"\big(\iota_{\mathtt{i}}\big)^{\mone}_{X}"]
\ar[rr, phantom,yshift=-8ex, "\circled{1}"]
\&\&
\underline{\mathbf{M}}_{\mathtt{ii}}(\mathbbm{1}_{\mathtt{i}})X
\ar[d,"{{\underline{\mathbf{M}}_{\mathtt{ii}}(\mathbbm{1}_{\mathtt{i}})}\mathrm{coev}_{X,X}}"description]
\ar[drr,"\underline{\mathbf{M}}_{\mathtt{ii}}\big(\mathrm{coev}_{\mathrm{F}}\big)_X"]
\ar[drr, phantom,yshift=-8ex, "\circled{4}"]
\&\&
\\[5ex]
\underline{\mathbf{M}}_{\mathtt{ii}}(\mathrm{C}^{X})X
\ar[d,"{\underline{\mathbf{M}}_{\mathtt{ii}}\big((\lunit_{\mathrm{C}^{X}})^{\mone}\big)_X}"description]
\ar[rr,"\big(\iota_{\mathtt{i}}\big)^{\mone}_{\underline{\mathbf{M}}_{\mathtt{ii}}(\mathrm{C}^{X})X}"]
\ar[rr, phantom,yshift=-4ex, xshift=-4ex,"\circled{2}"]
\&\&
\underline{\mathbf{M}}_{\mathtt{ii}}(\mathbbm{1}_{\mathtt{i}})\underline{\mathbf{M}}_{\mathtt{ii}}(\mathrm{C}^{X})X
\ar[d,"{\underline{\mathbf{M}}_{\mathtt{ii}}\big(\mathrm{coev}_{\mathrm{F}}\big)_{\underline{\mathbf{M}}_{\mathtt{ii}}(\mathrm{C}^{X})X}}"description]
\ar[dll,"\big(\mu_{\mathtt{iii}}^{\mathbbm{1}_{\mathtt{i}},\mathrm{C}^{X}}\big)_X",sloped]
\ar[dll, phantom,yshift=-8ex, "\circled{3}"]
\&\&
\underline{\mathbf{M}}_{\mathtt{ii}}(\mathrm{F}^{\star}\mathrm{F})X
\ar[dll,"{\underline{\mathbf{M}}_{\mathtt{ii}}(\mathrm{F}^{\star}\mathrm{F})}\mathrm{coev}_{X,X}"]
\ar[d,"\big(\mu_{\mathtt{iji}}^{\mathrm{F}^{\star},\mathrm{F}}\big)^{\mone}_X"]
\\[5ex]
\underline{\mathbf{M}}_{\mathtt{ii}}(\mathbbm{1}_{\mathtt{i}}\mathrm{C}^{X})X
\ar[d,"{\underline{\mathbf{M}}_{\mathtt{ii}}\big(\mathrm{coev}_{\mathrm{F}}\circ_{\mathsf{h}}\mathrm{id}_{\mathrm{C}^{X}}\big)_X}"description]
\&\&
\underline{\mathbf{M}}_{\mathtt{ii}}(\mathrm{F}^{\star}\mathrm{F})\underline{\mathbf{M}}_{\mathtt{ii}}(\mathrm{C}^{X})X
\ar[d,"{\big(\mu_{\mathtt{iji}}^{\mathrm{F}^{\star},\mathrm{F}}\big)^{\mone}_{\underline{\mathbf{M}}_{\mathtt{ii}}(\mathrm{C}^{X})X}}"description]
\ar[dll,"\big(\mu_{\mathtt{iii}}^{\mathrm{F}^{\star}\mathrm{F},\mathrm{C}^{X}}\big)_X",sloped]
\&\&
\underline{\mathbf{M}}_{\mathtt{ij}}(\mathrm{F}^{\star})\underline{\mathbf{M}}_{\mathtt{ji}}(\mathrm{F})X
\ar[dll,"{\underline{\mathbf{M}}_{\mathtt{ij}}(\mathrm{F}^{\star})\underline{\mathbf{M}}_{\mathtt{ji}}(\mathrm{F})}\mathrm{coev}_{X,X}"]
\ar[dll, phantom,yshift=8ex, "\circled{6}"]
\\[5ex]
\underline{\mathbf{M}}_{\mathtt{ii}}\big((\mathrm{F}^{\star}\mathrm{F})\mathrm{C}^{X}\big)X
\ar[d, "{\underline{\mathbf{M}}_{\mathtt{ii}}\big(\alpha_{\mathrm{F}^{\star},\mathrm{F},\mathrm{C}^{X}}^{\mone}\big)_X}"description]
\&\&
\underline{\mathbf{M}}_{\mathtt{ij}}(\mathrm{F}^{\star})\underline{\mathbf{M}}_{\mathtt{ji}}(\mathrm{F})\underline{\mathbf{M}}_{\mathtt{ii}}(\mathrm{C}^{X})X
\ar[d,"{{\underline{\mathbf{M}}_{\mathtt{ii}}(\mathrm{F}^{\star})}\big(\mu_{\mathtt{jii}}^{\mathrm{F},\mathrm{C}^{X}}\big)_X}"description]
\&\&
\\[5ex]
\underline{\mathbf{M}}_{\mathtt{ii}}\big(\mathrm{F}^{\star}(\mathrm{F}\mathrm{C}^{X})\big)X
\ar[d, "{\underline{\mathbf{M}}_{\mathtt{ii}}\big(\mathrm{id}_{\mathrm{F}^{\star}}\circ_{\mathsf{h}}(\runit_{\mathrm{FC}^{X}})^{\mone}\big)_X}"description]
\&\&
\underline{\mathbf{M}}_{\mathtt{ij}}(\mathrm{F}^{\star})\underline{\mathbf{M}}_{\mathtt{ji}}(\mathrm{F}\mathrm{C}^{X})X
\ar[ll,"\big(\mu_{\mathtt{iji}}^{\mathrm{F}^{\star},\mathrm{F}\mathrm{C}^{X}}\big)_X",swap]
\ar[d, "{{\underline{\mathbf{M}}_{\mathtt{ij}}(\mathrm{F}^{\star})}\underline{\mathbf{M}}_{\mathtt{ji}}\big((\runit_{\mathrm{FC}^{X}})^{\mone}\big)_X}"description]
\ar[ll, phantom,yshift=14ex, "\circled{5}"]
\ar[ll, phantom,yshift=-8ex, "\circled{7}"]
\&\&
\\[5ex]
\underline{\mathbf{M}}_{\mathtt{ii}}\Big(\mathrm{F}^{\star}\big((\mathrm{F}\mathrm{C}^{X})\mathbbm{1}_{\mathtt{i}}\big)\Big)X
\ar[d, "{\underline{\mathbf{M}}_{\mathtt{ii}}\Big(\mathrm{id}_{\mathrm{F}^{\star}}\circ_{\mathsf{h}}\big(\mathrm{id}_{\mathrm{FC}^{X}}\circ_{\mathsf{h}}\mathrm{coev}_{\mathrm{F}}\big)\Big)_X}"description]
\&\&
\underline{\mathbf{M}}_{\mathtt{ij}}(\mathrm{F}^{\star})\underline{\mathbf{M}}_{\mathtt{ji}}\big((\mathrm{F}\mathrm{C}^{X})\mathbbm{1}_{\mathtt{i}}\big)X
\ar[ll,"\big(\mu_{\mathtt{iji}}^{\mathrm{F}^{\star},(\mathrm{F}\mathrm{C}^{X})\mathbbm{1}_{\mathtt{i}}}\big)_X",swap]
\ar[d, "{{\underline{\mathbf{M}}_{\mathtt{ij}}(\mathrm{F}^{\star})}\underline{\mathbf{M}}_{\mathtt{ji}}\big(\mathrm{id}_{\mathrm{FC}^{X}}\circ_{\mathsf{h}}\mathrm{coev}_{\mathrm{F}}\big)_X}"description]
\ar[ll, phantom,yshift=-8ex, "\circled{8}"]
\&\&
\\[5ex]
\underline{\mathbf{M}}_{\mathtt{ii}}\Big(\mathrm{F}^{\star}\big((\mathrm{F}\mathrm{C}^{X})(\mathrm{F}^{\star}\mathrm{F})\big)\Big)X
\ar[d, "{\underline{\mathbf{M}}_{\mathtt{ii}}\big(\mathrm{id}_{\mathrm{F}^{\star}}\circ_{\mathsf{h}}\alpha_{\mathrm{FC}^{X},\mathrm{F}^{\star},\mathrm{F}}^{\mone}
\big)_X}"description]
\&\&
\underline{\mathbf{M}}_{\mathtt{ij}}(\mathrm{F}^{\star})\underline{\mathbf{M}}_{\mathtt{ji}}\big((\mathrm{F}\mathrm{C}^{X})(\mathrm{F}^{\star}\mathrm{F})\big)X
\ar[ll,"\big(\mu_{\mathtt{iji}}^{\mathrm{F}^{\star},(\mathrm{F}\mathrm{C}^{X})(\mathrm{F}^{\star}\mathrm{F})}\big)_X",swap]
\ar[d, "{{\underline{\mathbf{M}}_{\mathtt{ij}}(\mathrm{F}^{\star})}\underline{\mathbf{M}}_{\mathtt{ji}}\big(\alpha_{\mathrm{FC}^{X},\mathrm{F}^{\star},\mathrm{F}}^{\mone}\big)_X}"description]
\ar[ll, phantom,yshift=-8ex, "\circled{9}"]
\&\&
\\[5ex]
\underline{\mathbf{M}}_{\mathtt{ii}}\big(\mathrm{F}^{\star}(\mathrm{H}_2\mathrm{F})\big)X
\ar[d, "{\underline{\mathbf{M}}_{\mathtt{ii}}\big(\mathrm{id}_{\mathrm{F}^{\star}}\circ_{\mathsf{h}}(\delta_{\mathrm{H}_2}\circ_{\mathsf{h}}\mathrm{id}_{\mathrm{F}})
\big)_X}"description]
\&\&
\underline{\mathbf{M}}_{\mathtt{ij}}(\mathrm{F}^{\star})\underline{\mathbf{M}}_{\mathtt{ji}}(\mathrm{H}_2\mathrm{F})X
\ar[ll,"\big(\mu_{\mathtt{iji}}^{\mathrm{F}^{\star},\mathrm{H}_2\mathrm{F}}\big)_X",swap]
\ar[d, "{{\underline{\mathbf{M}}_{\mathtt{ij}}(\mathrm{F}^{\star})}\underline{\mathbf{M}}_{\mathtt{ji}}\big(\delta_{\mathrm{H}_2}\circ_{\mathsf{h}}\mathrm{id}_{\mathrm{F}}\big)_X}"description]
\ar[ll, phantom,yshift=-8ex, "\circled{10}"]
\&\&
\\[5ex]
\underline{\mathbf{M}}_{\mathtt{ii}}\Big(\mathrm{F}^{\star}\big((\mathrm{H}_2\mathrm{H}_2)\mathrm{F}\big)\Big)X
\&\&
\underline{\mathbf{M}}_{\mathtt{ij}}(\mathrm{F}^{\star})\underline{\mathbf{M}}_{\mathtt{ji}}\big((\mathrm{H}_2\mathrm{H}_2)\mathrm{F}\big)X
\ar[ll,"\big(\mu_{\mathtt{iji}}^{\mathrm{F}^{\star},(\mathrm{H}_2\mathrm{H}_2)\mathrm{F}}\big)_X"]
\&\&
\end{tikzcd}}
\end{gather*}
which commutes due to
\begin{itemize}
\item naturality of $\iota_{\mathtt{i}}^{\mone}$ for the facet labeled $1$;

\item the right diagram in \eqref{eq:birepresentation2} for the facet labeled $2$;

\item naturality of $\mu_{\mathtt{iii}}$ for the facet labeled $3$;

\item naturality of $\underline{\mathbf{M}}_{\mathtt{ii}}(\mathrm{coev}_{\mathrm{F}})$ for the facet labeled $4$;

\item the diagram in \eqref{eq:birepresentation3} for the facet labeled $5$;

\item naturality of $\big(\mu_{\mathtt{iji}}^{\mathrm{F}^{\star},\mathrm{F}}\big)^{\mone}$ for the facet labeled $6$;

\item naturality of $\mu_{\mathtt{iji}}$ for the facets labeled $7$, $8$, $9$ and $10$.
\end{itemize}
As above, commutativity guarantees that the $2$-morphisms corresponding to the paths along the boundary from northwest to southwest coincide. The path which goes straight down corresponds to the $2$-morphisms in \eqref{eq:0.14}, which is therefore equal to
\begin{gather*}\scalebox{0.83}{$
\begin{aligned}
&\big(\mu_{\mathtt{iji}}^{\mathrm{F}^{\star},(\mathrm{H}_2\mathrm{H}_2)\mathrm{F}}\big)_X
\circ_{\mathsf{v}}
\Big({\underline{\mathbf{M}}_{\mathtt{ij}}(\mathrm{F}^{\star})}\underline{\mathbf{M}}_{\mathtt{ji}}\big(\delta_{\mathrm{H}_2}\circ_{\mathsf{h}}\mathrm{id}_{\mathrm{F}}\big)_X\Big)
\circ_{\mathsf{v}}
\Big({\underline{\mathbf{M}}_{\mathtt{ij}}(\mathrm{F}^{\star})}\underline{\mathbf{M}}_{\mathtt{ji}}\big(\alpha_{\mathrm{FC}^{X},\mathrm{F}^{\star},\mathrm{F}}^{\mone}\big)_X\Big)
\\&
\circ_{\mathsf{v}}
\Big({\underline{\mathbf{M}}_{\mathtt{ij}}(\mathrm{F}^{\star})}\underline{\mathbf{M}}_{\mathtt{ji}}\big(\mathrm{id}_{\mathrm{FC}^{X}}\circ_{\mathsf{h}}\mathrm{coev}_{\mathrm{F}}\big)_X\Big)
\circ_{\mathsf{v}}
\Big({\underline{\mathbf{M}}_{\mathtt{ij}}(\mathrm{F}^{\star})}\underline{\mathbf{M}}_{\mathtt{ji}}\big((\runit_{\mathrm{FC}^{X}})^{\mone}\big)_X\Big)
\\&
\circ_{\mathsf{v}}
\Big({\underline{\mathbf{M}}_{\mathtt{ij}}(\mathrm{F}^{\star})}\big(\mu_{\mathtt{jii}}^{\mathrm{F},\mathrm{C}^{X}}\big)_X\Big)
\circ_{\mathsf{v}}
\Big({\underline{\mathbf{M}}_{\mathtt{ij}}(\mathrm{F}^{\star})\underline{\mathbf{M}}_{\mathtt{ji}}(\mathrm{F})}\mathrm{coev}_{X,X}\Big)
\\&
\circ_{\mathsf{v}}
\big(\mu_{\mathtt{iji}}^{\mathrm{F}^{\star},\mathrm{F}}\big)^{\mone}_X
\circ_{\mathsf{v}}
\underline{\mathbf{M}}_{\mathtt{ii}}\big(\mathrm{coev}_{\mathrm{F}}\big)_X
\circ_{\mathsf{v}}
\big(\iota_{\mathtt{i}}\big)^{\mone}_{X}
\end{aligned}$}
\end{gather*}
This expression coincides precisely with that in \eqref{eq:0.11},
considering \eqref{eq:0.10} and the definition of $\delta_{\mathrm{H}_2}=\delta_{(\mathrm{FC}^{X})\mathrm{F}^{\star}}$ as in Lemma \ref{lem0.1}. This completes the proof.
\end{proof}

As in \cite[Corollary 5.2]{MMMT} (see also \cite[Lemma 7.9.4]{EGNO}), the internal cohoms
$[X,\underline{\mathbf{M}}_{\mathtt{ji}}(\mathrm{F})\,X]$ and $[\underline{\mathbf{M}}_{\mathtt{ji}}(\mathrm{F})\,X,X]$ are the biinjective
$(\mathrm{FC}^{X})\mathrm{F}^{\star}\text{-}\mathrm{C}^{X}$-bicomodules respectively $\mathrm{C}^{X}\text{-}(\mathrm{FC}^{X})\mathrm{F}^{\star}$-bicomodules inducing this MT equivalence. Firstly, noting that
$\mathrm{C}^{X}=[X,X]$ and $(\mathrm{FC}^{X})
\mathrm{F}^{\star}\cong[\underline{\mathbf{M}}_{\mathtt{ji}}(\mathrm{F})\,X,\underline{\mathbf{M}}_{\mathtt{ji}}(\mathrm{F})\,X]$, we have
\begin{gather*}
\begin{aligned}
\mathrm{Hom}_{\underline{\ccC}(\mathtt{i},\mathtt{j})}\big([X,\underline{\mathbf{M}}_{\mathtt{ji}}(\mathrm{F})\,X],\mathrm{G}\big)
&\cong
\mathrm{Hom}_{\underline{\mathbf{M}}(\mathtt{j})}\big(\underline{\mathbf{M}}_{\mathtt{ji}}(\mathrm{F})\,X,\underline{\mathbf{M}}_{\mathtt{ji}}(\mathrm{G})\,X\big)
\\
&\cong\mathrm{Hom}_{\underline{\mathbf{M}}(\mathtt{i})}\big(X,\underline{\mathbf{M}}_{\mathtt{ij}}(\mathrm{F}^{\star})\underline{\mathbf{M}}_{\mathtt{ji}}(\mathrm{G})\,X\big)
\\
&\cong\mathrm{Hom}_{\underline{\mathbf{M}}(\mathtt{i})}\big(X,\underline{\mathbf{M}}_{\mathtt{ii}}(\mathrm{F}^{\star}\mathrm{G})\,X\big)
\\
&\cong\mathrm{Hom}_{\underline{\ccC}(\mathtt{i},\mathtt{i})}(\mathrm{C}^{X},\mathrm{F}^{\star}\mathrm{G})
\\
&\cong\mathrm{Hom}_{\underline{\ccC}(\mathtt{i},\mathtt{j})}(\mathrm{FC}^{X},\mathrm{G})
\end{aligned}
\end{gather*}
and
\begin{gather*}
\begin{aligned}
\mathrm{Hom}_{\underline{\ccC}(\mathtt{j},\mathtt{i})}
\big([\underline{\mathbf{M}}_{\mathtt{ji}}(\mathrm{F})\,X,X],\mathrm{H}\big)&\cong
\mathrm{Hom}_{\underline{\mathbf{M}}(\mathtt{i})}\big(X,\underline{\mathbf{M}}_{\mathtt{ij}}(\mathrm{H})\underline{\mathbf{M}}_{\mathtt{ji}}(\mathrm{F})\,X\big)
\\
&\cong
\mathrm{Hom}_{\underline{\mathbf{M}}(\mathtt{i})}\big(X,\underline{\mathbf{M}}_{\mathtt{ii}}(\mathrm{H}\mathrm{F})\,X\big)
\\
&\cong\mathrm{Hom}_{\underline{\ccC}(\mathtt{i},\mathtt{i})}(\mathrm{C}^{X},\mathrm{H}\mathrm{F})
\\
&\cong\mathrm{Hom}_{\underline{\ccC}(\mathtt{j},\mathtt{i})}(\mathrm{C}^{X}\mathrm{F}^{\star}, \mathrm{H})
\end{aligned}
\end{gather*}
for all $1$-morphisms $\mathrm{G}\in\underline{\cC}(\mathtt{i},\mathtt{j})$ and $\mathrm{H}\in\underline{\cC}(\mathtt{j},\mathtt{i})$. Therefore, there are isomorphisms of $1$-morphisms
\begin{gather}\label{eq:inthom}
[X,\underline{\mathbf{M}}_{\mathtt{ji}}(\mathrm{F})\,X]\cong\mathrm{FC}^{X}
\quad\text{and}\quad
[\underline{\mathbf{M}}_{\mathtt{ji}}(\mathrm{F})\,X,X]\cong\mathrm{C}^{X}
\mathrm{F}^{\star}.
\end{gather}

\begin{corollary}\label{corollary:mt} Under the same assumptions as in Theorem \ref{prop0.4}, the coalgebras $\mathrm{C}^{X}$ and
$(\mathrm{FC}^{X})\mathrm{F}^{\star}\cong\mathrm{F}(\mathrm{C}^{X}\mathrm{F}^{\star})$
are MT equivalent. Moreover, the MT equivalence is realized by the bicomodules $\mathrm{FC}^{X}$ and
$\mathrm{C}^{X}\mathrm{F}^{\star}$, whose right and left $\mathrm{C}^{X}$-comodule structures, respectively, are the canonical ones and whose left
and right $(\mathrm{FC}^{X})\mathrm{F}^{\star}$-comodule structures, respectively, are given by
\begin{gather*}
\begin{aligned}
&\alpha_{\mathrm{FC}^{X},\mathrm{F}^{\star},\mathrm{FC}^{X}}^{\mone}
\circ_{\mathsf{v}}
\big(\mathrm{id}_{\mathrm{FC}^{X}}\circ_{\mathsf{h}}\alpha_{\mathrm{F}^{\star},\mathrm{F},\mathrm{C}^{X}}\big)
\circ_{\mathsf{v}}
\big(\mathrm{id}_{\mathrm{FC}^{X}}\circ_{\mathsf{h}}(\mathrm{coev}_{\mathrm{F}}\circ_{\mathsf{h}}\mathrm{id}_{\mathrm{C}^{X}})\big)
\\&
\circ_{\mathsf{v}}
\big(\mathrm{id}_{\mathrm{FC}^{X}}\circ_{\mathsf{h}}(\lunit_{\mathrm{C}^{X}})^{\mone}\big)
\circ_{\mathsf{v}}
\alpha_{\mathrm{F},\mathrm{C}^{X},\mathrm{C}^{X}}^{\mone}
\circ_{\mathsf{v}}
\big(\mathrm{id}_{\mathrm{F}}\circ_{\mathsf{h}}\delta_{\mathrm{C}^{X}}\big)
\end{aligned}
\end{gather*}
and
\begin{gather*}
\begin{aligned}
&\big(\mathrm{id}_{\mathrm{C}^{X}\mathrm{F}^{\star}}\circ_{\mathsf{h}}\alpha_{\mathrm{F},\mathrm{C}^{X},\mathrm{F}^{\star}}^{\mone}\big)
\circ_{\mathsf{v}}
\alpha_{\mathrm{C}^{X}\mathrm{F}^{\star},\mathrm{F},\mathrm{C}^{X}\mathrm{F}^{\star}}
\circ_{\mathsf{v}}
\big(\alpha_{\mathrm{C}^{X},\mathrm{F}^{\star},\mathrm{F}}^{\mone}\circ_{\mathsf{h}}\mathrm{id}_{\mathrm{C}^{X}\mathrm{F}^{\star}}\big)
\\&
\circ_{\mathsf{v}}
\big((\mathrm{id}_{\mathrm{C}^{X}}\circ_{\mathsf{h}}\mathrm{coev}_{\mathrm{F}})\circ_{\mathsf{h}}\mathrm{id}_{\mathrm{C}^{X}\mathrm{F}^{\star}}\big)
\circ_{\mathsf{v}}
\big((\runit_{\mathrm{C}^{X}})^{\mone}\circ_{\mathsf{h}}\mathrm{id}_{\mathrm{C}^{X}\mathrm{F}^{\star}}\big)
\\&
\circ_{\mathsf{v}}
\alpha_{\mathrm{C}^{X},\mathrm{C}^{X},\mathrm{F}^{\star}}
\circ_{\mathsf{v}}
(\delta_{\mathrm{C}^{X}}\circ_{\mathsf{h}}\mathrm{id}_{\mathrm{F}^{\star}}).
\end{aligned}
\end{gather*}
\end{corollary}

\begin{proof}
The first statement follows immediately from Corollary \ref{corollary:MT} and Theorem \ref{prop0.4}.

The proof of the second statement is more involved. On the one hand, by the first isomorphism in \eqref{eq:associativity-cotensor2},
we have
\begin{gather*}
(\mathrm{F}\mathrm{C}^{X})\mathrm{F}^{\star}
\cong(\mathrm{F}\mathrm{C}^{X}\square_{\mathrm{C}^{X}}\mathrm{C}^{X})\mathrm{F}^{\star}
\cong(\mathrm{F}\mathrm{C}^{X})\square_{\mathrm{C}^{X}}(\mathrm{C}^{X}\mathrm{F}^{\star}).
\end{gather*}

On the other hand, to prove $(\mathrm{C}^{X}\mathrm{F}^{\star})\square_{(\mathrm{FC}^{X})\mathrm{F}^{\star}}(\mathrm{F}\mathrm{C}^{X})\cong \mathrm{C}^{X}$,
we consider the following diagram
\begin{gather}\label{diag0}
\adjustbox{scale=.6,center}{%
\begin{tikzcd}[ampersand replacement=\&,column sep=2em]
(\mathrm{C}^{X}\mathrm{F}^{\star})\square_{(\mathrm{FC}^{X})\mathrm{F}^{\star}}(\mathrm{F}\mathrm{C}^{X})
\ar[rr, hook, "\beta"]
\&\&
(\mathrm{C}^{X}\mathrm{F}^{\star})(\mathrm{F}\mathrm{C}^{X})
\arrow[rr, "\delta_{\mathrm{C}^{X}\mathrm{F}^{\star},(\mathrm{F}\mathrm{C}^{X})\mathrm{F}^{\star}}\circ_{\mathsf{h}}\mathrm{id}_{\mathrm{FC}^{X}}"]
\arrow[dr, "\mathrm{id}_{\mathrm{C}^{X}\mathrm{F}^{\star}}\circ_{\mathsf{h}}\delta_{(\mathrm{F}\mathrm{C}^{X})\mathrm{F}^{\star},\mathrm{FC}^{X}}"]
\&\&
\Big(\big(\mathrm{C}^{X}\mathrm{F}^{\star}\big)\big((\mathrm{F}\mathrm{C}^{X})\mathrm{F}^{\star}\big)\Big)\Big(\mathrm{F}\mathrm{C}^{X}\Big)
\arrow[dl, "\alpha_{\mathrm{C}^{X}\mathrm{F}^{\star},(\mathrm{F}\mathrm{C}^{X})\mathrm{F}^{\star},\mathrm{FC}^{X}}"]
\\[5ex]
\&
\mathrm{C}^{X}
\ar[ur,"\gamma", swap]
\ar[ul,"\theta",dashed]
\&\&
\big(\mathrm{C}^{X}\mathrm{F}^{\star}\big)\Big(\big((\mathrm{F}\mathrm{C}^{X})\mathrm{F}^{\star}\big)\big(\mathrm{F}\mathrm{C}^{X}\big)\Big)
\&
\end{tikzcd}}
\end{gather}
where $\delta_{(\mathrm{F}\mathrm{C}^{X})\mathrm{F}^{\star},\mathrm{FC}^{X}}$ and $\delta_{\mathrm{C}^{X}\mathrm{F}^{\star},(\mathrm{F}\mathrm{C}^{X})\mathrm{F}^{\star}}$ are
the left and right $(\mathrm{FC}^{X})\mathrm{F}^{\star}$-coaction $2$-morphisms, respectively,
and $\gamma$ is given by
\begin{gather*}
\begin{aligned}
&\alpha_{\mathrm{C}^{X}\mathrm{F}^{\star},\mathrm{F},\mathrm{C}^{X}}
\circ_{\mathsf{v}}
\big(\alpha_{\mathrm{C}^{X},\mathrm{F}^{\star},\mathrm{F}}^{\mone}\circ_{\mathsf{h}}\mathrm{id}_{\mathrm{C}^{X}}\big)
\circ_{\mathsf{v}}
\big((\mathrm{id}_{\mathrm{C}^{X}}\circ_{\mathsf{h}}\mathrm{coev}_{\mathrm{F}})\circ_{\mathsf{h}}\mathrm{id}_{\mathrm{C}^{X}}\big)
\\&
\circ_{\mathsf{v}}
\big((\runit_{\mathrm{C}^{X}})^{\mone}\circ_{\mathsf{h}}\mathrm{id}_{\mathrm{C}^{X}}\big)
\circ_{\mathsf{v}}
\delta_{\mathrm{C}^{X}}.
\end{aligned}
\end{gather*}
Now we claim that $\gamma$ equalizes the right triangle in diagram \eqref{diag0},
which, by the universal property of the equalizer, implies that there exists a unique $2$-morphism $\theta$ such that the left triangle in diagram \eqref{diag0} commutes.
Consider the diagram
\begin{gather*}
\scalebox{0.44}{
$\begin{tikzcd}[ampersand replacement=\&, column sep=2.9em, row sep=2.5em]
\mathrm{C}^{X}
\ar[dd,"{\delta_{\mathrm{C}^{X}}}"description]
\ar[rrr,"\delta_{\mathrm{C}^{X}}"]
\ar[rrr, phantom,yshift=-15ex, "\circled{1}"]
\&\&\&
\mathrm{C}^{X}\mathrm{C}^{X}
\ar[d,"{\delta_{\mathrm{C}^{X}}\circ_{\mathsf{h}}\mathrm{id}_{\mathrm{C}^{X}}}"description]
\ar[rr,"(\runit_{\mathrm{C}^{X}})^{\mone}\circ_{\mathsf{h}}\mathrm{id}_{\mathrm{C}^{X}}"]
\ar[rr, phantom,yshift=-7ex, "\circled{2}"]
\&\&
(\mathrm{C}^{X}\mathbbm{1}_{\mathtt{i}})\mathrm{C}^{X}
\ar[d, xshift=1ex,"{(\delta_{\mathrm{C}^{X}}\circ_{\mathsf{h}}\mathrm{id}_{\mathbbm{1}_{\mathtt{i}}})\circ_{\mathsf{h}}\mathrm{id}_{\mathrm{C}^{X}}}"description]
\ar[rrrrr, phantom,yshift=-7ex, "\circled{3}"]
\ar[rrrrr,"(\mathrm{id}_{\mathrm{C}^{X}}\circ_{\mathsf{h}}\mathrm{coev}_{\mathrm{F}})\circ_{\mathsf{h}}\mathrm{id}_{\mathrm{C}^{X}}"]
\&\&\&\&\&
\big(\mathrm{C}^{X}(\mathrm{F}^{\star}\mathrm{F})\big)\mathrm{C}^{X}
\ar[d,"{(\delta_{\mathrm{C}^{X}}\circ_{\mathsf{h}}\mathrm{id}_{\mathrm{F}^{\star}\mathrm{F}})\circ_{\mathsf{h}}\mathrm{id}_{\mathrm{C}^{X}}}"description]
\\[5ex]
\&\&\&
(\mathrm{C}^{X}\mathrm{C}^{X})\mathrm{C}^{X}
\ar[d,"\alpha_{\mathrm{C}^{X},\mathrm{C}^{X},\mathrm{C}^{X}}",swap]
\ar[dr,"{\mathrm{id}_{\mathrm{C}^{X}\mathrm{C}^{X}}\circ_{\mathsf{h}}(\lunit_{\mathrm{C}^{X}})^{\mone}}"description]
\ar[rr,"(\runit_{\mathrm{C}^{X}\mathrm{C}^{X}})^{\mone}\circ_{\mathsf{h}}\mathrm{id}_{\mathrm{C}^{X}}"]
\ar[rr, phantom,yshift=-5ex, "\circled{6}"]
\ar[dr, phantom,yshift=-8ex, "\circled{5}"]
\&\&
\big((\mathrm{C}^{X}\mathrm{C}^{X})\mathbbm{1}_{\mathtt{i}}\big)\mathrm{C}^{X}
\ar[d, xshift=1ex,"{\left(\left(\left(\runit_{\mathrm{C}^{X}}\right)^{\mone}\circ_{\mathsf{h}}\mathrm{id}_{\mathrm{C}^{X}}\right)\circ_{\mathsf{h}}\mathrm{id}_{\mathbbm{1}_{\mathtt{i}}}\right)\circ_{\mathsf{h}}\mathrm{id}_{\mathrm{C}^{X}}}"description, near end]
\ar[dl,"\alpha_{\mathrm{C}^{X}\mathrm{C}^{X},\mathbbm{1}_{\mathtt{i}},\mathrm{C}^{X}}^{\mone}",swap, near start]
\ar[rrrrr, phantom,yshift=-7ex, "\circled{8}"]
\ar[dl, phantom,yshift=-8ex, "\circled{7}"]
\ar[rrrrr,"(\mathrm{id}_{\mathrm{C}^{X}\mathrm{C}^{X}}\circ_{\mathsf{h}}\mathrm{coev}_{\mathrm{F}})\circ_{\mathsf{h}}\mathrm{id}_{\mathrm{C}^{X}}"]
\&\&\&\&\&
\big((\mathrm{C}^{X}\mathrm{C}^{X})(\mathrm{F}^{\star}\mathrm{F})\big)\mathrm{C}^{X}
\ar[d, "{\left(\left(\left(\runit_{\mathrm{C}^{X}}\right)^{\mone}\circ_{\mathsf{h}}\mathrm{id}_{\mathrm{C}^{X}}\right)\circ_{\mathsf{h}}\mathrm{id}_{\mathrm{F}^{\star}\mathrm{F}}\right)\circ_{\mathsf{h}}\mathrm{id}_{\mathrm{C}^{X}}}"description]
\\[5ex]
\mathrm{C}^{X}\mathrm{C}^{X}
\ar[rrr,"\mathrm{id}_{\mathrm{C}^{X}}\circ_{\mathsf{h}}\delta_{\mathrm{C}^{X}}"]
\ar[rrr, phantom,yshift=-15ex, "\circled{4}"]
\ar[dd,"{(\runit_{\mathrm{C}^{X}})^{\mone}\circ_{\mathsf{h}}\mathrm{id}_{\mathrm{C}^{X}}}"description]
\&\&\&
\mathrm{C}^{X}(\mathrm{C}^{X}\mathrm{C}^{X})
\ar[dr,xshift=-2ex,"{\mathrm{id}_{\mathrm{C}^{X}}\circ_{\mathsf{h}}\left(\mathrm{id}_{\mathrm{C}^{X}}\circ_{\mathsf{h}}\left(\lunit_{\mathrm{C}^{X}}\right)^{\mone}\right)}"description, near end]
\ar[dd,"{(\runit_{\mathrm{C}^{X}})^{\mone}\circ_{\mathsf{h}}\mathrm{id}_{\mathrm{C}^{X}\mathrm{C}^{X}}}"description, near end]
\&
(\mathrm{C}^{X}\mathrm{C}^{X})(\mathbbm{1}_{\mathtt{i}}\mathrm{C}^{X})
\ar[d,xshift=1ex, "\alpha_{\mathrm{C}^{X},\mathrm{C}^{X},\mathbbm{1}_{\mathtt{i}}\mathrm{C}^{X}}", near start, swap]
\ar[dr,"{\left(\left(\runit_{\mathrm{C}^{X}}\right)^{\mone}\circ_{\mathsf{h}}\mathrm{id}_{\mathrm{C}^{X}}\right)\circ_{\mathsf{h}}\mathrm{id}_{\mathbbm{1}_{\mathtt{i}}\mathrm{C}^{X}}}"description]
\ar[dr, phantom,yshift=-8ex, "\circled{11}"]
\&
\Big(\big((\mathrm{C}^{X}\mathbbm{1}_{\mathtt{i}})\mathrm{C}^{X}\big)\mathbbm{1}_{\mathtt{i}}\Big)\mathrm{C}^{X}
\ar[d,xshift=1ex,"\alpha_{(\mathrm{C}^{X}\mathbbm{1}_{\mathtt{i}})\mathrm{C}^{X},\mathbbm{1}_{\mathtt{i}},\mathrm{C}^{X}}"]
\ar[rrrrr,"(\mathrm{id}_{(\mathrm{C}^{X}\mathbbm{1}_{\mathtt{i}})\mathrm{C}^{X}}\circ_{\mathsf{h}}\mathrm{coev}_{\mathrm{F}})\circ_{\mathsf{h}}\mathrm{id}_{\mathrm{C}^{X}}"]
\ar[rrrrr, phantom,yshift=-7ex, "\circled{9}"]
\&\&\&\&\&
\Big(\big((\mathrm{C}^{X}\mathbbm{1}_{\mathtt{i}})\mathrm{C}^{X}\big)(\mathrm{F}^{\star}\mathrm{F})\Big)\mathrm{C}^{X}
\ar[d,"{\alpha_{(\mathrm{C}^{X}\mathbbm{1}_{\mathtt{i}})\mathrm{C}^{X},\mathrm{F}^{\star}\mathrm{F},\mathrm{C}^{X}}}"description]
\\[5ex]
\&\&\&\&
\mathrm{C}^{X}\big(\mathrm{C}^{X}(\mathbbm{1}_{\mathtt{i}}\mathrm{C}^{X})\big)
\ar[dr,"{(\runit_{\mathrm{C}^{X}})^{\mone}\circ_{\mathsf{h}}\mathrm{id}_{\mathrm{C}^{X}(\mathbbm{1}_{\mathtt{i}}\mathrm{C}^{X})}}"description]
\&
\big((\mathrm{C}^{X}\mathbbm{1}_{\mathtt{i}})\mathrm{C}^{X}\big)\big(\mathbbm{1}_{\mathtt{i}}\mathrm{C}^{X}\big)
\ar[d,xshift=1ex,"\alpha_{\mathrm{C}^{X}\mathbbm{1}_{\mathtt{i}},\mathrm{C}^{X},\mathbbm{1}_{\mathtt{i}}\mathrm{C}^{X}}"]
\ar[rrrrr,"\mathrm{id}_{(\mathrm{C}^{X}\mathbbm{1}_{\mathtt{i}})\mathrm{C}^{X}}\circ_{\mathsf{h}}(\mathrm{coev}_{\mathrm{F}}\circ_{\mathsf{h}}\mathrm{id}_{\mathrm{C}^{X}})"]
\ar[rrrrr, phantom,yshift=-7ex, "\circled{12}"]
\&\&\&\&\&
\big((\mathrm{C}^{X}\mathbbm{1}_{\mathtt{i}})\mathrm{C}^{X}\big)\big((\mathrm{F}^{\star}\mathrm{F})\mathrm{C}^{X}\big)
\ar[d,"{\alpha_{\mathrm{C}^{X}\mathbbm{1}_{\mathtt{i}},\mathrm{C}^{X},(\mathrm{F}^{\star}\mathrm{F})\mathrm{C}^{X}}}"description]
\\[5ex]
(\mathrm{C}^{X}\mathbbm{1}_{\mathtt{i}})\mathrm{C}^{X}
\ar[rrr,"\mathrm{id}_{\mathrm{C}^{X}\mathbbm{1}_{\mathtt{i}}}\circ_{\mathsf{h}}\delta_{\mathrm{C}^{X}}"]
\ar[rrr, phantom,yshift=-7ex, "\circled{13}"]
\ar[d,"{(\mathrm{id}_{\mathrm{C}^{X}}\circ_{\mathsf{h}}\mathrm{coev}_{\mathrm{F}})\circ_{\mathsf{h}}\mathrm{id}_{\mathrm{C}^{X}}}"description]
\&\&\&
(\mathrm{C}^{X}\mathbbm{1}_{\mathtt{i}})(\mathrm{C}^{X}\mathrm{C}^{X})
\ar[rr,"\mathrm{id}_{\mathrm{C}^{X}\mathbbm{1}_{\mathtt{i}}}\circ_{\mathsf{h}}\left(\mathrm{id}_{\mathrm{C}^{X}}\circ_{\mathsf{h}}\left(\lunit_{\mathrm{C}^{X}}\right)^{\mone}\right)"]
\ar[rr, phantom,yshift=-7ex, "\circled{14}"]
\ar[rr, phantom,yshift=8ex, xshift=-3ex, "\circled{10}"]
\ar[d,"{(\mathrm{id}_{\mathrm{C}^{X}}\circ_{\mathsf{h}}\mathrm{coev}_{\mathrm{F}})\circ_{\mathsf{h}}\mathrm{id}_{\mathrm{C}^{X}\mathrm{C}^{X}}}"description]
\&\&
\big(\mathrm{C}^{X}\mathbbm{1}_{\mathtt{i}}\big)\big(\mathrm{C}^{X}(\mathbbm{1}_{\mathtt{i}}\mathrm{C}^{X})\big)
\ar[rrrrr,"\mathrm{id}_{\mathrm{C}^{X}\mathbbm{1}_{\mathtt{i}}}\circ_{\mathsf{h}}\left(\mathrm{id}_{\mathrm{C}^{X}}\circ_{\mathsf{h}}\left(\mathrm{coev}_{\mathrm{F}}\circ_{\mathsf{h}}\mathrm{id}_{\mathrm{C}^{X}}\right)\right)"]
\ar[rrrrr, phantom,yshift=-7ex, "\circled{15}"]
\ar[d,xshift=1ex,"{(\mathrm{id}_{\mathrm{C}^{X}}\circ_{\mathsf{h}}\mathrm{coev}_{\mathrm{F}})\circ_{\mathsf{h}}\mathrm{id}_{\mathrm{C}^{X}(\mathbbm{1}_{\mathtt{i}}\mathrm{C}^{X})}}"description]
\&\&\&\&\&
\Big(\mathrm{C}^{X}\mathbbm{1}_{\mathtt{i}}\Big)\Big(\mathrm{C}^{X}\big((\mathrm{F}^{\star}\mathrm{F})\mathrm{C}^{X}\big)\Big)
\ar[d,"{(\mathrm{id}_{\mathrm{C}^{X}}\circ_{\mathsf{h}}\mathrm{coev}_{\mathrm{F}})\circ_{\mathsf{h}}\mathrm{id}_{\mathrm{C}^{X}\left(\left(\mathrm{F}^{\star}\mathrm{F}\right)\mathrm{C}^{X}\right)}}"description]
\\[5ex]
\big(\mathrm{C}^{X}(\mathrm{F}^{\star}\mathrm{F})\big)\mathrm{C}^{X}
\ar[rrr,"\mathrm{id}_{\mathrm{C}^{X}(\mathrm{F}^{\star}\mathrm{F})}\circ_{\mathsf{h}}\delta_{\mathrm{C}^{X}}"]
\ar[rrr, phantom,yshift=-7ex, "\circled{16}"]
\ar[d,"{\alpha_{\mathrm{C}^{X},\mathrm{F}^{\star},\mathrm{F}}^{\mone}\circ_{\mathsf{h}}\mathrm{id}_{\mathrm{C}^{X}}}"description]
\&\&\&
\big(\mathrm{C}^{X}(\mathrm{F}^{\star}\mathrm{F})\big)\big(\mathrm{C}^{X}\mathrm{C}^{X}\big)
\ar[rr,"\mathrm{id}_{\mathrm{C}^{X}(\mathrm{F}^{\star}\mathrm{F})}\circ_{\mathsf{h}}\left(\mathrm{id}_{\mathrm{C}^{X}}\circ_{\mathsf{h}}\left(\lunit_{\mathrm{C}^{X}}\right)^{\mone}\right)"]
\ar[rr, phantom,yshift=-7ex,"\circled{17}"]
\ar[d,"{\alpha_{\mathrm{C}^{X},\mathrm{F}^{\star},\mathrm{F}}^{\mone}\circ_{\mathsf{h}}\mathrm{id}_{\mathrm{C}^{X}\mathrm{C}^{X}}}"description]
\&\&
\big(\mathrm{C}^{X}(\mathrm{F}^{\star}\mathrm{F})\big)\big(\mathrm{C}^{X}(\mathbbm{1}_{\mathtt{i}}\mathrm{C}^{X})\big)
\ar[rrrrr,"\mathrm{id}_{\mathrm{C}^{X}(\mathrm{F}^{\star}\mathrm{F})}\circ_{\mathsf{h}}\left(\mathrm{id}_{\mathrm{C}^{X}}\circ_{\mathsf{h}}\left(\mathrm{coev}_{\mathrm{F}}\circ_{\mathsf{h}}\mathrm{id}_{\mathrm{C}^{X}}\right)\right)"]
\ar[rrrrr, phantom,yshift=-7ex,"\circled{18}"]
\ar[d,xshift=1ex,"{\alpha_{\mathrm{C}^{X},\mathrm{F}^{\star},\mathrm{F}}^{\mone}\circ_{\mathsf{h}}\mathrm{id}_{\mathrm{C}^{X}(\mathbbm{1}_{\mathtt{i}}\mathrm{C}^{X})}}"description]
\&\&\&\&\&
\Big(\mathrm{C}^{X}(\mathrm{F}^{\star}\mathrm{F})\Big)\Big(\mathrm{C}^{X}\big((\mathrm{F}^{\star}\mathrm{F})\mathrm{C}^{X}\big)\Big)
\ar[d,"{\alpha_{\mathrm{C}^{X},\mathrm{F}^{\star},\mathrm{F}}^{\mone}\circ_{\mathsf{h}}\mathrm{id}_{\mathrm{C}^{X}\left(\left(\mathrm{F}^{\star}\mathrm{F}\right)\mathrm{C}^{X}\right)}}"description]
\\[5ex]
\big((\mathrm{C}^{X}\mathrm{F}^{\star})\mathrm{F}\big)\mathrm{C}^{X}
\ar[rrr,"\mathrm{id}_{(\mathrm{C}^{X}\mathrm{F}^{\star})\mathrm{F}}\circ_{\mathsf{h}}\delta_{\mathrm{C}^{X}}"]
\ar[rrr, phantom,yshift=-7ex, "\circled{19}"]
\ar[d,"{\alpha_{\mathrm{C}^{X}\mathrm{F}^{\star},\mathrm{F},\mathrm{C}^{X}}}"description]
\&\&\&
\big((\mathrm{C}^{X}\mathrm{F}^{\star})\mathrm{F}\big)\big(\mathrm{C}^{X}\mathrm{C}^{X}\big)
\ar[rr,"\mathrm{id}_{(\mathrm{C}^{X}\mathrm{F}^{\star})\mathrm{F}}\circ_{\mathsf{h}}\left(\mathrm{id}_{\mathrm{C}^{X}}\circ_{\mathsf{h}}\left(\lunit_{\mathrm{C}^{X}}\right)^{\mone}\right)"]
\ar[rr, phantom,yshift=-7ex, "\circled{20}"]
\ar[d,"{\alpha_{\mathrm{C}^{X}\mathrm{F}^{\star},\mathrm{F},\mathrm{C}^{X}\mathrm{C}^{X}}}"description]
\&\&
\big((\mathrm{C}^{X}\mathrm{F}^{\star})\mathrm{F}\big)\big(\mathrm{C}^{X}(\mathbbm{1}_{\mathtt{i}}\mathrm{C}^{X})\big)
\ar[rrrrr,"\mathrm{id}_{(\mathrm{C}^{X}\mathrm{F}^{\star})\mathrm{F}}\circ_{\mathsf{h}}\left(\mathrm{id}_{\mathrm{C}^{X}}\circ_{\mathsf{h}}\left(\mathrm{coev}_{\mathrm{F}}\circ_{\mathsf{h}}\mathrm{id}_{\mathrm{C}^{X}}\right)\right)"]
\ar[rrrrr, phantom,yshift=-7ex, "\circled{21}"]
\ar[d,xshift=1ex,"{\alpha_{\mathrm{C}^{X}\mathrm{F}^{\star},\mathrm{F},\mathrm{C}^{X}(\mathbbm{1}_{\mathtt{i}}\mathrm{C}^{X})}}"description]
\&\&\&\&\&
\Big((\mathrm{C}^{X}\mathrm{F}^{\star})\mathrm{F}\Big)\Big(\mathrm{C}^{X}\big((\mathrm{F}^{\star}\mathrm{F})\mathrm{C}^{X}\big)\Big)
\ar[d,"{\alpha_{\mathrm{C}^{X}\mathrm{F}^{\star},\mathrm{F},\mathrm{C}^{X}\left(\left(\mathrm{F}^{\star}\mathrm{F}\right)\mathrm{C}^{X}\right)}}"description]
\\[5ex]
(\mathrm{C}^{X}\mathrm{F}^{\star})(\mathrm{F}\mathrm{C}^{X})
\ar[rrr,"\mathrm{id}_{\mathrm{C}^{X}\mathrm{F}^{\star}}\circ_{\mathsf{h}}(\mathrm{id}_{\mathrm{F}}\circ_{\mathsf{h}}\delta_{\mathrm{C}^{X}})"]
\&\&\&
\big(\mathrm{C}^{X}\mathrm{F}^{\star}\big)\big(\mathrm{F}(\mathrm{C}^{X}\mathrm{C}^{X})\big)
\ar[rr,"\mathrm{id}_{\mathrm{C}^{X}\mathrm{F}^{\star}}\circ_{\mathsf{h}}\left(\mathrm{id}_{\mathrm{F}}\circ_{\mathsf{h}}\left(\mathrm{id}_{\mathrm{C}^{X}}\circ_{\mathsf{h}}\left(\lunit_{\mathrm{C}^{X}}\right)^{\mone}\right)\right)"]
\ar[rr, phantom,yshift=-7ex, "\circled{22}"]
\ar[d,"{\mathrm{id}_{\mathrm{C}^{X}\mathrm{F}^{\star}}\circ_{\mathsf{h}}\alpha_{\mathrm{F},\mathrm{C}^{X},\mathrm{C}^{X}}^{\mone}}"description]
\&\&
\big(\mathrm{C}^{X}\mathrm{F}^{\star}\big)\Big(\mathrm{F}\big(\mathrm{C}^{X}(\mathbbm{1}_{\mathtt{i}}\mathrm{C}^{X})\big)\Big)
\ar[rrrrr,"\mathrm{id}_{\mathrm{C}^{X}\mathrm{F}^{\star}}\circ_{\mathsf{h}}\left(\mathrm{id}_{\mathrm{F}}\circ_{\mathsf{h}}\left(\mathrm{id}_{\mathrm{C}^{X}}\circ_{\mathsf{h}}\left(\mathrm{coev}_{\mathrm{F}}\circ_{\mathsf{h}}\mathrm{id}_{\mathrm{C}^{X}}\right)\right)\right)"]
\ar[rrrrr, phantom,yshift=-7ex, "\circled{23}"]
\ar[d,"{\mathrm{id}_{\mathrm{C}^{X}\mathrm{F}^{\star}}\circ_{\mathsf{h}}\alpha_{\mathrm{F},\mathrm{C}^{X},\mathbbm{1}_{\mathtt{i}}\mathrm{C}^{X}}^{\mone}}"description]
\&\&\&\&\&
\big(\mathrm{C}^{X}\mathrm{F}^{\star}\big)\bigg(\mathrm{F}\Big(\mathrm{C}^{X}\big((\mathrm{F}^{\star}\mathrm{F})\mathrm{C}^{X}\big)\Big)\bigg)
\ar[d,"{\mathrm{id}_{\mathrm{C}^{X}\mathrm{F}^{\star}}\circ_{\mathsf{h}}\alpha_{\mathrm{F},\mathrm{C}^{X},(\mathrm{F}^{\star}\mathrm{F})\mathrm{C}^{X}}^{\mone}}"description]
\\[5ex]
\&\&\&
\big(\mathrm{C}^{X}\mathrm{F}^{\star}\big)\big((\mathrm{F}\mathrm{C}^{X})\mathrm{C}^{X}\big)
\ar[rr,"\mathrm{id}_{\mathrm{C}^{X}\mathrm{F}^{\star}}\circ_{\mathsf{h}}\left(\mathrm{id}_{\mathrm{F}\mathrm{C}^{X}}\circ_{\mathsf{h}}\left(\lunit_{\mathrm{C}^{X}}\right)^{\mone}\right)"]
\&\&
\big(\mathrm{C}^{X}\mathrm{F}^{\star}\big)\big((\mathrm{F}\mathrm{C}^{X})(\mathbbm{1}_{\mathtt{i}}\mathrm{C}^{X})\big)
\ar[rrrrr,"\mathrm{id}_{\mathrm{C}^{X}\mathrm{F}^{\star}}\circ_{\mathsf{h}}\left(\mathrm{id}_{\mathrm{F}\mathrm{C}^{X}}\circ_{\mathsf{h}}\left(\mathrm{coev}_{\mathrm{F}}\circ_{\mathsf{h}}\mathrm{id}_{\mathrm{C}^{X}}\right)\right)"]
\&\&\&\&\&
\big(\mathrm{C}^{X}\mathrm{F}^{\star}\big)\Big(\big(\mathrm{F}\mathrm{C}^{X}\big)\big((\mathrm{F}^{\star}\mathrm{F})\mathrm{C}^{X}\big)\Big)
\end{tikzcd}$}
\end{gather*}
which commutes due to
\begin{itemize}
\item coassociativity of $\delta_{\mathrm{C}^{X}}$ for the facet labeled $1$;

\item naturality of $(\runit_{})^{\mone}$ for the facet labeled $2$;

\item the interchange law for the facets labeled $3$, $4$, $8$, $10$, $13$, $14$, $15$, $16$, $17$ and $18$;

\item naturality of $\alpha$ and $\alpha^{\mone}$ for the facets labeled $5$, $7$, $9$, $11$, $12$, $19$, $20$, $21$, $22$ and $23$;

\item the triangle coherence condition of the unitors for the facet labeled $6$,

\end{itemize}
and the diagram
\begin{gather*}
\scalebox{0.41}{
$\begin{tikzcd}[ampersand replacement=\&, column sep=3.4em, row sep=2.5em]
\big(\mathrm{C}^{X}(\mathrm{F}^{\star}\mathrm{F})\big)\mathrm{C}^{X}
\ar[d,"{(\delta_{\mathrm{C}^{X}}\circ_{\mathsf{h}}\mathrm{id}_{\mathrm{F}^{\star}\mathrm{F}})\circ_{\mathsf{h}}\mathrm{id}_{\mathrm{C}^{X}}}"description]
\ar[rrrrrrr,"\alpha_{\mathrm{C}^{X},\mathrm{F}^{\star},\mathrm{F}}^{\mone}\circ_{\mathsf{h}}\mathrm{id}_{\mathrm{C}^{X}}"]
\ar[rrrrrrr, phantom,yshift=-7ex, "\circled{1}"]
\&\&\&\&\&\&\&
\big((\mathrm{C}^{X}\mathrm{F}^{\star})\mathrm{F}\big)\mathrm{C}^{X}
\ar[d,"{\left(\left(\delta_{\mathrm{C}^{X}}\circ_{\mathsf{h}}\mathrm{id}_{\mathrm{F}^{\star}}\right)\circ_{\mathsf{h}}\mathrm{id}_{\mathrm{F}}\right)\circ_{\mathsf{h}}\mathrm{id}_{\mathrm{C}^{X}}}"description]
\ar[rr,"\alpha_{\mathrm{C}^{X}\mathrm{F}^{\star},\mathrm{F},\mathrm{C}^{X}}"]
\ar[rr, phantom,yshift=-7ex, "\circled{2}"]
\&\&
(\mathrm{C}^{X}\mathrm{F}^{\star})(\mathrm{F}\mathrm{C}^{X})
\ar[d,"{(\delta_{\mathrm{C}^{X}}\circ_{\mathsf{h}}\mathrm{id}_{\mathrm{F}^{\star}})\circ_{\mathsf{h}}\mathrm{id}_{\mathrm{F}\mathrm{C}^{X}}}"description]
\\[5ex]
\big((\mathrm{C}^{X}\mathrm{C}^{X})(\mathrm{F}^{\star}\mathrm{F})\big)\mathrm{C}^{X}
\ar[d, "{\left(\left(\left(\runit_{\mathrm{C}^{X}}\right)^{\mone}\circ_{\mathsf{h}}\mathrm{id}_{\mathrm{C}^{X}}\right)\circ_{\mathsf{h}}\mathrm{id}_{\mathrm{F}^{\star}\mathrm{F}}\right)\circ_{\mathsf{h}}\mathrm{id}_{\mathrm{C}^{X}}}"description]
\ar[rrrrrrr,"\alpha_{\mathrm{C}^{X}\mathrm{C}^{X},\mathrm{F}^{\star},\mathrm{F}}^{\mone}\circ_{\mathsf{h}}\mathrm{id}_{\mathrm{C}^{X}}"]
\ar[drrrr,"\alpha_{\mathrm{C}^{X}\mathrm{C}^{X},\mathrm{F}^{\star}\mathrm{F},\mathrm{C}^{X}}"]
\ar[drrrr, phantom,yshift=-7ex, "\circled{3}"]
\&\&\&\&\&\&\&
\Big(\big((\mathrm{C}^{X}\mathrm{C}^{X})\mathrm{F}^{\star}\big)\mathrm{F}\Big)\mathrm{C}^{X}
\ar[rr,"\alpha_{(\mathrm{C}^{X}\mathrm{C}^{X})\mathrm{F}^{\star},\mathrm{F},\mathrm{C}^{X}}"]
\&\&
\big((\mathrm{C}^{X}\mathrm{C}^{X})\mathrm{F}^{\star}\big)\big(\mathrm{F}\mathrm{C}^{X}\big)
\ar[ddll,"{\left(\left(\left(\runit_{\mathrm{C}^{X}}\right)^{\mone}\circ_{\mathsf{h}}\mathrm{id}_{\mathrm{C}^{X}}\right)\circ_{\mathsf{h}}\mathrm{id}_{\mathrm{F}^{\star}}\right)\circ_{\mathsf{h}}\mathrm{id}_{\mathrm{F}\mathrm{C}^{X}}}"description, near end]
\ar[d,"{\alpha_{\mathrm{C}^{X},\mathrm{C}^{X},\mathrm{F}^{\star}}\circ_{\mathsf{h}}\mathrm{id}_{\mathrm{FC}^{X}}}"description]
\\[5ex]
\Big(\big((\mathrm{C}^{X}\mathbbm{1}_{\mathtt{i}})\mathrm{C}^{X}\big)(\mathrm{F}^{\star}\mathrm{F})\Big)\mathrm{C}^{X}
\ar[d,"{\alpha_{(\mathrm{C}^{X}\mathbbm{1}_{\mathtt{i}})\mathrm{C}^{X},\mathrm{F}^{\star}\mathrm{F},\mathrm{C}^{X}}}"description]
\&\&\&\&
(\mathrm{C}^{X}\mathrm{C}^{X})\big((\mathrm{F}^{\star}\mathrm{F})\mathrm{C}^{X}\big)
\ar[rrr,"\mathrm{id}_{\mathrm{C}^{X}\mathrm{C}^{X}}\circ_{\mathsf{h}}\alpha_{\mathrm{F}^{\star},\mathrm{F},\mathrm{C}^{X}}"]
\ar[dllll, "{\left(\left(\runit_{\mathrm{C}^{X}}\right)^{\mone}\circ_{\mathsf{h}}\mathrm{id}_{\mathrm{C}^{X}}\right)\circ_{\mathsf{h}}\mathrm{id}_{(\mathrm{F}^{\star}\mathrm{F})\mathrm{C}^{X}}}"description]
\ar[rrr, phantom,yshift=10ex, "\circled{4}"]
\ar[rrr, phantom,yshift=-8ex, xshift=-29ex,"\circled{5}"]
\&\&\&
\big(\mathrm{C}^{X}\mathrm{C}^{X}\big)\big(\mathrm{F}^{\star}(\mathrm{F}\mathrm{C}^{X})\big)
\ar[urr,"\alpha_{\mathrm{C}^{X}\mathrm{C}^{X},\mathrm{F}^{\star},\mathrm{F}\mathrm{C}^{X}}^{\mone}"]
\ar[dlll, "{\left(\left(\runit_{\mathrm{C}^{X}}\right)^{\mone}\circ_{\mathsf{h}}\mathrm{id}_{\mathrm{C}^{X}}\right)\circ_{\mathsf{h}}\mathrm{id}_{\mathrm{F}^{\star}(\mathrm{F}\mathrm{C}^{X})}}"description]
\&\&
\big(\mathrm{C}^{X}(\mathrm{C}^{X}\mathrm{F}^{\star})\big)\big(\mathrm{F}\mathrm{C}^{X}\big)
\ar[dd,"{\left(\left(\runit_{\mathrm{C}^{X}}\right)^{\mone}\circ_{\mathsf{h}}\mathrm{id}_{\mathrm{C}^{X}\mathrm{F}^{\star}}\right)\circ_{\mathsf{h}}\mathrm{id}_{\mathrm{F}\mathrm{C}^{X}}}"description]
\ar[dd, phantom,xshift=-20ex, "\circled{7}"]
\\[5ex]
\big((\mathrm{C}^{X}\mathbbm{1}_{\mathtt{i}})\mathrm{C}^{X}\big)\big((\mathrm{F}^{\star}\mathrm{F})\mathrm{C}^{X}\big)
\ar[d,"{\alpha_{\mathrm{C}^{X}\mathbbm{1}_{\mathtt{i}},\mathrm{C}^{X},(\mathrm{F}^{\star}\mathrm{F})\mathrm{C}^{X}}}"description]
\ar[rrrr,"\mathrm{id}_{(\mathrm{C}^{X}\mathbbm{1}_{\mathtt{i}})\mathrm{C}^{X}}\circ_{\mathsf{h}}\alpha_{\mathrm{F}^{\star},\mathrm{F},\mathrm{C}^{X}}"]
\ar[rrrr, phantom,yshift=-7ex, "\circled{8}"]
\&\&\&\&
\big((\mathrm{C}^{X}\mathbbm{1}_{\mathtt{i}})\mathrm{C}^{X}\big)\big(\mathrm{F}^{\star}(\mathrm{F}\mathrm{C}^{X})\big)
\ar[rrr,"\alpha_{(\mathrm{C}^{X}\mathbbm{1}_{\mathtt{i}})\mathrm{C}^{X},\mathrm{F}^{\star},\mathrm{F}\mathrm{C}^{X}}^{\mone}"]
\ar[rrr, phantom,yshift=8ex, xshift=22ex, "\circled{6}"]
\ar[d,"{\alpha_{\mathrm{C}^{X}\mathbbm{1}_{\mathtt{i}},\mathrm{C}^{X},\mathrm{F}^{\star}(\mathrm{F}\mathrm{C}^{X})}}"description]
\ar[rrr, phantom,yshift=-7ex,xshift=6ex, "\circled{9}"]
\&\&\&
\Big(\big((\mathrm{C}^{X}\mathbbm{1}_{\mathtt{i}})\mathrm{C}^{X}\big)\mathrm{F}^{\star}\Big)\Big(\mathrm{F}\mathrm{C}^{X}\Big)
\ar[drr,"\alpha_{\mathrm{C}^{X}\mathbbm{1}_{\mathtt{i}},\mathrm{C}^{X},\mathrm{F}^{\star}}\circ_{\mathsf{h}}\mathrm{id}_{\mathrm{F}\mathrm{C}^{X}}"]
\&\&
\\[5ex]
\Big(\mathrm{C}^{X}\mathbbm{1}_{\mathtt{i}}\Big)\Big(\mathrm{C}^{X}\big((\mathrm{F}^{\star}\mathrm{F})\mathrm{C}^{X}\big)\Big)
\ar[d,"{(\mathrm{id}_{\mathrm{C}^{X}}\circ_{\mathsf{h}}\mathrm{coev}_{\mathrm{F}})\circ_{\mathsf{h}}\mathrm{id}_{\mathrm{C}^{X}\left(\left(\mathrm{F}^{\star}\mathrm{F}\right)\mathrm{C}^{X}\right)}}"description]
\ar[rrrr,"\mathrm{id}_{\mathrm{C}^{X}\mathbbm{1}_{\mathtt{i}}}\circ_{\mathsf{h}}(\mathrm{id}_{\mathrm{C}^{X}}\circ_{\mathsf{h}}\alpha_{\mathrm{F}^{\star},\mathrm{F},\mathrm{C}^{X}})"]
\ar[rrrr, phantom,yshift=-7ex, "\circled{10}"]
\&\&\&\&
\Big(\mathrm{C}^{X}\mathbbm{1}_{\mathtt{i}}\Big)\Big(\mathrm{C}^{X}\big(\mathrm{F}^{\star}(\mathrm{F}\mathrm{C}^{X})\big)\Big)
\ar[rrr,"\mathrm{id}_{\mathrm{C}^{X}\mathbbm{1}_{\mathtt{i}}}\circ_{\mathsf{h}}\alpha_{\mathrm{C}^{X},\mathrm{F}^{\star},\mathrm{F}\mathrm{C}^{X}}^{\mone}"]
\ar[d,"{(\mathrm{id}_{\mathrm{C}^{X}}\circ_{\mathsf{h}}\mathrm{coev}_{\mathrm{F}})\circ_{\mathsf{h}}\mathrm{id}_{\mathrm{C}^{X}\left(\mathrm{F}^{\star}\left(\mathrm{F}\mathrm{C}^{X}\right)\right)}}"description]
\ar[rrr, phantom,yshift=-7ex, "\circled{11}"]
\&\&\&
\big(\mathrm{C}^{X}\mathbbm{1}_{\mathtt{i}}\big)\big((\mathrm{C}^{X}\mathrm{F}^{\star})(\mathrm{F}\mathrm{C}^{X}))\big)
\ar[rr,"\alpha_{\mathrm{C}^{X}\mathbbm{1}_{\mathtt{i}},\mathrm{C}^{X}\mathrm{F}^{\star},\mathrm{F}\mathrm{C}^{X}}^{\mone}"]
\ar[rr, phantom,yshift=-7ex,"\circled{12}"]
\ar[d,"{(\mathrm{id}_{\mathrm{C}^{X}}\circ_{\mathsf{h}}\mathrm{coev}_{\mathrm{F}})\circ_{\mathsf{h}}\mathrm{id}_{(\mathrm{C}^{X}\mathrm{F}^{\star})(\mathrm{F}\mathrm{C}^{X})}}"description]
\&\&
\big((\mathrm{C}^{X}\mathbbm{1}_{\mathtt{i}})(\mathrm{C}^{X}\mathrm{F}^{\star})\big)\big(\mathrm{F}\mathrm{C}^{X}\big)
\ar[d,"{\left(\left(\mathrm{id}_{\mathrm{C}^{X}}\circ_{\mathsf{h}}\mathrm{coev}_{\mathrm{F}}\right)\circ_{\mathsf{h}}\mathrm{id}_{\mathrm{C}^{X}\mathrm{F}^{\star}}\right)\circ_{\mathsf{h}}\mathrm{id}_{\mathrm{F}\mathrm{C}^{X}}}"description]
\\[5ex]
\Big(\mathrm{C}^{X}(\mathrm{F}^{\star}\mathrm{F})\Big)\Big(\mathrm{C}^{X}\big((\mathrm{F}^{\star}\mathrm{F})\mathrm{C}^{X}\big)\Big)
\ar[d,"{\alpha_{\mathrm{C}^{X},\mathrm{F}^{\star},\mathrm{F}}^{\mone}\circ_{\mathsf{h}}\mathrm{id}_{\mathrm{C}^{X}\left(\left(\mathrm{F}^{\star}\mathrm{F}\right)\mathrm{C}^{X}\right)}}"description]
\ar[rrrr,"\mathrm{id}_{\mathrm{C}^{X}(\mathrm{F}^{\star}\mathrm{F})}\circ_{\mathsf{h}}(\mathrm{id}_{\mathrm{C}^{X}}\circ_{\mathsf{h}}\alpha_{\mathrm{F}^{\star},\mathrm{F},\mathrm{C}^{X}})"]
\ar[rrrr, phantom,yshift=-7ex, "\circled{13}"]
\&\&\&\&
\Big(\mathrm{C}^{X}(\mathrm{F}^{\star}\mathrm{F})\Big)\Big(\mathrm{C}^{X}\big(\mathrm{F}^{\star}(\mathrm{F}\mathrm{C}^{X})\big)\Big)
\ar[rrr,"\mathrm{id}_{\mathrm{C}^{X}(\mathrm{F}^{\star}\mathrm{F})}\circ_{\mathsf{h}}\alpha_{\mathrm{C}^{X},\mathrm{F}^{\star},\mathrm{F}\mathrm{C}^{X}}^{\mone}"]
\ar[d,"{\alpha_{\mathrm{C}^{X},\mathrm{F}^{\star},\mathrm{F}}^{\mone}\circ_{\mathsf{h}}\mathrm{id}_{\mathrm{C}^{X}\left(\mathrm{F}^{\star}\left(\mathrm{F}\mathrm{C}^{X}\right)\right)}}"description]
\ar[rrr, phantom,yshift=-7ex, "\circled{14}"]
\&\&\&
\big(\mathrm{C}^{X}(\mathrm{F}^{\star}\mathrm{F})\big)\big((\mathrm{C}^{X}\mathrm{F}^{\star})(\mathrm{F}\mathrm{C}^{X})\big)
\ar[rr,"\alpha_{\mathrm{C}^{X}(\mathrm{F}^{\star}\mathrm{F}),\mathrm{C}^{X}\mathrm{F}^{\star},\mathrm{F}\mathrm{C}^{X}}^{\mone}"]
\ar[rr, phantom,yshift=-7ex, "\circled{15}"]
\ar[d,"{\alpha_{\mathrm{C}^{X},\mathrm{F}^{\star},\mathrm{F}}^{\mone}\circ_{\mathsf{h}}\mathrm{id}_{(\mathrm{C}^{X}\mathrm{F}^{\star})(\mathrm{F}\mathrm{C}^{X})}}"description]
\&\&
\Big(\big(\mathrm{C}^{X}(\mathrm{F}^{\star}\mathrm{F})\big)\big(\mathrm{C}^{X}\mathrm{F}^{\star}\big)\Big)\Big(\mathrm{F}\mathrm{C}^{X}\Big)
\ar[d,"{(\alpha_{\mathrm{C}^{X},\mathrm{F}^{\star},\mathrm{F}}^{\mone}\circ_{\mathsf{h}}\mathrm{id}_{\mathrm{C}^{X}\mathrm{F}^{\star}})\circ_{\mathsf{h}}\mathrm{id}_{\mathrm{F}\mathrm{C}^{X}}}"description]
\\[5ex]
\Big((\mathrm{C}^{X}\mathrm{F}^{\star})\mathrm{F}\Big)\Big(\mathrm{C}^{X}\big((\mathrm{F}^{\star}\mathrm{F})\mathrm{C}^{X}\big)\Big)
\ar[d,"{\alpha_{\mathrm{C}^{X}\mathrm{F}^{\star},\mathrm{F},\mathrm{C}^{X}\left(\left(\mathrm{F}^{\star}\mathrm{F}\right)\mathrm{C}^{X}\right)}}"description]
\ar[rrrr,"\mathrm{id}_{(\mathrm{C}^{X}\mathrm{F}^{\star})\mathrm{F}}\circ_{\mathsf{h}}(\mathrm{id}_{\mathrm{C}^{X}}\circ_{\mathsf{h}}\alpha_{\mathrm{F}^{\star},\mathrm{F},\mathrm{C}^{X}})"]
\ar[rrrr, phantom,yshift=-7ex, "\circled{16}"]
\&\&\&\&
\Big((\mathrm{C}^{X}\mathrm{F}^{\star})\mathrm{F}\Big)\Big(\mathrm{C}^{X}\big(\mathrm{F}^{\star}(\mathrm{F}\mathrm{C}^{X})\big)\Big)
\ar[d,"{\alpha_{\mathrm{C}^{X}\mathrm{F}^{\star},\mathrm{F},\mathrm{C}^{X}\left(\mathrm{F}^{\star}\left(\mathrm{F}\mathrm{C}^{X}\right)\right)}}"description]
\ar[rrr, phantom,yshift=-7ex, "\circled{17}"]
\ar[rrr,"\mathrm{id}_{(\mathrm{C}^{X}\mathrm{F}^{\star})\mathrm{F}}\circ_{\mathsf{h}}\alpha_{\mathrm{C}^{X},\mathrm{F}^{\star},\mathrm{F}\mathrm{C}^{X}}^{\mone}"]
\&\&\&
\big((\mathrm{C}^{X}\mathrm{F}^{\star})\mathrm{F}\big)\big((\mathrm{C}^{X}\mathrm{F}^{\star})(\mathrm{F}\mathrm{C}^{X})\big)
\ar[d,"{\alpha_{\mathrm{C}^{X}\mathrm{F}^{\star},\mathrm{F},(\mathrm{C}^{X}\mathrm{F}^{\star})(\mathrm{F}\mathrm{C}^{X})}}"description]
\ar[rr,"\alpha_{(\mathrm{C}^{X}\mathrm{F}^{\star})\mathrm{F},\mathrm{C}^{X}\mathrm{F}^{\star},\mathrm{F}\mathrm{C}^{X}}^{\mone}"]
\ar[rr, phantom,yshift=-14ex, "\circled{18}"]
\&\&
\Big(\big((\mathrm{C}^{X}\mathrm{F}^{\star})\mathrm{F}\big)\big(\mathrm{C}^{X}\mathrm{F}^{\star}\big)\Big)\Big(\mathrm{F}\mathrm{C}^{X}\Big)
\ar[d,"{\alpha_{\mathrm{C}^{X}\mathrm{F}^{\star},\mathrm{F},\mathrm{C}^{X}\mathrm{F}^{\star}}\circ_{\mathsf{h}}\mathrm{id}_{\mathrm{F}\mathrm{C}^{X}}}"description]
\\[5ex]
\big(\mathrm{C}^{X}\mathrm{F}^{\star}\big)\bigg(\mathrm{F}\Big(\mathrm{C}^{X}\big((\mathrm{F}^{\star}\mathrm{F})\mathrm{C}^{X}\big)\Big)\bigg)
\ar[dd,"{\mathrm{id}_{\mathrm{C}^{X}\mathrm{F}^{\star}}\circ_{\mathsf{h}}\alpha_{\mathrm{F},\mathrm{C}^{X},(\mathrm{F}^{\star}\mathrm{F})\mathrm{C}^{X}}^{\mone}}"description]
\ar[rrrr,"\mathrm{id}_{\mathrm{C}^{X}\mathrm{F}^{\star}}\circ_{\mathsf{h}}\left(\mathrm{id}_{\mathrm{F}}\circ_{\mathsf{h}}\left(\mathrm{id}_{\mathrm{C}^{X}}\circ_{\mathsf{h}}\alpha_{\mathrm{F}^{\star},\mathrm{F},\mathrm{C}^{X}}\right)\right)"]
\ar[rrrr, phantom,yshift=-16ex, "\circled{19}"]
\&\&\&\&
\big(\mathrm{C}^{X}\mathrm{F}^{\star}\big)\bigg(\mathrm{F}\Big(\mathrm{C}^{X}\big(\mathrm{F}^{\star}(\mathrm{F}\mathrm{C}^{X})\big)\Big)\bigg)
\ar[dd,"{\mathrm{id}_{\mathrm{C}^{X}\mathrm{F}^{\star}}\circ_{\mathsf{h}}\alpha_{\mathrm{F},\mathrm{C}^{X},\mathrm{F}^{\star}(\mathrm{F}\mathrm{C}^{X})}^{\mone}}"description]
\ar[rrr, phantom,yshift=-16ex, "\circled{20}"]
\ar[rrr,"\mathrm{id}_{\mathrm{C}^{X}(\mathrm{F}^{\star}}\circ_{\mathsf{h}}(\mathrm{id}_{\mathrm{F}}\circ_{\mathsf{h}}\alpha_{\mathrm{C}^{X},\mathrm{F}^{\star},\mathrm{F}\mathrm{C}^{X}}^{\mone})"]
\&\&\&
\big(\mathrm{C}^{X}\mathrm{F}^{\star}\big)\Big(\mathrm{F}\big((\mathrm{C}^{X}\mathrm{F}^{\star})(\mathrm{F}\mathrm{C}^{X})\big)\Big)
\ar[d,"{\mathrm{id}_{\mathrm{C}^{X}\mathrm{F}^{\star}}\circ_{\mathsf{h}}\alpha_{\mathrm{F},\mathrm{C}^{X}\mathrm{F}^{\star},\mathrm{F}\mathrm{C}^{X}}^{\mone}}"description]
\&\&
\Big(\big(\mathrm{C}^{X}\mathrm{F}^{\star}\big)\big(\mathrm{F}(\mathrm{C}^{X}\mathrm{F}^{\star})\big)\Big)\Big(\mathrm{F}\mathrm{C}^{X}\Big)
\ar[d,"{(\mathrm{id}_{\mathrm{C}^{X}\mathrm{F}^{\star}}\circ_{\mathsf{h}}\alpha_{\mathrm{F},\mathrm{C}^{X},\mathrm{F}^{\star}})\circ_{\mathsf{h}}\mathrm{id}_{\mathrm{F}\mathrm{C}^{X}}}"description]
\ar[dll,"{\alpha_{\mathrm{C}^{X}\mathrm{F}^{\star},\mathrm{F}(\mathrm{C}^{X}\mathrm{F}^{\star}),\mathrm{F}\mathrm{C}^{X}}}"description]
\\[5ex]
\&\&\&\&\&\&\&
\big(\mathrm{C}^{X}\mathrm{F}^{\star}\big)\Big(\big(\mathrm{F}(\mathrm{C}^{X}\mathrm{F}^{\star})\big)\big(\mathrm{F}\mathrm{C}^{X}\big)\Big)
\ar[drr,"{\mathrm{id}_{\mathrm{C}^{X}\mathrm{F}^{\star}}\circ_{\mathsf{h}}(\alpha_{\mathrm{F},\mathrm{C}^{X}\mathrm{F}^{\star}}^{\mone}\circ_{\mathsf{h}}\mathrm{id}_{\mathrm{F}\mathrm{C}^{X}})}"description]
\ar[drr, phantom,yshift=8ex, "\circled{21}"]
\&\&
\Big(\big(\mathrm{C}^{X}\mathrm{F}^{\star}\big)\big((\mathrm{F}\mathrm{C}^{X})\mathrm{F}^{\star}\big)\Big)\Big(\mathrm{F}\mathrm{C}^{X}\Big)
\ar[d,"{\alpha_{\mathrm{C}^{X}\mathrm{F}^{\star},(\mathrm{F}\mathrm{C}^{X})\mathrm{F}^{\star},\mathrm{F}\mathrm{C}^{X}}}"description]
\\[5ex]
\big(\mathrm{C}^{X}\mathrm{F}^{\star}\big)\Big(\big(\mathrm{F}\mathrm{C}^{X}\big)\big((\mathrm{F}^{\star}\mathrm{F})\mathrm{C}^{X}\big)\Big)
\ar[rrrr,"\mathrm{id}_{\mathrm{C}^{X}\mathrm{F}^{\star}}\circ_{\mathsf{h}}(\mathrm{id}_{\mathrm{F}\mathrm{C}^{X}}\circ_{\mathsf{h}}\alpha_{\mathrm{F}^{\star},\mathrm{F},\mathrm{C}^{X}})"]
\&\&\&\&
\big(\mathrm{C}^{X}\mathrm{F}^{\star}\big)\Big(\big(\mathrm{F}\mathrm{C}^{X}\big)\big(\mathrm{F}^{\star}(\mathrm{F}\mathrm{C}^{X})\big)\Big)\Big)
\ar[rrrrr,"\mathrm{id}_{\mathrm{C}^{X}\mathrm{F}^{\star}}\circ_{\mathsf{h}}\alpha_{\mathrm{F}\mathrm{C}^{X},\mathrm{F}^{\star},\mathrm{F}\mathrm{C}^{X}}^{\mone}"]
\&\&\&\&\&
\big(\mathrm{C}^{X}\mathrm{F}^{\star}\big)\Big(\big((\mathrm{F}\mathrm{C}^{X})\mathrm{F}^{\star}\big)\big(\mathrm{F}\mathrm{C}^{X}\big)\Big)
\end{tikzcd}$}
\end{gather*}
which commutes due to
\begin{itemize}
\item naturality of $\alpha$ and $\alpha^{\mone}$ for the facets labeled $1$, $2$, $3$, $6$, $7$, $8$, $12$, $15$, $16$, $17$, $19$ and $21$;

\item the pentagon coherence condition of the associator for the facets labeled $4$, $9$, $18$ and $20$;

\item the interchange law for the facets labeled $5$, $10$, $11$, $13$ and $14$.

\end{itemize}
The last column of the former diagram coincides with the first column of the latter one,
so we can glue the above two diagrams from left to right. Commutativity of the resulting big diagram proves the claim, as the two paths along its boundary from northwest to southeast correspond precisely to the two paths along the boundary of the right triangle
in \eqref{diag0} precomposed with $\gamma$.

By applying $\mathrm{F}$ to the diagram in \eqref{diag0} from the left, we obtain the upper part of the diagram
\begin{gather*}
\adjustbox{scale=.55,center}{%
\begin{tikzcd}[ampersand replacement=\&,column sep=2em, row sep=2em]
\mathrm{F}\big((\mathrm{C}^{X}\mathrm{F}^{\star})\square_{(\mathrm{FC}^{X})\mathrm{F}^{\star}}(\mathrm{F}\mathrm{C}^{X})\big)
\ar[rr, hook, "\mathrm{id}_{\mathrm{F}\circ_{\mathsf{h}}}\beta"]
\ar[ddd,phantom, xshift=2ex,"\cong"]
\ar[ddd, "\varphi",swap]
\&\&
\mathrm{F}\big((\mathrm{C}^{X}\mathrm{F}^{\star})(\mathrm{F}\mathrm{C}^{X})\big)
\arrow[rr, "\mathrm{id}_{\mathrm{F}}\circ_{\mathsf{h}}(\delta_{\mathrm{C}^{X}\mathrm{F}^{\star},(\mathrm{F}\mathrm{C}^{X})\mathrm{F}^{\star}}\circ_{\mathsf{h}}\mathrm{id}_{\mathrm{FC}^{X}})"]
\arrow[dr, "{\mathrm{id}_{\mathrm{F}}\circ_{\mathsf{h}}(\mathrm{id}_{\mathrm{C}^{X}\mathrm{F}^{\star}}\circ_{\mathsf{h}}\delta_{(\mathrm{F}\mathrm{C}^{X})\mathrm{F}^{\star},\mathrm{FC}^{X}})}"description]
\arrow[dr,phantom,yshift=-20ex, xshift=4ex,"\phantom{aaa}\circled{2}\text{\tiny(front and back)}"]
\ar[ddd,"{\alpha_{\mathrm{F},\mathrm{C}^{X}\mathrm{F}^{\star},\mathrm{F}\mathrm{C}^{X}}^{\mone}}"description]
\&\&
\mathrm{F}\bigg(\Big(\big(\mathrm{C}^{X}\mathrm{F}^{\star}\big)\big((\mathrm{F}\mathrm{C}^{X})\mathrm{F}^{\star}\big)\Big)\Big(\mathrm{F}\mathrm{C}^{X}\Big)\bigg)
\arrow[dl, "{\mathrm{id}_{\mathrm{F}}\circ_{\mathsf{h}}\alpha_{\mathrm{C}^{X}\mathrm{F}^{\star},(\mathrm{F}\mathrm{C}^{X})\mathrm{F}^{\star},\mathrm{FC}^{X}}}"description]
\arrow[dl,phantom,yshift=-12ex,xshift=3ex,"\circled{3}"]
\ar[dd,"{\alpha_{\mathrm{F},\left(\mathrm{C}^{X}\mathrm{F}^{\star}\right)\left(\left(\mathrm{F}\mathrm{C}^{X}\right)\mathrm{F}^{\star}\right),\mathrm{F}\mathrm{C}^{X}}^{\mone}}"description]
\\[5ex]
\&
\mathrm{F}\mathrm{C}^{X}
\ar[ur,"\mathrm{id}_{\mathrm{F}}\circ_{\mathsf{h}}\gamma", swap]
\ar[ul,"\mathrm{id}_{\mathrm{F}}\circ_{\mathsf{h}}\theta",dashed]
\&\&
\mathrm{F}\bigg(\big(\mathrm{C}^{X}\mathrm{F}^{\star}\big)\Big(\big((\mathrm{F}\mathrm{C}^{X})\mathrm{F}^{\star}\big)\big(\mathrm{F}\mathrm{C}^{X}\big)\Big)\bigg)
\&
\\[5ex]
\&\&\&\&
\bigg(\mathrm{F}\Big(\big(\mathrm{C}^{X}\mathrm{F}^{\star}\big)\big((\mathrm{F}\mathrm{C}^{X})\mathrm{F}^{\star}\big)\Big)\bigg)\bigg(\mathrm{F}\mathrm{C}^{X}\bigg)
\ar[d,"{\alpha_{\mathrm{F},\mathrm{C}^{X}\mathrm{F}^{\star},(\mathrm{F}\mathrm{C}^{X})\mathrm{F}^{\star}}^{\mone}\circ_{\mathsf{h}}\mathrm{id}_{\mathrm{F}\mathrm{C}^{X}}}"description]
\\[5ex]
\big(\mathrm{F}(\mathrm{C}^{X}\mathrm{F}^{\star})\big)\square_{(\mathrm{FC}^{X})\mathrm{F}^{\star}}\big(\mathrm{F}\mathrm{C}^{X}\big)
\ar[rr, hook, "\xi"]
\ar[rr, phantom,yshift=20ex, xshift=-2ex,"\circled{1}"]
\&\&
\big(\mathrm{F}(\mathrm{C}^{X}\mathrm{F}^{\star})\big)\big(\mathrm{F}\mathrm{C}^{X}\big)
\ar[rr,"\delta_{\mathrm{F}(\mathrm{C}^{X}\mathrm{F}^{\star}),(\mathrm{F}\mathrm{C}^{X})\mathrm{F}^{\star}}\circ_{\mathsf{h}}\mathrm{id}_{\mathrm{F}\mathrm{C}^{X}}", near start, swap]
\ar[dr,"\mathrm{id}_{\mathrm{F}(\mathrm{C}^{X}\mathrm{F}^{\star})}\circ_{\mathsf{h}}\delta_{(\mathrm{F}\mathrm{C}^{X})\mathrm{F}^{\star},\mathrm{FC}^{X}}", swap]
\arrow[rr,phantom, xshift=9ex,yshift=5ex,"\phantom{aaa}\circled{4}\text{\tiny(back)}"]
\ar[urr,"{\ \ (\mathrm{id}_{\mathrm{F}}\circ_{\mathsf{h}}\delta_{\mathrm{C}^{X}\mathrm{F}^{\star},(\mathrm{F}\mathrm{C}^{X})\mathrm{F}^{\star}})\circ_{\mathsf{h}}\mathrm{id}_{\mathrm{FC}^{X}}}"description, near end]
\&\&
\Big(\big(\mathrm{F}(\mathrm{C}^{X}\mathrm{F}^{\star})\big)\big((\mathrm{F}\mathrm{C}^{X})\mathrm{F}^{\star}\big)\Big)\Big(\mathrm{F}\mathrm{C}^{X}\Big)
\ar[dl,"\alpha_{\mathrm{F}(\mathrm{C}^{X}\mathrm{F}^{\star}),(\mathrm{F}\mathrm{C}^{X})\mathrm{F}^{\star},\mathrm{F}\mathrm{C}^{X}}"]
\\[5ex]
\&\&\&
\Big(\mathrm{F}(\mathrm{C}^{X}\mathrm{F}^{\star})\Big)\Big(\big((\mathrm{F}\mathrm{C}^{X})\mathrm{F}^{\star}\big)\big(\mathrm{F}\mathrm{C}^{X}\big)\Big)
\arrow[from=uuu,crossing over,"\alpha_{\mathrm{F},\mathrm{C}^{X}\mathrm{F}^{\star},\left(\left(\mathrm{F}\mathrm{C}^{X}\right)\mathrm{F}^{\star}\right)\left(\mathrm{F}\mathrm{C}^{X}\right)}^{\mone}",very near start]
\&
\end{tikzcd}}
\end{gather*}
Note that $\mathrm{F}\big((\mathrm{C}^{X}\mathrm{F}^{\star})\square_{(\mathrm{FC}^{X})\mathrm{F}^{\star}}(\mathrm{F}\mathrm{C}^{X})\big)$ is the equalizer of the right upper triangle, since the functor $\mathrm{F}$ is left exact when acting on $\mathrm{comod}_{\underline{\ccC}}(\mathrm{C}^{X})$. As in the proof of Lemma \ref{lemma:associator-cotensor-product}, the
associator $\alpha_{\mathrm{F},\mathrm{C}^{X}\mathrm{F}^{\star},\mathrm{F}\mathrm{C}^{X}}^{\mone}$ induces the $2$-isomorphism $\varphi$ (see the second isomorphism in \eqref{eq:associativity-cotensor2}), such that the pentagon labeled $1$ commutes. All other vertical facets also commute: the facet labeled $2$ by naturality of the associator, the one labeled $3$ by the pentagon
coherence condition for the associator, and the triangle labeled $4$ by definition of
$\delta_{\mathrm{F}(\mathrm{C}^{X}\mathrm{F}^{\star}),(\mathrm{F}\mathrm{C}^{X})\mathrm{F}^{\star}}$. Further,
$\alpha_{\mathrm{F},\mathrm{C}^{X}\mathrm{F}^{\star},\mathrm{F}\mathrm{C}^{X}}^{\mone}\circ_{\mathsf{v}}(\mathrm{id}_{\mathrm{F}}\circ_{\mathsf{h}}\gamma)$ equalizes the right bottom triangle, whence, by the universal property of kernels, the $2$-morphism
$\varphi\circ_{\mathsf{v}}(\mathrm{id}_{\mathrm{F}}\circ_{\mathsf{h}}\theta)$ provides a
$2$-isomorphism
\begin{gather*}
\big(\mathrm{F}(\mathrm{C}^{X}\mathrm{F}^{\star})\big)\square_{(\mathrm{FC}^{X})\mathrm{F}^{\star}}\big(\mathrm{F}\mathrm{C}^{X}\big)\cong
(\mathrm{FC}^{X})\mathrm{F}^{\star}\square_{(\mathrm{FC}^{X})\mathrm{F}^{\star}}\big(\mathrm{F}\mathrm{C}^{X}\big)\cong \mathrm{F}\mathrm{C}^{X}.
\end{gather*}
In particular, $\mathrm{id}_{\mathrm{F}}\circ_{\mathsf{h}}\theta$ provides a $2$-isomorphism
\[\mathrm{F}\big((\mathrm{C}^{X}\mathrm{F}^{\star})\square_{(\mathrm{FC}^{X})\mathrm{F}^{\star}}(\mathrm{F}\mathrm{C}^{X})\big)\cong\mathrm{FC}^{X}.\]
Analogously, for any $\mathrm{H}\in\cC$, we have the $2$-isomorphisms
\begin{gather*}\scalebox{0.91}{$
\begin{aligned}
\big(\mathrm{H}\mathrm{F}\big)\big((\mathrm{C}^{X}\mathrm{F}^{\star})\square_{(\mathrm{FC}^{X})\mathrm{F}^{\star}}(\mathrm{F}\mathrm{C}^{X})\big)&\cong
\mathrm{H}\Big(\mathrm{F}\big((\mathrm{C}^{X}\mathrm{F}^{\star})\square_{(\mathrm{FC}^{X})\mathrm{F}^{\star}}(\mathrm{F}\mathrm{C}^{X})\big)\Big)\\
&\cong \mathrm{H}(\mathrm{F}\mathrm{C}^{X})\\
&\cong (\mathrm{H}\mathrm{F})\mathrm{C}^{X}
\end{aligned}$}
\end{gather*}
where the first and third $2$-isomorphisms are given by the associator and the second
$2$-isomorphism is induced by $\mathrm{id}_{\mathrm{H}}(\mathrm{id}_{\mathrm{F}}\circ_{\mathsf{h}}\theta)$. By naturality of the associator, the composite of the above three
$2$-isomorphisms equals
$\mathrm{id}_{\mathrm{HF}}\circ\theta$.
Therefore we have
\begin{gather*}
\mathrm{K}\big((\mathrm{C}^{X}\mathrm{F}^{\star})\square_{(\mathrm{FC}^{X})\mathrm{F}^{\star}}(\mathrm{F}\mathrm{C}^{X})\big)\cong \mathrm{KC}^{X}.
\end{gather*}
for any direct summand $\mathrm{K}$ of $\mathrm{HF}$ with some $\mathrm{H}\in\cC$,
and this is functorial. Note that
\begin{gather*}
\begin{aligned}
(\mathrm{K}\mathrm{C}^{X})\square_{\mathrm{C}^{X}}\big((\mathrm{C}^{X}\mathrm{F}^{\star})\square_{(\mathrm{FC}^{X})\mathrm{F}^{\star}}(\mathrm{F}\mathrm{C}^{X})\big)
&\cong
\big((\mathrm{K}\mathrm{C}^{X})\square_{\mathrm{C}^{X}}(\mathrm{C}^{X}\mathrm{F}^{\star})\big)\square_{(\mathrm{FC}^{X})\mathrm{F}^{\star}}(\mathrm{F}\mathrm{C}^{X})\\
&\cong
\big(\mathrm{K}(\mathrm{C}^{X}\mathrm{F}^{\star})\big)\square_{(\mathrm{FC}^{X})\mathrm{F}^{\star}}(\mathrm{F}\mathrm{C}^{X})\\
&\cong
\mathrm{K}\big((\mathrm{C}^{X}\mathrm{F}^{\star})\square_{(\mathrm{FC}^{X})\mathrm{F}^{\star}}(\mathrm{F}\mathrm{C}^{X})\big)\\
&\cong \mathrm{KC}^{X}
\end{aligned}
\end{gather*}
where the first $2$-isomorphism is given by the associator, the second and third ones are due to the second $2$-isomorphism in \eqref{eq:associativity-cotensor2}, and the fourth one is induced by $\theta$.
Combining this with the fact that $\mathrm{FC}^{X}$ generates
$\mathrm{inj}_{\underline{\ccC}}(\mathrm{C}^{X})$, we see that the natural transformation
${}_{-}\square_{\mathrm{C}^{X}}\theta\colon
{}_{-}\square_{\mathrm{C}^{X}}\mathrm{C}^{X}\to
{}_{-}\square_{\mathrm{C}^{X}}\big((\mathrm{C}^{X}\mathrm{F}^{\star})\square_{(\mathrm{FC}^{X})\mathrm{F}^{\star}}(\mathrm{F}\mathrm{C}^{X})\big)$
is an isomorphism. This implies that $\theta$ is a $2$-isomorphism.
\end{proof}


\subsection{Avoiding abelianizations}\label{subsection:no-abelian}


\begin{proposition}\label{prop5.5}
Let $\cC$ be a $\mathcal{J}$-simple quasi multifiab bicategory and $\mathrm{F}\in\mathcal{J}$.
The pseudofunctor
\begin{gather*}
(\mathrm{F}{}_{-})\mathrm{F}^{\star}\colon
\underline{\cC}\to\underline{\cC},\quad
\mathrm{G}\mapsto (\mathrm{F}\mathrm{G})\mathrm{F}^{\star}
\end{gather*}
takes values in $\mathrm{inj}(\underline{\cC})\cong\cC$.
\end{proposition}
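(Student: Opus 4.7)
The plan is to combine an adjunction-based exactness argument with the structural consequences of $\mathcal{J}$-simplicity. First, observe that the quasi multifiab adjunction $\mathrm{F}\dashv\mathrm{F}^{\star}$ extends to adjunctions $\mathrm{F}(-)\dashv\mathrm{F}^{\star}(-)$ between morphism categories and, on the other side, $(-)\mathrm{F}\dashv(-)\mathrm{F}^{\star}$. Thus the extensions of $\mathrm{F}(-)$ and $(-)\mathrm{F}^{\star}$ to $\underline{\cC}$ are right exact (as left adjoints) and left exact (automatically, as functors induced on the injective abelianization). Both are therefore exact, and so is their composition $T:=(\mathrm{F}-)\mathrm{F}^{\star}\colon\underline{\cC}\to\underline{\cC}$.

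Next, since $\cC$ is closed under horizontal composition, $T$ restricts to a $\Bbbk$-linear functor $\cC\to\cC$. Given an arbitrary $\mathrm{G}\in\underline{\cC}(\mathtt{i}_{s(\mathrm{F})},\mathtt{i}_{s(\mathrm{F})})$, fix an injective copresentation $0\to\mathrm{G}\to\mathrm{I}_0\xrightarrow{\alpha}\mathrm{I}_1$ with $\mathrm{I}_0,\mathrm{I}_1\in\cC$. Exactness of $T$ gives $T(\mathrm{G})\cong\ker\bigl(T(\alpha)\bigr)$ in $\underline{\cC}$, so the proposition reduces to the following assertion: for every $2$-morphism $\alpha:\mathrm{I}_0\to\mathrm{I}_1$ between $1$-morphisms of $\cC$, the kernel in $\underline{\cC}$ of $\mathrm{F}\alpha\mathrm{F}^{\star}$ is already an object of $\cC$.

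For this reduction I would invoke $\mathcal{J}$-simplicity together with $\mathrm{F}\in\mathcal{J}$. Fix any left cell $\mathcal{L}\subseteq\mathcal{J}$ and consider the associated cell birepresentation $\mathbf{C}_{\mathcal{L}}$. By $\mathcal{J}$-simplicity it is $2$-faithful (compare the argument in the proof of Lemma~\ref{lemma:h-qoutient}), and by the bicategorical analog of \cite[Theorem 2]{KMMZ} the functors $\mathbf{C}_{\mathcal{L}}(\mathrm{F})$ and $\mathbf{C}_{\mathcal{L}}(\mathrm{F}^{\star})$ are given by tensoring with projective bimodules over the underlying algebra of $\mathbf{C}_{\mathcal{L}}$. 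Hence $\mathbf{C}_{\mathcal{L}}(\mathrm{F}\alpha\mathrm{F}^{\star})$ is a homomorphism of projective bimodules whose kernel is a direct summand of a projective bimodule. Transporting across the equivalence $\mathbf{C}_{\mathcal{L}}\simeq\boldsymbol{\mathrm{inj}}_{\underline{\ccC}}(\mathrm{C}^X)$ of Theorem~\ref{theorem:generator}, this kernel corresponds to an injective object in $\mathrm{comod}_{\underline{\ccC}}(\mathrm{C}^X)$, that is, to a $1$-morphism in $\cC$.

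The main obstacle will be this last transfer step: one must verify that the kernel computed inside the abelian category $\mathbf{C}_{\mathcal{L}}$ faithfully reflects the kernel of $\mathrm{F}\alpha\mathrm{F}^{\star}$ in $\underline{\cC}$. Since $2$-faithfulness alone merely detects non-vanishing of $2$-morphisms, one has to exploit the equivalence of Theorem~\ref{theorem:generator} together with Krull--Schmidt in $\cC$, arguing that the preimage of the bimodule kernel under the equivalence is itself a direct summand of $(\mathrm{F}\mathrm{I}_0)\mathrm{F}^{\star}$ inside $\cC$, and therefore lies in $\cC$ as required.
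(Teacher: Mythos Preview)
Your reduction to kernels is sound, and so is the exactness of $T=(\mathrm{F}-)\mathrm{F}^{\star}$. But the argument via $\mathbf{C}_{\mathcal{L}}$ has a structural mismatch that you flag at the end but do not resolve. The cell birepresentation $\mathbf{C}_{\mathcal{L}}$ is a birepresentation of $\cC$: it sends $1$-morphisms of $\cC$ to endofunctors of $\mathbf{C}_{\mathcal{L}}(\mathtt{i})$ and $2$-morphisms to natural transformations. It does not furnish a functor from $\underline{\cC}$ to anything that would let you compare kernels computed in $\underline{\cC}$ with kernels computed in the bimodule category. Two-faithfulness detects non-vanishing of $2$-morphisms, nothing more, and Theorem~\ref{theorem:generator} identifies $\mathbf{C}_{\mathcal{L}}$ with a category of comodules, not $\underline{\cC}$ with one. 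In addition, the assertion that the kernel of a morphism of projective bimodules is a direct summand of a projective bimodule is false in general (e.g.\ multiplication by $x$ on $\Bbbk[x]/(x^2)$ as a bimodule over itself), so even the bimodule-side computation does not go through.

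The paper's proof fixes this by changing the birepresentation rather than the $1$-morphism. It passes to the enveloping bicategory $\cC^{\,\mathrm{e}}=\cC\boxtimes\cC^{\mathrm{op}}$ and regards $\cC$ itself---more precisely $\mathrm{add}(\mathcal{J})$---as a birepresentation of $\cC^{\,\mathrm{e}}$ via simultaneous left and right multiplication. Under $\mathcal{J}$-simplicity, $\mathrm{add}(\mathcal{J})$ is a \emph{simple transitive} birepresentation of $\cC^{\,\mathrm{e}}$ with apex $\mathcal{J}\boxtimes\mathcal{J}^{\mathrm{op}}$, so the generalization of \cite[Theorem~2]{KMMZ} applies directly to the single $1$-morphism $\mathrm{F}\boxtimes\mathrm{F}^{\star}\in\mathcal{J}\boxtimes\mathcal{J}^{\mathrm{op}}$ and yields that $(\mathrm{FX})\mathrm{F}^{\star}$ is injective in $\underline{\mathrm{add}(\mathcal{J})}$ for every $\mathrm{X}\in\underline{\mathrm{add}(\mathcal{J})}$. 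A separate observation (from \cite[Proposition~26]{MM6}) that $(\mathrm{FL})\mathrm{F}^{\star}=0$ for simples $\mathrm{L}$ not supported on $\mathcal{J}$ handles the rest of $\underline{\cC}$. The moral is that the KMMZ statement says ``$1$-morphisms in the apex send \emph{every} object of the abelianized birepresentation to an injective'', so one must arrange for $\underline{\cC}$ (or $\underline{\mathrm{add}(\mathcal{J})}$) to \emph{be} that abelianized birepresentation, not merely the source of a $2$-faithful pseudofunctor into one.
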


\begin{proof}
Let us consider the $\mathcal{J}\boxtimes\mathcal{J}^{\,\mathrm{op}}$-simple fiab bicategory
\begin{gather*}
\cC^{\,\mathrm{e}}=
\cC\boxtimes\cC^{\mathrm{op}},
\end{gather*}
cf. \cite[Section 6 and Proposition 21]{MM6}. Note that
$\cC$ is a birepresentation of $\cC^{\,\mathrm{e}}$,
and thus, by $\mathcal{J}$-simplicity,
$\mathrm{add}(\mathcal{J})$
is
a simple transitive birepresentation of $\cC^{\,\mathrm{e}}$.
By construction, $\mathrm{add}(\mathcal{J})$ has
apex $\mathcal{J}\boxtimes\mathcal{J}^{\mathrm{op}}$ in $\cC^{\,\mathrm{e}}$. From the straightforward
generalization of \cite[Theorem 2]{KMMZ} to bicategories, we know that $(\mathrm{FX})\mathrm{F}^{\star}$ is injective
in $\underline{\mathrm{add}(\mathcal{J})}$ for any $\mathrm{X}\in\underline{\mathrm{add}(\mathcal{J})}$.
Finally, since for any simple $1$-morphism $\mathrm{L}$ in
$\underline{\cC}$ we have
\begin{gather*}
(\mathrm{F}\mathrm{L})\mathrm{F}^{\star}=0
\quad\Leftrightarrow\quad
\mathrm{L}\text{ is not supported in }\mathcal{J},
\end{gather*}
cf. \cite[Proposition 26]{MM6}, the claim follows.
\end{proof}

\begin{theorem}\label{thm5.5}
Let $\cC$ be a $\mathcal{J}$-simple quasi multifiab bicategory and $\mathbf{M}$ a transitive
birepresenta\-tion of $\cC$ with apex $\mathcal{J}$. Then, for any
$X\in\mathbf{M}(\mathtt{i}), Y\in\mathbf{M}(\mathtt{j})$,
the $1$-morphism $[X,Y]$ belongs to $\cC(\mathtt{i},\mathtt{j})$ (not only to
$\underline{\cC}(\mathtt{i},\mathtt{j})$).
\end{theorem}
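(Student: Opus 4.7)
The plan is to first pick a particularly convenient generator $X_0$ of $\mathbf{M}$ whose associated coalgebra $\mathrm{C}^{X_0}=[X_0,X_0]$ already lies in $\cC$, then handle $[X_0,Y]$ for all $Y$ via the classification of injective comodules, and finally bootstrap to arbitrary $X$ by a summand argument.

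For the first step, I choose any $\mathcal{H}$-cell $\mathcal{H}\subseteq\mathcal{J}$ and, writing $\mathtt{i}_0:=\mathtt{i}_{s(\mathcal{H})}$ and $\mathtt{i}_1:=\mathtt{i}_{t(\mathcal{H})}$, apply Lemma~\ref{lemma:cyclicbirep} to produce a generator $W\in\mathbf{M}(\mathtt{i}_0)$ together with $\mathrm{F}\in\mathcal{H}$ for which $X_0:=\mathbf{M}_{\mathtt{i}_1\mathtt{i}_0}(\mathrm{F})\,W$ is again a generator of $\mathbf{M}$. Theorem~\ref{prop0.4} then yields the coalgebra isomorphism $\mathrm{C}^{X_0}\cong(\mathrm{F}\mathrm{C}^W)\mathrm{F}^{\star}$ in $\underline{\cC}$, and since $\mathrm{F}\in\mathcal{J}$, Proposition~\ref{prop5.5} (the key consequence of $\mathcal{J}$-simplicity) forces $(\mathrm{F}\mathrm{C}^W)\mathrm{F}^{\star}$ to lie in $\mathrm{inj}(\underline{\cC})\simeq\cC$. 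Consequently $\mathrm{C}^{X_0}\in\cC(\mathtt{i}_1,\mathtt{i}_1)$.

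With this in hand, Theorem~\ref{theorem:generator} provides an equivalence of birepresentations $\mathbf{M}\simeq\boldsymbol{\mathrm{inj}}_{\underline{\ccC}}(\mathrm{C}^{X_0})$ sending $Y\mapsto[X_0,Y]$, while Lemma~\ref{lemma:addGX} identifies $[X_0,Y]$ as an object of the additive closure of $\{\mathrm{G}\mathrm{C}^{X_0}\mid\mathrm{G}\in\cC(\mathtt{i}_1,\mathtt{j})\}$. Because $\mathrm{C}^{X_0}\in\cC$ and $\cC$ is closed under horizontal composition and idempotent splitting, $[X_0,Y]\in\cC(\mathtt{i}_1,\mathtt{j})$, which settles the theorem in the special case when the first argument is $X_0$.

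To reach a general $X\in\mathbf{M}(\mathtt{i})$, I will exploit that $X_0$ generates $\mathbf{M}$, which expresses $X$ as a direct summand of $\mathbf{M}_{\mathtt{i}\mathtt{i}_1}(\mathrm{G})\,X_0$ for some $\mathrm{G}\in\cC(\mathtt{i}_1,\mathtt{i})$. A short Yoneda computation exploiting the adjunction $(\mathrm{G},\mathrm{G}^{\star})$, in the same spirit as the chain \eqref{eq:0.4} used in Theorem~\ref{prop0.4}, yields a $1$-isomorphism
\[
[\mathbf{M}_{\mathtt{i}\mathtt{i}_1}(\mathrm{G})\,X_0,Y]\;\cong\;[X_0,Y]\,\mathrm{G}^{\star}.
\]
The right-hand side is in $\cC(\mathtt{i},\mathtt{j})$ by the previous paragraph and closure of $\cC$ under horizontal composition. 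Contravariance of $[-,Y]$ in the first argument then exhibits $[X,Y]$ as a direct summand of $[\mathbf{M}_{\mathtt{i}\mathtt{i}_1}(\mathrm{G})\,X_0,Y]$, and idempotent splitness of $\cC$ closes the argument. The main obstacle is the identification $\mathrm{C}^{X_0}\cong(\mathrm{F}\mathrm{C}^W)\mathrm{F}^{\star}$ of Theorem~\ref{prop0.4}, whose proof in the bicategorical setting involves substantial bookkeeping with the associator and adjunction coherences; fortunately that theorem is already available, leaving only the comparatively routine Yoneda and summand steps above.
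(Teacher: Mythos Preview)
Your proof is correct and uses the same key ingredients as the paper's: the good generator from Lemma~\ref{lemma:cyclicbirep}, the framing result Proposition~\ref{prop5.5}, the adjunction isomorphisms for internal cohoms, and additivity. The paper's argument is organized slightly more economically: rather than staging through $\mathrm{C}^{X_0}$, then $[X_0,Y]$ via Lemma~\ref{lemma:addGX}, then $[X,Y]$, it writes both $X$ and $Y$ as summands of $\mathbf{M}(\mathrm{GF})Z$ and $\mathbf{M}(\mathrm{HF})Z$ and applies the bilateral version of \eqref{eq:inthom} to get $[\mathbf{M}(\mathrm{GF})Z,\mathbf{M}(\mathrm{HF})Z]\cong\big(\mathrm{H}\big((\mathrm{F}[Z,Z])\mathrm{F}^{\star}\big)\big)\mathrm{G}^{\star}\in\cC$ in one step, then uses additivity in both slots. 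Also note that your citation of Theorem~\ref{prop0.4} is heavier than needed: you only use the $1$-morphism isomorphism $\mathrm{C}^{X_0}\cong(\mathrm{F}\mathrm{C}^W)\mathrm{F}^{\star}$, which already follows from the short Yoneda chain \eqref{eq:0.4} (equivalently the reasoning behind \eqref{eq:inthom}) without any of the coalgebra-structure bookkeeping.
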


\begin{proof}
Let $X\in\mathbf{M}(\mathtt{i}), Y\in\mathbf{M}(\mathtt{j})$, fix an arbitrary
$\mathcal{H}$-cell $\mathcal{H}$ inside $\mathcal{J}$ and denote $\mathtt{k}:=\mathtt{i}_{s(\mathcal{H})}$ and $\mathtt{t}:=\mathtt{i}_{t(\mathcal{H})}$.
By Lemma \ref{lemma:cyclicbirep}, we can choose a generator $Z\in\mathbf{M}(\mathtt{k})$ such that, for any $\mathrm{F}\in\mathcal{H}$, $\mathbf{M}_{\mathtt{tk}}(\mathrm{F})Z$ also generates $\mathbf{M}$. Therefore, there exist $1$-morphisms
$\mathrm{G}\in\cC(\mathtt{t},\mathtt{i}),\mathrm{H}\in\cC(\mathtt{t},\mathtt{j})$ such that
\begin{gather*}
\mathbf{M}_{\mathtt{ik}}(\mathrm{G}\mathrm{F})\,Z\cong\mathbf{M}_{\mathtt{it}}(\mathrm{G})\mathbf{M}_{\mathtt{tk}}(\mathrm{F})\,Z\cong X\oplus X^{\prime},\\
\mathbf{M}_{\mathtt{jk}}(\mathrm{H}\mathrm{F})\,Z\cong\mathbf{M}_{\mathtt{jt}}(\mathrm{H})\mathbf{M}_{\mathtt{tk}}(\mathrm{F})\,Z\cong Y\oplus Y^{\prime}.
\end{gather*}
for some $X^{\prime}\in\mathbf{M}(\mathtt{i})$ and $Y^{\prime}\in\mathbf{M}(\mathtt{j})$.

Next, recall from \eqref{eq:inthom} that
\begin{gather*}
[\mathbf{M}_{\mathtt{ik}}(\mathrm{G}\mathrm{F})\,Z,\mathbf{M}_{\mathtt{jk}}(\mathrm{H}\mathrm{F})\,Z]\cong \big((\mathrm{HF})[Z,Z]\big)(\mathrm{GF})^{\star}\cong \Big(\mathrm{H}\big((\mathrm{F}[Z,Z])\mathrm{F}^{\star}\big)\Big)\mathrm{G}^{\star},
\end{gather*}
where the last isomorphism is obtained by using the associator several times and $(\mathrm{GF})^{\star}=\mathrm{F}^{\star}\mathrm{G}^{\star}$.
By Proposition \ref{prop5.5},
we know that $(\mathrm{F}[Z,Z])\mathrm{F}^{\star}$ belongs to
$\cC$ for all $1$-morphisms $\mathrm{F}$ and thus $[\mathbf{M}_{\mathtt{ik}}(\mathrm{G}\mathrm{F})\,Z,\mathbf{M}_{\mathtt{jk}}(\mathrm{H}\mathrm{F})\,Z]$ also belongs to $\cC$.
Since the internal cohom is additive in both entries, we
see that $[X,Y]$ is a direct summand of $[\mathbf{M}_{\mathtt{ik}}(\mathrm{G}\mathrm{F})\,Z,\mathbf{M}_{\mathtt{jk}}(\mathrm{H}\mathrm{F})\,Z]$ and therefore, it belongs to $\cC$ as well.
\end{proof}

\begin{example}\label{example.2b}
For any coalgebra $1$-morphism $\mathrm{C}$ in $\underline{\cC}$ we have
\begin{gather*}
\mathrm{C}\cong[\mathrm{C},\mathrm{C}],
\end{gather*}
as follows e.g. from \cite[Lemma 3]{ChMi}. However,
this does not contradict Theorem \ref{thm5.5}, since a coalgebra $\mathrm{C}$ which is strictly
in $\underline{\cC}$ will correspond to a birepresentation $\mathbf{M}$ that is either not transitive or has smaller apex.
\end{example}


\subsection{Simple transitive birepresentations and coalgebras}\label{subsection:st-and-coalgebras}


Simple transitive birepresentations correspond to particularly nice coalgebras (compare \cite[Corollary 4.9]{MMMT} and \cite[Corollary 12]{MMMZ}).

\begin{proposition}\label{cor:cosimplicity} Let $\cC$ be a quasi multifiab bicategory and $\mathcal{J}$ a two-sided cell. If $\mathbf{M}\in\cC\text{-}\mathrm{afmod}_{\mathcal{J}}$,  then, for any fixed left cell $\mathcal{L}$ inside $\mathcal{J}$, there is a $1$-morphism $\mathrm{C}\in\underline{\mathrm{add}\big(\mathcal{H}(\mathcal{L})\big)}\subseteq\underline{\mathrm{add}(\mathcal{J})}$ which has a coalgebra structure in $\underline{\cC}$ such that
\begin{gather}\label{eq:cosimplicity1}
\mathbf{M}\simeq\mathbf{inj}_{\underline{\ccC}}(\mathrm{C}).
\end{gather}
If, moreover, $\cC$ is $\mathcal{J}$-simple, then we can choose $\mathrm{C}\in\mathrm{add}\big(\mathcal{H}(\mathcal{L})\big)\subseteq\mathrm{add}(\mathcal{J})$.

If $\mathbf{M}\in\cC\text{-}\mathrm{stmod}_{\mathcal{J}}$, then such a coalgebra $\mathrm{C}$ is cosimple.
Conversely, if $\mathrm{C}\in\underline{\mathrm{add}(\mathcal{J})}$ is a cosimple coalgebra in $\underline{\cC}$, then $\mathbf{inj}_{\underline{\ccC}}(\mathrm{C})$ is a simple transitive
birepresentation of $\cC$ with apex $\mathcal{J}$.
\end{proposition}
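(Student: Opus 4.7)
The plan is to produce $\mathrm{C}$ by applying the framing construction of Lemma~\ref{lem0.1} to the coalgebra associated to a cleverly chosen generator, and then to invoke the coalgebra-birepresentation dictionary for the cosimplicity statements. First I would set $\mathtt{i} := \mathtt{i}_{s(\mathcal{H}(\mathcal{L}))}$ and, by Lemma~\ref{lemma:cyclicbirep}, choose a generator $X' \in \mathbf{M}(\mathtt{i})$ such that $\mathbf{M}_{\mathtt{ii}}(\mathrm{F})\,X'$ is again a generator of $\mathbf{M}$ for every $\mathrm{F} \in \mathcal{H}(\mathcal{L})$. Picking any such $\mathrm{F}$ (for instance the Duflo involution $\mathrm{D}(\mathcal{L})$ in the multifiab case, available from the discussion preceding Lemma~\ref{lemma:supported}) and setting $X := \mathbf{M}_{\mathtt{ii}}(\mathrm{F})\,X'$, Theorem~\ref{theorem:generator} yields an equivalence $\mathbf{M} \simeq \mathbf{inj}_{\underline{\ccC}}(\mathrm{C}^X)$, while Theorem~\ref{prop0.4} identifies $\mathrm{C}^X \cong (\mathrm{F}\mathrm{C}^{X'})\mathrm{F}^{\star}$ with the coalgebra structure of Lemma~\ref{lem0.1}.

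The main step is then to refine the target of $\mathrm{C}$ from $\underline{\mathrm{add}(\mathcal{J})}$ to $\underline{\mathrm{add}(\mathcal{H}(\mathcal{L}))}$. Cell theory enters here: since $\mathrm{F} \in \mathcal{L}$, every indecomposable $1$-morphism of $\cC$ occurring as a summand of $\mathrm{F}\mathrm{G}$ satisfies $\leq_L \mathcal{L}$, and since $\mathrm{F}^{\star}$ lies in the right cell $\mathcal{R}$ dual to $\mathcal{L}$, every summand of $\mathrm{G}\mathrm{F}^{\star}$ satisfies $\leq_R \mathcal{R}$. Consequently every indecomposable $1$-morphism appearing in an injective presentation of $(\mathrm{F}\mathrm{C}^{X'})\mathrm{F}^{\star}$ either lies in $\mathcal{L} \cap \mathcal{R} = \mathcal{H}(\mathcal{L})$ or has apex strictly below $\mathcal{J}$. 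The summands of the latter kind are annihilated by $\mathbf{M}$, so they form a coideal of $(\mathrm{F}\mathrm{C}^{X'})\mathrm{F}^{\star}$, and quotienting by this coideal produces the desired coalgebra $\mathrm{C} \in \underline{\mathrm{add}(\mathcal{H}(\mathcal{L}))}$ without altering the birepresentation (via the Morita--Takeuchi equivalence of Theorem~\ref{theorem:MT}). When $\cC$ is additionally $\mathcal{J}$-simple, Proposition~\ref{prop5.5} forces $(\mathrm{F}\mathrm{C}^{X'})\mathrm{F}^{\star}$ to already lie in $\cC$, and the same cell analysis then places the quotient in $\mathrm{add}(\mathcal{H}(\mathcal{L}))$.

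For the cosimplicity dichotomy, I would appeal to the coalgebra-birepresentation dictionary assembled in Sections~\ref{section:coalgebras}--\ref{section:coalgebras-and-birepresentations}: proper $\cC$-stable ideals of $\mathbf{inj}_{\underline{\ccC}}(\mathrm{C})$ correspond bijectively to proper subcoalgebras of $\mathrm{C}$. Under $\mathbf{M} \simeq \mathbf{inj}_{\underline{\ccC}}(\mathrm{C})$, simple transitivity of $\mathbf{M}$ is therefore equivalent to cosimplicity of $\mathrm{C}$, giving both directions of the final statement. The apex of $\mathbf{inj}_{\underline{\ccC}}(\mathrm{C})$ is pinned to $\mathcal{J}$ in the converse direction because $\mathrm{C} \in \underline{\mathrm{add}(\mathcal{J})}$ makes $\mathcal{I}_{\not\leq \mathcal{J}}$ act trivially, while $\mathcal{J}$ itself must act nontrivially (else $\mathrm{C}$ would be zero, contradicting cosimplicity of a nonzero coalgebra). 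The hard part will be the coideal verification in the second paragraph: one must check that the sum of non-$\mathcal{H}(\mathcal{L})$ summands is genuinely compatible with both $\delta$ and $\epsilon$, and that the quotient step preserves the equivalence class of the birepresentation. Without $\mathcal{J}$-simplicity these checks inside the injective abelianization $\underline{\cC}$ are somewhat delicate, but the strategy closely parallels the $2$-categorical arguments of \cite{MMMZ}, with the associators and unitors inserted where needed.
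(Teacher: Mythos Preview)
Your first paragraph matches the paper's approach exactly: choose a generator via Lemma~\ref{lemma:cyclicbirep}, apply Theorem~\ref{theorem:generator}, and use Theorem~\ref{prop0.4} to replace $\mathrm{C}^{X'}$ by its framing $(\mathrm{F}\mathrm{C}^{X'})\mathrm{F}^{\star}$.

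The second paragraph, however, contains a genuine gap and an unnecessary detour. The paper does not attempt any coideal/quotient construction. Instead, it \emph{first} invokes the biequivalence~\eqref{eq:pullback1-5} to reduce, without loss of generality, to the case where $\mathcal{J}$ is the unique maximal two-sided cell of $\cC$. After this reduction, the cell-theoretic observation you make (summands of $(\mathrm{F}\mathrm{G})\mathrm{F}^{\star}$ are $\geq_L\mathcal{L}$ and $\geq_R\mathcal{L}^{\star}$) forces \emph{every} indecomposable summand of $(\mathrm{F}\mathrm{C}^{X'}_k)\mathrm{F}^{\star}$ to lie in $\mathcal{H}(\mathcal{L})$ outright, since there is nothing strictly $>_J\mathcal{J}$ left. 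Hence $(\mathrm{F}\mathrm{C}^{X'})\mathrm{F}^{\star}\in\underline{\mathrm{add}(\mathcal{H}(\mathcal{L}))}$ directly, with no quotient needed. Your proposed alternative has several problems: (i) the direction is inverted---the non-$\mathcal{H}$ summands are strictly $>_J\mathcal{J}$, not below; (ii) ``annihilated by $\mathbf{M}$'' is a statement about the \emph{action} of these $1$-morphisms, not about their position inside the coalgebra object, so it does not by itself produce a coideal; (iii) even granting a coideal, you would still owe an argument that the quotient is MT equivalent to the original, which Theorem~\ref{theorem:MT} does not supply automatically. You flag this as ``the hard part'', but in fact it is entirely avoidable.

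For the $\mathcal{J}$-simple refinement, the paper cites Theorem~\ref{thm5.5} rather than Proposition~\ref{prop5.5}; the latter only says the framing lands in $\cC$, while the former gives the stronger conclusion that the internal cohom itself already lies in $\cC$. For the cosimplicity equivalence, the paper does not use a direct bijection between stable ideals and subcoalgebras, but rather cites the bicategorical generalizations of \cite[Corollary~12]{MMMZ} and \cite[Theorem~20(ii)]{ChMi} separately for each direction. Your apex argument is essentially right: the paper phrases it as $\mathrm{C}\mathrm{C}=0$ (which via counitality forces $\mathrm{C}=0$), combined with maximality of $\mathcal{J}$ after the reduction.
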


\begin{proof} Due to the biequivalence \eqref{eq:pullback1-5} for $\mathcal{J}^{\prime}=\mathcal{J}$,
without loss of generality, we may assume that $\mathcal{J}$ is the unique maximal two-sided cell of $\cC$.

Set $\mathcal{H}:=\mathcal{H}(\mathcal{L})$ and let $\mathtt{i}$ be the source of $\mathcal{H}$.
By Lemma \ref{lemma:cyclicbirep}, for any $\mathbf{M}\in\cC\text{-}\mathrm{afmod}_{\mathcal{J}}$, there is
a generator $X\in\mathbf{M}(\mathtt{i})$ of  $\mathbf{M}$ such that, for any $\mathrm{F}\in\mathcal{H}$, $\mathbf{M}_{\mathtt{i}\mathtt{i}}(\mathrm{F})X$ also generates $\mathbf{M}$. By Theorem \ref{theorem:generator}, there is a biequivalence
\begin{gather*}
\mathbf{M}\simeq\mathbf{inj}_{\underline{\ccC}}(\mathrm{C}^{X}),
\end{gather*}
where $\mathrm{C}^{X}\in\underline{\cC}(\mathtt{i},\mathtt{i})$.
By Corollary \ref{corollary:MT} and Theorem \ref{prop0.4}, for any $\mathrm{F}\in\mathcal{H}$, the coalgebra $\mathrm{C}^{X}$ is MT equivalent to $\mathrm{C}^{\mathbf{M}_{\mathtt{i}\mathtt{i}}(\mathrm{F})X}\cong (\mathrm{F}\mathrm{C}^{X} )\mathrm{F}^{\star}
\in\underline{\cC}(\mathtt{i},\mathtt{i})$.
Suppose that $\mathrm{C}^{X}$ is given by $\mathrm{C}^{X}_1\xrightarrow{\beta}\mathrm{C}^{X}_2$ in $\underline{\cC}(\mathtt{i},\mathtt{i})$. Then $(\mathrm{F}\mathrm{C}^{X})\mathrm{F}^{\star}$ is given by
\begin{gather*}
(\mathrm{F}\mathrm{C}^{X}_{1})\mathrm{F}^{\star} \xrightarrow{(\mathrm{id}_{\mathrm{F}}\circ_{\mathsf{h}}\beta)\circ_{\mathsf{h}}\mathrm{id}_{\mathrm{F}^{\star}}}(\mathrm{F}\mathrm{C}^{X}_{2})\mathrm{F}^{\star}.
\end{gather*}
Since $\mathcal{J}$ is the unique maximal two-sided cell of $\cC$,
the $1$-morphisms $(\mathrm{F}\mathrm{C}^{X}_1)\mathrm{F}^{\star}$ and
$(\mathrm{F}\mathrm{C}^{X}_2)\mathrm{F}^{\star}$ belong to $\mathrm{add}(\mathcal{H})$, whence $\mathrm{C}:=(\mathrm{F}\mathrm{C}^{X} )\mathrm{F}^{\star}$
belongs to $\underline{\mathrm{add}(\mathcal{H})}$. This proves the first claim of the proposition.

If $\cC$ is $\mathcal{J}$-simple, then $\mathrm{C}$ already belongs to $\mathrm{add}(\mathcal{H})$ as a result of Theorem \ref{thm5.5}.

If $\mathbf{M}\in\cC\text{-}\mathrm{stmod}_{\mathcal{J}}$, then the coalgebra $\mathrm{C}$ satisfying \eqref{eq:cosimplicity1} is cosimple by the generalization
of \cite[Corollary 12]{MMMZ} to bicategories.

For the converse statement, first observe that for cosimple $\mathrm{C}$, the birepresentation $\mathbf{inj}_{\underline{\ccC}}(\mathrm{C})$ is transitive by the generalization of \cite[Theorem 20~(ii)]{ChMi} to bicategories. The generalization of \cite[Corollary~12]{MMMZ} to bicategories then implies that it is simple transitive. If $\mathbf{inj}_{\underline{\ccC}}(\mathrm{C})$ annihilates $\mathcal{J}$, then we obtain $\mathrm{C}\mathrm{C}=0$ since $\mathrm{C}\in\underline{\mathrm{add}(\mathcal{J})}$, which is a contradiction. Therefore
the apex of $\mathbf{inj}_{\underline{\ccC}}(\mathrm{C})$ being $\mathcal{J}$ follows from the maximality of $\mathcal{J}$.
\end{proof}

\begin{proposition}\label{prop:J-simple-descend-stmod}
If $\cC$ is quasi multifiab,
then, for any two-sided cell $\mathcal{J}$ in $\cC$, there is a biequivalence
\begin{gather*}
\cC_{\leq\mathcal{J}}\text{-}\mathrm{stmod}_{\mathcal{J}}\xrightarrow{\simeq}\cC\text{-}\mathrm{stmod}_{\mathcal{J}}.
\end{gather*}
\end{proposition}

\begin{proof}
By \eqref{eq:pullback3-4} for $\mathcal{J}^{\prime}=\mathcal{J}$, we already know that there is a
local equivalence
\begin{gather*}
\cC_{\leq\mathcal{J}}\text{-}\mathrm{stmod}_{\mathcal{J}}\xrightarrow{} \cC\text{-}\mathrm{stmod}_{\mathcal{J}}.
\end{gather*}
It remains to prove that any simple transitive birepresentation of $\cC$ with apex $\mathcal{J}$ descends to $\cC_{\leq\mathcal{J}}$. Due to the biequivalence \eqref{eq:pullback1-4} for $\mathcal{J}^{\prime}=\mathcal{J}$,
without loss of generality, we can assume that $\mathcal{J}$ is the unique maximal two-sided cell of $\cC$, i.e. that $\cC\simeq\cC/\mathcal{I}_{\not\leq\mathcal{J}}$.
Let $\mathcal{I}$ be the biideal of $\cC$ such that $\cC_{\leq\mathcal{J}}\simeq\cC/\mathcal{I}$, i.e. $\mathcal{I}$ is the maximal biideal of $\cC$ not containing
$\mathrm{id}_{\mathrm{X}}$ for any $\mathrm{X}\in\mathcal{J}$.

Now, suppose that $\mathbf{M}\in\cC\text{-}\mathrm{stmod}_{\mathcal{J}}$. Since $\mathrm{apex}(\mathbf{M})=\mathcal{J}$, the annihilator of
$\mathbf{M}$ is contained in $\mathcal{I}$.
We need to show that this inclusion is an equality, so
suppose that $\alpha\colon \mathrm{X}\to \mathrm{Y}$ is a $2$-morphism in $\cC$ not belonging to  the annihilator of $\mathbf{M}$.
By Proposition \ref{cor:cosimplicity}, there is a coalgebra $\mathrm{C}\in\underline{\mathrm{add}(\mathcal{J})}\subseteq\underline{\cC}$ such that
\begin{gather*}
\mathbf{M}\simeq\mathbf{inj}_{\underline{\ccC}}(\mathrm{C}).
\end{gather*}
By this equivalence of birepresentations,
there exists a $1$-morphism $\mathrm{F}\in\mathcal{J}$ such that $\alpha\circ_{\mathsf{h}}\mathrm{id}_{\mathrm{FC}}\colon \mathrm{X(FC)}\to \mathrm{Y(FC)}$ is non-zero in $\mathrm{inj}_{\underline{\ccC}}(\mathrm{C})$, whence
the left $\cC$-stable ideal in $\mathrm{inj}_{\underline{\ccC}}(\mathrm{C})$ generated by
$\alpha\circ_{\mathsf{h}}\mathrm{id}_{\mathrm{FC}}$ is equal to
$\mathrm{inj}_{\underline{\ccC}}(\mathrm{C})$ by simple transitivity.
In particular, this left $\cC$-stable ideal contains some $\mathrm{id}_{\mathrm{G}}$ with $\mathrm{G}\in\underline{\mathrm{add}(\mathcal{J})}$. We claim that therefore
$\alpha\not\in\mathcal{I}$.
To prove this claim, assume that $\mathrm{C}$ is given by $\mathrm{C}_1\xrightarrow{\beta}\mathrm{C}_2 \in\underline{\cC}$. Then $\alpha\circ_{\mathsf{h}}\mathrm{id}_{\mathrm{FC}}$ is given by the commutative square
\begin{gather*}
\begin{tikzcd}[ampersand replacement=\&]
\mathrm{X}(\mathrm{FC}_{1})
\ar[rr,"\mathrm{id}_{\mathrm{X}}\circ_{\mathsf{h}} (\mathrm{id}_{\mathrm{F}}\circ_{\mathsf{h}}\beta)"]
\ar[d,"\alpha\circ_{\mathsf{h}} \mathrm{id}_{\mathrm{F}\mathrm{C}_1}",swap]
\& [1em]\&
\mathrm{X}(\mathrm{FC}_{2})
\ar[d,"\alpha\circ_{\mathsf{h}} \mathrm{id}_{\mathrm{F}\mathrm{C}_2}"]
\\
\mathrm{Y}(\mathrm{FC}_{1})
\ar[rr,"\mathrm{id}_{\mathrm{Y}}\circ_{\mathsf{h}} (\mathrm{id}_{\mathrm{F}}\circ_{\mathsf{h}}\beta)",swap]
\& [1em] \&
\mathrm{Y}(\mathrm{FC}_{2})
\end{tikzcd}.
\end{gather*}
Since the left $\cC$-stable ideal in $\underline{\cC}$ generated by $\alpha\circ_{\mathsf{h}}\mathrm{id}_{\mathrm{FC}}$ contains $\mathrm{id}_{\mathrm{G}}$ with
$\mathrm{G}\in\underline{\mathrm{add}(\mathcal{J})}$, the left $\cC$-stable ideal in $\cC$ generated by $\alpha\circ_{\mathsf{h}}\mathrm{id}_{\mathrm{F}\mathrm{C}_{1}}$
contains some $\mathrm{id}_{\mathrm{K}}$ with $\mathrm{K}\in\mathrm{add}(\mathcal{J})$. The latter left $\cC$-stable ideal is contained in the biideal of $\cC$ generated by $\alpha$, whence
$\alpha\not\in\mathcal{I}$. We conclude that $\mathrm{ann}(\mathbf{M})=\mathcal{I}$, which is what we had to prove.
\end{proof}


\subsection{Bicomodules and birepresentations}\label{subsection:bimodule-bicats}


Let $\cC$ be a multifinitary bicategory.

\begin{definition}\label{definition:comod-bicat}
We define
$\BB_{\underline{\ccC}}$ to be
the \emph{bicategory of biinjective bicomodules over coalgebras in $\underline{\cC}$},
whose objects, $1$-morphisms and $2$-morphisms are coalgebras,
biinjective bicomodules and bicomodule homomorphisms in
$\underline{\cC}$, respectively. Horizontal composition is defined by the cotensor
product over coalgebras and vertical composition is defined by the composition of bicomodule homomorphisms.
For each object
$\mathrm{C}$ in $\BB_{\underline{\ccC}}$, the identity $1$-morphism $\mathbbm{1}_{\mathrm{C}}$
is given by $\mathrm{C}$, seen as
a $\mathrm{C}\text{-}\mathrm{C}$-bicomodule. For each $1$-morphism $\mathrm{M}$ in
$\BB_{\underline{\ccC}}$, the
identity $2$-morphism is simply the identity bicomodule
endomorphism of $\mathrm{M}$.
\end{definition}
By \eqref{eq:associativity-cotensor} and the explanations above it, as well as the
fact that the cotensor product over a coalgebra
of two biinjective comodules is again biinjective, $\BB_{\underline{\ccC}}$ is indeed a bicategory.

\begin{definition}\label{def:excfmodstmod}
For various $1,2$-full $2$-subcategories $\cD$ of $\cC\text{-}\mathrm{afmod}$ appearing in Definitions \ref{def:cfmodstmod} and \ref{def:mod-apexJ}, we define the associated $2$-subcategories $\cD^{\mathrm{ex}}$ with the objects being the same as those of $\cD$,
the $1$-morphisms being the exact morphisms of finitary birepresentations and $2$-morphisms being all modifications.
\end{definition}
We have
\begin{gather*}
\cC\text{-}\mathrm{stmod}^{\mathrm{ex}}\subset\cC\text{-}\mathrm{tfmod}^{\mathrm{ex}}\subset\cC\text{-}\mathrm{cfmod}^{\mathrm{ex}}\subset\cC\text{-}\mathrm{afmod}^{\mathrm{ex}}
\end{gather*}
and, due to Lemma \ref{lemma:cyclicbirep},
\begin{gather*}
\cC\text{-}\mathrm{stmod}_{\mathcal{J}}^{\mathrm{ex}}\subset\cC\text{-}\mathrm{tfmod}_{\mathcal{J}}^{\mathrm{ex}}\subset\cC\text{-}\mathrm{cfmod}_{\mathcal{J}}^{\mathrm{ex}}=\cC\text{-}\mathrm{afmod}_{\mathcal{J}}^{\mathrm{ex}}.
\end{gather*}

Note that all finitary birepresentations of $\cC^{\,\oplus}$ are cyclic.
Furthermore, all morphisms between simple transitive birepresentations with the same apex
of a given fiab bicategory are exact,
as the following proposition shows. This is the analog of \cite[Proposition 7.6.9]{EGNO}
in our context and its proof follows the same reasoning, except that we have to invoke \cite[Theorem 2]{KMMZ} at some point.

\begin{proposition}\label{prop:exactness}
Suppose that $\cC$ is quasi (multi)fiab. For any two-sided cell $\mathcal{J}$
of $\cC$, the bicategories $\cC\text{-}\mathrm{stmod}_{\mathcal{J}}^{\mathrm{ex}}$
and $\cC\text{-}\mathrm{stmod}_{\mathcal{J}}$ are equal.
\end{proposition}

\begin{proof}
Let $\mathbf{M}$, $\mathbf{N}$ be two simple transitive birepresentations of $\cC$ with apex $\mathcal{J}$ and let $\Phi\colon\mathbf{M}\to\mathbf{N}$ be a $\Bbbk$-linear homomorphism of birepresentations. We have to show that its extension $\underline{\Phi}
\colon \underline{\mathbf{M}}\to \underline{\mathbf{N}}$ is exact.

Before we do that, we first prove an auxiliary result. For $\mathtt{i,j}\in\cC$, let
\begin{gather*}
\mathrm{C}_{\mathtt{j,i}}:=\bigoplus_{\mathrm{X}\in\ccC(\mathtt{i,j})\cap\mathcal{J}}\mathrm{X}\;\in\mathrm{add}(\mathcal{J})
\end{gather*}
and notice that, by adjunction, $\mathrm{C}_{\mathtt{j,i}}^{\star}\cong\mathrm{C}_{\mathtt{i,j}}$.

\textbf{Claim}. The endofunctors $\underline{\mathbf{M}}(\mathrm{C}_{\mathtt{j,i}})$
and $\underline{\mathbf{N}}(\mathrm{C}_{\mathtt{j,i}})$ are both projective and
injective in the category of left exact endofunctors and they do not annihilate any objects in $\underline{\mathbf{M}(\mathtt{i})}$ and $\underline{\mathbf{N}(\mathtt{i})}$, respectively.

The first part of the claim follows from simple transitivity of $\mathbf{M}$, $\mathbf{N}$ and
\cite[Theorem 2]{KMMZ}. Let us show the second part of the claim for
$\underline{\mathbf{M}}(\mathrm{C}_{\mathtt{j,i}})$, the argument for
$\underline{\mathbf{N}}(\mathrm{C}_{\mathtt{j,i}})$ being analogous. Suppose to the contrary that $L\in\underline{\mathbf{M}(\mathtt{i})}$ is a simple object such that $\underline{\mathbf{M}}(\mathrm{C}_{\mathtt{j,i}})L=0$. Let $Q$ be the direct sum of all indecomposable injectives in $\underline{\mathbf{M}(\mathtt{j})}$, i.e. the direct sum of all indecomposables in
$\mathbf{M}(\mathtt{j})$. By adjunction, we have
\begin{gather*}
\mbox{Hom}_{\underline{\mathbf{M}}}\big(L,\underline{\mathbf{M}}(\mathrm{C}_{\mathtt{i,j}})Q\big)\cong\mbox{Hom}_{\mathbf{M}}\big(\underline{\mathbf{M}}(\mathrm{C}_{\mathtt{j,i}})L,Q\big)=0.
\end{gather*}
However, this means that the injective hull of $L$ has multiplicity zero in the decomposition of
$\underline{\mathbf{M}}(\mathrm{C}_{\mathtt{i,j}})Q=
\mathbf{M}(\mathrm{C}_{\mathtt{i,j}})Q$, which contradicts transitivity of $\mathbf{M}$. This completes the proof of the claim.

Now, suppose that the above homomorphism $\underline{\Phi}$ is not exact. Then there exists an object $\mathtt{i}$ and a short exact sequence of objects in $\underline{\mathbf{M}(\mathtt{i})}$
\begin{gather*}
0\xrightarrow{}X\xrightarrow{}Y\xrightarrow{}Z\xrightarrow{}0
\end{gather*}
such that its image under $\underline{\Phi}$
\begin{gather*}
0\xrightarrow{}\underline{\Phi}(X)\xrightarrow{}\underline{\Phi}(Y)
\xrightarrow{}\underline{\Phi}(Z)\xrightarrow{}0
\end{gather*}
is not exact in $\underline{\mathbf{N}(\mathtt{i})}$. The claim implies that
\begin{gather*}
0\xrightarrow{}\underline{\mathbf{M}}(\mathrm{C}_{\mathtt{j,i}})X\xrightarrow{}
\underline{\mathbf{M}}(\mathrm{C}_{\mathtt{j,i}})Y\xrightarrow{}
\underline{\mathbf{M}}(\mathrm{C}_{\mathtt{j,i}})Z\xrightarrow{}0
\end{gather*}
is split exact, while
\begin{gather*}
0\xrightarrow{}\underline{\mathbf{N}}(\mathrm{C}_{\mathtt{j,i}})\underline{\Phi}(X)\xrightarrow{}\underline{\mathbf{N}}(\mathrm{C}_{\mathtt{j,i}})\underline{\Phi}(Y)\xrightarrow{}
\underline{\mathbf{N}}(\mathrm{C}_{\mathtt{j,i}})\underline{\Phi}(Z)\xrightarrow{}0
\end{gather*}
is not exact. But this is a contradiction, since the latter sequence is isomorphic to
\begin{gather*}
0\xrightarrow{}\underline{\Phi}\big(\underline{\mathbf{M}}(\mathrm{C}_{\mathtt{j,i}})X\big)\xrightarrow{}\underline{\Phi}\big(\underline{\mathbf{M}}(\mathrm{C}_{\mathtt{j,i}})Y\big)\xrightarrow{}\underline{\Phi} \big(\underline{\mathbf{M}}(\mathrm{C}_{\mathtt{j,i}})Z\big)\xrightarrow{}0
\end{gather*}
and $\underline{\Phi}$ preserves split exactness.

This shows that $\underline{\Phi}$ is exact and completes the proof of the proposition.
\end{proof}

\begin{theorem}\label{theorem:MT2}
Let $\cC$ be a quasi multifiab bicategory.
The assignment
\begin{align*}
\mathrm{C}&\mapsto\boldsymbol{\mathrm{inj}}_{\underline{\ccC}}(\mathrm{C}),
\\
\mathrm{M}&\mapsto
\boldsymbol{\mathrm{inj}}_{\underline{\ccC}}(\mathrm{C})\xrightarrow{{}_{-}\s\mathrm{M}}\boldsymbol{\mathrm{inj}}_{\underline{\ccC}}(\mathrm{D}),
\\
(\mathrm{M}\xrightarrow{f}\mathrm{N})&\mapsto({}_{-}\,\s\mathrm{M}\xrightarrow{{}_{-}\s f}{}_{-}\,\s\mathrm{N}),
\end{align*}
defines a biequivalence
\begin{gather}\label{eq:MT2bieq1}
\BB_{\underline{\ccC^{\oplus}}}\simeq\cC^{\,\oplus}\text{-}\mathrm{afmod}^{\mathrm{ex}},
\end{gather}
which restricts to a biequivalence
\begin{gather}\label{eq:MT2bieq2}
\BB_{\underline{\ccC}}\simeq\cC\text{-}\mathrm{cfmod}^{\mathrm{ex}}.
\end{gather}
\end{theorem}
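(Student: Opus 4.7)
The plan is to show the assignment in the theorem extends to a pseudofunctor and then verify that it is essentially surjective on objects and locally an equivalence, which together yield the claimed biequivalence. I would handle \eqref{eq:MT2bieq1} first and then deduce the restriction \eqref{eq:MT2bieq2} by noting that coalgebras in $\underline{\cC}$ give rise to cyclic birepresentations (cf.\ Lemma~\ref{lemma:addGX}), so the sub-bicategory $\BB_{\underline{\ccC}}$ embeds into $\BB_{\underline{\ccC^{\oplus}}}$ with essential image matching cyclic birepresentations via Proposition~\ref{proposition:bieq-additive}.

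For well-definedness as a pseudofunctor, Lemma~\ref{lemma:biinjective-vs-exact} guarantees that cotensoring with a biinjective bicomodule defines an exact morphism of birepresentations sending injectives to injectives. The unitor and associator for the pseudofunctor are built from the canonical isomorphism ${}_-\square_{\mathrm{C}}\mathrm{C}\cong \mathrm{Id}$ (coming from $\rho$) and the cotensor product associator of Lemma~\ref{lemma:associator-cotensor-product}, with the pentagon and triangle identities inherited from those for the associator and unitors in $\underline{\cC}$. For essential surjectivity on objects, I would invoke Theorem~\ref{theorem:generator}: every finitary birepresentation of $\cC^{\,\oplus}$ is automatically cyclic since $\cC^{\,\oplus}$ has a single object, so any such $\mathbf{N}$ is biequivalent to $\boldsymbol{\mathrm{inj}}_{\underline{\ccC^{\oplus}}}(\mathrm{C}^X)$ for any choice of generator $X$; for \eqref{eq:MT2bieq2} the same argument applies directly, as cyclicity is part of the hypothesis.

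The heart of the proof is local equivalence. Given two coalgebras $\mathrm{C}\in\underline{\cC}(\mathtt{i},\mathtt{i})$ and $\mathrm{D}\in\underline{\cC}(\mathtt{k},\mathtt{k})$ and an exact morphism $\Phi\colon\boldsymbol{\mathrm{inj}}_{\underline{\ccC}}(\mathrm{C})\to \boldsymbol{\mathrm{inj}}_{\underline{\ccC}}(\mathrm{D})$, I would construct a biinjective $\mathrm{C}$-$\mathrm{D}$-bicomodule $\mathrm{M}$ and show ${}_-\square_{\mathrm{C}}\mathrm{M}\simeq\Phi$ naturally. Concretely, first extend $\Phi$ by exactness to a morphism $\widetilde{\Phi}\colon\boldsymbol{\mathrm{comod}}_{\underline{\ccC}}(\mathrm{C})\to\boldsymbol{\mathrm{comod}}_{\underline{\ccC}}(\mathrm{D})$ of birepresentations of $\underline{\cC}$, and take
\[
\mathrm{M}:=\Phi_{\mathtt{i}}(\mathrm{C}),\qquad \lambda_{\mathrm{M}}:=\widetilde{\phi}_{\mathtt{i}\mathtt{i}}^{\mathrm{C}}\circ_{\mathsf{v}}\widetilde{\Phi}_{\mathtt{i}}(\delta_{\mathrm{C}})\colon \mathrm{M}\to\mathrm{C}\mathrm{M},
\]
where $\widetilde{\phi}$ is the structural natural isomorphism of $\widetilde{\Phi}$. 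The right $\mathrm{D}$-coaction on $\mathrm{M}$ is the defining one on $\Phi_{\mathtt{i}}(\mathrm{C})\in\mathrm{inj}_{\underline{\ccC}}(\mathrm{D})$. The bicomodule axioms then follow from coassociativity and counitality of $\delta_{\mathrm{C}}$ combined with the coherence conditions on $\widetilde{\phi}$, and biinjectivity of $\mathrm{M}$ from exactness of $\Phi$ together with Lemma~\ref{lemma:addGX}. To produce the natural biequivalence ${}_-\square_{\mathrm{C}}\mathrm{M}\simeq \Phi$, I would use $\mathrm{N}\cong \mathrm{N}\square_{\mathrm{C}}\mathrm{C}$ for any $\mathrm{N}\in\boldsymbol{\mathrm{inj}}_{\underline{\ccC}}(\mathrm{C})$ and transport through $\widetilde{\phi}$. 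Full-faithfulness on modifications is then straightforward by evaluation at the generator $\mathrm{C}$, with inverse $f\mapsto({}_-\square_{\mathrm{C}}f)$.

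The main obstacle will be verifying the coherence conditions throughout, especially that the constructed left $\mathrm{C}$-coaction on $\mathrm{M}$ is coassociative and compatible with the right $\mathrm{D}$-coaction, and that the resulting natural isomorphism ${}_-\square_{\mathrm{C}}\mathrm{M}\simeq\Phi$ is compatible with the coherers of both pseudofunctors. In the bicategorical setting the associators and unitors cannot be suppressed, so these verifications require diagram chases substantially more involved than their strict analogs in \cite{MMMT} and \cite{MMMZ}, along similar lines to (though considerably simpler than) the proof of Theorem~\ref{prop0.4}.
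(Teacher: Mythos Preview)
Your proposal is correct and follows essentially the same route as the paper: well-definedness from Lemmas~\ref{lemma:associator-cotensor-product} and~\ref{lemma:biinjective-vs-exact}, essential surjectivity from Theorem~\ref{theorem:generator}, and the restriction to $\BB_{\underline{\ccC}}$ matching cyclic birepresentations. The paper's own proof is terser on the local equivalence step, simply citing Theorems~\ref{theorem:MT} and~\ref{theorem:generator} (and implicitly the analogous arguments in \cite{MMMT}), whereas you spell out the standard construction $\mathrm{M}:=\Phi_{\mathtt{i}}(\mathrm{C})$ and the evaluation-at-$\mathrm{C}$ argument for full faithfulness; this is precisely the content those citations encode, so the approaches coincide.
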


\begin{proof}
The pseudofunctor $\BB_{\underline{\ccC}^{\oplus}}\to
\cC^{\,\oplus}\text{-}\mathrm{afmod}^{\mathrm{ex}}$ is well-defined by Lemmas \ref{lemma:associator-cotensor-product} and
\ref{lemma:biinjective-vs-exact}. It is a biequivalence due to Theorems \ref{theorem:MT} and  \ref{theorem:generator}.
When we restrict to coalgebras and biinjective bicomodules in $\underline{\cC}$ on one side of the biequivalence, we have to restrict to cyclic birepresentations of $\cC$ on the other side, because
we need a generator $X\in\mathbf{M}(\mathtt{i})$, for some $\mathtt{i}\in\cC$, in order to define $\mathrm{C}^{X}$ via the internal cohom construction.
\end{proof}

The following corollary follows immediately from Proposition \ref{proposition:bieq-additive} and \eqref{eq:MT2bieq1}.

\begin{corollary}\label{corollary:MT2}
Let $\cC$ be a quasi multifiab bicategory. Then there is a biequivalence
\begin{gather*}
\BB_{\underline{\ccC^{\oplus}}}\simeq\cC\text{-}\mathrm{afmod}^{\mathrm{ex}}.
\end{gather*}
\end{corollary}

Let $\mathrm{add}_{\ccC_{\leq\mathcal{J}}}(\mathcal{J})$ be the additive closure of $\mathcal{J}$ inside $\cC_{\leq\mathcal{J}}$ and
let $\BB_{\mathrm{add}_{\cccC_{\leq\mathcal{J}}}(\mathcal{J})}^{\mathrm{cos}}$ be the $1$-dense and $2$-full subbicategory of $\BB_{\ccC_{\leq\mathcal{J}}}$ of biinjective bicomodules over
cosimple coalgebras in $\mathrm{add}_{\ccC_{\leq\mathcal{J}}}(\mathcal{J})$. For the multifinitary bicategory $\cC_{\mathcal{J}}$,
one can also define $\mathrm{add}_{\ccC_{\mathcal{J}}}(\mathcal{J})$ and $\BB_{\mathrm{add}_{\cccC_{\mathcal{J}}}(\mathcal{J})}^{\mathrm{cos}}$.
Since $\cC_{\mathcal{J}}$  is a $2$-full subbicategory of $\cC_{\leq\mathcal{J}}$,
we have
\begin{gather}\label{eq:0}
\mathrm{add}_{\ccC_{\mathcal{J}}}(\mathcal{J})=\mathrm{add}_{\ccC_{\leq\mathcal{J}}}(\mathcal{J}) \quad\text{and}\quad
\BB_{\mathrm{add}_{\cccC_{\mathcal{J}}}(\mathcal{J})}^{\mathrm{cos}}=\BB_{\mathrm{add}_{\cccC_{\leq\mathcal{J}}}(\mathcal{J})}^{\mathrm{cos}}.
\end{gather}

\begin{theorem}\label{cor:MT2}
If $\cC$ is quasi multifiab, then there are the following biequivalences:
\begin{gather*}
\cC\text{-}\mathrm{stmod}_{\mathcal{J}}\simeq\cC_{\leq\mathcal{J}}\text{-}\mathrm{stmod}_{\mathcal{J}} \simeq
\cC_{\mathcal{J}}\text{-}\mathrm{stmod}_{\mathcal{J}}\simeq
\BB_{\mathrm{add}_{\cccC_{\leq\mathcal{J}}}(\mathcal{J})}^{\mathrm{cos}}.
\end{gather*}
\end{theorem}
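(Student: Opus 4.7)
The plan is to establish the three biequivalences in sequence, each obtained by suitable restriction of a result already available in the paper. The common theme is that the exactness condition in the superscript $\mathrm{ex}$ is defined pointwise on the underlying component functors, so any $2$-functor between $\cC\text{-}\mathrm{afmod}$ and $\cD\text{-}\mathrm{afmod}$ that does not modify those component functors automatically preserves and reflects exactness. The first biequivalence, $\cC\text{-}\mathrm{stmod}_{\mathcal{J}}^{\mathrm{ex}}\simeq \cC_{\leq \mathcal{J}}\text{-}\mathrm{stmod}_{\mathcal{J}}^{\mathrm{ex}}$, is exactly the restriction of the biequivalence in Proposition~\ref{prop:J-simple-descend-stmod}; since this biequivalence is given by pullback along the quotient $\cC\to\cC_{\leq\mathcal{J}}$, the component functors of any morphism are untouched, so the restriction to $\mathrm{ex}$ is immediate.

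The second biequivalence, $\cC_{\leq \mathcal{J}}\text{-}\mathrm{stmod}_{\mathcal{J}}^{\mathrm{ex}} \simeq \cC_{\mathcal{J}}\text{-}\mathrm{stmod}_{\mathcal{J}}^{\mathrm{ex}}$, is the restriction of the biequivalence asserted in (the forthcoming) Theorem~\ref{theorem:00}, which upgrades the $2$-functor~\eqref{eq:pullback4-2} to an equivalence of $2$-categories. Again, since the $2$-functor is obtained by pullback along the $2$-fully faithful embedding $\cC_{\mathcal{J}}\hookrightarrow \cC_{\leq\mathcal{J}}$, it preserves and reflects exactness componentwise.

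For the third and principal biequivalence, I would apply Theorem~\ref{theorem:MT2} to the quasi multifiab bicategory $\cC_{\mathcal{J}}$ (quasi multifiab by Lemma~\ref{lemma:multifinitary} and $\mathcal{J}$-simple by Lemma~\ref{lem:J-simple1}) to obtain the biequivalence $\BB_{\underline{\cccC_{\mathcal{J}}}}\simeq \cC_{\mathcal{J}}\text{-}\mathrm{cfmod}^{\mathrm{ex}}$. Restricting the right-hand side to birepresentations with apex $\mathcal{J}$, Lemma~\ref{lemma:cyclicbirep} guarantees that all transitive birepresentations with apex $\mathcal{J}$ are cyclic. Proposition~\ref{cor:cosimplicity} then identifies the simple transitive birepresentations with apex $\mathcal{J}$ with biinjective bicomodules over cosimple coalgebras. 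Crucially, since $\cC_{\mathcal{J}}$ is $\mathcal{J}$-simple, the strengthened second assertion of Proposition~\ref{cor:cosimplicity} (whose proof in turn rests on Theorem~\ref{thm5.5}) ensures that such coalgebras can be chosen in $\mathrm{add}_{\cccC_{\mathcal{J}}}(\mathcal{J})$ rather than merely in $\underline{\mathrm{add}(\mathcal{J})}$. Combining this with the identifications~\eqref{eq:0}, we arrive at $\cC_{\mathcal{J}}\text{-}\mathrm{stmod}_{\mathcal{J}}^{\mathrm{ex}}\simeq \BB_{\mathrm{add}_{\cccC_{\leq \mathcal{J}}}(\mathcal{J})}^{\mathrm{cos}}$.

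The main obstacle is verifying, in the third step, that the restriction is well-defined and essentially surjective on both sides; specifically, that the biequivalence of Theorem~\ref{theorem:MT2} truly descends to the two subbicategories in question. On the birepresentation side this amounts to the two directions of Proposition~\ref{cor:cosimplicity}, combined with the reduction to $\mathrm{add}(\mathcal{J})$ afforded by $\mathcal{J}$-simplicity. On the bicomodule side it amounts to the fact, following from Lemma~\ref{lemma:biinjective-vs-exact}, that cotensoring with any biinjective bicomodule between cosimple coalgebras yields an exact morphism of the associated birepresentations; conversely, Theorem~\ref{theorem:MT2} already guarantees that every exact morphism arises in this way. Once these matchings are checked, chaining the three biequivalences produces the stated result.
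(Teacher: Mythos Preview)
Your argument is correct, but the organization differs from the paper's in one respect worth noting. For the middle biequivalence $\cC_{\leq \mathcal{J}}\text{-}\mathrm{stmod}_{\mathcal{J}}^{\mathrm{ex}} \simeq \cC_{\mathcal{J}}\text{-}\mathrm{stmod}_{\mathcal{J}}^{\mathrm{ex}}$ you forward-reference Theorem~\ref{theorem:00}, whereas the paper avoids this by deriving it \emph{indirectly}: it first proves $\cC_{\leq \mathcal{J}}\text{-}\mathrm{stmod}_{\mathcal{J}}^{\mathrm{ex}}\simeq \BB_{\mathrm{add}_{\cccC_{\leq \mathcal{J}}}(\mathcal{J})}^{\mathrm{cos}}$ (Theorem~\ref{theorem:MT2} applied to $\cC_{\leq\mathcal{J}}$ plus Proposition~\ref{cor:cosimplicity}), then uses~\eqref{eq:0} and the same argument for $\cC_{\mathcal{J}}$ to obtain $\BB_{\mathrm{add}_{\cccC_{\leq \mathcal{J}}}(\mathcal{J})}^{\mathrm{cos}}\simeq \cC_{\mathcal{J}}\text{-}\mathrm{stmod}_{\mathcal{J}}^{\mathrm{ex}}$, so that the middle biequivalence is a composite of these two. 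This keeps the logical flow linear (Theorem~\ref{theorem:00} is proved only afterwards), whereas your route is more direct but relies on a result whose proof comes later; since Theorem~\ref{theorem:00} does not depend on the present theorem, there is no circularity and your argument stands.

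One small correction: Lemma~\ref{lemma:multifinitary} concerns $\cC^{\,\oplus}$, not $\cC_{\mathcal{J}}$. That $\cC_{\mathcal{J}}$ is quasi multifiab is stated in the text of Subsection~\ref{quotientbicat} (just before Lemma~\ref{lem:J-simple1}), not in Lemma~\ref{lemma:multifinitary}.
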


\begin{proof}
By Proposition \ref{prop:exactness}, all instances of $\mathrm{stmod}$ in
this theorem are equal to $\mathrm{stmod}^{\mathrm{ex}}$. Bearing this in mind, the first biequivalence
is due to the restriction of the biequivalence in Proposition \ref{prop:J-simple-descend-stmod}. By Proposition \ref{cor:cosimplicity}, therefore,
the biequivalence in Theorem \ref{theorem:MT2} restricts to a biequivalence
\begin{gather}\label{eq:00}
\BB_{\mathrm{add}_{\cccC_{\leq\mathcal{J}}}(\mathcal{J})}^{\mathrm{cos}}\simeq\cC_{\leq\mathcal{J}}\text{-}\mathrm{stmod}_{\mathcal{J}},
\end{gather}
By \eqref{eq:0} and $\mathcal{J}$-simplicity of $\cC_{\mathcal{J}}$, we also have a biequivalence
\begin{gather*}
\BB_{\mathrm{add}_{\cccC_{\leq\mathcal{J}}}(\mathcal{J})}^{\mathrm{cos}}=\BB_{\mathrm{add}_{\cccC_{\mathcal{J}}}(\mathcal{J})}^{\mathrm{cos}}\simeq\cC_{\mathcal{J}}\text{-}\mathrm{stmod}_{\mathcal{J}},
\end{gather*}
which is indeed a restriction of \eqref{eq:00}.
\end{proof}

\begin{remark}
Note that Theorems \ref{theorem:MT2} and \ref{cor:MT2} also prove that
\begin{gather*}
\BB_{\underline{\mathrm{add}(\mathcal{J})}}^{\mathrm{cos}}\simeq\BB_{\underline{\mathrm{add}_{\cccC_{\leq\mathcal{J}}}(\mathcal{J})}}^{\mathrm{cos}}\simeq\BB_{\mathrm{add}_{\cccC_{\leq\mathcal{J}}}(\mathcal{J})}^{\mathrm{cos}}.
\end{gather*}
\end{remark}


\subsection{Strong $\mathcal{H}$-reduction}\label{section:h-reduction}


Let $\cC$ be a multifiab bicategory, $\mathcal{J}$ a two-sided cell in $\cC$ and $\mathcal{H}$ a diagonal $\mathcal{H}$-cell inside $\mathcal{J}$.
Assume that $\mathtt{i}$ is the source of $\mathcal{H}$. Recall from Lemma \ref{lemma:cyclicbirep} that
\begin{gather*}
\cC_{(\mathcal{J})}\text{-}\mathrm{afmod}_{\mathcal{J}}=\cC_{(\mathcal{J})}\text{-}\mathrm{cfmod}_{\mathcal{J}} \quad\text{and}\quad \cC_{(\mathcal{H})}\text{-}\mathrm{afmod}_\mathcal{H}=\cC_{(\mathcal{H})}\text{-}\mathrm{cfmod}_\mathcal{H},
\end{gather*}
which implies that
\begin{gather*}
\cC_{(\mathcal{J})}\text{-}\mathrm{afmod}_{\mathcal{J}}^{\mathrm{ex}}=\cC_{(\mathcal{J})}\text{-}\mathrm{cfmod}_{\mathcal{J}}^{\mathrm{ex}}\quad\text{and}\quad \cC_{(\mathcal{H})}\text{-}\mathrm{afmod}_\mathcal{H}^{\mathrm{ex}}=\cC_{(\mathcal{H})}\text{-}\mathrm{cfmod}_\mathcal{H}^{\mathrm{ex}}.
\end{gather*}

We denote by  $\BB_{\underline{\ccC_{(\mathcal{J})}}, \underline{\mathrm{add}(\mathcal{H})}}$, respectively $\BB_{\underline{\ccC_{(\mathcal{H})}}, \underline{\mathrm{add}(\mathcal{H})}}$, the bicategory of coalgebras, biinjective bicomodules and comodule homomorphisms in $\underline{\mathrm{add}(\mathcal{H})}$ inside $\underline{\cC_{(\mathcal{J})}}$, respectively $\underline{\cC_{(\mathcal{H})}}$.

\begin{theorem}\label{theorem:H-reduction1}
There are biequivalences
\begin{gather*}
\begin{split}
\cC_{(\mathcal{J})}\text{-}\mathrm{afmod}_{\mathcal{J}}^{\mathrm{ex}} &=\cC_{(\mathcal{J})}\text{-}\mathrm{cfmod}_{\mathcal{J}}^{\mathrm{ex}} \simeq\BB_{\underline{\ccC_{(\mathcal{J})}}, \underline{\mathrm{add}(\mathcal{H})}}\\ &
\simeq \BB_{\underline{\ccC_{(\mathcal{H})}}, \underline{\mathrm{add}(\mathcal{H})}} \simeq\cC_{(\mathcal{H})}\text{-}\mathrm{cfmod}_\mathcal{H}^{\mathrm{ex}}
=\cC_{(\mathcal{H})}\text{-}\mathrm{afmod}_\mathcal{H}^{\mathrm{ex}}.
\end{split}
\end{gather*}
\end{theorem}

\begin{proof}
We claim that the last biequivalence in the first row and the middle biequivalence in the second row are obtained by restricting the biequivalence in \eqref{eq:MT2bieq2}.
Indeed, \eqref{eq:MT2bieq2} provides biequivalences
\begin{gather*}
\BB_{\underline{\ccC_{(\mathcal{J})}}} \simeq\cC_{(\mathcal{J})}\text{-}\mathrm{cfmod}^{\mathrm{ex}} \quad \text{and} \quad \BB_{\underline{\ccC_{(\mathcal{H})}}} \simeq\cC_{(\mathcal{H})}\text{-}\mathrm{cfmod}^{\mathrm{ex}} \end{gather*}
and Proposition \ref{cor:cosimplicity} guarantees that the restriction of the pseudofunctor in Theorem \ref{theorem:MT2} to coalgebras in $\underline{\mathrm{add}(\mathcal{H})}$ is still essentially surjective on objects when corestricting to $\cC_{(\mathcal{J})}\text{-}\mathrm{cfmod}_{\mathcal{J}}^{\mathrm{ex}}$ respectively $\cC_{(\mathcal{H})}\text{-}\mathrm{cfmod}_\mathcal{H}^{\mathrm{ex}} $.

Finally, $\BB_{\underline{\ccC_{(\mathcal{H})}}, \underline{\mathrm{add}(\mathcal{H})}} $ is naturally isomorphic to $\BB_{\underline{\ccC_{(\mathcal{J})}}, \underline{\mathrm{add}(\mathcal{H})}}$, consisting of the same objects, $1$-morphisms and $2$-morphisms, just considered in different ambient bicategories.
\end{proof}

Passing to the $\mathcal{J}$-simple and $\mathcal{H}$-simple quotients $\cC_{\mathcal{J}}$
and $\cC_\mathcal{H}$, respectively, and defining $\BB_{\underline{\ccC_{\mathcal{J}}}, {\mathrm{add}(\mathcal{H})}}$ and $\BB_{\underline{\ccC_{\mathcal{H}}}, {\mathrm{add}(\mathcal{H})}}$ as the bicategories of coalgebras, bicomodules and comodule homomorphisms in ${\mathrm{add}(\mathcal{H})}$ inside $\underline{\cC_{\mathcal{J}}}$ and $\underline{\cC_{\mathcal{H}}}$, respectively, we obtain the following.

\begin{theorem}\label{theorem:H-reduction2}
There are biequivalences
\begin{gather*}
\begin{aligned}
\cC_{\mathcal{J}}\text{-}\mathrm{afmod}_{\mathcal{J}}^{\mathrm{ex}}  &=\cC_{\mathcal{J}}\text{-}\mathrm{cfmod}_{\mathcal{J}}^{\mathrm{ex}}  \simeq\BB_{\underline{\ccC_{\mathcal{J}}}, {\mathrm{add}(\mathcal{H})}}\\ &\simeq \BB_{\underline{\ccC_{\mathcal{H}}}, {\mathrm{add}(\mathcal{H})}}\simeq\cC_{\mathcal{H}}\text{-}\mathrm{cfmod}_\mathcal{H}^{\mathrm{ex}}=\cC_{\mathcal{H}}\text{-}\mathrm{afmod}_\mathcal{H}^{\mathrm{ex}} .
\end{aligned}
\end{gather*}
\end{theorem}

\begin{proof}
The only thing to note is that, under the assumption of $\mathcal{J}$-simplicity,
the coalgebra $\mathrm{C}^{\mathbf{M}_{\mathtt{i}\mathtt{i}}(\mathrm{F})X} \cong (\mathrm{F}\mathrm{C}^{X})\mathrm{F}^{\star}$ in Proposition \ref{cor:cosimplicity} belongs indeed to $\mathrm{add}(\mathcal{H})$ by Proposition \ref{prop5.5}.
\end{proof}

We deduce the following consequence, which we (also) call
\emph{strong $\mathcal{H}$-reduction}.

\begin{theorem}\label{theorem:strongH}
Let $\cC$ be a fiab bicategory, and fix a two-sided cell $\mathcal{J}$ of $\cC$ as well as diagonal $\mathcal{H}$-cell
$\mathcal{H}\subset\mathcal{J}$. Then there is a biequivalence
\begin{gather*}
\cC\text{-}\mathrm{stmod}_{\mathcal{J}}\simeq
\cC_{\mathcal{H}}\text{-}\mathrm{stmod}_{\mathcal{H}}.
\end{gather*}
\end{theorem}

\begin{proof}
Bearing Proposition \ref{prop:exactness} in mind,
the statement follows by Theorems \ref{cor:MT2} and \ref{theorem:H-reduction2}.
\end{proof}


\subsection{An extra biequivalence}


The goal of this subsection is to prove that the $2$-functor in \eqref{eq:pullback4-3} is a local equivalence.

\begin{theorem}\label{theorem:00}
Let $\cC$ be a quasi multifiab bicategory and $\mathcal{J}$ a two-sided cell in $\cC$. Then the $2$-functor
\begin{gather*}
\cC_{\leq\mathcal{J}}\text{-}\mathrm{afmod}_{\mathcal{J}}\to \cC_{\mathcal{J}}\text{-}\mathrm{afmod}_{\mathcal{J}},
\end{gather*}
defined in \eqref{eq:pullback4-3}, is a local equivalence.
\end{theorem}

\begin{proof}
Recall from Subsection \ref{quotientbicat} that \eqref{eq:pullback4-3} is well-defined.
Let $\mathbf{M}$ and $\mathbf{N}$ be two arbitrary birepresentations in $\cC_{\leq\mathcal{J}}\text{-}\mathrm{afmod}_{\mathcal{J}}$.
Since $2$-faithfulness of \eqref{eq:pullback4-3} is obvious, it suffices to prove $1$- and $2$-fullness, or in other words essential surjectivity and fullness of the induced functor
\begin{gather}\label{eq:localfunctor}
\mathrm{Hom}_{\ccC_{\leq_{\mathcal{J}}}\text{-}\mathrm{afmod}_{\mathcal{J}}}(\mathbf{M},\mathbf{N})\to\mathrm{Hom}_{\ccC_{\mathcal{J}}\text{-}\mathrm{afmod}_{\mathcal{J}}}(\mathbf{M},\mathbf{N}).
\end{gather}

By the abelianized version of Proposition \ref{cor:cosimplicity} and the fact that $\cC_{\leq\mathcal{J}}$ is
$\mathcal{J}$-simple,
there exist two $1$-morphisms $\mathrm{C}$ and $\mathrm{D}$ in $\mathrm{add}(\mathcal{J})$ which have coalgebra structures in $\cC_{\leq\mathcal{J}}$
such that
\begin{gather*}
\underline{\mathbf{M}}\simeq\mathbf{comod}_{\underline{\ccC_{\leq\mathcal{J}}}}(\mathrm{C}) \quad\text{and}\quad \underline{\mathbf{N}}\simeq\mathbf{comod}_{\underline{\ccC_{\leq\mathcal{J}}}}(\mathrm{D}),
\end{gather*}
as birepresentations of $\cC_{\leq\mathcal{J}}$. Both equivalences can be restricted to equivalences
\begin{gather*}
\mathbf{M}\simeq\mathbf{inj}_{\underline{\ccC_{\leq\mathcal{J}}}}(\mathrm{C}) \quad\text{and}\quad \mathbf{N}\simeq\mathbf{inj}_{\underline{\leq\ccC_{\mathcal{J}}}}(\mathrm{D}),
\end{gather*}
as birepresentations of $\cC_{\leq\mathcal{J}}$. Since $\cC_{\mathcal{J}}$ is a $2$-full subbicategory of $\cC_{\leq\mathcal{J}}$,
the coalgebra structures of $\mathrm{C}$ and $\mathrm{D}$ in $\mathrm{add}(\mathcal{J})$ both restrict to $\cC_{\mathcal{J}}$.
Via the $2$-functor in \eqref{eq:pullback4-3}, the above equivalences descend to
\begin{gather*}
\underline{\mathbf{M}}\simeq\mathbf{comod}_{\underline{\ccC_{\mathcal{J}}}}(\mathrm{C})
\quad\text{and}\quad
\underline{\mathbf{N}}\simeq\mathbf{comod}_{\underline{\ccC_{\mathcal{J}}}}(\mathrm{D}),
\end{gather*}
as birepresentations of $\cC_{\mathcal{J}}$, respectively, and
\begin{gather*}
\mathbf{M}\simeq\mathbf{inj}_{\underline{\ccC_{\mathcal{J}}}}(\mathrm{C})
\quad\text{and}\quad
\mathbf{N}\simeq\mathbf{inj}_{\underline{\ccC_{\mathcal{J}}}}(\mathrm{D}),
\end{gather*}
as birepresentations of $\cC_{\mathcal{J}}$.

Let $\Phi$ be any morphism of birepresentations in
$\mathrm{Hom}_{\ccC_{\mathcal{J}}\text{-}\mathrm{afmod}_{\mathcal{J}}}(\mathbf{M},\mathbf{N})$. Then the induced morphism $\underline{\Phi}$ from $\underline{\mathbf{M}}$ to $\underline{\mathbf{N}}$ is left exact by definition.
Hence the functor underlying $\underline{\Phi}$ can be represented by cotensoring
with some $\mathrm{C}\text{-}\mathrm{D}$-bicomodule $\mathrm{X}\in\underline{\cC_{\mathcal{J}}}\subseteq\underline{\cC_{\leq\mathcal{J}}}$, which is injective as a right $\mathrm{D}$-comodule since it sends injective right $\mathrm{C}$-comodules to injective right $\mathrm{D}$-comodules.
It is clear that the functor ${}_{-}\s\mathrm{X}$ is an element of
$\mathrm{Hom}_{\ccC_{\leq_{\mathcal{J}}}\text{-}\mathrm{afmod}_{\mathcal{J}}}\big(\mathbf{inj}_{\underline{\ccC_{\mathcal{J}}}}(\mathrm{C}),\mathbf{inj}_{\underline{\ccC_{\mathcal{J}}}}(\mathrm{D})\big)$
by Lemma \ref{lemma:associator-cotensor-product}. This implies that $\Phi\in\mathrm{Hom}_{\ccC_{\leq_{\mathcal{J}}}\text{-}\mathrm{afmod}_{\mathcal{J}}}(\mathbf{M},\mathbf{N})$ and our functor \eqref{eq:localfunctor} are essentially surjective. Since modifications correspond to homomorphisms of $\mathrm{C}\text{-}\mathrm{D}$-bicomodules, fullness of \eqref{eq:localfunctor} is also clear and the statement is proved.
\end{proof}

\begin{remark}\label{remark0.0}
In fact, the local equivalence in Theorem~\ref{theorem:00} restricts to a biequivalence $\cC_{\leq\mathcal{J}}\text{-}\mathrm{stmod}_{\mathcal{J}} \simeq
\cC_{\mathcal{J}}\text{-}\mathrm{stmod}_{\mathcal{J}}$, cf Theorem~\ref{cor:MT2}.
\end{remark}


\section{The double centralizer theorem}\label{section:doublecentralizer}


Throughout this section, let $\cC$ be a fiab bicategory, $\mathcal{H}$ a diagonal
$\mathcal{H}$-cell and $\mathbf{M}$ a simple transitive birepresentation of
$\cCH$ with apex $\mathcal{H}$. By Proposition \ref{cor:cosimplicity}
there is a cosimple coalgebra
$\mathrm{C}\in\mathrm{add}(\mathcal{H})$ such that
\begin{gather*}
\mathbf{M}\simeq\mathbf{inj}_{\underline{\ccCH}}(\mathbf{C}),
\end{gather*}
and by Lemma \ref{lemma:addGX} we have
\begin{gather}\label{eq:comodequalsinj}
\mathrm{inj}_{\underline{\ccCH}}(\mathbf{C})=\mathrm{add}\{\mathrm{GC}\mid\mathrm{G}\in\cCH\},
\end{gather}
where the additive closure is taken inside $\mathrm{comod}_{\underline{\ccCH}}(\mathrm{C})$.

Let $\cEnd_{\ccCH}(\mathbf{M})$ denote the one-object $2$-category of endomorphisms (of finitary birepresentations) of $\mathbf{M}$ and recall that $\cEnd_{\ccCH}(\mathbf{M})=\cEnd_{\ccCH}^{\mathrm{ex}}(\mathbf{M})$ by Proposition \ref{prop:exactness}.
Further, let $\cBM:=(\mathrm{C})\Bi_{\underline{\ccCH}}(\mathrm{C})$ denote the one-object bicategory of biinjective
$\mathrm{C}$-bicomodules in $\underline{\cCH}$, with the horizontal composition being
given by ${}_{-}\s{}_{-}$. By Proposition \ref{cor:cosimplicity},
there is a biequivalence
\begin{gather}\label{eq:endbicomodules}
\cEnd_{\ccCH}(\mathbf{M})\simeq\cBMop,
\end{gather}
where the right biaction of $\cBM$ on $\mathbf{inj}_{\underline{\ccCH}}(\mathbf{C})$ is
given by ${}_{-}\s{}_{-}$. By \eqref{eq:endbicomodules}, $\mathbf{M}$ can be viewed as a left birepresentation of $\cBMop$ or, equivalently, a right birepresentation of $\cBM$.

\begin{lemma}\label{lemma:endtrans}
$\mathbf{M}$ is transitive as a birepresentation of $\cBMop$.
\end{lemma}

\begin{proof}
For every $\mathrm{X}\in\mathrm{inj}_{\underline{\ccCH}}(\mathbf{C})$ and $\mathrm{Y}\in\mathrm{add}(\mathcal{H})$, we have $\mathrm{C(YC)}\in\cBM$ and
\begin{gather*}
\mathrm{X}\s\big(\mathrm{C}(\mathrm{YC})\big)\cong(\mathrm{X}\s\mathrm{C})(\mathrm{YC})\cong\mathrm{X(YC)}\cong\mathrm{(XY)C}.
\end{gather*}
The result now follows from \eqref{eq:comodequalsinj} and the fact that $\mathcal{H}$ is a
right cell of $\cCH$.
\end{proof}

\begin{remark} 
In general, $\cEnd_{\ccCH}(\mathbf{M})$ may not be finitary, 
but for our purpose that does not cause any serious problems fortunately. It always has a finitary $\mathcal{I}$-simple subquotient 
$\cEnd_{\ccCH}(\mathbf{M})_{\mathcal{I}}$, where 
$\mathcal{I}$ is the unique maximal two-sided cell of indecomposable injective 
endomorphisms. Recall that we call an endofunctor of an additive category injective if it is 
injective in the category of endofunctors of the injective abelianization and note that $\mathcal{I}$ 
is finite since any indecomposable injective $\mathrm{X}\in\cBM$ 
is a direct summand of one of the form $\mathrm{C(YC)}$ for some 
$\mathrm{Y}\in\mathrm{add}(\mathcal{H})$. Moreover, left and right 
adjoints define a quasi fiab structure on $\cEnd_{\ccCH}(\mathbf{M})$ 
which preserves $\mathcal{I}$ and, therefore, induces a quasi fiab structure on $\cEnd_{\ccCH}(\mathbf{M})_{\mathcal{I}}$. 
Finally, the proof of Lemma \ref{lemma:endtrans} shows that $\mathbf{M}$ restricts to 
a transitive birepresentation of $\cEnd_{\ccCH}(\mathbf{M})_{\mathcal{I}}$ with apex 
$\mathcal{I}$. Of course, $\cBMop$ has a biequivalent quasi fiab subquotient $(\cBMop)_{\mathcal{I}}$. 
Strictly speaking, we will be using these $\mathcal{I}$-simple 
subquotients below, but to avoid cluttering the notation, we will always suppress the 
subscript $\mathcal{I}$. 
\end{remark}

Since the biaction of $\cCH$ and $\cEnd_{\ccCH}(\mathbf{M})$ on $\mathbf{M}$ weakly commute by definition, there is a \emph{canonical pseudofunctor}
\begin{gather*}
\mathrm{can}\colon\cCH\to \cEnd^{\mathrm{ex}}_{\ccEnd_{\cccCH}(\mathbf{M})}(\mathbf{M}).
\end{gather*}
For every $\mathrm{X}\in\mathrm{add}(\mathcal{H})$, the endofunctor
$\mathbf{M}(\mathrm{X})$ of $\mathbf{M}(\mathtt{i})$ is injective by the dual version of \cite[Theorem 2]{KMMZ}, so in particular it is exact.
The identity $\mathbbm{1}_{\mathtt{i}}$ in $\cCH$, where
$\mathtt{i}$ is the source of $\mathcal{H}$, acts by
the identity functor, which is not injective but is, of course, exact.
The following theorem, which we call the \emph{double centralizer theorem}, is the analog of \cite[Theorem 7.12.11]{EGNO} for fiab bicategories
and simple transitive birepresentations. In its formulation, the superscript $\mathrm{inj}$ denotes
the injective morphisms, i.e. those realized by injective endofunctors.

\begin{theorem}\label{thm:double-centralizer}
The canonical pseudofunctor is fully faithful on $2$-morphisms and essentially surjective
on $1$-morphisms when restricted to $\mathrm{add}(\mathcal{H})$ and
corestricted to $\cEnd^{\mathrm{inj}}_{\ccEnd_{\cccCH}(\mathbf{M})}(\mathbf{M})$.
\end{theorem}

The proof follows similar reasoning as the proof of \cite[Theorem 7.12.11]{EGNO}, but
we have to adapt some of the arguments to our setting, because $\cCH$ is not
abelian and $\mathbbm{1}_{\mathtt{i}}$ does not act on $\mathbf{M}(\mathtt{i})$ by an injective endofunctor, as already remarked.

Before we give the proof of Theorem \ref{thm:double-centralizer}, let us recall some general facts about duality and coactions and point out
some consequences. Since these facts are
well-known and not difficult to check, we omit their proofs, see also Remark \ref{remark:diagrams}. Suppose that $\C$ is a coalgebra
in $\cCH$ and let $\mathrm{Y}\in\mathrm{inj}_{\underline{\ccCH}}(\C)$. Then
$\mathrm{Y}^{\star}\in(\C)\mathrm{inj}_{\underline{\ccCH}}$, with the left
$\C$-coaction $\delta_{\C,\mathrm{Y}^{\star}}$ being defined as the composite of
(recall $\mathrm{ev}^{\prime}$ and $\mathrm{coev}^{\prime}$ from
below Definition \ref{definition:fiab})
\begin{gather*}
\hspace*{-1.6cm}
\begin{tikzcd}[ampersand replacement=\&,column sep=3.75em]
\mathrm{Y}^{\star}
\arrow[d,xshift=-2cm,phantom,""{coordinate, name=Z}]
\arrow[r,"(\runit_{\mathrm{Y}^{\star}})^{\mone}"]
\&
\mathrm{Y}^{\star}\mathbbm{1}_{\mathtt{i}}
\arrow[r,"\mathrm{id}_{\mathrm{Y}^{\star}}\circ_{\mathsf{h}}\mathrm{coev}^{\prime}_{\mathrm{Y}}"]
\&
\mathrm{Y}^{\star}(\mathrm{Y}\mathrm{Y}^{\star})
\arrow[rr, "\mathrm{id}_{\mathrm{Y}^{\star}}\circ_{\mathsf{h}}
(\delta_{\mathrm{Y},\C}\circ_{\mathsf{h}}\mathrm{id}_{\mathrm{Y}^{\star}})"]
\ar[draw=none]{d}[name=X, anchor=center]{}
\& \&
\mathrm{Y}^{\star}\big((\mathrm{Y}\C)\mathrm{Y}^{\star}\big)
\ar[rounded corners,
to path={ -- ([xshift=2ex]\tikztostart.east)
|- (X.center) \tikztonodes
-| ([xshift=-2ex]\tikztotarget.west)
-- (\tikztotarget)},swap]{dlll}[at end]{\alpha^{\mone}_{\mathrm{Y}^{\star},\mathrm{Y}, \C \mathrm{Y}^{\star}}\circ_{\mathsf{v}} (\mathrm{id}_{\mathrm{Y}^{\star}}\circ_{\mathsf{h}}\alpha_{\mathrm{Y},\C,\mathrm{Y}^{\star}})}
\\
{} \& (\mathrm{Y}^{\star}\mathrm{Y})(\C \mathrm{Y}^{\star})
\arrow[r,"\mathrm{ev}^{\prime}_{\mathrm{Y}}\circ_{\mathsf{h}}\mathrm{id}_{\C \mathrm{Y}^{\star}}",swap]
\&
\mathbbm{1}_{\mathtt{i}}(\C \mathrm{Y}^{\star})
\arrow[rr,"\lunit_{\C\mathrm{Y}^{\star}}",swap]
\&\&
\C\mathrm{Y}^{\star}.
\end{tikzcd}
\end{gather*}
This implies that $\mathrm{Y}^{\star}\mathrm{Y}\in\cBM$ and that the following
diagrams commute:
\begin{gather}\label{eq:generalfacts1}
\begin{tikzcd}[ampersand replacement=\&,column sep=4.1em]
\mathrm{Y}^{\star}\mathrm{Y}
\arrow[r,"\delta_{\C,\mathrm{Y}^{\star}}\circ_{\mathsf{h}}
\mathrm{id}_{\mathrm{Y}}"]
\arrow[d, equal]
\&
(\C\mathrm{Y}^{\star})\mathrm{Y}
\arrow[r,"\alpha_{\C,\mathrm{Y}^{\star},\mathrm{Y}}"]
\&
\C (\mathrm{Y}^{\star}\mathrm{Y})
\arrow[r,"\mathrm{id}_{\C}\circ_{\mathsf{h}}\mathrm{ev}^{\prime}_{\mathrm{Y}}"]
\&
\C\mathbbm{1}_{\mathtt{i}}
\arrow[r,"\runit_{\C}"]
\&
\C
\arrow[d, equal]
\\
\mathrm{Y}^{\star}\mathrm{Y}
\arrow[r,"\mathrm{id}_{\mathrm{Y}}\circ_{\mathsf{h}}\delta_{\mathrm{Y},\C}",swap]
\&
\mathrm{Y}^{\star}(\mathrm{Y}\C)
\arrow[r,"\alpha^{\mone}_{\mathrm{Y}^{\star}, \mathrm{Y},\C}",swap]
\&
(\mathrm{Y}^{\star}\mathrm{Y})\C\arrow[r,"\mathrm{ev}^{\prime}_{\mathrm{Y}}\circ_{\mathsf{h}}\mathrm{id}_{\C}",swap]
\&
\mathbbm{1}_{\mathtt{i}}\C\arrow[r,"\lunit_{\C}",swap]
\&
\C
\end{tikzcd}
,
\end{gather}

\begin{gather}\label{eq:generalfacts2}
\begin{tikzcd}[ampersand replacement=\&,column sep=4em]
\mathbbm{1}_{\mathtt{i}}
\arrow[r,"\mathrm{coev}^{\prime}_{\mathrm{Y}}"]
\&
\mathrm{Y}\mathrm{Y}^{\star}
\arrow[r,"\delta_{\mathrm{Y},\C}\circ_{\mathsf{h}} \mathrm{id}_{\mathrm{Y}^{\star}}"]
\arrow[dr,"\mathrm{id}_{\mathrm{Y}}\circ_{\mathsf{h}} \delta_{\C,\mathrm{Y}^{\star}}", swap]
\&
(\mathrm{Y}\C) \mathrm{Y}^{\star}
\arrow[d,"\alpha_{\mathrm{Y}, \C, \mathrm{Y}^{\star}}"]
\\
\&\&
\mathrm{Y}(\C \mathrm{Y}^{\star})
\end{tikzcd}
.
\end{gather}

Now, let $\mathrm{X}\in\cCH$,
$\mathrm{Y}\in\mathrm{inj}_{\underline{\ccCH}}(\C)$ and
$\mathrm{Z}\in\cBM$. For any $f\in\mathrm{Hom}_{\underline{\ccC_{\mathcal{H}}}}(\mathrm{X},\mathrm{Y}\s\mathrm{Z})$, define $\tilde{f}:= \iota_{\mathrm{Y},\mathrm{Z}}\circ_{\mathsf{h}} f \in\mathrm{Hom}_{\underline{\ccC_{\mathcal{H}}}}(\mathrm{X},\mathrm{Y}\mathrm{Z})$, where $\iota_{\mathrm{Y},\mathrm{Z}}
\colon \mathrm{Y}\square_{\C}\mathrm{Z}\xhookrightarrow{}
\mathrm{Y}\mathrm{Z}$ is the canonical embedding. Then
$g\in\mathrm{Hom}_{\underline{\ccC_{\mathcal{H}}}}(\mathrm{X},\mathrm{Y}\mathrm{Z})$ satisfies $g=\tilde{f}$, for some
$f\in\mathrm{Hom}_{\underline{\ccC_{\mathcal{H}}}}(\mathrm{X},\mathrm{Y}\square_{\C}\mathrm{Z})$, if and only if
\begin{gather}\label{eq:generalfacts3}
\alpha_{\mathrm{Y}, \mathrm{C},\mathrm{Z}}\circ_{\mathsf{v}}(\delta_{\mathrm{Y},\mathrm{C}}\circ_{\mathsf{h}}\mathrm{id}_{\mathrm{Z}})
\circ_{\mathsf{v}} g= (\mathrm{id}_{\mathrm{Y}}\circ_{\mathsf{h}}
\delta_{\mathrm{C},\mathrm{Z}})
\circ_{\mathsf{v}} g.
\end{gather}
Taking $\mathrm{X}=\mathbbm{1}_{\mathtt{i}}$ and $\mathrm{Z}=\mathrm{Y}^{\star}$,
we see that commutativity of the diagram in \eqref{eq:generalfacts2} means that
$\mathrm{coev}^{\prime}_{\mathrm{Y}}$ factors through $\mathrm{Y}\s\mathrm{Y}^{\star}$,
i.e.
\begin{gather*}
\mathrm{coev}^{\prime}_{\mathrm{Y}}=\widetilde{\mathrm{coev}^{\C,\prime}_{\mathrm{Y}}},
\end{gather*}
where $\mathrm{coev}^{\C,\prime}_{\mathrm{Y}}\in\mathrm{Hom}_{\underline{\ccC_{\mathcal{H}}}}(\mathbbm{1}_{\mathtt{i}},\mathrm{Y}\square_{\C}\mathrm{Y}^{\star})$. This, in turn, implies that
$\mathrm{Y}^{\star}\mathrm{Y}$ is a coalgebra in $\cBM$, with comultiplication
$\delta^{\C}_{\mathrm{Y}^{\star}\mathrm{Y}}$ being the
composite of
\begin{gather}\label{eq:generalfacts5}
\mathrm{Y}^{\star}\mathrm{Y}
\xrightarrow{(\runit_{\mathrm{Y}^{\star}})^{\mone}\circ_{\mathsf{h}}
\mathrm{id}_{\mathrm{Y}}}
(\mathrm{Y}^{\star}\mathbbm{1}_{\mathtt{i}})\mathrm{Y}
\xrightarrow{\mathrm{id}_{\mathrm{Y}^{\star}}\circ_{\mathsf{h}} \mathrm{coev}^{\C,\prime}_{\mathrm{Y}}\circ_{\mathsf{h}}\mathrm{id}_{\mathrm{Y}}}
\big(\mathrm{Y}^{\star}(\mathrm{Y}\s\mathrm{Y}^{\star})\big)\mathrm{Y}
\xrightarrow{\cong}
(\mathrm{Y}^{\star}\mathrm{Y})\s(\mathrm{Y}^{\star}\mathrm{Y})
\end{gather}
and counit $\epsilon^{\C}_{\mathrm{Y}^{\star}\mathrm{Y}}$ being the composite
of either one of the rows in \eqref{eq:generalfacts1}. Checking coassociativity and counitality is
an easy but tedious exercise in diagram-chasing, which we leave to the reader. We only note that to check counitality, one has to use commutativity of \eqref{eq:generalfacts1}.

Finally, let
\begin{gather}\label{eq:generalfacts6}
\delta_{\mathrm{Y}^{\star}\mathrm{Y}}:=
\widetilde{\delta^{\C}_{\mathrm{Y}^{\star}\mathrm{Y}}}\quad
\epsilon_{\mathrm{Y}^{\star}\mathrm{Y}}:=\epsilon_{\C}\circ_{\mathsf{v}} \epsilon^{\C}_{\mathrm{Y}^{\star}\mathrm{Y}}.
\end{gather}
Then $(\mathrm{Y}^{\star}\mathrm{Y}, \delta_{\mathrm{Y}^{\star}\mathrm{Y}},
\epsilon_{\mathrm{Y}^{\star}\mathrm{Y}})$ is a coalgebra in $\cCH$. As a matter of fact,
it is exactly the coalgebra structure on $\mathrm{Y}^{\star}\mathrm{Y}$ which we defined
in Lemma \ref{lem0.1}, if we consider $\mathrm{Y}^{\star}\mathrm{Y}$ as the framing of
the coalgebra $\mathbbm{1}_{\mathtt{i}}$ by $\mathrm{Y}^{\star}$ in $\cCH$, namely,
\begin{gather*}
\begin{gathered}
\delta_{\mathrm{Y}^{\star}\mathrm{Y}}=
\alpha_{\mathrm{Y}^{\star}\mathrm{Y},\mathrm{Y}^{\star},\mathrm{Y}}\circ_{\mathsf{v}}
(\alpha_{\mathrm{Y}^{\star},\mathrm{Y},\mathrm{Y}^{\star}}^{\mone}\circ_{\mathsf{h}}\mathrm{id}_{\mathrm{Y}})\circ_{\mathsf{v}}
\\
\big((\mathrm{id}_{\mathrm{Y}^{\star}}\circ_{\mathsf{h}}
\mathrm{coev}^{\prime}_{\mathrm{Y}})\circ_{\mathsf{h}}\mathrm{id}_{\mathrm{Y}}\big)\circ_{\mathsf{v}}\big((\runit_{\mathrm{Y}^{\star}})^{\mone}\circ_{\mathsf{h}}\mathrm{id}_{\mathrm{Y}}\big)
\end{gathered}
,
\\
\epsilon_{\mathrm{Y}^{\star}\mathrm{Y}}=\mathrm{ev}^{\prime}_{\mathrm{Y}}.
\end{gather*}

\begin{remark}\label{remark:diagrams}
The facts above are easy to see in the strict setting using string diagrams. For example, \eqref{eq:generalfacts1} reads as
\begin{gather*}
\begin{tikzpicture}[anchorbase,scale=1]
\draw[dstrand,directed=0.5] (1,0)node[below]{$\mathrm{Y}$} to[out=90,in=0] (0.5,0.5) to[out=180,in=90] (0,0)node[below]{$\mathrm{Y}^{\star}$};
\draw[cstrand] (0.07,0.2) to[out=180,in=270] (-0.5,0.75)node[above]{$\C$};
\end{tikzpicture}
=
\begin{tikzpicture}[anchorbase,scale=1]
\draw[dstrand,directed=0.5] (1,0)node[below]{$\mathrm{Y}$} to[out=90,in=0] (0.5,0.5) to[out=180,in=90] (0,0)node[below]{$\mathrm{Y}^{\star}$};
\draw[cstrand] (0.93,0.2) to[out=0,in=270] (1.5,0.75)node[above]{$\C$};
\end{tikzpicture}
,
\end{gather*}
while $\delta_{\mathrm{Y}^{\star}\mathrm{Y}}$ and $\epsilon_{\mathrm{Y}^{\star}\mathrm{Y}}$ can be depicted as
\begin{gather*}
\delta_{\mathrm{Y}^{\star}\mathrm{Y}}=
\begin{tikzpicture}[anchorbase,scale=1]
\draw[dstrand,opdirected=0.5] (-1,0)node[above]{$\mathrm{Y}$} to[out=270,in=180] (-0.5,-0.5) to[out=0,in=270] (0,0)node[above]{$\mathrm{Y}^{\star}$};
\draw[dstrand,opdirected=0.5] (1,0)node[above]{$\mathrm{Y}$} to[out=270,in=90] (0,-1)node[below]{$\mathrm{Y}$};
\draw[dstrand,opdirected=0.5] (-1,-1)node[below]{$\mathrm{Y}^{\star}$} to[out=90,in=270] (-2,0)node[above]{$\mathrm{Y}^{\star}$};
\end{tikzpicture}
\,,\quad
\epsilon_{\mathrm{Y}^{\star}\mathrm{Y}}=
\begin{tikzpicture}[anchorbase,scale=1]
\draw[dstrand,opdirected=0.5] (0,0)node[below]{$\mathrm{Y}^{\star}$} to[out=90,in=180] (0.5,0.5) to[out=0,in=90] (1,0)node[below]{$\mathrm{Y}$};
\end{tikzpicture}
\,
\end{gather*}
for which coassociativity and counitality are easy exercises in planar topology. Similarly, the various constructions which we will use below also have string diagrammatic interpretations. For example,
\begin{gather*}
\begin{tikzpicture}[anchorbase,scale=1]
\draw[dstrand,white] (0,-0.45) node[left,yshift=0.225cm]{$\mathrm{F}$} to (0,0.05);
\draw[dstrand,white] (0,0) to (0,0.01) node[above,black,box]{\raisebox{-0.025cm}{\hspace*{0.35cm}$f$\hspace*{0.35cm}}};
\draw[dstrand,directed=0.5] (0,-0.4)node[below]{$\mathrm{X}$} to (0,0);
\draw[dstrand,directed=0.5] (-0.4,0.6) to (-0.4,1)node[above]{$\mathrm{Y}$};
\draw[dstrand,directed=0.5] (0.4,0.6) to (0.4,1)node[above]{$\mathrm{Z}$};
\end{tikzpicture}
\mapsto
\begin{tikzpicture}[anchorbase,scale=1]
\draw[dstrand,white] (0,-0.45) node[left,yshift=0.225cm]{$\mathrm{F}$} to (0,0.05);
\draw[dstrand,white] (0,0) to (0,0.01) node[above,black,box]{\raisebox{-0.025cm}{\hspace*{0.35cm}$f$\hspace*{0.35cm}}};
\draw[dstrand,directed=0.5] (0,-0.4)node[below]{$\mathrm{X}$} to (0,0);
\draw[dstrand,directed=0.5] (-0.4,0.6) to[out=90,in=0] (-0.6,1) to[out=180,in=90] (-0.8,0.6) to (-0.8,-0.4) node[below]{$\mathrm{Y}^{\star}$};
\draw[dstrand,directed=0.5] (0.4,0.6) to (0.4,1)node[above]{$\mathrm{Z}$};
\end{tikzpicture}
\,,
\end{gather*}
is the picture of the map in \eqref{eq:fiso}.
\end{remark}	

\begin{proof}[Proof of Theorem \ref{thm:double-centralizer}]
By \eqref{eq:endbicomodules} we can interpret the canonical pseudofunctor
as a pseudofunctor (with the same name)
$\mathrm{can}\colon\cCH\to\cEnd^{\mathrm{ex}}_{{\ccB}^{\mathrm{op}}_{\mathbf{M}}}(\mathbf{M})$.

By Lemma \ref{lemma:endtrans} and the internal cohom construction, there is an
equivalence of right birepresentations of $\cBM$
\begin{gather*}
\mathbf{inj}_{\underline{\ccCH}}(\C)\simeq\big([\C,\C]\big)\mathbf{inj}_{\underline{\ccB_{\mathbf{M}}}}.
\end{gather*}
A crucial ingredient is the following claim.

\textbf{Claim 1.} We have
\begin{gather}\label{eq:double-centralizer1}
[\C,\C]\cong\C^{\star}\C\;\text{in}\;\cBM
\end{gather}
and the implied equivalence
\begin{gather}\label{eq:double-centralizer2}
\mathbf{inj}_{\underline{\ccCH}}(\C)\simeq(\C^{\star}\C)\mathbf{inj}_{\underline{\ccB_{\mathbf{M}}}}
\end{gather}
is given explicitly by $\mathrm{X}\mapsto\C^{\star}\mathrm{X}$.

Claim 1 then implies that
\begin{gather*}
\cEnd^{\mathrm{ex}}_{\ccEnd_{\cccCH}(\mathbf{M})}(\mathbf{M})\simeq(\C^{\star} \C)\Bi_{\underline{\ccB_{\mathbf{M}}}}(\C^{\star}\C)
\end{gather*}
and $1$-morphisms in $\cEnd^{\mathrm{inj}}_{\ccEnd_{\cccCH}(\mathbf{M})}(\mathbf{M})$, under this equivalence, correspond to
injective $(\C^{\star}\C)$-bicomodules in $\underline{\cBM}$, which all live
in $\cBM$. We can then trace the
canonical pseudofunctor through the equivalences to a pseudofunctor
\begin{gather*}
\mathrm{can}\colon\cCH\to(\C^{\star} \C)\Bi_{\underline{\ccB_{\mathbf{M}}}}(\C^{\star}\C).
\end{gather*}
It is on that level that we prove fully faithfulness on $2$-morphisms and essential surjectivity on $1$-morphisms when restricting to $\mathrm{add}(\mathcal{H})$ and corestricting to
injective $(\C^{\star}\C)$-bicomodules.

We now proceed to prove Claim 1, i.e. the isomorphism \eqref{eq:double-centralizer1} and the explicit description of the equivalence it implies. Both follow from
the natural isomorphism
\begin{gather*}
\mathrm{Hom}_{\C}(\mathrm{X},\mathrm{Y}\s\mathrm{Z})
\cong
\mathrm{Hom}_{\underline{{\ccB}_{\mathbf{M}}}}(\mathrm{Y}^{\star}\mathrm{X},\mathrm{Z})
\end{gather*}
for $\mathrm{X},\mathrm{Y}\in\mathbf{inj}_{\underline{\ccC_{\mathcal{H}}}}(\C)$ and
$\mathrm{Z}\in\underline{\cBM}$, which we claim is given by
\begin{gather}\label{eq:fiso}
f\mapsto \lunit_{\mathrm{Z}}\circ_{\mathsf{v}}(\mathrm{ev}_{\mathrm{Y}}^{\prime}
\circ_{\mathsf{v}}\circ_{\mathsf{h}}\mathrm{id}_{\mathrm{Z}})\circ_{\mathsf{v}}
\alpha^{\mone}_{\mathrm{Y}^{\star},\mathrm{Y},\mathrm{Z}}\circ_{\mathsf{v}}
(\mathrm{id}_{\mathrm{Y}^{\star}}\circ_{\mathsf{h}}\tilde{f}).
\end{gather}
To verify this, we have to show that the natural transformation is well-defined and that it is an isomorphism.

Since $f$ is assumed to be a right $\C$-comodule homomorphism, it is clear
from the definition of the natural transformation that the image of $f$ is also
a right $\C$-comodule homomorphism. The fact that
the image of $f$ is also a left $\C$-comodule homomorphism follows
from the assumption that the target of $f$ is $\mathrm{Y}\s\mathrm{Z}$, as the following
commutative diagram shows:
\begin{gather*}
\adjustbox{scale=0.57,center}{%
\begin{tikzcd}[ampersand replacement=\&,column sep=2.05em]
\mathrm{Y}^{\star} \mathrm{X}
\arrow[rrr,"\delta_{\C,\mathrm{Y}^{\star}}\circ_{\mathsf{h}}\mathrm{id}_{\mathrm{X}}"]
\arrow[d,equal]
\&\&\&
(\C\mathrm{Y}^{\star})\mathrm{X}
\arrow[r,"\alpha_{\C,\mathrm{Y}^{\star},\mathrm{X}}"]
\&
\C(\mathrm{Y}^{\star}\mathrm{X})
\arrow[rrr,"\mathrm{id}_{\C}(\mathrm{id}_{\mathrm{Y}^{\star}}\circ_{\mathsf{h}}\tilde{f})"]
\ar[rrr,phantom,yshift=-3.5ex, xshift=-9ex,"\circled{1}"]
\&\&\&
\C\big(\mathrm{Y}^{\star}(\mathrm{YZ})\big)
\arrow[r,white,"\mathrm{id}_{\C}\circ_{\mathsf{h}}\alpha_{\mathrm{Y}^{\star},\mathrm{Y},\mathrm{Z}}^{\mone}",yshift=0.1cm]
\arrow[r]
\&
\C\big((\mathrm{Y}^{\star}\mathrm{Y})\mathrm{Z}\big)
\arrow[rrr,"\mathrm{id}_{\C}\circ_{\mathsf{h}}(\mathrm{ev}^{\prime}_{\mathrm{Y}}
\mathrm{id}_{\mathrm{Z}})"]
\arrow[d,"(\alpha_{\C,\mathrm{Y}^{\star},\mathrm{Y}}^{\mone}\circ_{\mathsf{h}}\mathrm{id}_{\mathrm{Z}})\circ_{\mathsf{v}}\alpha_{\C,\mathrm{Y}^{\star}\mathrm{Y},\mathrm{Z}}^{\mone}",swap]
\&\&\&
\C(\mathbbm{1}_{\mathtt{i}}\mathrm{Z})
\arrow[ddrr, bend left,"\mathrm{id}_{\C}\circ_{\mathsf{h}}\lunit_{\mathrm{Z}}"]
\arrow[d,"\alpha_{\C,\mathbbm{1}_{\mathtt{i}},\mathrm{Z}}^{\mone}",swap]
\ar[d,phantom,yshift=-3.5ex,xshift=8ex,"\circled{3}"]
\&\&
\\
\mathrm{Y}^{\star} \mathrm{X}
\arrow[rrr,"\mathrm{id}_{\mathrm{Y}^{\star}}\circ_{\mathsf{h}}\tilde{f}",swap]
\arrow[dddd,equal]
\ar[dddd,phantom,xshift=20ex,"\circled{4}"]
\&\&\&
\mathrm{Y}^{\star}(\mathrm{YZ})
\arrow[r,"\alpha_{\mathrm{Y}^{\star},\mathrm{Y},\mathrm{Z}}^{\mone}",swap]
\arrow[ddr,equal]
\&
(\mathrm{Y}^{\star}\mathrm{Y})\mathrm{Z}
\arrow[rrrr,"(\delta_{\C,\mathrm{Y}^{\star}}\circ_{\mathsf{h}}\mathrm{id}_{\mathrm{Y}})
\circ_{\mathsf{h}}\mathrm{id}_{\mathrm{Z}}",swap]
\arrow[d,equal]
\&\&\&
\&
\big((\C\mathrm{Y}^{\star})\mathrm{Y}\big)\mathrm{Z}
\ar[rrr,phantom,"\circled{2}"]
\&\&\&
(\C\mathbbm{1}_{\mathtt{i}})\mathrm{Z}
\arrow[drr,"\runit_{\C}\circ_{\mathsf{h}}\mathrm{id}_{\mathrm{Z}}",swap,near start]
\&\&
\\
\&\&\&
\&
(\mathrm{Y}^{\star}\mathrm{Y})\mathrm{Z}
\arrow[rrr,"(\mathrm{id}_{\mathrm{Y}^{\star}}\circ_{\mathsf{h}}\delta_{\mathrm{Y},\C})
\circ_{\mathsf{h}}\mathrm{id}_{\mathrm{Z}}"]
\ar[d,"\alpha_{\mathrm{Y}^{\star}, \mathrm{Y},\mathrm{Z}}"]
\ar[rrr,phantom, yshift=-3ex, "\circled{5}"]
\&\&\&
\big(\mathrm{Y}^{\star}(\mathrm{Y}\C)\big)\mathrm{Z}
\arrow[r,white,"\alpha_{\mathrm{Y}^{\star},\mathrm{Y},\C}^{\mone}\circ_{\mathsf{h}}\mathrm{id}_{\mathrm{Z}}",yshift=0.1cm]
\ar[r]
\arrow[d,"\alpha_{\mathrm{Y}^{\star},\mathrm{Y}\C,\mathrm{Z}}"]
\&
\big((\mathrm{Y}^{\star}\mathrm{Y})\C\big)\mathrm{Z}
\arrow[rrr,"(\mathrm{ev}^{\prime}_{\mathrm{Y}}\circ_{\mathsf{h}}\mathrm{id}_{\C})\circ_{\mathsf{h}}\mathrm{id}_{\mathrm{Z}}"]
\ar[rrr,phantom, yshift=-9ex, xshift=-9ex, "\circled{6}"]
\&\&\&
(\mathbbm{1}_{\mathtt{i}}\C)\mathrm{Z}
\arrow[rr,"\lunit_{\C}\circ_{\mathsf{h}}\mathrm{id}_{\mathrm{Z}}"]
\&\&
\C\mathrm{Z}
\\
\&\&\&
\&
\mathrm{Y}^{\star} (\mathrm{Y}\mathrm{Z})
\ar[rrr,"\mathrm{id}_{\mathrm{Y}^{\star}}\circ_{\mathsf{h}}(\delta_{\mathrm{Y},\C}
\circ_{\mathsf{h}}\mathrm{id}_{\mathrm{Z}})"]
\&\&\&
\mathrm{Y}^{\star}\big((\mathrm{Y}\C)\mathrm{Z}\big)
\arrow[d,"\mathrm{id}_{\mathrm{Y}^{\star}}\circ_{\mathsf{h}}\alpha_{\mathrm{Y},\C,\mathrm{Z}}"]
\&
\&\&\&
\mathbbm{1}_{\mathtt{i}}(\C\mathrm{Z})
\arrow[u,"\alpha_{\mathbbm{1}_{\mathtt{i}},\C,\mathrm{Z}}^{\mone}"]
\&\&
\\
\&\&\&
\&
\&\&\&
\mathrm{Y}^{\star}\big(\mathrm{Y}(\C\mathrm{Z})\big)
\arrow[rr,"\alpha_{\mathrm{Y}^{\star},\mathrm{Y},\C\mathrm{Z}}^{\mone}"]
\ar[rr,phantom, yshift=-3ex, "\circled{7}"]
\&\&
(\mathrm{Y}^{\star}\mathrm{Y})(\C\mathrm{Z})
\arrow[urr,"\mathrm{ev}^{\prime}_{\mathrm{Y}}\circ_{\mathsf{h}}
\mathrm{id}_{\mathrm{CZ}}"]
\ar[urr,phantom, yshift=-4ex, "\circled{8}"]
\&\&
\mathbbm{1}_{\mathtt{i}}\mathrm{Z}
\arrow[rr,"\lunit_{\mathrm{Z}}",swap]
\arrow[u,"\mathrm{id}_{\mathbbm{1}_{\mathtt{i}}}\circ_{\mathsf{h}}\delta_{\C,\mathrm{Z}}",swap]
\&\&
\mathrm{Z}
\ar[uu,"\delta_{\C,\mathrm{Z}}",swap]
\ar[uu,phantom,xshift=-7ex,"\circled{9}"]
\\
\mathrm{Y}^{\star}\mathrm{X}
\arrow[rrrrrrr,"\mathrm{id}_{\mathrm{Y}^{\star}}\circ_{\mathsf{h}}\tilde{f}",swap]
\&\&\&
\&
\&\&\&
\mathrm{Y}^{\star} (\mathrm{Y}\mathrm{Z})
\arrow[rr,"\alpha_{\mathrm{Y}^{\star},\mathrm{Y},\mathrm{Z}}^{\mone}",swap]
\arrow[u,"\mathrm{id}_{\mathrm{Y}^{\star}}\circ_{\mathsf{h}}(\mathrm{id}_{\mathrm{Y}}\circ_{\mathsf{h}}\delta_{\C,\mathrm{Z}})"]
\&\&
(\mathrm{Y}^{\star} \mathrm{Y})\mathrm{Z}
\arrow[u,"\mathrm{id}_{\mathrm{Y}^{\star}\mathrm{Y}}\circ_{\mathsf{h}}
\delta_{\C,\mathrm{Z}}"]
\arrow[urr, "\mathrm{ev}^{\prime}_{\mathrm{Y}}\circ_{\mathsf{h}}\mathrm{id}_{\mathrm{Z}}",swap]
\&\&
\&\&
\end{tikzcd}
}
\hspace*{-0.25cm}
\end{gather*}
Commutativity follows from
\begin{itemize}

\item the interchange law, naturality of the associator and the pentagon coherence condition of the associator for the facet labeled $1$;

\item commutativity of \eqref{eq:generalfacts1} and naturality of the associator for the facet labeled $2$;

\item the triangle coherence condition of the unitors for the facet labeled $3$;

\item equation \eqref{eq:generalfacts3} for the facet labeled $4$;

\item naturality of the associator for the facets labeled $5$ and $7$;

\item naturality of the associator and the pentagon coherence condition of the associator for the facet labeled $6$;

\item the interchange law for the facet labeled $8$;

\item naturality of the unitor and \eqref{eq:0.00} for the facet labeled $9$.
\end{itemize}
This finishes the proof that the image of $f$ belongs to $\mathrm{Hom}_{\underline{\ccB_{\mathbf{M}}}}(\mathrm{Y}^{\star}\mathrm{X},\mathrm{Z})$, in other words, that the natural transformation
above is well-defined.

Its inverse is given by sending any $h\in\mathrm{Hom}_{\underline{\ccB_{\mathbf{M}}}}(\mathrm{Y}^{\star}\mathrm{X},\mathrm{Z})$ to the unique $2$-morphism
$h^{\prime}\in\mathrm{Hom}_{\C}(\mathrm{X},\mathrm{Y}\s\mathrm{Z})$ such that
\begin{gather*}
\widetilde{h^{\prime}}=(\mathrm{id}_{\mathrm{Y}}\circ_{\mathsf{h}}h)\circ_{\mathsf{v}}\alpha_{\mathrm{Y},\mathrm{Y}^{\star},\mathrm{X}}\circ_{\mathsf{v}}(\mathrm{coev}^{\prime}_{\mathrm{Y}}\circ_{\mathsf{h}}\mathrm{id}_{\mathrm{X}})\circ_{\mathsf{v}}(\lunit_{\mathrm{X}})^{\mone}.
\end{gather*}
The fact that the latter composite satisfies
\eqref{eq:generalfacts3} implies well-definition. The statement that the map $h\mapsto h^{\prime}$ 
is the inverse of \eqref{eq:fiso} follows from naturality of the unitors and associator, the 
interchange law, the pentagon coherence condition for the associator, the triangle 
coherence condition of the unitors and the zigzag relation for coevaluations and evaluations.

This completes the proof of Claim 1, since it shows that 
$[\mathrm{Y},\mathrm{X}]\cong\mathrm{Y}^{\star}\mathrm{X}$ which, setting $\mathrm{X}=\mathrm{Y}=\C$, 
yields \eqref{eq:double-centralizer1}. Moreover, the equivalence in \eqref{eq:double-centralizer2} 
is then given by $\mathrm{X}\mapsto[\C,\mathrm{X}]\cong\C^{\star}\mathrm{X}$.

It is not hard to see that the coalgebra structure of $\C^{\star}\C\cong [\C,\C]$ in $\cBM$
given by the internal cohom construction coincides with the one given
in \eqref{eq:generalfacts5} and the text above and below it.
As we explained in \eqref{eq:generalfacts6} and the text above and below it,
this coalgebra structure in $\cBM$ induces one in $\cCH$.

It follows that $\C^{\star}$ is naturally a left $\C^{\star}\C$-comodule in
$(\C)\mathrm{inj}_{\underline{\ccCH}}$, with
coaction defined by
\begin{gather*}
\delta_{\C^{\star}\C,\C^{\star}}:=\alpha^{\mone}_{\C^{\star},\C,\C^{\star}}\circ_{\mathsf{v}}(\mathrm{id}_{\C^{\star}}\circ_{\mathsf{h}}
\mathrm{coev}^{\prime}_{\C})\circ_{\mathsf{v}}(\runit_{\C^{\star}})^{\mone},
\end{gather*}
and $\C$ is naturally a right $\C^{\star}\C$-comodule, with coaction defined by
\begin{gather*}
\delta_{\C,\C^{\star}\C}:=
\alpha_{\C,\C^{\star},\C}
\circ_{\mathsf{v}}
(\mathrm{coev}^{\prime}_{\C}\circ_{\mathsf{h}}\mathrm{id}_{\C})
\circ_{\mathsf{v}}(\lunit_{\C})^{\mone}.
\end{gather*}
Coassociativity follows from the interchange law, naturality of the associator and unitors,
and the pentagon coherence condition of the associator. Counitality follows from the zigzag relations.

To show that the canonical pseudofunctor
$\mathrm{can}\colon\cCH\to(\C^{\star}\C)\Bi_{\underline{\ccB_{\mathbf{M}}}}(\C^{\star}\C)$ 
is fully faithful on $2$-morphisms and essentially surjective on $1$-morphisms,
when restricted to $\mathrm{add}(\mathcal{H})$, we need the following.

\textbf{Claim 2.}
The canonical pseudofunctor $\mathrm{can}\colon\cCH\to(\C^{\star}\C)\Bi_{\underline{\ccB_{\mathbf{M}}}}(\C^{\star}\C)$ is explicitly given by
\begin{gather*}
\mathrm{F}\mapsto\C^{\star}(\mathrm{F}\C).
\end{gather*}
We already know that the equivalence in \eqref{eq:double-centralizer2} is given by
$\mathrm{X}\to\C^{\star}\mathrm{X}$. Since $\mathrm{F}$, viewed as an endomorphism 
of $\mathbf{inj}_{\underline{\ccC_{\mathcal{H}}}}(\C)$, sends $\mathrm{X}$ to $\mathrm{FX}$, 
it follows that under the equivalence in \eqref{eq:double-centralizer2}, it sends 
$\C^{\star}\mathrm{X}$ to $\C^{\star}(\mathrm{FX})$. We therefore have to show that there is a
natural isomorphism
\begin{gather*}
\mathrm{X}\cong(\C\square_{\C^{\star}\C}\C^{\star})\mathrm{X},
\end{gather*}
for $\mathrm{X}\in\mathbf{inj}_{\underline{\ccC_{\mathcal{H}}}}(\C)$. By
Lemma \ref{lemma:endtrans}, it suffices to prove that there is a natural isomorphism
in $\cCH$
\begin{gather}\label{eq:double-centralizer5.51}
\mathrm{G}\cong(\C\square_{\C^{\star}\C}\C^{\star})\mathrm{G},
\end{gather}
for $\mathrm{G}\in\mathrm{add}(\mathcal{H})$. Finally, since $\mathcal{H}$ is a right
cell of $\cCH$, it suffices to prove $\eqref{eq:double-centralizer5.51}$ when $\mathrm{G}=
\mathrm{CH}$, for some $\mathrm{H}\in\mathrm{add}(\mathcal{H})$. In this case,
the isomorphism in \eqref{eq:double-centralizer5.51} is immediate,
since $\C\cong\C\square_{\C^{\star}\C}(\C^{\star}\C)\cong
(\C\square_{\C^{\star}\C}\C^{\star})\C$. Note that the
isomorphism
\begin{gather*}
\mathrm{CH}\cong(\C\square_{\C^{\star}\C}\C^{\star})(\mathrm{CH}),
\end{gather*}
is natural in $\mathrm{CH}$, as it is given by
\begin{gather*}
(\mathrm{coev}^{\C^{\star}\C,\prime}_{\mathrm{C}}\circ_{\mathsf{h}}\mathrm{id}_{\mathrm{CH}})\circ_{\mathsf{v}}(\lunit_{\mathrm{CH}})^{\mone},
\end{gather*}
where $\mathrm{coev}^{\C^{\star}\C,\prime}_{\mathrm{C}}$ is defined just as
$\mathrm{coev}^{\C,\prime}_{\mathrm{Y}}$ below \eqref{eq:generalfacts3}.
This completes the proof of Claim 2.

We are now ready to complete the proof of Theorem \ref{thm:double-centralizer}. The assignment
\begin{gather*}
\mathrm{can}^{\mone}\colon \mathrm{Z}\mapsto\C\square_{\C^{\star}\C}(\mathrm{Z}
\square_{\C^{\star}\C}\C^{\star})
\end{gather*}
defines a pseudofunctor in the opposite direction. Clearly,
$\mathrm{can}\circ\mathrm{can}^{\mone}$ is naturally isomorphic to the identity
on $(\C^{\star}\C)\Bi_{\underline{\ccB_{\mathbf{M}}}}(\C^{\star}\C)$, as follows from
\begin{gather*}
(\C^{\star}\C)\square_{\C^{\star}\C}(\mathrm{Z}
\square_{\C^{\star}\C}(\C^{\star}\C))\cong\mathrm{Z}
\end{gather*}
for any $\mathrm{Z}\in(\C^{\star}\C)\Bi_{\underline{\ccB_{\mathbf{M}}}}(\C^{\star}\C)$. 
On the other hand, $\mathrm{can}^{\mone}\circ\mathrm{can}$ is naturally isomorphic to the identity on $\mathrm{add}(\mathcal{H})$, due to
\eqref{eq:double-centralizer5.51} and its analog for tensoring with $\C \square_{\C^{\star}\C}
\C^{\star}$ on the right in $\mathrm{add}(\mathcal{H})$. Since we already know that $\mathrm{can}$ takes values in injective
$(\C^{\star}\C)$-bicomodules when restricted to $\mathrm{add}(\mathcal{H})$,
these two natural isomorphisms imply that $\mathrm{can}^{\mone}$ takes values in
$\mathrm{add}(\mathcal{H})$ when restricted to injective $(\C^{\star}\C)$-bicomodules
and, moreover, that $\mathrm{can}$ is fully faithful on $2$-morphisms and
essentially surjective on $1$-morphisms when
restricted to $\mathrm{add}(\mathcal{H})$ and corestricted to injective $(\C^{\star}\C)$-bicomodules.
\end{proof}


\vspace{2mm}

M.M.: Center for Mathematical Analysis, Geometry, and Dynamical Systems, Departamento de Matem{\'a}tica,
Instituto Superior T{\'e}cnico, 1049-001 Lisboa, PORTUGAL \& Departamento de Matem{\'a}tica, FCT,
Universidade do Algarve, Campus de Gambelas, 8005-139 Faro, PORTUGAL
\newline email: {\tt mmackaay\symbol{64}ualg.pt}

Vo.Ma.: Department of Mathematics, Uppsala University, Box. 480,
SE-75106, Uppsala, SWEDEN
\newline email: {\tt mazor\symbol{64}math.uu.se}

Va.Mi.: School of Mathematics, University of East Anglia,
Norwich NR4 7TJ, UK
\newline email: {\tt v.miemietz\symbol{64}uea.ac.uk}

D.T.: The University of Sydney, School of Mathematics and Statistics F07, Office Carslaw 827, NSW 2006, AUSTRALIA
\newline email: {\tt daniel.tubbenhauer\symbol{64}sydney.edu.au}, web: {\tt www.dtubbenhauer.com}

X.Z.: Beijing Advanced Innovation Center for Imaging Theory and Technology, Academy for Multidisciplinary Studies, Capital Normal University, Beijing 100048, CHINA
\newline email: {\tt xiaoting.zhang09\symbol{64}hotmail.com}

\end{document}